\newcommand{\R}{\mathbb{R}}
\newcommand{\Z}{\mathbb{Z}}
\newcommand{\N}{\mathbb{N}}
\newcommand{\F}{\mathbb{F}}
\newcommand{\Q}{\mathbb{Q}}
\newcommand{\C}{\mathbb{C}}
\newcommand{\vc}[1]{\boldsymbol{#1}}
\newcommand{\ind}{\mathds{1}} 	
\newcommand{\U}[1]{T_{#1}}
\newcommand{\Sz}{\mathcal{S}} 	
\newcommand{\Mod}[1]{\;(\textrm{mod } #1)}
\DeclarePairedDelimiter\abs{\lvert}{\rvert}
\DeclarePairedDelimiter\norm{\lVert}{\rVert}
\DeclarePairedDelimiter\japan{\langle}{\rangle}
\DeclareMathOperator{\supp}{supp}
\DeclareMathOperator{\BigO}{\mathcal{O}}
\DeclareFontFamily{U}{mathx}{\hyphenchar\font45}
\DeclareFontShape{U}{mathx}{m}{n}{
      <5> <6> <7> <8> <9> <10>
      <10.95> <12> <14.4> <17.28> <20.74> <24.88>
      mathx10
      }{}
\DeclareSymbolFont{mathx}{U}{mathx}{m}{n}
\DeclareMathAccent{\widecheck}{0}{mathx}{"71}
\DeclareMathAccent{\wideparen}{0}{mathx}{"75}
\newtheorem{thm}{Theorem}[section]
\newtheorem{cor}[thm]{Corollary}
\newtheorem{prop}[thm]{Proposition}
\newtheorem{lem}[thm]{Lemma}
\newtheorem{rmk}[thm]{Remark}
\newtheorem*{rmk*}{Remark}
\newtheorem*{acknowledgements*}{Acknowledgements}
\theoremstyle{definition}
\newtheorem{defin}[thm]{Definition}
\newenvironment{customthm}[1]
  {\innercustomthm}
  {\endinnercustomthm}
\title{Pointwise Convergence over Fractals for Dispersive Equations with Homogeneous Symbol}
\author[D. Eceizabarrena]{Daniel Eceizabarrena}
\address[D. Eceizabarrena]{Department of Mathematics and Statistics,
		University of Massachusetts Amherst,
		Amherst MA 01003, United States}
\email{eceizabarrena@math.umass.edu}
\author[F. Ponce-Vanegas]{Felipe Ponce-Vanegas}
\address[F. Ponce-Vanegas]{BCAM - Basque Center for Applied Mathematics,
		Mazarredo 14, E48009 Bilbao, Basque Country -- Spain}
\email{fponce@bcamath.org}
\subjclass[2020]{Primary 35J10; Secondary 11J83}
\keywords{Pointwise convergence, dispersive PDEs, Bourgain's counterexample, Hausdorff dimension, mass transference principle}
\date{\today}
\begin{document}

\begin{abstract}
We study the fractal pointwise convergence for the equation 
$i\hbar\partial_tu + P(D)u = 0$, where the symbol $P$ is real, homogeneous and non-singular.
We prove that for initial data $f\in H^s(\R^n)$ with $s>(n-\alpha+1)/2$
the solution $u$ converges to $f$ $\mathcal{H}^\alpha$-a.e, where
$\mathcal{H}^\alpha$ is the $\alpha$-dimensional Hausdorff measure.
We improve upon this result depending on the dispersive strength of $P$.
On the other hand, for a family of polynomials $P$ and given $\alpha$, 
we exploit a Talbot-like effect 
to construct initial data whose solutions $u$ diverge 
in sets of Hausdorff dimension $\alpha$. 
To compute the dimension of the sets of divergence, 
we adopt the Mass Transference Principle from Diophantine approximation.
We also construct counterexamples for quadratic symbols like the saddle
to show that 
our positive results are sometimes best possible.
\end{abstract}

\maketitle



\section{Introduction}

Let $P\in C^\infty(\R^n\setminus \{0\})$ be a real function that defines the 
following dispersive initial value problem:
\begin{equation} \label{eq:PDE}
\left\{
\begin{aligned}
i\hbar\partial_t u + P(D)u = 0, \\
u(\cdot,0)  = f.
\end{aligned}
\right.
\end{equation}
Here, $\hbar = 1/(2\pi)$ and $D = -i\hbar \partial$,
and we denote the solution to \eqref{eq:PDE} by $T_tf$. 
We tackle the fractal pointwise convergence problem for \eqref{eq:PDE},
that is, given $0 \leq \alpha \leq n$, 
if $\mathcal H^\alpha$ denotes the $\alpha$-Hausdorff measure, 
for which $s $ do we have 
\begin{equation}
\lim_{t \to 0} T_tf(x) = f(x), \qquad \mathcal H^\alpha\text{-almost everywhere}, \qquad \forall f \in H^s(\mathbb R^n) \, \, ?
\end{equation}
In this article we consider the case of homogeneous $P$ of degree $k\ge 1$, $k\in\R$,
that are non-singular
in the sense that $\nabla P(\xi) \neq 0$ for all $\xi\in \R^n\setminus \{0\}$.
In a nutshell, we prove:

\textbf{Positive results}
\begin{itemize}
	\item Theorem~\ref{thm:non-dispersive_B}: convergence holds for $s > (n-\alpha + 1)/2$.
	\item Theorem~\ref{thm:dispersive_B}: depending on the dispersive strength of $P$, 
	we lower the regularity of Theorem~\ref{thm:non-dispersive_B}.
\end{itemize}

\textbf{Negative results} 

We construct counterexamples for some particular symbols. 
As a novelty, to compute the Hausdorff dimension of the sets of divergence
we use the Mass Transference Principle, a technique originated in Diophantine approximation.
\begin{itemize}
	\item Theorem~\ref{thm:divergence_k}: counterexamples for $P(\xi) = \xi_1^k + \ldots + \xi_n^k$ and other similar symbols, generalizing the results of \cite{Pierce2021} to the fractal case $\alpha < n$. 	 
	
	\item Theorem~\ref{thm:examples_saddle}: counterexamples for $P(\xi) = \xi_1^2 + \cdots + \xi_m^2 - \xi_{m+1}^2 - \cdots - \xi_n^2$. 	
\end{itemize}

\subsection{Motivation}
The classical problem of convergence concerns the Schrödinger equation, 
which corresponds to $P(\xi) = |\xi|^2$.
Carleson asked in \cite{MR576038} 
for the minimal regularity $s \geq 0$ such that all functions $f\in H^s(\R^n)$ satisfy
$\lim_{t\to 0} e^{it\Delta/\hbar}f = f$ almost everywhere.
For $n = 1$, he proved that $s \geq 1/4$ is sufficient, while
Dahlberg and Kenig \cite{MR654188} showed that this is also necessary.

For dimensions $n \geq 2$, the problem was subsequently investigated by several authors in
\cite{MR934859, MR904948, MR1413873, MR2264734, MR3241836, demeter16, MR3842310}, 
to mention a few.
In \cite{Bourgain2016}, Bourgain gave a counterexample, discussed in detail in \cite{MR4186521}, to prove that
$s \geq \frac{n}{2(n+1)}$ is necessary,
while Du, Guth and Li proved in \cite{MR3702674} that 
$s>1/3$ is sufficient in $n=2$, and
Du and Zhang \cite{MR3961084} proved that 
$s>\frac{n}{2(n+1)}$ is sufficient for $n\ge 3$.
Thus, the problem has been solved except for the endpoint.

Several variations of the problem have been proposed; for example,
convergence along curves \cite{MR2871144, MR3118482, MR4273651},
convergence for other equations \cite{MR1136935, MR2284549, MR2379688, MR4198507},
and convergence in other manifolds \cite{MR3984026, eceizabarrena2020}, 
some papers addressing more than one version of the problem.

The fractal refinement we consider in this article, 
namely
the convergence $\mathcal H^\alpha$-almost everywhere, 
has been studied, among others, in
\cite{MR2754999, MR3896208, MR3613507, MR3903115}.
The question is to determine the critical regularity
\begin{equation}
s_c(\alpha) = \inf\left\{s\ge 0 \; \mid \,    \lim_{ t \to 0} \U{t}f = f \quad \mathcal H^\alpha\text{-a.e.} \quad \textrm{for every }f\in H^s(\R^n)\right\}.
\end{equation}
Some preliminary properties of $s_c(\alpha)$ are the following:
\begin{enumerate}[(i)]
\item $s_c(\alpha)\le n/2$, because if $f\in H^s(\R^n)$ for some $s>n/2$, then 
$\U{t}f$ is continuous and the solution converges everywhere;
\item $s_c(\alpha)$ is a non-increasing function in $\alpha$; 
\item  $s_c(\alpha)\ge (n-\alpha)/2$, because as shown in \cite{MR1903760}, for $s<(n-\alpha)/2$ the initial datum $f\in H^s(\R^n)$
can diverge in a set of dimension $\alpha$.
\end{enumerate}

Frequently, the problem is rephrased in terms of 
almost everywhere convergence with respect to Frostman measures, 
that is, probability measures supported in the unit ball $B_1$ that
satisfy $\mu(B_r)\le Cr^\alpha$ for all $r>0$.
Denoting the collection of all such measures by $M^\alpha(B_1)$,  
Frostman's lemma implies that the critical regularity can also be computed by 
\begin{equation}
s_c(\alpha) = \inf\left\{s\ge 0 \, \mid \, \lim_{t \to 0} \U{t}f = f \, \, \, \, \mu\textrm{-a.e. }  \textrm{ for every } \mu\in M^\alpha(B_1)\textrm{ and }f\in H^s(\R^n)\,  
	\right\}.
\end{equation}

\subsection{Positive results}

Convergence is usually better in presence of dispersion,
in which case the $L^2$ norm is constant while
the $L^\infty$ norm decays.
The rate of decay can be seen as a measure of the strength of the dispersion. 
As an example, the Schr\"odinger equation, with $P(\xi) = |\xi|^2$, 
has dispersion and satisfies both conditions, 
while the transport equation, with $P(\xi) = \xi$, does not.

Let us see some heuristics when there is no dispersion. 
Let $\beta>0$.
If $s<(n-\beta)/2$, we saw that a function $f\in H^s(\R^n)$ 
can diverge in a set $E\subset\R^n$
of Hausdorff dimension $\beta$. 
Since high frequencies remain concentrated, 
at times $t >0$ the solution $u(\cdot,t)$ might still diverge in a set $E_t$
of dimension $\beta$. 
Based on Cavalieri's principle 
(which, in the fractal setting, works in some situations but is in general false \cite[Ch. 8]{MR2118797}),
one may naively think that 
the sets $E_t$ make a set $F$ of dimension $\beta + 1$ in $\R^n\times\R$.
Moreover, if the sets $E_t$ are more or less disjoint, 
one may expect the projection of $F$ to $\R^n$ to have dimension $\beta+1$.
Thus, in this bad scenario, the solution might diverge in a set of dimension $\alpha = \beta + 1$.
In other words, 
if $s<(n-\alpha+1)/2$ and $f\in H^s(\R^n)$,
we should expect the solution $u$ to diverge in set of dimension $\alpha$.

In this setting, we first prove that $s_c(\alpha)\le (n-\alpha+1)/2$.
This amounts to saying that the scenario described above is the worst outcome. 
Since we expect it to hold when no dispersion is present, 
we call $(n-\alpha+1)/2$ the non-dispersive threshold.

\begin{thm}[The non-dispersive upper bound] \label{thm:non-dispersive_B}
Let $P\in C^\infty(\R^n\setminus\{0\})$ be a non-singular, homogeneous function of
degree $k\ge 1$ with $k\in \R$.
Let also $s>(n-\alpha+1)/2$. Then,
\begin{equation}
\lim_{ t \to 0} \U{t}f = f \quad \mu\text{-a.e.}, \qquad \forall \mu\in M^\alpha(B_1) \quad \text{ and } \quad \forall f\in H^s(\R^n).
\end{equation}
\end{thm}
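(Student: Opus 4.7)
The plan is to prove the conclusion via a standard maximal-inequality scheme: reduce the $\mu$-a.e.\ convergence to a maximal bound in $L^2(d\mu)$, decompose in frequency, and establish the resulting single-scale estimate by exploiting the non-singularity of $P$. Since Schwartz functions are dense in $H^s$ and $T_tf\to f$ everywhere for Schwartz $f$, it suffices to show the maximal inequality
\[
\Big\|\sup_{0<t<1}|T_tf|\Big\|_{L^2(d\mu)}\le C_\mu\|f\|_{H^s}\qquad (s>\tfrac{n-\alpha+1}{2}).
\]
A Littlewood--Paley decomposition $f=\sum_{j\ge 0}f_j$ with $\widehat{f_j}$ supported in $|\xi|\sim R_j:=2^j$ then reduces this, by Cauchy--Schwarz against an $R_j^{-s}$ weight, to the dyadic estimate
\[
\Big\|\sup_{0<t<1}|T_tf_j|\Big\|_{L^2(d\mu)}\le C\,R_j^{(n-\alpha+1)/2}\|f_j\|_{L^2}.
\]

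For this dyadic bound I would linearize the supremum in Kolmogorov fashion: pick a measurable $t_\ast:\R^n\to[0,1]$ essentially attaining the maximum and reduce to proving
\[
\int |T_{t_\ast(x)}f_j(x)|^2\,d\mu(x)\le C\,R_j^{n-\alpha+1}\|f_j\|_{L^2}^2
\]
uniformly in $t_\ast$. Expanding by Plancherel in $x$ produces a bilinear form in $\widehat{f_j}$ with kernel
\[
K(\xi,\eta)=\int e^{2\pi ix\cdot(\xi-\eta)+it_\ast(x)[P(\xi)-P(\eta)]/\hbar}\,d\mu(x),
\]
which I would control via a Schur-type estimate. Here the non-singularity $\nabla P\ne 0$ plays the central role: it permits a decomposition of the $(\xi,\eta)$-interaction that separates a narrow ``slab'' of directions near the orthogonal complement of $\nabla P(\xi)$, in which the Frostman ball condition $\mu(B_r)\le Cr^\alpha$ controls the $x$-integral directly, from the complementary region, in which a coercive gradient produces oscillatory decay that combines favorably with the Frostman energy bound $\int |\hat\mu(\zeta)|^2 |\zeta|^{\alpha-n}\,d\zeta<\infty$.

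The main obstacle is precisely this Schur-type estimate on $K$. The $x$-dependence of $t_\ast$ destroys the convolution structure $\hat\mu(\eta-\xi)$ available in the static (fixed $t$) problem, so one must carry out a decomposition of $(\xi,\eta)$-space adapted to $\nabla P$ and match the $\alpha$-dimensional Frostman geometry against the width of the slab, which is governed by the homogeneity degree $k$ of $P$. The remaining steps---the density reduction, the Littlewood--Paley summation, and the Kolmogorov linearization---are essentially routine, and all of the work in upgrading the $R_j^{-s}$-decay from $s>(n-\alpha)/2$ (the static threshold) to the claimed $s>(n-\alpha+1)/2$ comes from this bilinear analysis of $K$.
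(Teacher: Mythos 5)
The paper's route to Theorem~\ref{thm:non-dispersive_B} is a chain of reductions followed by a genuinely trivial estimate: one rescales $(x,t)\mapsto(x/R,t/R^k)$ to place $\operatorname{supp}\widehat f$ on $\{|\xi|\simeq 1\}$; applies Lemma~\ref{thm:Short_Time} (Lee's time localization) to shrink the time interval from $[0,R^k]$ to $[0,R]$; applies Lemma~\ref{thm:Discrete} (a Du--Zhang discretization) to replace the supremum by an $L^2(w)$ norm against a unit-cube weight $w\in W^\alpha(B_R)$ satisfying the Frostman condition; and then the target bound $\|T_tf\|_{L^2(w)}\le R^{1/2}\|w\|_\infty^{1/2}\|f\|_2$ follows from Plancherel at each fixed $t$, integrated over $|t|\le R$. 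The factor $R^{1/2}$, hence the ``$+1$'' in the exponent, is produced purely by the length-$R$ time integration after rescaling; no oscillatory or bilinear analysis is needed at all.

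Your proposal is structurally different: no rescaling, no time localization, no discretization, but instead a Kolmogorov linearization $t\mapsto t_*(x)$ followed by an expansion into a bilinear form with kernel $K(\xi,\eta)=\int e^{2\pi ix\cdot(\xi-\eta)}e^{2\pi it_*(x)(P(\xi)-P(\eta))}d\mu(x)$ and a Schur-type estimate on $K$. As you write yourself, ``the main obstacle is precisely this Schur-type estimate on $K$,'' and that estimate is not carried out: you describe in general terms a slab decomposition of $(\xi,\eta)$-space adapted to $\nabla P$ and an interplay between the Frostman condition and oscillation, but give no actual bound. This is where the proposal has a genuine gap. Because $t_*$ is an arbitrary measurable function, $K$ has none of the convolution structure that makes the fixed-time trace inequality accessible; and without first shortening the time window to be comparable to the dual frequency scale (which is what Lemma~\ref{thm:Short_Time} accomplishes after rescaling), the phase $t_*(x)\bigl(P(\xi)-P(\eta)\bigr)$ can be as large as $R_j^k$, so the oscillatory decay you invoke cannot be exploited in the simple way you sketch. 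In short: the reductions you omit are not auxiliary bookkeeping but are where the exponent $(n-\alpha+1)/2$ is actually earned, and the bilinear analysis you substitute for them is left as a heuristic rather than a proof.
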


\begin{center}
\includegraphics[scale=0.95]{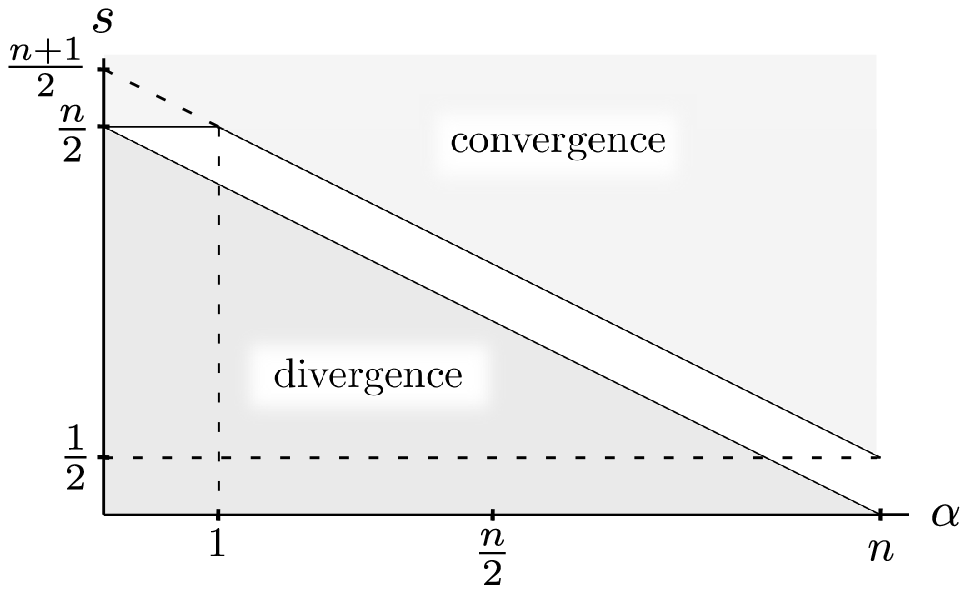}
\end{center}

When $\alpha = n$, this result has already been proved in several cases:
\begin{itemize}
	\item for $P = \abs{\xi}^2$ by 
Vega \cite{MR934859} and Sjölin \cite{MR904948};
\item for $P$ a polynomial of degree two by Rogers \textit{et al.} \cite{MR2284549};
\item for $P$ a polynomial of principal type by Ben-Artzi and Devinatz \cite{MR1136935}.
\end{itemize}
For general $\alpha$, this result was proved
for $P$ elliptic of degree $m\ge 2$ by Sj\"ogren and Sj\"olin \cite{MR997967}.

Let us come back to the dispersive case. 
When high frequencies are dispersed in different directions
at different speeds, 
one could think that the probability that high frequencies concentrate
at many spots most of the time is small, so
one expects that at a fixed point in space
the solution evolves somehow smoothly.
Kato called attention to this fact in \cite{MR759907}.
Therefore, we expect to improve the non-dispersive bound
in the presence of dispersion.
This is actually the case for many symbols $P$; however, 
there are symbols such as $P(\xi) = \xi_1^2-\xi_2^2$ for which 
the non-dispersive bound cannot be improved, as was proved in \cite{MR2284549} for $\alpha = n$.

Our next result shows that, in the presence of dispersion and when $\alpha$ is small, 
we can reach the lowest possible $s_c(\alpha) = (n-\alpha)/2$.
How small $\alpha$ can be chosen depends on the strength of the dispersion.

\begin{thm}[The dispersive bound] \label{thm:dispersive_B}
Let $P\in C^\infty(\R^n\setminus\{0\})$ be a non-singular, homogeneous function of
degree $k\ge 1$, $k\in \R$, and
suppose there exists $\beta >0$ such that $\norm{\U{t}\varphi}_\infty\le C_\varphi\abs{t}^{-\beta}$
for every $\varphi \in \mathcal S(\R^n) $ with Fourier support in $\{ \abs{\xi}\simeq 1 \}$. 
Let $\alpha<\beta$. Then, if $s>(n-\alpha)/2$, 
\begin{equation}
\lim_{ t \to 0} \U{t}f = f \quad \mu\text{-a.e.}, \qquad \forall \mu\in M^\alpha(B_1) \quad \text{ and } \quad \forall f\in H^s(\R^n).
\end{equation}
\end{thm}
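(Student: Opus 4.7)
\medskip

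The plan is to prove the $L^2(\mu)$ maximal estimate
$\|\sup_{0<t<1}|T_t f|\|_{L^2(\mu)} \le C \|f\|_{H^s}$ for $s>(n-\alpha)/2$ and all $\mu\in M^\alpha(B_1)$; $\mu$-a.e.\ convergence then follows from the standard density argument, since for Schwartz $\varphi$ one has $T_t\varphi\to\varphi$ everywhere. After a standard Littlewood--Paley decomposition $f = \sum_{N\ge 1} P_N f$, what I need is the dyadic bound
\begin{equation}
\big\|\sup_{0<t<1} |T_t P_N f|\big\|_{L^2(\mu)} \,\le\, C\, N^{(n-\alpha)/2}\,\|P_N f\|_{L^2},
\end{equation}
which Cauchy--Schwarz then sums to the desired $H^s$ estimate for $s>(n-\alpha)/2$ after absorbing an $N^\varepsilon$ gain. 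The contribution of $N\lesssim 1$ is trivial.

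Next I would use the homogeneity of $P$ of degree $k$. Rescaling $x\mapsto x/N$, $t\mapsto t/N^k$ turns a function Fourier-supported in $\{|\xi|\simeq N\}$ into one supported in $\{|\xi|\simeq 1\}$, at the cost of replacing $\mu$ by a measure $\widetilde\mu$ supported in $B_N$ with $\widetilde\mu(B_r)\le r^\alpha$, and dilating the time window to $|s|<N^k$. The factor $N^{(n-\alpha)/2}$ in the target estimate is exactly the change-of-variables ratio. So I am reduced to showing the \emph{unit-frequency, long-time} bound
\begin{equation}
\big\|\sup_{|s|<N^k}|T_s g|\big\|_{L^2(\widetilde\mu)} \,\le\, C\, \|g\|_{L^2}
\end{equation}
for $g$ with Fourier support in $\{|\xi|\simeq 1\}$.

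To prove this, I would linearize the sup by a measurable $s(y)\in[-N^k,N^k]$ and reduce $\|T_{s(\cdot)}g\|_{L^2(\widetilde\mu)}\le C\|g\|_2$ via $TT^*$ to the boundedness on $L^2(\widetilde\mu)$ of the kernel operator with
\begin{equation}
K(x,t) \,=\, \int e^{2\pi i(x\cdot\xi + tP(\xi))}\,\chi^2(\xi)\,d\xi,
\end{equation}
where $\chi$ is a smooth cutoff to $\{|\xi|\simeq 1\}$. Two pointwise estimates for $K$ are the whole game. First, the hypothesis gives the \emph{dispersive bound} $|K(x,t)|\le C\min(1,|t|^{-\beta})$. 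Second, since $P$ is non-singular and smooth, $|\nabla P|$ is bounded above and below on the compact annulus $\mathrm{supp}(\chi)$; hence for $|x|\ge C_0(1+|t|)$ with $C_0>\sup_{|\xi|\simeq 1}|\nabla P|$ one has $|\nabla_\xi(x\cdot\xi+tP(\xi))|\gtrsim |x|$, and repeated integration by parts yields $|K(x,t)|\le C_M|x|^{-M}$ for every $M$ — the \emph{finite speed} estimate.

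Finally I would close via Schur's test, bounding $\sup_y\int|K(y-y',s(y)-s(y'))|\,d\widetilde\mu(y')$. Outside the cone $\{|y-y'|\ge C_0(1+|s(y)-s(y')|)\}$, the dyadic annuli contribute $\sum_{j\ge 0} 2^{j\alpha}2^{-jM}$, which converges for $M>\alpha$. Inside the cone, the dyadic pieces in $|s(y)-s(y')|\simeq 2^\ell$ contribute at most $\widetilde\mu(B_{C_0 2^\ell}(y))\cdot 2^{-\ell\beta}\le 2^{\ell(\alpha-\beta)}$, and $\sum_{\ell\ge 0} 2^{\ell(\alpha-\beta)}<\infty$ \emph{precisely because} $\alpha<\beta$. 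The main obstacle is thus ensuring the off-diagonal decay of $K$ uniformly in $(x,t)$ — which is exactly the role of the non-singularity of $P$; the threshold $\alpha<\beta$ is then natural and sharp for this Schur-type scheme, since the cone-region geometric series would otherwise diverge.
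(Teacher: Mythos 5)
Your proposal is correct and rests on the same two pillars as the paper's proof: the dispersive decay $|(dS)^\vee(x,t)|\lesssim |t|^{-\beta}$ from the hypothesis, and the non-stationary phase decay $|(dS)^\vee(x,t)|\lesssim_M |x|^{-M}$ when $|x|\gg 1+|t|$ from non-singularity of $P$; combining them gives $|(dS)^\vee(x,t)|\lesssim |(x,t)|^{-\beta}$, and the dyadic sum $\sum_{\lambda\ge 1}\lambda^{\alpha-\beta}$ is exactly where $\alpha<\beta$ enters. The difference is in the scaffolding. The paper first localizes to short times $|t|\le R$ via Lemma~\ref{thm:Short_Time}, then trades the linearized maximal operator for a bound $\|T_tf\|_{L^2(w)}\le CR^\beta\|f\|_2$ against a discretized space-time weight $w\in W^\alpha(B_R)$ (Lemma~\ref{thm:Discrete}, where condition (v) on $w$ is precisely the record of the linearization $t=T(x)$), and finally controls this by $\||(dS)^\vee|*w\|_\infty^{1/2}\|f\|_2$ via a duality argument and $L^1$--$L^\infty$ interpolation. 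Your $TT^*$ plus Schur test arrives at the same quantity $\sup_y\int|(dS)^\vee(y-y',s(y)-s(y'))|\,d\widetilde\mu(y')$ more directly. Skipping Lemma~\ref{thm:Short_Time} is legitimate here because the dispersive kernel decays in $t$ for all times, so the long-time dyadic tail in your Schur sum converges on its own; this shortcut would not work for Theorem~\ref{thm:non-dispersive_B}, where the short-time localization is essential. What the paper's heavier reduction machinery buys is a single Lemma~\ref{thm:reductions} that serves both Theorems~\ref{thm:non-dispersive_B} and~\ref{thm:dispersive_B}; your version is a genuine streamlining for the dispersive case alone.
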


If $D^2P$ is non-singular,
then $\norm{\U{t}\varphi}_{\infty}\le C_\varphi\abs{t}^{-n/2}$
for every $\varphi \in \mathcal S(\R^n) $ 
with $\operatorname{supp} \widehat{\varphi} \subset \{ \xi \in \mathbb R^n \, : \, |\xi| \simeq 1\}$; 
see Theorem~1 in Ch. 8.3 of \cite{MR1232192}.
As an immediate consequence, we get:

\begin{cor} \label{thm:dispersive_non-Singular}
If $D^2P(\xi)$ is non-singular for $\xi\neq 0$
and if $\alpha<n/2$, then 
$s_c(\alpha) = (n-\alpha)/2$.
\end{cor}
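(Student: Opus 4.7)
The plan is to assemble the corollary from the two halves that have already been prepared in the excerpt: the dispersive upper bound of Theorem~\ref{thm:dispersive_B} and the universal lower bound recorded in preliminary property (iii).

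First I would identify the dispersive exponent $\beta$ furnished by the non-degeneracy of $D^2P$. Because $P$ is homogeneous of degree $k\ge 1$ and $D^2P(\xi)$ is non-singular on $\R^n\setminus\{0\}$, the phase function $x\cdot\xi - t P(\xi)/\hbar$ has a non-degenerate Hessian (in $\xi$) on the unit annulus. The stationary phase estimate quoted from \cite[Ch.~8.3, Thm.~1]{MR1232192} then gives
\begin{equation}
\norm{\U{t}\varphi}_\infty \le C_\varphi \abs{t}^{-n/2}
\end{equation}
for every $\varphi \in \mathcal S(\R^n)$ whose Fourier transform is supported in $\{\abs{\xi}\simeq 1\}$, so that one may take $\beta = n/2$ in Theorem~\ref{thm:dispersive_B}.

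Given this value of $\beta$, Theorem~\ref{thm:dispersive_B} applied to any $\alpha<n/2$ yields pointwise convergence $\mu$-a.e.\ for every $\mu \in M^\alpha(B_1)$ and every $f \in H^s(\R^n)$ whenever $s>(n-\alpha)/2$. By the Frostman reformulation of $s_c(\alpha)$ recalled in the introduction, this produces the upper bound $s_c(\alpha)\le (n-\alpha)/2$. The matching lower bound $s_c(\alpha)\ge (n-\alpha)/2$ is precisely property (iii) of $s_c$, which is valid for any dispersive equation and relies on the construction of \cite{MR1903760}. Combining the two inequalities gives $s_c(\alpha) = (n-\alpha)/2$, as claimed.

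Since both ingredients are already in place, no genuine obstacle remains; the only point demanding a modicum of care is checking that the hypothesis on $D^2P$ indeed allows one to invoke stationary phase uniformly for $\varphi$ localized to an annulus, but this is standard once one rescales using the degree-$k$ homogeneity of $P$.
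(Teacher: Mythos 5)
Your argument is correct and is exactly the paper's proof: one reads off $\beta=n/2$ from the non-degeneracy of $D^2P$ via the cited stationary-phase estimate, feeds this into Theorem~\ref{thm:dispersive_B} to obtain $s_c(\alpha)\le(n-\alpha)/2$ for $\alpha<n/2$, and closes with the universal lower bound of property (iii). (The sign in the phase you wrote, $x\cdot\xi - tP(\xi)/\hbar$, is a small slip — the correct phase is $2\pi(x\cdot\xi+tP(\xi))$ — but it does not affect the Hessian argument.)
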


This result was proved by Barceló \textit{et al.} \cite{MR2754999}
for $P = \abs{\xi}^2$, 
as well as for $P = \abs{\xi}^m$ with $m > 1$
 when $\alpha<(n-1)/2$.

Theorem~\ref{thm:non-dispersive_B} and Corollary~\ref{thm:dispersive_non-Singular}, 
together with the fact that $s_c(\alpha)$ is non-increasing,
imply that if $D^2P$ is non-singular, then
\begin{equation}\label{eq:Non_Singular_Prelim}
\begin{array}{ll}
s_c(\alpha) =  (n-\alpha)/2, &  \textrm{ if }\alpha \le n/2, \\
& \\
s_c(\alpha) \leq n/4, & \text{ if } n/2 \leq \alpha \leq n/2 + 1, \\
& \\
s_c(\alpha) \leq (n-\alpha + 1)/2, & \text{ if } n/2 + 1 \leq \alpha \leq n.
\end{array} 
\end{equation}
When $P$ is a non-elliptic quadratic symbol,
this result was already proved in Theorem~1, eq. (4) of \cite{MR4196386}.
To compare their results with ours,
the reader can use the identity
\begin{equation}  \label{eq:Barron_ours}
s_c(\alpha) = 
	\frac{n-\beta(\alpha,\mathbb{H}_m^{n})}{2},
	\qquad d - 1 \;(\textit{their notation}) = n\;(\textit{ours}).
\end{equation}
See also Theorem~2 and Proposition~2.1 of \cite{MR4196386}.


When $D^2P$ is positive definite,
we can complement Corollary~\ref{thm:dispersive_non-Singular}
with Du and Zhang's deep result in Theorem~2.4 of \cite{MR3961084}, so that
\begin{equation}
\begin{array}{ll}
s_c(\alpha) =  (n-\alpha)/2, &  \textrm{ if }\alpha \le n/2, \\
& \\
s_c(\alpha) \leq n/4, & \text{ if } n/2 \leq \alpha \leq (n+1)/2, \\
& \\
s_c(\alpha) \leq \displaystyle{\frac{n}{2(n+1)}+\frac{n}{2(n+1)}(n-\alpha)}, & \text{ if } (n+1)/2 \leq \alpha \leq n.
\end{array} 
\end{equation}
This is already known for the Schr\"odinger equation.

\subsection{Negative results}\label{sec:Intro_Negative}

We first consider polynomial symbols $P$ of the type
\begin{equation}\label{eq:Polynomial}
P(\xi) = \xi_1^k + W(\xi'), \qquad \qquad \xi = (\xi_1,\xi') = (\xi_1, \ldots, \xi_n) \in \mathbb R^n,
\end{equation}
where $k\ge 2$ is an integer and $W\in \Q[X_2,\ldots,X_n]$ has degree $k$.
The case $P(\xi) = \abs{\xi}^2$, corresponding to $k=2$,
was investigated in \cite{LucaPonceVanegas2021}.
Our main source of inspiration is the work of An, Chu and Pierce in \cite{Pierce2021},
where they tackled the the symbol 
$P(\xi) = \xi_1^k + \xi_2^k +\ldots + \xi_n^k$ for $k\geq 3$
and $\alpha = n$.
Our main result is the following, which is portrayed in Figure~\ref{fig:divergence_k}.

\begin{thm} \label{thm:divergence_k}
Let $P(\xi) =  \xi_1^k + W(\xi')$, where
$W\in\Q[X_2,\ldots,X_n]$ has degree $k\ge 2$ 
and its homogeneous part $W_k$ is non-singular
 in the sense that
$\nabla W_k(\xi')\neq 0$ for every $\xi'\in\C^{n-1}\setminus\{0\}$.

Suppose that either $2\le k\le 2(n-1)$ and
\begin{equation}
s<\begin{cases}
(i)\;\thickspace
	\dfrac{1}{4} + \dfrac{n-1}{4(n(k-1)+1)}+\dfrac{(n-1)(k-1)}{2(n(k-1)+1)} (n-\alpha) & 
	\textrm{for } n-\dfrac{n-1}{2k}\le \alpha\le n, \\[12pt]
(ii)\thickspace
	\dfrac{1}{4} + \dfrac{n-1}{4(n+k-1)}+\dfrac{(n-1)}{2(n+k-1)} (n-\alpha)&
	\textrm{for } n-\dfrac{1}{2}-\dfrac{n-1}{k}\le \alpha \le n-\dfrac{n-1}{2k},
\end{cases}
\end{equation}
or $k>2(n-1)$ and
\begin{equation}
s<\begin{cases}
(i)\;\;\thickspace
\dfrac{1}{4} + \dfrac{n-1}{4(n(k-1)+1)}+\dfrac{(n-1)(k-1)}{2(n(k-1)+1)} (n-\alpha) & 
	\textrm{for } n-\dfrac{n-1}{2k}\le \alpha\le n, \\[12pt]
(ii)\;\thickspace
\dfrac{1}{4} + \dfrac{n-1}{4(n+k-1)}+\dfrac{(n-1)}{2(n+k-1)} (n-\alpha) &
	\textrm{for } n-\dfrac{n-1}{k-n+1}\le \alpha \le n-\dfrac{n-1}{2k}, \\[12pt]
(iii)\thickspace
\dfrac{1}{4} + \dfrac{n-1}{4k} + \dfrac{n-1}{4k}(n-\alpha) &
	\textrm{for } n-\dfrac{k+n-1}{2k-n+1} \le \alpha \le n-\dfrac{n-1}{k-n+1}.
\end{cases}
\end{equation}
\\[-2mm]
Then there exists $f\in H^s(\R^n)$ such that
$\U{t}f$ diverges in a set of Hausdorff dimension $\alpha$. 
\end{thm}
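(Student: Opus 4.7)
The plan is to extend the Bourgain-type counterexample of \cite{Pierce2021}, which handles the Lebesgue-a.e.\ case $\alpha = n$, to the fractal regime by coupling it with the Mass Transference Principle of Beresnevich--Velani. The divergence set is naturally organized as a $\limsup$ of small balls around rationals $p/q$, and fractal Hausdorff dimension is then extracted by contracting these balls appropriately.

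First, I would take $f = \sum_j c_{N_j} f_{N_j}$ with $N_j$ very lacunary, each $f_{N_j}$ having Fourier transform equal to the indicator of an integer lattice set of size $\simeq N_j$ adapted to $P$, and $c_{N_j} = N_j^{-s-n/2}(\log N_j)^{-2}$ so that $\norm{f}_{H^s} < \infty$. At a rational time $t = a/q$ with $\gcd(a,q) = 1$ and $q$ a multiple of the denominator of $P$'s coefficients, the splitting $\xi = q\eta + r$ factorizes $\U{a/q}f_N(x)$ as a Gauss-type sum over $r \in (\Z/q\Z)^n$ times a Dirichlet-type envelope. Because the homogeneous part $W_k$ is non-singular and $P$ contains the pure term $\xi_1^k$, Weyl/Weil-type bounds produce a clean lower bound of the form $\abs{\U{a/q}f_N(x)} \gtrsim N^n/q^{\kappa(n,k)}$ at rational $x=p/q$ inside a coherence ball of radius $r_N$. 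This is the technical heart of the proof, carried out along the lines of the An--Chu--Pierce computations; the new twist is that one must keep the lower bound uniform in $q$ and $p$ so that the family of bad balls carries a clean $\limsup$ structure.

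Second, letting $q = q(N) \simeq N^\tau$ with $\tau$ to be optimized, the bad set at scale $N$ is a union of balls $B(p/q, r_N)$ over admissible triples $(a,q,p)$. Sparsity of $N_j$ provides quasi-independence across scales, and the divergence set contains $E = \limsup_{j} \bigcup_{p,\,q_j} B(p/q_j,\, r_{N_j})$. A Dirichlet-type counting argument shows that the \emph{enlarged} $\limsup$ with radii $1/q_j$ has full Lebesgue measure in $[0,1]^n$, so the Mass Transference Principle yields $\mathcal H^\alpha(E) > 0$ as soon as $r_{N_j} \gtrsim q_{N_j}^{-n/\alpha}$. The admissible range of $s$ is then pinned down by simultaneously enforcing (a) the pointwise divergence condition $c_N \cdot \abs{\U{a/q}f_N(x)} \gtrsim 1$, (b) the MTP constraint $r_N \gtrsim q^{-n/\alpha}$, and (c) the choice of $\tau$ that optimally trades the Gauss-sum amplitude $N^n/q^{\kappa}$ against the $H^s$ cost.

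The three regimes of the theorem correspond to which constraint is binding. Case (i) is the choice $q \simeq N^k$, where a full residue system is obtained for $P(\xi)\Mod q$; case (ii) takes the smaller $q \simeq N$, exploiting only the $\xi_1$-periodicity of the Gauss sum; and case (iii), valid only when $k > 2(n-1)$, comes from an auxiliary construction where the pure term $\xi_1^k$ contributes an extra resonance at a distinct scale in the remaining $n-1$ directions. The main obstacle is this multi-parameter optimization together with the number-theoretic bookkeeping required to keep the Gauss-sum lower bounds uniform in $q$ and compatible with the MTP scaling; once these ingredients are assembled, the remaining algebra reduces to routine balancing that yields the stated thresholds.
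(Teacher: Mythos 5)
Your high-level plan — build a Bourgain/An--Chu--Pierce style counterexample, estimate Gauss sums via Deligne/Weil, and invoke a Mass Transference Principle to compute the Hausdorff dimension of the $\limsup$ divergence set — is the same outline that the paper follows. But there is a genuine gap in the way you handle the geometry of the divergence sets, and it propagates to a misreading of where the three regimes come from.

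The divergence sets in this construction are \emph{not} balls around rationals $p/q$. By design, the initial datum factors as $g(x_1)h(x')$ with a single wave packet of spatial width $R^{-1/2}$ in the $x_1$-direction and a lattice of packets of width $R^{-1}$ in the remaining $n-1$ directions. After evolving to rational times, the set where the solution is large is a union of anisotropic slabs $B_1(\cdot,R^{-1/2})\times B_{n-1}(\cdot,R^{-1})$, and the aspect ratio $R^{1/2}$ is unavoidable: it is exactly what controls the $R^{1/4}$ gain in the amplitude. Your step ``the bad set at scale $N$ is a union of balls $B(p/q,r_N)$ \dots\ so the Mass Transference Principle yields $\mathcal H^\alpha(E)>0$ as soon as $r_{N_j}\gtrsim q_{N_j}^{-n/\alpha}$'' applies the Beresnevich--Velani ball-to-ball MTP to rectangles. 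The paper explicitly flags this as the obstruction and brings in the rectangle-to-rectangle Mass Transference Principle of Wang and Wu \cite{WangWu2021} (Theorem~\ref{thm:MTP_Rectangles} here). That theorem returns the dimension as a minimum of three affine expressions indexed by the distinct exponents $b_\ell$; in our case this collapses to $\min\{a_1+(n-1)a_2+1/2,\,n-1+2a_1\}$ (Proposition~\ref{thm:Prop_MTP}). Covering each slab by balls of radius $R^{-1}$ and then applying the ball MTP would not reproduce this minimum, and would yield a worse — indeed a wrong — dimension formula; this is precisely why the rectangle version is needed.

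This misstep also leads to a wrong attribution of the three cases. They do not correspond to choices $q\simeq N^k$, $q\simeq N$, and an ``auxiliary construction.'' All the regimes come from a single construction with two geometric parameters $(u_1,u_2)$ describing the slab spacings (equivalently $D$ and $Q$). For fixed target dimension $\alpha$, one minimizes $u_2$ on the broken line \eqref{eq:Broken_Line_u1u2} inside the polygonal domain $\mathcal D$, and the three formulas arise according to which boundary the minimizer hits: $u_1=1/2$ (case~(i)), or the line $Q=1$ in $\mathcal D_{\textrm{above}}$ (case~(ii)), or the line $Q=1$ in $\mathcal D_{\textrm{below}}$ (case~(iii)). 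Note in particular that cases~(ii) and~(iii) both have $Q\simeq 1$, so $q$ is bounded, the opposite of your ``$q\simeq N$'' guess; and case~(iii) appears only when $k>2(n-1)$ because the region $\mathcal D_{\textrm{below}}$ is nonempty only in that regime (the anisotropic minimum switches branch there). The split into $\mathcal D_{\textrm{above}}/\mathcal D_{\textrm{below}}$ is itself a shadow of the minimum in the rectangle MTP, so this three-way split is not visible under the ball heuristic. You would need to redo the dimension calculation with the rectangle MTP, keeping both $u_1$ and $u_2$ as free parameters and verifying the uniform local ubiquity hypothesis (Lemma~\ref{thm:Restrictions_For_a}), to recover the stated thresholds.
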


\begin{figure}[t] 
\includegraphics[width=0.9\linewidth]{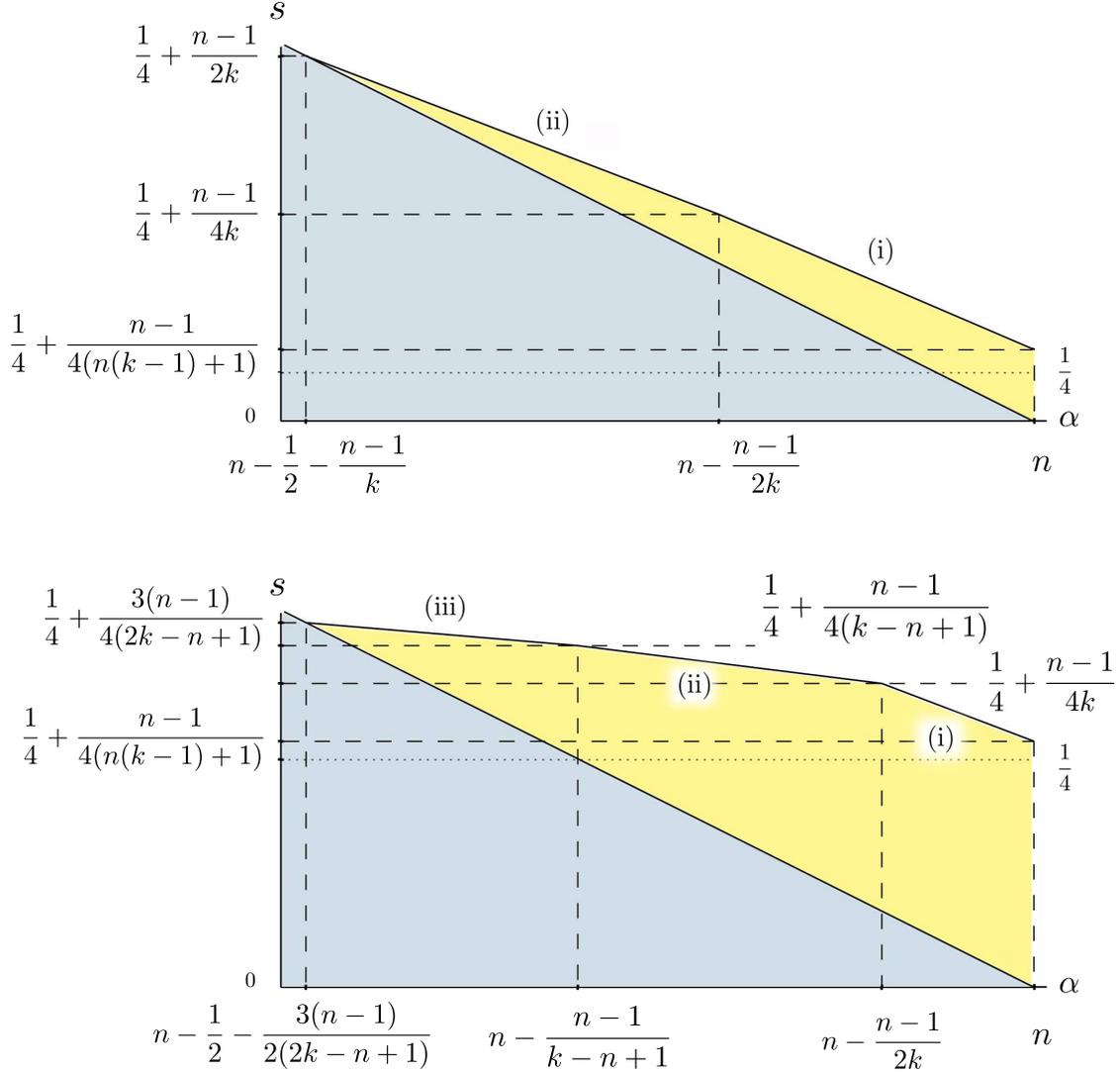}
\caption{Representation of Theorem~\ref{thm:divergence_k}.
The plot at the top is $k\le 2(n-1)$, and 
the plot at the bottom is $k> 2(n-1)$.
In the blue region the initial datum itself diverges,
so only the yellow region is not trivial.
The numbering (i), (ii) and (iii) corresponds with that in the theorem.} \label{fig:divergence_k}
\end{figure}

Theorem~\ref{thm:divergence_k} asks for $\nabla W_k(\xi')\neq 0$
 for every $\xi'\in\C^{n-1}\setminus\{0\}$. 
 However, the theorem might hold for a larger class of polynomials, 
like for those that satisfy the Weil bound.
See Section~\ref{sec:Building_counterexample} for more details.

To build counterexamples,
the recurring idea is to place as many wave packets as possible 
in a carefully chosen plane in $(x,t)$,
in such a way that the oscillations interact coherently,
making the solution to be large in a large set over that plane.
Our proof is based on Bourgain's counterexample \cite{MR4186521} for the Schr\"odinger equation, 
which exploits Gauss sums and the underlying Talbot effect
to compute the solution at rational times.
When $k>2$, the oscillatory sums demand a finer manipulation,
so An, Chu and Pierce \cite{Pierce2021} introduced to this context 
Deligne's Theorem \cite[Theorem 8.4]{MR340258}.

The divergence sets we get are very similar to 
those studied in diophantine approximation.
To compute their Hausdorff dimension, we use the 
Mass Transference Principle, a powerful tool in that field
introduced by Beresnevich and Velani \cite{BeresnevichVelani2006}.
We discuss it in detail in Section~\ref{sec:MTP}, but let us motivate it briefly here.

A typical set considered in diophantine approximation
is the limsup of sets $F_m$ which are unions of balls $B_i$ of
radius $R_m^{-1}$, where $R_m$ is a sequence diverging to infinity. 
To compute the Hausdorff dimension of this limsup, 
one can take the union of such sets up to some scale $m$ 
and sort out the set $F_m$ to get a large subset $F_m'$ that is
somehow uniformly distributed. 
Then, one applies a Frostman measure technique to compute the dimension.
This can be very technical and exhausting;  
the reader can find an example in Section~5 of \cite{LucaPonceVanegas2021}.
On the other hand, the Mass Transference Principle runs 
the whole process automatically and efficiently: 
to compute the dimension of a limsup of balls $B(x_i,r_i)$, 
all it requires is to find an exponent $a$ such that the limsup of the dilated balls 
$B(x_i,r_i^a)$ has full measure. 

The Mass Transference Principle of Beresnevich and Velani \cite{BeresnevichVelani2006} 
is referred to as from balls to balls, since it only permits dilations that take balls to balls.
However, our divergence sets are not union of balls, but rectangles. Hence, this result
does not directly apply. 
Wang, Wu and Xu \cite{WangWuXu2015} proved a Mass Transference Principle from balls to rectangles,
which is still too rigid for our purposes.
Recently, Wang and Wu \cite{WangWu2021} proved a version from rectangles to rectangles.
This is the form we use. 

Finally, we study non-elliptic quadratic symbols, 
which by a linear transformation are reduced to 
\begin{equation}\label{eq:Poly_Saddle}
P(\xi) = \xi_1^2+\cdots + \xi_m^2 - \xi_{m+1}^2-\cdots -\xi_{n}^2
\qquad \qquad \xi = (\xi_1, \ldots, \xi_n) \in \mathbb R^n.
\end{equation}
Here, we assume that $1 \leq m\le n/2$ without loss of generality.
These symbols were also extensively studied in \cite{MR4196386}, where
several positive and negative results were established.

By \eqref{eq:Non_Singular_Prelim}, we only need to work with $\alpha > n/2$. 
For large $\alpha$, we prove that the non-dispersive exponent given 
in Theorem~\ref{thm:non-dispersive_B} is optimal.
For intermediate $\alpha$, we improve the trivial $(n-\alpha)/2$ without reaching the non-dispersive threshold. 
Notice that Theorem~\ref{thm:divergence_k} still applies here;
however, in this case we can build even more regular divergent initial data.
Our counterexamples are inspired by the work of Rogers, Vargas and Vega \cite{MR2284549},
who considered the case $\alpha = n$.
Results are portrayed in Figure~\ref{fig:saddle}.

\begin{thm} \label{thm:examples_saddle}
Let $P$ be a quadratic polynomial with index $1\le m\le n/2$.
\begin{itemize}
\item If $m\le n/2 - 1$, let
\begin{equation}
\begin{array}{ll}
s <   \displaystyle{\frac{n + (n-2m)(n-\alpha)}{2 ( n - 2m + 2 )}} & \qquad \textrm{ for } n/2 \le \alpha\le n-m+1, \\ 
& \\
s < (n-\alpha+1)/2 & \qquad \textrm{ for } n-m+1\le \alpha\le n.
\end{array} 
\end{equation}

\item If $n$ is odd and $m = (n-1)/2$, let 
\begin{equation}
\begin{array}{ll}
s <   (n-\alpha +m +1)/4 &\qquad \textrm{ for } (n+1)/2\le \alpha\le (n+3)/2, \\ 
& \\
s < (n-\alpha+1)/2  &\qquad \textrm{ for } (n+3)/2\le \alpha\le n.
\end{array} 
\end{equation}

\item If $n$ is even and $m = n/2$, let
\begin{equation}
s < (n-\alpha+1)/2 \qquad \textrm{ for } n/2+1\le \alpha\le n.
\end{equation}
\end{itemize}
Then there exists $f\in H^s(\R^n)$ such that
$\U{t}f$ diverges in a set of Hausdorff dimension $\alpha$.
\end{thm}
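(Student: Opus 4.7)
The plan is to construct a divergent datum by arranging wave packets at integer frequencies and exploiting a Talbot-type focusing phenomenon for the saddle symbol, in the vein of Rogers, Vargas and Vega~\cite{MR2284549}. I take $\widehat{f}$ to be a weighted superposition of unit bumps centered at integer vectors $\xi_j$ with $|\xi_j|\simeq N$, normalized so that $\|f\|_{H^s}\lesssim 1$. At a rational time $t=a/q$ with $\gcd(a,q)=1$ and $q$ small relative to $N$, the phase $e^{2\pi i(a/q)P(\xi_j)}$ depends only on $\xi_j$ modulo $q$, so after splitting $\xi=(\xi',\xi'')\in\Z^m\times\Z^{n-m}$ the evaluation at rational spatial points reduces to a product of Gauss sums in the positive and negative variables. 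Because $P$ is isotropic of opposite signature in $\xi'$ and $\xi''$, the modular null cone $\{P\equiv 0\pmod q\}$ has codimension one and the Gauss-sum gain is essentially $q^{n/2}$, forcing constructive interference along an $(n-1)$-dimensional quadric.

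The second step converts this modular coherence into a pointwise lower bound on $T_tf$ in an anisotropic rectangle. The group velocity $\nabla P(\xi)=2(\xi',-\xi'')$ points along the cone, so packets with frequencies on a given sheet focus along an affine subspace whose tangential directions carry weak dispersion while the transverse directions carry the full Schr\"odinger-type decay. A careful stationary-phase count, together with a Dirichlet-type neighborhood of the rationals in $t$, produces a union of anisotropic rectangles $R_j$ in $\R^n\times\R$ whose side lengths are dictated by $m$ and $n-m$, and on which $|T_tf(x)|$ exceeds what an $H^s$ function could sustain once $s$ lies above the stated threshold. Iterating over scales $N$ and a lacunary collection of denominators $q$ produces a limsup set on which a suitably modified initial datum diverges.

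To compute the Hausdorff dimension of this limsup set, I invoke the Wang--Wu Mass Transference Principle from rectangles to rectangles~\cite{WangWu2021}. Its only input is a full-measure statement for the rectangles $R_j$ themselves, which one gets from a Dirichlet-type theorem in $(x,t)$. The output is the Hausdorff dimension expressed in terms of the anisotropic exponents of $R_j$. The three cases of the theorem then fall out according to how many slow directions the rectangle has: when $m\le n/2-1$ there are $n-2m$ directions of weakest dispersion, producing the formula with denominator $2(n-2m+2)$; when $m=(n-1)/2$ only a single parity-breaking direction survives, producing the $(n-\alpha+m+1)/4$ bound; when $m=n/2$ the positive and negative directions balance exactly, all dispersive improvement is lost, and only the plain non-dispersive rectangle remains, giving $(n-\alpha+1)/2$ on the full range.

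I expect the main obstacle to be the balanced and near-balanced signatures, where pairs of frequencies with $|\xi'|=|\xi''|$ cause partial cancellation in the null-cone Gauss sum. Saturating the expected $q^{n/2}$ bound there requires restricting to residue classes whose individual Gauss sums in $\xi'$ and $\xi''$ align in phase, and this constraint reshapes the focusing rectangles and forces the $m=(n-1)/2$ and $m=n/2$ cases to be handled separately. A secondary technical point is matching the anisotropy of $R_j$ to the class of shapes admitted by Wang and Wu; once this compatibility is checked, the passage from the known $\alpha=n$ statement of~\cite{MR2284549} to general $\alpha$ is essentially bookkeeping.
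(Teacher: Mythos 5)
Your proposal misses the key ingredient behind the sharp non-dispersive threshold $(n-\alpha+1)/2$: the linear change of variables that converts $P$ into $\xi_1\xi_{m+1} + \cdots + \xi_m\xi_{2m} - \xi_{2m+1}^2 - \cdots - \xi_n^2$, separating $2m$ \emph{hyperbolic} cross-term coordinates from $n-2m$ \emph{elliptic} ones. As written, your construction is Gauss-sum-centric throughout: you split $\xi = (\xi',\xi'') \in \mathbb Z^m \times \mathbb Z^{n-m}$ by signature, evaluate at rational times $t=a/q$, and invoke a Gauss-sum gain of order $q^{n/2}$. But over $\mathbb F_q$ one can diagonalize any nondegenerate quadratic form, so the Gauss sums for the saddle have exactly the same size $q^{n/2}$ as those for $|\xi|^2$; the signature buys you nothing modularly. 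A Talbot/Gauss-sum scheme alone therefore cannot exceed the Schr\"odinger threshold $\frac{n}{2(n+1)}$ at $\alpha=n$, whereas the theorem claims $s<1/2$ there for $m\ge 1$. What the paper actually does for $\alpha \geq n-m+1$ is purely deterministic and involves no exponential sums: it builds an anisotropic wave packet, concentrated at $\xi_1 = R$ and \emph{wide} (width $\simeq R$) in $\xi_{m+1},\ldots,\xi_{2m}$ with a $D$-lattice shift in the last $m-1$ of those, and the hyperbolic term $\xi_1\xi_{m+1}$ then transports the packet across the entire $x_{m+1}$-strip as $t$ runs through $[0,1/R]$. That transport, not any modular coherence, produces the sharp threshold.

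Gauss sums (via Deligne/Weil and Lemma~\ref{thm:G_q}, not a hand count) enter only in the intermediate range $n/2 \le \alpha < n-m+1$, and they are applied to the $n-2m$ \emph{elliptic} variables $\xi_{2m+1},\ldots,\xi_n$ layered on top of the transport — this is the Talbot mechanism from Theorem~\ref{thm:divergence_k}, grafted onto the Rogers--Vargas--Vega packet. Your diagnosis of the three cases as arising from phase alignment of the $\xi'$ and $\xi''$ Gauss sums is also off: the cases $m=(n-1)/2$ and $m=n/2$ are distinguished simply by having one or zero elliptic variables after the change of variables. With one elliptic variable the paper uses a coarser lattice $D\le R^{1/2}$ (and no Gauss sums) for the small-$\alpha$ regime; with zero there is nothing left but the transport and only the non-dispersive line survives. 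You do correctly reach for the Wang--Wu rectangle-to-rectangle Mass Transference Principle and the limsup structure, and your instinct that $n-2m$ controls the improvement is right. But without the change of variables exposing the hyperbolic/elliptic split, you have neither the transport mechanism that gives the optimal threshold for large $\alpha$ nor the correct identification of which variables carry the Talbot layer, so the argument as sketched does not reach the stated regularities.
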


\begin{figure}[h]
\centering
\includegraphics[width=0.92\linewidth]{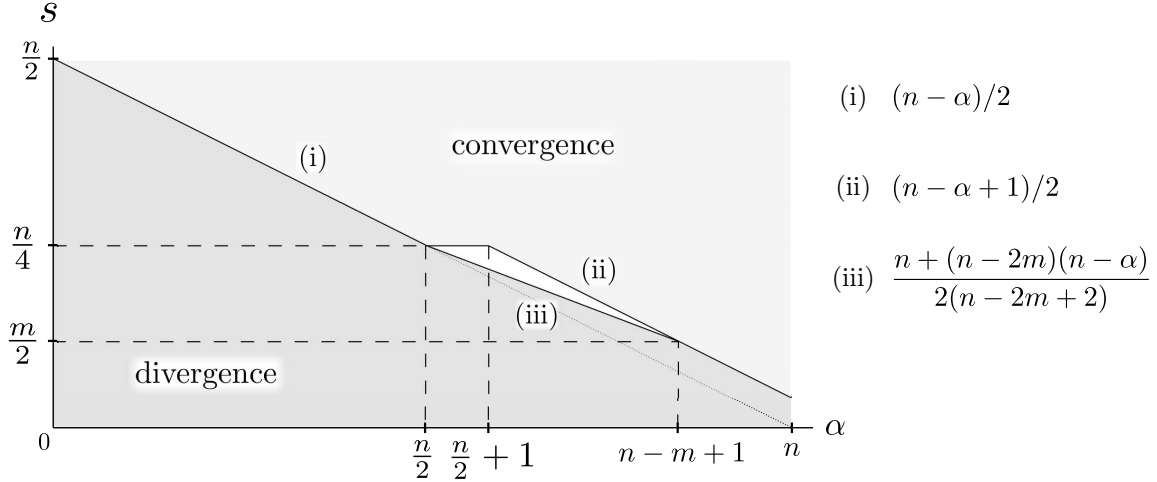}
\caption{Representation of Theorem~\ref{thm:examples_saddle}
	for the case $m\le n/2-1$.
	We do not include here the positive results in \cite{MR4196386};
	the convergence region is actually greater.} \label{fig:saddle}
\end{figure}

Theorems~\ref{thm:examples_saddle} and \ref{thm:non-dispersive_B}
imply $s_c(\alpha) = (n-\alpha + 1)/2$ when $\alpha \geq n - m + 1$. 
In particular, when $n$ is even and $m = n/2$:
\begin{equation}
s_c(\alpha)  = \begin{cases}
(n-\alpha)/2 & \text{ for } \alpha \leq n/2, \\
n/4 & \textrm{ for } n/2 \le \alpha \le n/2 + 1, \\
(n-\alpha+1)/2  & \textrm{ for } n/2 + 1 \le \alpha\le n.
\end{cases}
\end{equation} 
This result is morally subsumed in the work of Barron \textit{et al.} \cite[Corollary 1.1]{MR4196386}.
In fact, they also proved that
the corresponding result when $n$ is odd and $m = (n-1)/2$ is sharp.
Aside from the positive part, 
the difference with Theorem~\ref{thm:examples_saddle} is that
their counterexamples are constructed for an inequality involving a weighted $L^2$-norm
(see Remark~\ref{rmk:Barron_counter}),
while ours are directed to contradict convergence.

We should mention that 
Theorem~1, eq. (2) in \cite{MR4196386} morally improves Theorem~\ref{thm:examples_saddle} when $m\le n/2-1$ and $n-m \le \alpha < n-m+1$.
We defer a more detailed comparison until Section~\ref{sec:saddle}.

\subsection*{Outline of the paper}

\begin{itemize}
	\item Positive results:
\vspace{0.25cm}

\begin{enumerate}
\item[\it Section~\ref{sec:positive}:]  
We prove 
Theorems~\ref{thm:non-dispersive_B} and \ref{thm:dispersive_B}. 
\end{enumerate}

\vspace{0.25cm}

	\item Negative results:
	
\vspace{0.25cm}

\begin{enumerate}
\item[\it Section~\ref{sec:Building_counterexample}:] 
We build the counterexample for Theorem~\ref{thm:divergence_k} 
based on the one proposed by Bourgain in 
\cite{Bourgain2016} and An, Chu and Pierce in \cite{Pierce2021}
and we determine the divergence sets.

\item[\it Section~\ref{sec:MTP}:] 
We discuss the Mass Transference Principle,
which we use to compute the Hausdorff dimension of the divergence sets.
	
\item[\it Section~\ref{sec:FractalSet}:] 
We study in detail the divergence sets,
	which depend on certain parameters. 
	We compute their dimension. 
	Strictly speaking, the theorem follows by studying only some extremal parameters;
	however, the Mass Transference Principle allows us to deal 
	with the whole range of parameters	and ensure that 
	this is the best possible result for this counterexample.
	
\item[\it Section~\ref{sec:SobolevExponents}:] 	
 Once we know the dimension of the divergence sets, 
	we compute the Sobolev regularity of the counterexample
	and conclude the proof of Theorem~\ref{thm:divergence_k}.

\item[\it Section~\ref{sec:saddle}:] We prove Theorem~\ref{thm:examples_saddle}. 
\end{enumerate}
\end{itemize}

\subsection*{Notations}

\begin{enumerate}[(a)]
\item Miscellaneous: $e(z) = e^{2\pi iz}$; $B(a,r) = \{x : \abs{x-a}\le r\}$.
\item Relations: $A\lesssim B$ means that $A\le CB$ for some constant $C>0$;
analogously, we have $A\gtrsim B$ and $A\simeq B$.
When we want to stress some dependence of $C$ on a parameter $N$,
we write $A\lesssim_N B$. 
We write $c\ll 1$ as a shorthand of ``a sufficiently small constant''.
\item Algebra: if $k$ is a field, then 
$\bar{k}\supset k$ is the unique, up to isomorphism, algebraic closure of $k$.
For a prime $q$, $\mathbb{F}_q = \Z/q\Z$.
The projective space $\mathbb{P}^{d}(k)$ is $k^{d+1}\setminus\{0\}$ with the relation
$x\sim y$ if $x = \lambda y$ for some $\lambda\in k$.
If $R$ is a ring, then $R[X_1,\ldots, X_n]$ are the polynomials with coefficients in $R$.
 
\item Size of sets: If $E \subset \mathbb R^n$ is a Lebesgue measurable set, 
then either $\abs{E}$ or $\mathcal H^n(E)$ denote its Lebesgue measure.
If $E$ is a finite set, then $\abs{E}$ is the number of elements. 
For a set $A$, 
$\mathcal{H}^\alpha(A)$ and $\dim_{\mathcal H} A$ stand for the $\alpha$-Hausdorff measure
and the Hausdorff dimension of $A$.
%
\end{enumerate}

\subsection*{Acknowledgments}
Daniel Eceizabarrena is supported by the Simons Foundation Collaboration Grant on Wave Turbulence (Nahmod's Award ID 651469).
Felipe Ponce-Vanegas is funded by the Basque Government through
the BERC 2018-2021 program, by the Spanish State Research Agency
through BCAM Severo Ochoa excellence accreditation SEV-2017-0718,
the project PGC2018-094528-B-I00 - IHAIP
and the grant Juan de la Cierva -Formación FJC2019-039804-I, 
and by the ERC Advanced Grant 2014 669689 - HADE.

We would like to thank Renato Luc\`a for sharing with us all his insights on the problem.
We also thank Lillian Pierce for bringing her collaborative results to our attention and 
explaining some details,
Will Sawin for clarifying some points of Deligne's Theorem for us,
and Alex Barron for calling our attention to
his collaborative work on non-elliptic quadratic symbols.


\section{Convergence Results} \label{sec:positive}

In this section we prove Theorems~\ref{thm:non-dispersive_B} and \ref{thm:dispersive_B}.
As usual, we aim to bound the maximal operator 
$f\mapsto \sup_{0\le t\le 1}\abs{\U{t}f}$.
After space-time rescaling, 
we localize in time like in \cite{MR2264734} and
discretize the maximal operator like in \cite{MR3961084},
which leaves us with a more manageable operator.

The solution of \eqref{eq:PDE} is
\begin{equation}
\U{t}f(x) = \int \widehat{f}(\xi)e(x\cdot\xi + tP(\xi))\,d\xi.
\end{equation}
Since we will restrict the frequencies of $f$ to lie in some annulus,
we can localize with a cut-off $\varphi\in C^\infty_0(\R^n)$
and express the solution as
\begin{equation}
\U{t}f(x) = \int \varphi(\xi)\widehat{f}(\xi)e(x\cdot\xi + tP(\xi))\,d\xi.
\end{equation}
In this form, $\U{t}f$ is the Fourier transform of a measure over 
the graph of $P$, \textit{i.e.} over
$S := \{(\xi,P(\xi))\mid \xi\in\supp\varphi\}$,
so that
\begin{equation}
\U{t}f(x) = (\widehat{f}\,dS)^\vee(x,t).
\end{equation}
We will adopt this point of view at times.

Since our results refer to convergence $\mu$-a.e. for 
measures $\mu\in M^\alpha(B_1)$,
we should define $\U{t}f$ at least at that level of precision.
If $f$ is a measurable function, then
we choose the representative 
\begin{equation} \label{eq:lim_def_f}
\widetilde{f}(x) := \lim_{r\to 0}\frac{1}{\abs{B(x,r)}}\int_{B(x,r)}f,
\end{equation}
whenever the limit exists.
If $f\in H^s(\R^n)$ and $s>(n-\alpha)/2$, 
the limit \eqref{eq:lim_def_f} exists $\mu$-a.e.
Moreover, if we write $f = J_s*\japan{D}^sf$,
where $\japan{D}^sf\in L^2(\R^n)$ and 
$\widehat{J}_s(\xi) := \japan{2\pi\xi}^{-s}$ is the Bessel potential,
then the integral defining the convolution is 
absolutely convergent $\mu$-a.e., 
and $J_s*\japan{D}^sf = \widetilde{f}$ $\mu$-a.e.
We refer the reader to  
Definition~1.4 and Proposition~7.1 of \cite{MR540320}
for details.
Hence, we can write $(\U{t}f)\,\widetilde{ }\,$ also as $J_s*\U{t}\japan{D}^sf$.

By standard arguments, for any $s'>s$,
the solution $\U{t} f$ converges to $f\in H^{s'}(\R^n)$ $\mu$-a.e. if
\begin{equation}
\norm{\sup_{0\le t\le 1}\abs{\U{t}f}}_{L^2(\mu)}\le 
	C R^s\norm{f}_2,\quad\textrm{for every } R\gg 1,
\end{equation}
where $\supp\widehat{f}\subset \{\abs{\xi}\simeq R\}$.
Applying the transformation $(x,t)\mapsto (x/R,t/R^k)$ and using the homogeneity of $P$, 
that is, that $R^{-k}P(R\xi) = P(\xi)$, the expression above is equivalent to
\begin{equation}
\norm{\sup_{0\le t\le 1}\abs{\U{t}f}}_{L^2(\mu)} = 
	R^{-\alpha/2}\norm{\sup_{0\le t\le R^k}\abs{\U{t}f_R}}_{L^2(\mu_R)},
\end{equation}
where $f_R(x) := f(x/R)$ and $d\mu_R(x) := R^\alpha d\mu(x/R)$.
Therefore, our goal has changed to prove
\begin{equation}\label{eq:Maximal_Partial_Objective}
\norm{\sup_{0\le t\le R^k}\abs{\U{t}f_R}}_{L^2(\mu_R)} \le CR^{s-(n-\alpha)/2}\norm{f_R}_2,
\end{equation}
where $\supp\widehat{f}_R\subset \{\abs{\xi}\simeq 1\}$
and 
$\mu_R$ is a measure with support in $B_R$ that
satisfies $\mu(B_R) = R^\alpha$ and
$d\mu_R(B_r)\le Cr^\alpha$ for all $r>0$; 
let us say that $\mu_R\in M^\alpha(B_R)$.

We remark that, with a few modifications, it is possible to extend some arguments
to non-singular, almost $k$-homogeneous symbols $P$, that is, symbols that satisfy
\begin{equation}
0<c_1\le \frac{1}{R^{k-1}}\abs{(\nabla P)(R\xi)} \le c_0,\;\textrm{for }\abs{\xi}\simeq 1,\qquad
	\textrm{and}\qquad \frac{1}{R^{k-2}}\abs{(D^2 P)(R\xi)}\le c_2,
 	\qquad \forall R \gg 1.
\end{equation}

Our next step is to localize in time as Lee did in Lemma~2.3 of \cite{MR2264734}.
We show that instead of proving the maximal estimate for $t < R^k$ as in
\eqref{eq:Maximal_Partial_Objective}, 
it is enough to prove it for $t < R$.

\begin{lem} \label{thm:Short_Time}
Let $P$ be a non-singular symbol, 
\textit{i.e.} $\nabla P(\xi)\neq 0$ for all $\xi\in\R^n\setminus\{0\}$. 
Suppose that for some $q\ge 2$ and $\mu\in M^\alpha(B_R)$ it holds that
\begin{equation} \label{eq:thm:short_time}
\norm{\sup_{0\le t\le R}\abs{\U{t}f}}_{L^q(\mu)}\lesssim R^\beta\norm{f}_2
\end{equation} 
for every $f\in L^2(\R^n)$ such that $\supp \widehat{f} \subset \{\abs{\xi}\simeq 1\}$. Then,	
\begin{equation} \label{eq:thm:long_time}
\norm{\sup_{t\in\R}\,\abs{\U{t}f}}_{L^q(\mu)}\lesssim R^\beta\norm{f}_2,
\end{equation}
where the implicit constant depends on $\nabla P$.
\end{lem}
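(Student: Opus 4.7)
The plan is to reduce the long-time maximal estimate to the short-time hypothesis by partitioning the time axis into intervals of length $R$ and exploiting that frequency-localized data propagates at essentially finite speed, which lets us pair each time interval with a spatially restricted piece of the initial datum. Set $I_j = [jR,(j+1)R]$ for $j\in\Z$, so that $\R = \bigcup_j I_j$, and fix a smooth partition of unity $\{\chi_k\}_{k\in\Z^n}$ adapted to a covering of $\R^n$ by cubes of side $R$ centered at $Rk$. Writing $f_k = \chi_k f$, define $g_j$ to be the sum of those $f_k$ with $\abs{k}\simeq \abs{j}$, representing the portion of $f$ sitting at spatial distance comparable to $\abs{j}R$ from the ball $B_R$ where $\mu$ is supported. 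The crux is the non-singularity hypothesis: since $\nabla P(\xi)$ is bounded above and below by positive constants on $\{\abs{\xi}\simeq 1\}$, a non-stationary phase analysis of the frequency-localized kernel $\Phi_t(z) = \int \varphi(\xi) e(z\cdot\xi + tP(\xi))\,d\xi$ shows that $\Phi_t$ is concentrated on the ``light shell'' $\abs{z}\simeq \abs{t}$ with Schwartz decay off of it.

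Concretely, for $x\in B_R$, $y\in\supp f_k$ with $\abs{k}\not\simeq \abs{j}$, and $t\in I_j$, the gradient of the phase $(x-y)\cdot\xi + tP(\xi)$ in $\xi$ is bounded below by a positive multiple of $R\max(\abs{j},\abs{k})$, so repeated integration by parts yields
\begin{equation}
\abs{\U{t}f(x) - \U{t}g_j(x)} \lesssim_N \big(R(1+\abs{j})\big)^{-N}\norm{f}_2, \qquad x\in B_R,\ t\in I_j,
\end{equation}
for every $N\ge 1$. By the group property of $\U{}$, $\sup_{t\in I_j}\abs{\U{t}g_j(x)} = \sup_{0\le s\le R}\abs{\U{s}(\U{jR}g_j)(x)}$, and $\U{jR}g_j$ has the same Fourier support as $g_j$, which is essentially contained in $\{\abs{\xi}\simeq 1\}$; invoking the hypothesis~\eqref{eq:thm:short_time} on each such function gives $\norm{\sup_{t\in I_j}\abs{\U{t}g_j}}_{L^q(\mu)} \lesssim R^\beta \norm{g_j}_2$.

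Combining these ingredients with the trivial pointwise estimate $\sup_{t\in\R}\abs{\U{t}f(x)}^q \le \sum_j \sup_{t\in I_j}\abs{\U{t}f(x)}^q$, the error bound above (rendered negligible by choosing $N$ large enough), the bounded-overlap orthogonality $\sum_j \norm{g_j}_2^2 \lesssim \norm{f}_2^2$, and the elementary inequality $\sum_j \norm{g_j}_2^q \le \big(\sum_j \norm{g_j}_2^2\big)^{q/2}$ for $q\ge 2$, produces the desired bound $\norm{\sup_{t\in\R}\abs{\U{t}f}}_{L^q(\mu)} \lesssim R^\beta \norm{f}_2$. The main obstacle will be making the propagation estimate quantitative and uniform in $j$; a secondary technicality is that spatial truncation enlarges the Fourier support only by Schwartz tails, which we accommodate by applying the hypothesis with a frequency cutoff slightly larger than $\supp\widehat{f}$, at the cost of negligible error. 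Negative and positive $j$ are handled symmetrically (the hypothesis for $0\le t\le R$ applied after the time translation $\U{jR}$ covers both cases), so the conclusion extends to all $t\in\R$.
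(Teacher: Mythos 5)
Your route — a spatial decomposition at scale $R$ plus a finite--speed--of--propagation kernel estimate — is the ``wave packet'' heuristic that the paper only sketches in Figure~\ref{fig:Lee_proof}. The paper's actual proof is different: it multiplies $\U{t}f$ by a space--time bump $\psi_{R,l}(t)\varphi_R(x)$, Fourier-expands the result near the surface $\tau=P(\xi)$ to get pieces $f_{\tau,l}$, and proves the Bessel-type inequality $\sum_l\norm{f_{\tau,l}}_2^2\lesssim R^2\norm{f}_2^2$ via Poisson summation and a Jacobian argument that uses $\nabla P\neq 0$.

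There is, however, a genuine gap in your argument: the claimed ``bounded-overlap orthogonality'' $\sum_j\norm{g_j}_2^2\lesssim\norm{f}_2^2$ fails. You define $g_j=\sum_{|k|\simeq|j|}f_k$, so a fixed spatial cube $\chi_k$ is included in $g_j$ for every $j\in\Z$ with $|j|\simeq|k|$; there are $\simeq|k|$ such $j$, not $O(1)$. Since the $f_k$ are essentially orthogonal, this gives $\sum_j\norm{g_j}_2^2\simeq\sum_k|k|\norm{f_k}_2^2$, which is not controlled by $\norm{f}_2^2$; e.g.\ take $f=g(\cdot-y_0)$ with $\widehat g$ in the annulus and $|y_0|=MR$, for which the ratio is $\simeq M$. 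The unbounded overlap cannot be avoided by tightening the ``$\simeq$'': for $x\in B_R$ and $t\in I_j$ the kernel $\Phi_t(x-\cdot)$ is concentrated on $\{y:\,|y|\in[c_1|j|R,\,c_0|j|R]\}$ with $c_1=\min_{\supp\varphi}|\nabla P|$ and $c_0=\max_{\supp\varphi}|\nabla P|$, an annulus of thickness $\simeq(c_0-c_1)|j|R$. When $|\nabla P|$ is non-constant on the unit sphere (which the non-singularity hypothesis permits, e.g.\ $P(\xi)=\xi_1^2+2\xi_2^2$), this thickness grows with $|j|$, so a purely radial decomposition at scale $R$ is forced to be this wide and the overlap is $\simeq|k|$. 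The contributions to $B_R\times I_j$ from different shells $|k|$ within this annulus actually come from different frequency caps (those with $|\nabla P(\xi)|\simeq |k|/|j|$), so they are orthogonal in $L^2$ --- but this is invisible to a decomposition by position alone. What is needed is the genuine wave packet (joint space--frequency) decomposition alluded to in Figure~\ref{fig:Lee_proof}, which assigns each piece of $f$ to the $O(1)$ spacetime boxes $B_R\times I_j$ its tube actually crosses; or else the paper's Fourier-side argument, which encodes the same orthogonality through Poisson summation. Your argument does go through in the special case where $|\nabla P|$ is constant on the unit sphere (e.g.\ $P=|\xi|^m$), but the lemma asserts the result for general non-singular~$P$.
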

As a remark, we note that 
in the original argument of \cite{MR2264734} there are $\epsilon$-losses in the power of $R$. 
However, these losses were removed in Lemma~2.1 of \cite{MR2871144};
see also \cite[Lemma~2.1]{MR2970037}.

\begin{proof}
We can assume that $R\ge 1$, otherwise \eqref{eq:thm:long_time} always holds.
We cover the time line $\R$
with disjoint intervals $I_l$ of length $\abs{I_l} = R$ and 
center $R l$, for $l\in\Z$,
so that the supremum at \eqref{eq:thm:long_time} can be replaced by
\begin{equation}
\sup_{t\in\R}\,\abs{\U{t}f(x)} \le 
	\sup_l\sup_{t\in I_l}\abs{\U{t}f(x)}\le 
	\Big(\sum_l\sup_{t\in I_l}\abs{\U{t}f(x)}^q\Big)^{1/q}.
\end{equation}
We take the $L^q(\mu)$-norm at both sides to reach
\begin{equation} \label{eq:local_sum_sup}
\norm{\sup_{t\in\R}\,\abs{\U{t}f}}_{L^q(\mu)} \le 
	\Big(\sum_l\norm{\sup_{t\in I_l}\abs{\U{t}f}}_{L^q(\mu)}^q\Big)^{1/q}.
\end{equation}

Choose a function $\psi\in \Sz(\R)$ 
such that $\psi\ge 1$ in $[-1,1]$ and
$\widehat{\psi}\ge 0$ is supported in $[-5,5]$.
Choose one more function $\varphi\in \Sz(\R^n)$
with the same properties, and 
define the function of localization to $I_l$ as 
\begin{equation}
\psi_{R,l}(t)\varphi_R(x) := 
	\psi(R^{-1}(t-Rl))\varphi((c_0R)^{-1}x).
\end{equation}
The constant $c_0\ge 1$ is just to signal that
we will need to adjust the support later.
We localize $\U{t}f$ and write it as
\begin{align*}
\psi_{R,l}(t)\, \varphi_R(x) \, \U{t}f(x) 
	&= \left[(\widehat{\psi}_{R,l} \, \widehat{\varphi}_R)*(\widehat{f}\,dS)\right]^\vee(x,t) \\
	&= \int\left[\int \widehat{\psi}_{R,l}(\tau-P(\eta)) \,
		\widehat{\varphi}_R(\xi-\eta) \, \widehat{f}(\eta)\,d\eta\right]
		e^{2\pi i(x\cdot\xi+t\tau)}\,d\xi d\tau.
\end{align*}
After the change of variables $(\xi,\tau)\mapsto (\xi, P(\xi)+\tau)$
we find that
\begin{equation}\label{eq:local_slices}
\begin{split}
\psi_{R,l}(t) \, \varphi_R(x) \, \U{t}f(x) &= \\
	&\hspace*{-1cm}\int\left[\int \widehat{\psi}_{R,l}(P(\xi)-P(\eta)+\tau) \,
		\widehat{\varphi}_R(\xi-\eta) \, \widehat{f}(\eta)\,d\eta\right]
			e^{2\pi i(x\cdot\xi+tP(\xi))}\,d\xi\, e^{2\pi it\tau}\, d\tau.
\end{split}
\end{equation}
We redefine the function enclosed by parentheses in \eqref{eq:local_slices} as
\begin{equation} \label{eq:def:f_tau}
\widehat{f}_{\tau,l}(\xi) := 
	\int \widehat{\psi}_{R,l}(P(\xi)-P(\eta)+\tau) \,
		\widehat{\varphi}_R(\xi-\eta) \, \widehat{f}(\eta)\,d\eta.
\end{equation}
For $\abs{\xi-\eta}\le (c_0R)^{-1}$,
the function $\widehat{f}_{\tau,l}$ vanishes 
if $\abs{P(\xi)-P(\eta)+\tau}\ge 5R^{-1}$.
Thus, if we choose $c_0 \ge 10\sup_{\abs{\xi}\simeq 1} \abs{\nabla P(\xi)}$,
by the mean value theorem $\widehat{f}_{\tau,l}$ vanishes if $|\tau| > 10R^{-1}$.
Consequently, it suffices to integrate \eqref{eq:local_slices} in $\abs{\tau}\le 10R^{-1}$, 
so rewrite \eqref{eq:local_slices} as
\begin{align}
\psi_{R,l}(t) \, \varphi_R(x) \, \U{t}f(x) &= 
	\int_{\abs{\tau}\le 10R^{-1}} \widehat{f}_{\tau,l}(\xi)e(x\cdot\xi+tP(\xi))\,d\xi\, e^{2\pi it\tau}\,d\tau,\\
	&= \int_{\abs{\tau}\le 10R^{-1}} \U{t}f_{\tau,l}(x)\, e^{2\pi it\tau}\,d\tau.
\end{align}
\vspace*{2mm}
\begin{center}
\includegraphics[scale=0.6]{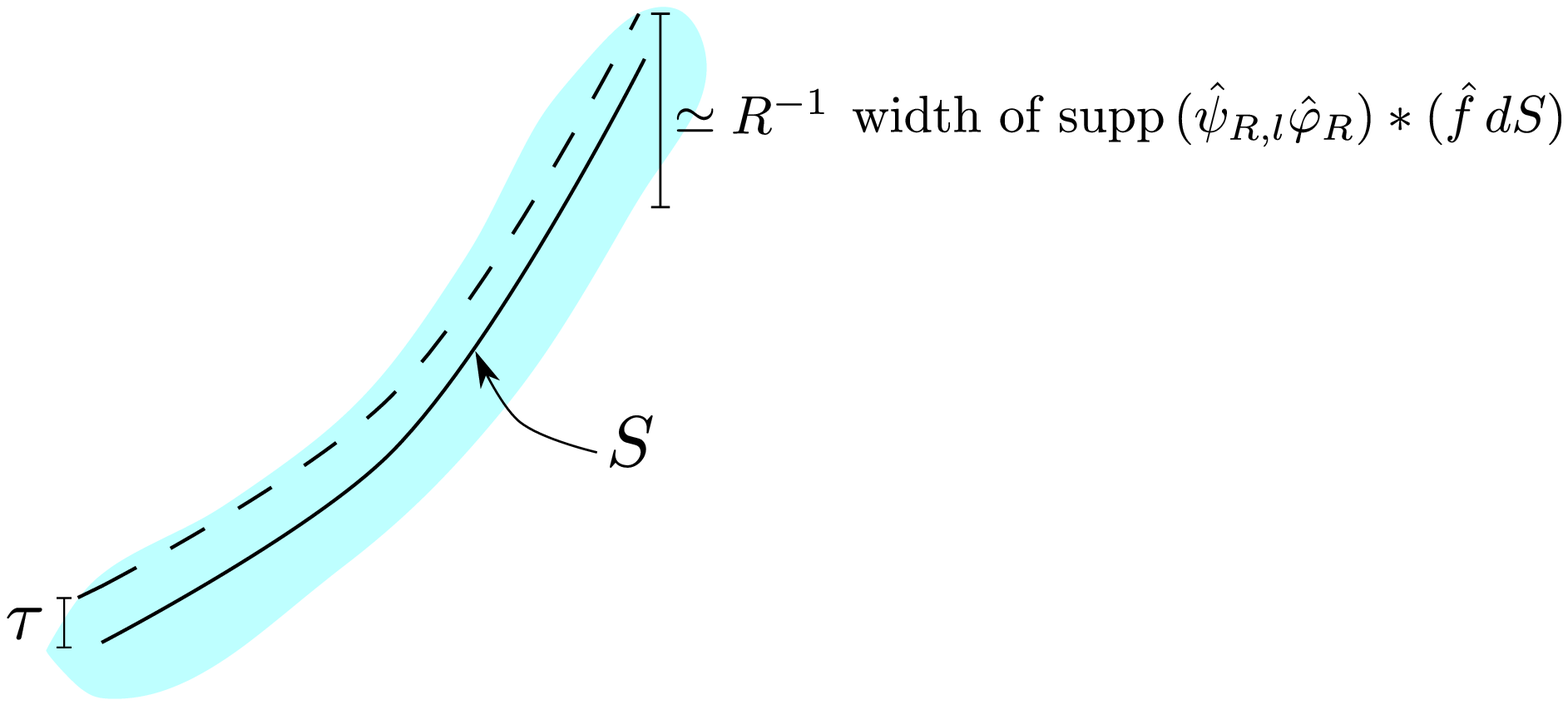}
\end{center}
\vspace*{0mm}

After this localization, 
let us apply the hypothesis \eqref{eq:thm:short_time} 
to each addend in \eqref{eq:local_sum_sup} so that
\begin{align}
\norm{\sup_{t\in I_l}\,\abs{\U{t}f}}_{L^q(\mu)} &\le 
	\norm{\sup_{t\in I_l}\,\abs{\psi_{R,l}(t)\varphi_R\U{t}f}}_{L^q(\mu)} \\
	&\le \int \norm{\sup_{t\in I_l}\,\abs{\U{t}f_{\tau,l}}}_{L^q(\mu)}\,d\tau \\
	&\le CR^\beta\int \norm{f_{\tau,l}}_2\,d\tau.
\end{align}
Hence, \eqref{eq:local_sum_sup} becomes bounded as
\begin{align}
\norm{\sup_{0\le t\le R^k}\abs{\U{t}f}}_{L^q(\mu)} &\le 
	CR^\beta\left[\sum_l \left(\int \norm{f_{\tau,l}}_2\,d\tau\right)^q\right]^{1/q} \\
	&\le CR^\beta\int \Big(\sum_l \norm{f_{\tau,l}}_2^2\Big)^{1/2}\,d\tau
\end{align}
where we used $q\ge 2$ and Minkowski's integral inequality.
Thus, to prove \eqref{eq:thm:long_time} it suffices to prove that
\begin{equation}\label{eq:pre_L2}
\int_{\abs{\tau}\le 10R^{-1}}\Big(\sum_l \norm{f_{\tau,l}}_2^2\Big)^{1/2}\,d\tau \le C\norm{f}_2. 
\end{equation}

For that, by Fubini's theorem, we first write
\begin{equation}
\sum_l \norm{f_{\tau,l}}_2^2 = \int\Big(\sum_l \abs{\widehat{f}_{\tau,l}(\xi)}^2\Big)\,d\xi.
\end{equation}
In \eqref{eq:def:f_tau}, we notice that
\begin{equation}
\widehat{\psi}_{R,l}(P(\xi)-P(\eta)+\tau) = 
	R e^{-2\pi i\tau Rl} e^{2\pi i(P(\eta)-P(\xi))Rl}\, \widehat{\psi}(R (P(\xi)-P(\eta)+\tau)),
\end{equation}
so the absolute value of \eqref{eq:def:f_tau} is
\begin{equation}
\abs{ \widehat{f}_{\tau,l}(\xi) } = 
	R \, \left| \int e^{2\pi i(P(\eta)-P(\xi))Rl} \, a_{R,\tau}(\eta,\xi) \, \widehat{f}(\eta)\,d\eta \right|,
\end{equation}
where the amplitude $a_{R,\tau}(\eta,\xi) := 
\widehat{\psi}(R (P(\xi)-P(\eta)+\tau)) \, \widehat{\varphi}_R(\xi-\eta)$
is a smooth function and, 
as function of $\eta$, it is supported in $B(\xi,(c_0R)^{-1})$.

Let us fix $\xi$ and $\tau$.
Observe that $\{\widehat{f}_{\tau,l}(\xi)\}_{l\in\Z}$ are not far from the Fourier coefficients
of $a_{R,\tau}(\cdot,\xi)\widehat{f}$ 
at frequencies $\{\nabla P(\xi)R l\}_{l\in\Z}$.
In the same way as we can prove the inequality
$\int\abs{\widehat F}^2\ind_E\,dx\le \norm{F}_2^2$ using the crude estimate $\ind_E\le 1$,
and the result is best possible,
we can bound the square norm of $\{\widehat{f}_{\tau,l}(\xi)\}_{l\in\Z}$.

Assume that $\nabla P(\xi)$ points in the direction $e_n$, 
which can always be achieved after a rotation, if needed. 
Let $m = (m',m_n) \in \mathbb Z^{n-1} \times \mathbb Z$ and define the Fourier coefficients
\begin{equation}
b(m) := R \int e^{2\pi i R((\eta'-\xi')\cdot m'+(P(\eta)-P(\xi)) m_n)}
	a_{R,\tau}(\eta,\xi)\widehat{f}(\eta)\,d\eta,
\end{equation}
where $\xi = (\xi',\xi_n) \in \mathbb R^{n-1} \times \mathbb R$.
Then,
\begin{align}
\sum_{l\in\Z} \abs{\widehat{f}_{\tau,l}(\xi)}^2 &\le \sum_{m\in\Z^n}\abs{b(m)}^2 \\
	&= R^2 \, \int_{\R^{2n}} a_{R,\tau}(\eta,\xi) \, \widehat{f}(\eta) \, 
		\overline{a_{R,\tau}(\omega,\xi) \, \widehat{f}(\omega)}
		\Big(\sum_m e^{2\pi iR[(\eta'-\omega')\cdot m'+(P(\eta)-P(\omega))m_n]}\Big)\,d\eta d\omega. 
		\label{eq:squareSumBig}
\end{align}

By the Poisson summation formula, the inner sum is 
\begin{equation}
\sum_m e^{2\pi iR[(\eta'-\omega')\cdot m'+(P(\eta)-P(\omega))m_n]} = 
	R^{-n}\sum_{m\in \Z^n} \delta_{m/R}(\eta'-\omega', P(\eta)-P(\omega)).
\end{equation}
Since $a_{R,\tau}(\cdot,\xi)$ is supported in the ball $B(\xi,1/(c_0R))$,
we may work with $\eta,\omega\in B(\xi,1/(c_0R))$.
Let us check which Dirac deltas contribute to the integral \eqref{eq:squareSumBig}.
Suppose that $(\eta'-\omega', P(\eta)-P(\omega)) = m/R$ for some $m\in \Z^n$.
Then, $\abs{m'}/R = \abs{\eta'-\omega'}\le 2/(c_0R)$, which implies $m' = 0$ if we set $c_0>2$.
Similarly, $\abs{m_n}/R = \abs{P(\eta)-P(\omega)} \le 2\sup_{\abs{\xi}\simeq 1} \abs{\nabla P(\xi)}/(c_0R) \leq 1/(5R)$
implies $m_n = 0$.
Hence, only the Dirac delta at $m=0$ in \eqref{eq:squareSumBig} survives.

We have thus
\begin{equation} \label{eq:Bessel_Dirac_delta}
\sum_{l\in\Z} \abs{\widehat{f}_{\tau,l}(\xi)}^2 
	\le R^{2-n} \, \int_{\R^{n+1}} a_{R,\tau}(\eta,\xi) \, \widehat{f}(\eta) \, 
		\overline{a_{R,\tau}((\eta',\omega_n),\xi) \, \widehat{f}((\eta',\omega_n))} \, 
		\delta_0(P(\eta)-P((\eta',\omega_n)))\, d\omega_n \,d\eta .
\end{equation}
Call $G(\omega_n) = a_{R,\tau}(\eta,\xi) \, \widehat{f}(\eta)\overline{a_{R,\tau}((\eta',\omega_n),\xi) \, \widehat{f}((\eta',\omega_n))} $
so that the integral in $\omega_n$ is
\begin{equation}
\int_\R G(\omega_n) \, \delta_0(P(\eta)-P((\eta',\omega_n)))\,d\omega_n.
	\label{eq:Integral_omega_n}
\end{equation}
We want to change variables $r = P(\eta)-P((\eta',\omega_n))$, 
with $dr = -\partial_n P((\eta',\omega_n)) \, d\omega_n$, where
\begin{align}
- \partial_n P((\eta',\omega_n)) &= -\partial_n P(\xi) 
	+ (\partial_n P(\xi) - \partial_n P((\eta',\omega_n))) \\
	&= -\partial_n P(\xi) + \BigO(1/(c_0R)).
\end{align}
The last equality is due to the regularity of $P$ and the support of $a_{R,\tau}(\cdot,\xi)$, since 
\begin{equation}
\left| \partial_n P(\xi) - \partial_n P(\eta',\omega_n) \right| \leq c_2 |\xi - (\eta',\omega_n)| \leq \frac{c_2}{c_0 R}.
\end{equation}
Now, recall that $\nabla P(\xi)$ points in the direction $e_n$, so
$\abs{\partial_n P(\xi)} = \abs{\nabla P(\xi)}$. 
Since $P$ is non-singular by hypothesis, we have
$\inf_{\abs{\xi}\simeq 1}\abs{\nabla P(\xi)}\ge c_1>0$, 
and thus, choosing $c_0$ as large as needed, we get
$\abs{\partial_n P((\eta',\omega_n))}\ge c_1/2 > 0$.
Consequently,  the change of variables is bijective, so
 \eqref{eq:Integral_omega_n} turns into
\begin{equation}
\int_\R   \frac{ G(\omega_n(r)) \, \delta_0(r) }{\left| \partial_n P((\eta',\omega_n(r))) \right|} \, dr
	= \frac{G(\omega_n(0))}{\abs{\partial_n P((\eta',\omega_n(0)))}}
	 \leq \frac{2}{c_1} \, G(\eta_n). 
\end{equation}
In the last inequality we used $\omega_n(0) = \eta_n$, because $0 = r = P(\eta) - P((\eta',\omega_n))$ 
and $r(\omega_n)$ is one-to-one. 

Going back to \eqref{eq:Bessel_Dirac_delta},
\begin{align}
\sum_l \abs{\widehat{f}_{\tau,l}(\xi)}^2	&\le 
	\frac{2}{c_1} \, R^{2-n} \, \int_{\R^n}\abs{a_{R,\tau}(\eta,\xi)\widehat{f}(\eta)}^2\,d\eta 
	\le \frac{2}{c_1} \, \lVert \widehat \psi \rVert_\infty \, R^{2-n} \, \int_{\R^n}\abs{\widehat{\varphi}_R(\xi-\eta)\widehat{f}(\eta)}^2\,d\eta,
\end{align}
where in the last inequality we removed the dependence on $\tau$.
Now integrate in $\xi$ to get
\begin{equation}
\sum_l \norm{f_{\tau,l}}_2^2 \le \frac{C_\psi}{c_1} \, R^{2-n}
	\int \abs{\widehat{f}(\eta)}^2 \left( \int\abs{\varphi_R(\xi)}^2\,d\xi \right) d\eta
	=  C_{\psi,\varphi}\, \frac{c_0^n}{c_1} \, R^2 \, \norm{f}_2^2.
\end{equation}
Finally, take the square root and integrate in $\tau$, 
recalling that $\abs{\tau}\le 10R^{-1}$.
This proves \eqref{eq:pre_L2}, 
hence the lemma is proved.
\end{proof}

\begin{figure}
\centering
\includegraphics[scale=0.75]{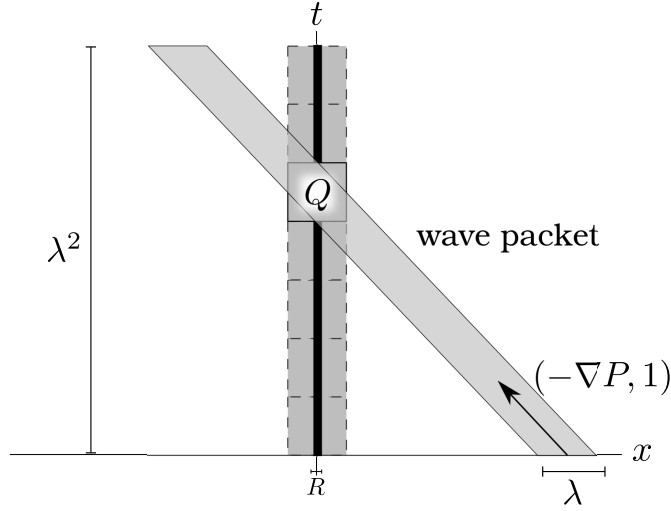}
\caption{Lemma~\ref{thm:Short_Time} can be proved using a wave packet decomposition.
We find it easier to go through scales $\lambda$ from 
$\lambda = R$ up to $\lambda = R^{k/2}$.
We divide the cylinder $B_\lambda\times [0,\lambda^2]$ into cubes $Q$
and assign to each cube all the wave packets passing through it, and then
extract the corresponding contribution $f_Q$ from the initial datum $f$.
We apply the inductive hypothesis to each $\sup_{m\lambda\le t\le (m+1)\lambda}\abs{\U{t}f_Q}$.
All $f_Q$'s are essentially orthogonal, so 
Bessel's inequality holds, \textit{i.e.} 
something like \eqref{eq:pre_L2}.} \label{fig:Lee_proof}
\end{figure}

A proof based on wave packets might be more intuitive, so
we sketch in Figure~\ref{fig:Lee_proof} a proof of a slightly weaker result than Lemma~\ref{thm:Short_Time}.

Now, we perform a further reduction as in \cite[p. 845-846]{MR3961084}.

\begin{lem} \label{thm:Discrete}
Let $W^\alpha(B_R)$ be the collection of weights $w:B_R\subset\R^{n+1}\to \R$ 
with the following properties:
\begin{enumerate}[(i)]
\item $w\ge 0$;
\item $\int_{B_R} w = R^\alpha$;
\item $\int_{B_r(x)}w \le K_w r^\alpha$ holds for every $x\in\R^{n+1}$;
\item $w = \sum_Q w_Q\ind_Q$, where $Q$ are unit cubes that tile $\R^{n+1}$;
\item Every line $t\mapsto (x,t)$ intersects at most one cube in the support of $w$. \label{it:projection}
\end{enumerate}
Suppose that for every $w\in W^\alpha(B_R)$ it holds that
\begin{equation} \label{eq:thm:discrete}
\norm{\U{t}f}_{L^2(w)}\le C_{K_w} R^\beta\norm{f}_2,
\end{equation}
where $\supp \widehat{f}\subset \{\abs{\xi}\simeq 1\}$. Then,
for every $\mu\in M^\alpha(B_R)$ it also holds that
\begin{equation} \label{eq:thm:non-discrete}
\norm{\sup_{0\le t\le R}\abs{\U{t}f}}_{L^2(\mu)}\le C_\mu R^\beta\norm{f}_2.
\end{equation} 
\end{lem}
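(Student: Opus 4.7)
The plan is to bound the left-hand side of \eqref{eq:thm:non-discrete} by $\norm{\U{t}f}_{L^2(w)}$ for a suitable $w\in W^\alpha(B_R)$ and then invoke the hypothesis \eqref{eq:thm:discrete}; the scheme follows Du and Zhang \cite{MR3961084}. First linearize: pick a measurable $t: B_R \to [0,R]$ with $\abs{\U{t(x)}f(x)}^2 \ge \tfrac{1}{2}\sup_{0 \le s \le R} \abs{\U{s}f(x)}^2$ pointwise, and tile $\R^n$ by unit cubes $\{Q_j\}$ and $[0,R]$ by unit intervals $\{I_l\}_{l=0}^{R-1}$. Since $\widehat{f}$ is supported in $\{\abs{\xi}\simeq 1\}$, the space-time function $(y,s) \mapsto \U{s}f(y)$ has $(y,s)$-Fourier transform concentrated on the bounded patch $\{(\xi,P(\xi)) : \abs{\xi} \simeq 1\}$. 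Bernstein's inequality therefore yields the pointwise local-constancy bound
\begin{equation}
\abs{\U{s_0}f(x_0)}^2 \lesssim \int_{Q\times I}\abs{\U{s}f(y)}^2\,dy\,ds, \qquad (x_0,s_0) \in Q \times I,
\end{equation}
on any unit space-time cube $Q\times I$, up to rapidly decaying tails from neighboring cubes.

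Applying Bernstein first in $s$ to dominate $\sup_s$ by the supremum over $l$ of the unit-time averages, and then in $y$ to dominate pointwise values by $Q_j$-averages, one obtains for every $x\in Q_j$
\begin{equation}
\sup_{0\le s\le R}\abs{\U{s}f(x)}^2 \;\lesssim\; \sup_l G_{j,l}, \qquad G_{j,l} := \int_{Q_j\times I_l}\abs{\U{s}f(y)}^2\,dy\,ds.
\end{equation}
Choose $l(j) := \mathrm{argmax}_l\, G_{j,l}$ and set
\begin{equation}
w(y,s) := \sum_j \mu(Q_j)\,\ind_{Q_j\times I_{l(j)}}(y,s).
\end{equation}
Integrating the previous display against $d\mu$ over each $Q_j$ and summing in $j$,
\begin{equation}
\norm{\sup_{0 \le t \le R}\abs{\U{t}f}}_{L^2(\mu)}^2 \lesssim \sum_j \mu(Q_j)\,G_{j,l(j)} = \norm{\U{t}f}_{L^2(w)}^2.
\end{equation}

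It remains to verify $w\in W^\alpha(B_R)$ with $K_w \lesssim K_\mu$: (i) $w\ge 0$; (ii) $\int w = \sum_j \mu(Q_j) = \mu(B_R) = R^\alpha$; (iii) for a ball $B(p,r)\subset \R^{n+1}$ with $p=(p_x,p_t)$, the case $r\ge 1$ gives $\int_{B(p,r)}w \le \mu(B(p_x,2r)) \lesssim K_\mu r^\alpha$, while the case $r<1$ gives $\int_{B(p,r)}w \lesssim K_\mu r^{n+1} \le K_\mu r^\alpha$ since $\alpha\le n$; (iv) $w$ is a sum of indicators of unit cubes; (v) for each $y\in Q_j$, the vertical line $\{(y,t): t\in\R\}$ meets only the cube $Q_j\times I_{l(j)}$ in $\supp w$. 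The hypothesis then yields $\norm{\U{t}f}_{L^2(w)} \lesssim R^\beta \norm{f}_2$, finishing the proof; the case of general $q\ge 2$ is analogous after an application of Hölder. The delicate point is property (v): naive linearization with $t(x)$ leaves the time coordinate varying within each spatial $Q_j$, producing several cubes $Q_j\times I_l$ in $\supp w$ and violating the graph-like constraint. The double use of Bernstein---first in $s$ and then in $y$---lets us commit to one $l(j)$ per $Q_j$ via the simple argmax rule above, at the cost of absorbing $O(1)$ Schwartz tails from adjacent spatial cubes into the implicit constants.
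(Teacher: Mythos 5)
Your overall scheme coincides with the paper's: exploit that $(x,t)\mapsto\abs{\U{t}f(x)}^2$ is band-limited in $\R^{n+1}$, tile space-time into unit cubes, commit to a single time interval per spatial cube, and package the result as a weight built from $\mu(Q_j)$. The verification that $w\in W^\alpha(B_R)$ (properties (i)--(v)) is fine. However, there is a genuine gap at the central inequality
\begin{equation}
\norm{\sup_{0\le t\le R}\abs{\U{t}f}}_{L^2(\mu)}^2 \lesssim \sum_j \mu(Q_j)\,G_{j,l(j)} = \norm{\U{t}f}_{L^2(w)}^2.
\end{equation}
The local-constancy estimate from Bernstein is really $\sup_{0\le s\le R}\abs{\U{s}f(x)}^2 \lesssim \sum_{j',l'} c_{(j,l(x))-(j',l')}\,G_{j',l'}$ for $x\in Q_j$, with $c$ rapidly decaying but not compactly supported. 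Keeping only the diagonal term $(j',l')=(j,l(j))$ and claiming the rest can be ``absorbed into the implicit constants'' is not correct: the cross terms produce, after integrating against $\mu$ and swapping sums, a quantity of the form $\sum_{j'} G_{j',l(j')}\,\widetilde\mu(Q_{j'})$ with $\widetilde\mu(Q_{j'}) := \sum_j C_{j-j'}\,\mu(Q_j)$. This is $\norm{\U{t}f}_{L^2(\widetilde w)}^2$ for a \emph{different} weight $\widetilde w$, not a constant times $\norm{\U{t}f}_{L^2(w)}^2$, so nothing is ``absorbed''; you still need to invoke the hypothesis \eqref{eq:thm:discrete} a second time on $\widetilde w$. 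The paper closes this gap cleanly by majorizing the Schwartz kernel $\psi_1\le\sum_{Q'}b_{Q'}\ind_{Q'}$ with rapidly decaying $b_{Q'}$ and using the translation invariance $\U{t+T'}f(x+X') = \U{t}(f_{X',T'})(x)$, so that each tail term becomes $b_{Q'}\norm{\U{t}(f_{X',T'})}_{L^2(w)}^2$ with $\norm{f_{X',T'}}_2=\norm{f}_2$; the hypothesis is applied to every translate and the resulting geometric series $\sum_{Q'}b_{Q'}$ is summed. Your argument can be repaired either by this translation trick or by showing that the smoothed weight $\widetilde w$ (suitably normalized) still lies in $W^\alpha(B_R)$ with $K_{\widetilde w}\lesssim K_\mu$ and invoking \eqref{eq:thm:discrete} on it, but as written the tails are not handled.
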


\begin{rmk} \label{rmk:Barron_counter}
Barron \textit{et al.} \cite{MR4196386} constructed counterexamples for \eqref{eq:thm:discrete} for weights that
do not necessarily satisfy condition \ref{it:projection}.
\end{rmk}

\begin{proof}
Since the Fourier transform of $\abs{\U{t}f}^2$ is supported in $B_1$, 
then we can find $\psi\in\Sz(\R^n)$ such that
$\abs{\U{t}f(x)}^2 = \abs{\U{s}f}^2*\psi(x,t)$.
If we define $\psi_1(x,t) := \sup_{  \abs{(y,s)}\le \sqrt{n}} \left| \psi ((x,t)+(y,s)) \right|$,
which has the same decay properties as $\psi$, then
for every pair of points $(x,t)$ and $(X,T)$ separated by 
a distance less than $\sqrt{n}$ the inequality
$\abs{\U{t}f(x)}^2\le \abs{\U{s}f}^2*\psi_1(X,T)$ holds true.

Let us tile $\R^{n+1}$ with unit cubes $Q = X\times T\subset\R^n\times\R$,
whose center we denote also as $(X,T)$.
According to the previous paragraph, 
the value at $(x,t)$ is controlled by the value
at the center of the cube where it lies,
that is, 
\begin{equation}
\abs{\U{t}f(x)}^2 \le 
	\sum_Q \, \abs{\U{s}f}^2*\psi_1(X,T) \, \ind_Q(x,t).
\end{equation}
For every $X$, we can find some $T(X)$ such that
\begin{equation}
\sup_{0\le t\le R}\abs{\U{t}f(x)}^2 \le 
	\sum_X \, \abs{\U{s}f}^2*\psi_1(X,T(X)) \, \ind_X(x).
\end{equation}
We integrate over $\mu$ to reach
\begin{equation} \label{eq:discrete_I_mu}
\norm{\sup_{0\le t\le R}\abs{\U{t}f}}_{L^2(\mu)}^2\le 
	\sum_X \, \abs{\U{s}f}^2*\psi_1(X,T(X)) \, \mu(X).
\end{equation}

Write now
\begin{equation}
\psi_1(x,t) \le \sum_Q \, b_Q \, \ind_Q(x,t),
\end{equation}
where $b_Q = \sup_{(x,t) \in Q}{\psi_1(x,t)}$ decay rapidly 
because $\psi_1$ decays faster than any power.
Plug it into \eqref{eq:discrete_I_mu} and rearrange terms so that
\begin{equation}
\norm{\sup_{0\le t\le R}\abs{\U{t}f(x)}}_{L^2(\mu)}^2 \le 
	\sum_{Q'}\, b_{Q'} \, \int \abs{\U{t}f(x)}^2\Big(\sum_X\ind_{Q'}(x-X,t-T(X)) \, \mu(X)\Big)\,dx dt.
\end{equation}
Define the weight
\begin{equation}
w(x,t) := \sum_X \, \ind_{X\times T(X)}(x,t) \, \mu(X),
\end{equation}
which satisfies all the properties listed in the statement of the lemma
with $K_w$ independent of $f$.
With it, if $Q' = X' \times T'$, we can write
\begin{equation}
\begin{split}
\norm{\sup_{0\le t\le R}\abs{\U{t}f(x)}}_{L^2(\mu)}^2 & 
	\le \sum_{Q'} \, b_{Q'} \, \int \abs{\U{t}f(x)}^2 \, w(x-X',t-T') \,dx dt \\
	& = \sum_{Q'} \, b_{Q'} \, \int \abs{\U{t + T'}f(x + X')}^2 \, w(x, t) \,dx dt.
	\end{split}
\end{equation}
Observe that 
\begin{equation}
\U{t + T'}f(x + X') = \U{t}(f_{X',T'})(x) \qquad \text{ such that } \qquad \widehat{f_{X',T'}}(\xi) = \widehat f (\xi) \, e^{i \, (X' \xi + i T' P(\xi) ) }. 
\end{equation}
Thus, using the hypothesis \eqref{eq:thm:discrete}, we bound
\begin{equation}
\norm{\sup_{0\le t\le R}\abs{\U{t}f(x)}}_{L^2(\mu)}^2  \leq  C_w^2\, R^{2\beta} \, \sum_{Q'} \, b_{Q'} \, \lVert f_{X',T'} \rVert_2^2 
= C_w^2\, R^{2\beta} \, \lVert f \rVert_2^2  \, \sum_{Q'} \, b_{Q'}.
\end{equation}
The fact that $b_Q$ decay rapidly implies \eqref{eq:thm:non-discrete}.
\end{proof}

In the next lemma we summarize the reductions until now.

\begin{lem}\label{thm:reductions}
Let $P$ be a non-singular and homogeneous symbol. 
Suppose that for every $w\in W^\alpha(B_R)$
as defined in Lemma~\ref{thm:Discrete},
and every $R\gg 1$ it holds that
\begin{equation}
\norm{\U{t}f}_{L^2(w)}\le CR^{s(\alpha)-(n-\alpha)/2}\norm{f}_2,
\end{equation}
where $\supp\widehat{f}\subset\{\abs{\xi}\simeq 1\}$. Then,
for every $\mu\in M^\alpha(B_1)$ the solution
$\U{t}f$ converges $\mu$-a.e. to $f\in H^s(\R^n)$ if $s>s(\alpha)$.
\end{lem}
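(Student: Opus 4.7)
The plan is purely organizational: chain together the scaling reductions sketched in Section~\ref{sec:positive} with Lemmas~\ref{thm:Short_Time} and \ref{thm:Discrete}. There is no real obstacle; the work has been done in the two preceding lemmas, and the present statement amounts to a clean packaging of their consequences.

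I would start by fixing $f \in H^s(\R^n)$ with $s > s(\alpha)$ and choosing any intermediate exponent $s'$ with $s(\alpha) < s' < s$. A standard Littlewood--Paley argument, as recalled in the paragraphs before \eqref{eq:Maximal_Partial_Objective}, shows that $\mu$-almost-everywhere convergence follows once we prove the dyadic maximal estimate
\begin{equation*}
\norm{\sup_{0 \le t \le 1} \abs{\U{t} g}}_{L^2(\mu)} \le C \, R^{s'} \norm{g}_2
\end{equation*}
for all $g$ with $\supp \widehat{g} \subset \{\abs{\xi} \simeq R\}$, all $\mu \in M^\alpha(B_1)$, and all $R \gg 1$; the room $s - s' > 0$ supplies the summability across the dyadic pieces of $f$. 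Applying the parabolic dilation $(x,t) \mapsto (x/R, t/R^k)$ and using the homogeneity identity $R^{-k} P(R\xi) = P(\xi)$, exactly as in the derivation of \eqref{eq:Maximal_Partial_Objective}, this inequality transforms into
\begin{equation*}
\norm{\sup_{0 \le t \le R^k} \abs{\U{t} g_R}}_{L^2(\mu_R)} \le C \, R^{s' - (n-\alpha)/2} \norm{g_R}_2,
\end{equation*}
where now $\supp \widehat{g_R} \subset \{\abs{\xi} \simeq 1\}$ and $\mu_R \in M^\alpha(B_R)$.

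Next, since $P$ is non-singular by hypothesis, Lemma~\ref{thm:Short_Time} applies and replaces the time supremum over $[0, R^k]$ by the supremum over $[0, R]$ without affecting the right-hand side. Then Lemma~\ref{thm:Discrete} reduces this maximal bound against an arbitrary $\mu_R \in M^\alpha(B_R)$ to the supremum-free estimate
\begin{equation*}
\norm{\U{t} g_R}_{L^2(w)} \le C \, R^{s' - (n-\alpha)/2} \norm{g_R}_2 \qquad \forall \, w \in W^\alpha(B_R),
\end{equation*}
which is implied by the hypothesis, since $s(\alpha) < s'$ gives $R^{s(\alpha)-(n-\alpha)/2} \le R^{s'-(n-\alpha)/2}$ for $R \gg 1$. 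The main---and essentially the only---thing to watch is opening the gap between $s(\alpha)$ and $s$ at the start so that dyadic pieces sum; no new harmonic-analytic input is required beyond what Lemmas~\ref{thm:Short_Time} and \ref{thm:Discrete} already provide.
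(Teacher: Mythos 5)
Your proposal is correct and matches the paper's intended argument: Lemma~\ref{thm:reductions} is stated by the authors as a ``summary'' of the preceding reductions, and the proof is precisely the chain you describe — standard Littlewood--Paley dyadic decomposition and parabolic rescaling (the paragraphs around \eqref{eq:Maximal_Partial_Objective}), then Lemma~\ref{thm:Short_Time} to pass from $[0,R^k]$ down to $[0,R]$, then Lemma~\ref{thm:Discrete} to discretize the maximal operator into the weighted $L^2$ estimate, which is exactly the hypothesis. The auxiliary exponent $s'$ is harmless but unnecessary — one can run the chain directly with $s(\alpha)$ and invoke the room $s-s(\alpha)>0$ for the dyadic summation at the end.
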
 

With all these reductions we can prove one of the main results, 
Theorem~\ref{thm:non-dispersive_B}.
\begin{customthm}{\ref{thm:non-dispersive_B}}
{\it 
Let $P\in C^\infty(\R^n\setminus\{0\})$ be a non-singular, homogeneous function of
degree $k\ge 1$, $k\in \R$. If $s>(n-\alpha+1)/2$, then
$\U{t}f$ converges  to $f$ $\mu$-a.e. 
for every $f\in H^s(\R^n)$ 
and for every $\mu\in M^\alpha(B_1)$.
}
\end{customthm} 
\begin{proof}
By Lemma~\ref{thm:reductions}, it suffices to prove that
\begin{equation} \label{eq:trace_bound}
\norm{\U{t}f}_{L^2(w)}\le R^{1/2} \, \norm{w}_\infty^{1/2} \, \norm{f}_2,
\end{equation}
where $w\in W^\alpha(B_R)$.
Using Plancherel's theorem and the fact that $w$ is bounded, for every time $t$ we have
\begin{equation}
\int \abs{\U{t}f(x)}^2 \, w(x,t)\,dx \le \norm{w}_\infty\norm{f}_2^2.
\end{equation}
Integrating in $t\in[-R, R]$ 
we get the desired inequality \eqref{eq:trace_bound}.
\end{proof}

In this generality, Theorem~\ref{thm:non-dispersive_B} is best possible for $\alpha>1$.
For example, for the transport equation \eqref{eq:trace_bound} cannot be improved.
What is more, some dispersive symbols like the saddle
behave as
a non-dispersive symbol with especially crafted initial data,
as we see in Theorem~\ref{thm:examples_saddle}.

Recall that for $s < (n-\alpha)/2$ convergence does not hold.
In general, for a fixed symbol $P$, it is a very hard problem
to close the gap between convergence and divergence. 
However, if the symbol is dispersive, 
then we can close it when $\alpha$ is small, as we show now.

\begin{customthm}{\ref{thm:dispersive_B}}
{\it 
Let $P\in C^\infty(\R^n\setminus\{0\})$ be a non-singular, homogeneous function of
degree $k\ge 1$, $k\in \R$.
Suppose there exists $\beta >0$ such that $\norm{\U{t}\varphi}_\infty\le C_\varphi\abs{t}^{-\beta}$
for every $\varphi \in \mathcal S(\R^n) $ with Fourier support in $\{ \abs{\xi}\simeq 1 \}$. 
Then, for $\alpha<\beta$ and $s>(n-\alpha)/2$, 
$\U{t}f$ converges  to $f$ $\mu$-a.e. 
for every $f\in H^s(\R^n)$ 
and for every $\mu\in M^\alpha(B_1)$.
}
\end{customthm} 
\begin{proof}
We use Lemma~\ref{thm:reductions} to prove this. 
It suffices to work with data $f$ 
Fourier supported in the annulus $\{ |\xi| \simeq 1\}$, 
so let $\varphi \in \mathcal S(\R^n)$ be a cut-off function 
such that $\varphi = 1$ in that annulus
and write 
\begin{equation}
T_tf(x) = \int \, \varphi(\xi)\, \widehat{f} (\xi) \, e^{2 \pi i (x \cdot \xi + tP(\xi))} \, d\xi.
\end{equation} 
We will first prove that
\begin{equation} \label{eq:dispersive_Bound}
\norm{\U{t}f}_{L^2(w)}\le C \, \norm{ \left| (dS)^\vee \right| * w}_\infty^{1/2} \, \norm{f}_2, 
\end{equation}
and then we will check that $\norm{ \left| (dS)^\vee \right| * w}_\infty^{1/2} \leq C$, 
which by Lemma~\ref{thm:reductions} implies the result. 

By Plancherel's theorem, defining the operator $E$ such that $Ef = T_t (f^\vee)$, 
it is enough to prove that
\begin{equation} \label{eq:dispersive_Bound_2}
\norm{Ef}_{L^2(w)}\le C \, \norm{ \left|(dS)^\vee \right| * w}_\infty^{1/2} \, \norm{f}_2.
\end{equation}
By duality, 
\begin{equation}\label{eq:Adjoint_With_Weight}
\norm{Ef}_{L^2(w)} = \sup_{\lVert g w^{1/2} \rVert_2=1} \langle Ef w^{1/2}, gw^{1/2} \rangle  \leq \lVert f \rVert_2 \, \sup_{\lVert g \rVert_{L^2(w)}=1} \lVert E^*(g w) \rVert_2
\end{equation}  
where the adjoint operator $E^*$
is essentially the restriction operator attached to $S$, that is,
\begin{equation}
E^*F(\xi) = \varphi(\xi) \,  \int F(x,t) \, e^{- 2\pi i(x\cdot \xi + tP(\xi))}\,dx dt.
\end{equation}
Rearranging the integrals, we get 
\begin{equation}
\lVert E^*(g w) \rVert_2^2 = \langle E^*(g w), E^*(g w)  \rangle = \int gw \,  \overline{ \left(  gw \ast (dS)^\vee \right)} \, dx dt \leq \lVert g \rVert_{L^2(w)} \, \lVert  gw \ast (dS)^\vee  \rVert_{L^2(w)},
\end{equation}
where $(dS)^\vee(x,t) = \int \varphi(\xi)^2 \, e^{i(x\xi + tP(\xi))}\, d\xi$.
Thus, to prove \eqref{eq:dispersive_Bound_2} it is enough to prove 
\begin{equation}\label{eq:Convolution_Bound}
\lVert  gw \ast (dS)^\vee  \rVert_{L^2(w)} \leq C \,  \norm{\left| (dS)^\vee \right| * w}_\infty \, \lVert g \rVert_{L^2(w)}.
\end{equation}
We prove \eqref{eq:Convolution_Bound} by interpolation
between $L^\infty(w)\to L^\infty(w)$ and $L^1(w)\to L^1(w)$.
Indeed, the $L^\infty$ bound follows from
\begin{align}
\abs{ gw * (dS)^\vee(x,t)} &\le 
	\int \left| (gw)(y,s)(dS)^\vee(x-y,t-s) \right| \,dy ds \\
	&  = \int \, \left|  (g \ind_{\supp w})(y,s)  \,  w(y,s)(dS)^\vee(x-y,t-s) \right| \,dy ds \\
	& \le \norm{g}_{L^\infty(w)} \, \left( w \ast \left| (dS)^\vee \right| \right) (x,t),
\end{align}
while the $L^1$ bound holds because
\begin{align}
\int \abs{(gw)*(dS)^\vee} \, w\,dxdt &\le 
	\int \abs{gw(y,s)}\int \abs{w(x,t)(dS)^\vee(x-y,t-s)}\,dxdt\,dyds \\
	&\le \norm{g}_{L^1(w)} \, \norm{\left|(dS)^\vee \right| * w}_\infty.
\end{align}
This implies \eqref{eq:Convolution_Bound} and so \eqref{eq:dispersive_Bound} is proved.

Let us bound $\norm{ \left|(dS)^\vee \right| * w}_\infty$ now.
By the non-stationary phase principle, 
\begin{equation}
\abs{(dS)^\vee(x,t)}\le C\sup_{2\le j\le N}\norm{D^jP}_\infty \, \abs{x}^{-N}, \quad 
	\textrm{for every } N\ge 1 \textrm{ and } \abs{x}\ge 2c_0\abs{t},
\end{equation}
where $c_0 := \sup_{\abs{\xi}\simeq 1}\abs{\nabla P(\xi)}$ and
$C$ depends on $c_0$ and $\varphi$, but
is otherwise independent of time.
On the other hand, by hypothesis we have 
$\abs{(dS)^\vee(x,t)}\le C\abs{t}^{-\beta}$.
Combining the two bounds, we obtain 
\begin{equation}
\left| (dS)^\vee (x,t) \right| \leq \frac{C}{|(x,t)|^\beta}, \qquad \text{ for any } (x,t), 
\end{equation}
so by decomposing the space in dyadic annuli we deduce that
\begin{equation}
\abs{(dS)^\vee(x,t)} \le 
	C\sum_{\lambda\ge 1, \,  \lambda \text{ dyadic}} \, \lambda^{-\beta} \, \ind_{B_\lambda}(x,t).
\end{equation}
Hence,
\begin{align}
\int \left| (dS)^\vee(x-y,t-s) \right| \, w(y,s)\,dyds 
	&\le C\sum_{\lambda\ge 1}\lambda^{-\beta}\int_{B_\lambda(x,t)}w  \\
	&\lesssim \sum_{1\le\lambda\le R}\lambda^{-\beta+\alpha} + R^\alpha\sum_{R\le \lambda}\lambda^{-\beta} \\
	& \leq \sum_{\lambda \geq 1} \lambda^{-\beta + \alpha}, 
\end{align}
which is bounded if $\alpha < \beta$. 
In that case, $\norm{\abs{(dS)^\vee}*w}_\infty\lesssim 1$,
which we insert into \eqref{eq:dispersive_Bound} and 
apply Lemma~\ref{thm:reductions} to conclude the proof.
\end{proof}


\section{Building the counterexample} \label{sec:Building_counterexample}

We begin the proof of Theorem~\ref{thm:divergence_k}. 
Recall that we consider symbols 
\begin{equation}
P(\xi) = \xi_1^k + W(\xi'),
\end{equation}
where $W\in \Q[X_2,\cdots,X_n]$ is a polynomial of degree $k\ge 2$.
It suffices to work with $W\in \Z[X_2,\cdots,X_n]$; 
indeed, given that there exists $ a \in \mathbb Z$ such that $aW\in \Z[X_2,\cdots,X_n]$, 
we study $T_{at}f(x)$ when $t \to 0$ instead of $T_tf(x)$.

\subsection{A preliminary datum}\label{sec:Preliminary_Datum}

Let $0 < c \ll 1$ and choose two positive functions $\phi_1 \in \mathcal S(\R)$ and
$\phi_2 \in \mathcal S(\R^{n-1})$ such that
 $\operatorname{supp} \widehat{\phi}_i \subset B(0,c)$. 
Let $\psi\in \mathcal S(\R^{n-1})$ be another positive function 
with $\operatorname{supp} \psi \subset B(0,1)$.
For $R > 1$ and $D>1$, let us define
\begin{equation}\label{eq:Counterexample}
f_R(x) =  \phi_1(R^{1/2}x_1) \, e^{2\pi i R x_1 } \, 
	\phi_2(x') \, \sum_{m' \in\Z^{n-1}} \psi\left(\frac{Dm' }{R} \right)e^{2\pi i D m' \cdot x'}
\end{equation}
where $ x = (x_1,\ldots, x_n) = (x_1,x') \in B(0,1)$.
Defining 
\begin{equation}
g(x_1) = \phi_1(R^{1/2}x_1) \, e^{2\pi i R x_1 } \qquad \textrm{ and } \qquad
h(x') = \phi_2(x') \, \sum_{m' \in\Z^{n-1}} \psi\left(\frac{Dm' }{R} \right)e^{2\pi i D m' \cdot x'},
\end{equation}
we may write $f_R(x) = g(x_1)\, h(x')$.
In particular, $\widehat{f_R}(\xi) = \widehat{g}(\xi_1)\, \widehat{h}(\xi')$, where 
\begin{equation}
\widehat{g}(\xi_1) = R^{-1/2} \, \widehat{\phi}_1\big( R^{-1/2} (\xi_1 - R) \big) \qquad \text{ and } \qquad \widehat{h}(\xi') = \sum_{m' \in\Z^{n-1}} \psi\left(\frac{Dm'}{R} \right) \widehat{\phi}_2 (\xi' - D m' ).
\end{equation}
Thus, the $L^2(\R^n)$ norm of $f_R$ is 
\begin{equation}\label{eq:Norms_Counterexample}
\lVert f_R \rVert_2 \simeq R^{-1/4} \, \left( \frac{R}{D} \right)^{(n-1)/2}.
\end{equation}

Let us analyze the evolution of $f_R$.
In the $x_1$ variable, the evolution is
\begin{equation}\label{eq:Evolution_First_Component}
\begin{split}
\U{t} g(x_1) & = R^{-1/2} \, \int \widehat{\phi}_1(R^{-1/2}\, \xi_1) \, e(x_1\, (\xi_1 + R) + t( \xi_1 + R)^k)\, d\xi_1 \\
	& = e^{2\pi i (x_1 R + t R^k)} \, \int \widehat{\phi}_1( \xi_1) \,
	e\Big( R^{1/2}\,  \xi_1 (x_1 +  t k R^{k-1}) + t \sum_{j=2}^k \binom{k}{j}\, \xi_1^j\, R^{k-j/2} \Big)\, d\xi_1.
\end{split}
\end{equation}
Observe that $\left| \sum_{j=2}^k \binom{k}{j}\, \xi_1^j\, R^{k-j/2}   \right| \leq R^{k-1}\,  \sum_{j=2}^k \binom{k}{j} $, so 
asking $\abs{t} < 1/R^{k-1}$ we essentially have 
\begin{equation}\label{eq:Heuristic_First_Component}
\left| \U{t} g(x_1) \right| \simeq \left| \phi_1\left( R^{1/2} \big(x_1 + k R^{k-1} t \big) \right)  \right|.
\end{equation}
In the remaining variables $x'$, the evolution is
\begin{align}
\U{t} h(x') &=  \sum_{m' \in\Z^{n-1}} \psi \left(\frac{Dm'}{R}\right) \int \widehat{\phi}_2(\xi') \, 
	e\big(x'\cdot(\xi' + Dm') + t \, W(\xi'+Dm')\big) \, d\xi' \\
\begin{split}\label{eq:Evolution_Last_Components}
	&= \sum_{m'\in\Z^{n-1}} e^{2\pi i(x'\cdot Dm' + t \, W_k(Dm'))} \\
	&\qquad \psi \left(\frac{Dm'}{R}\right) \int \widehat{\phi}_2(\xi') \, 
		e\left( x'\cdot\xi'+ t(W(Dm'+\xi') - W_k(Dm'))\right) \, d\xi',
	\end{split}
\end{align}
where $W_k$ is the homogeneous part of $W$. 
In particular, $W_k$ is homogeneous of degree $k$.
Let us evaluate the solution at the points
\begin{equation}\label{eq:Points_For_Solution}
t = \frac{p_1}{D^k \, q} \qquad\textrm{and}\qquad x' = \frac{p'}{D\, q} + \epsilon, 
\end{equation}
where $p = (p_1,p') \in \mathbb Z^n$ and $q \in \mathbb N$ with $q\simeq Q$ for some $Q \ge 1 $ to be chosen later, and $|\epsilon| \lesssim R^{-1}$. 
Thus,
\begin{equation}\label{eq:h_pre_perturbation}
\begin{split}
\U{t} h(x') &  = \sum_{m'\in\Z^{n-1}} e\left( \frac{p'\cdot m' + p_1W_k(m')}{q} \right)  \\
& \qquad \qquad \left[ e^{2\pi i\,\epsilon \cdot Dm'}  \psi \left(\frac{Dm'}{R}\right) 
	\int \widehat{\phi}_2(\xi') \, e\left(x'\cdot\xi'+ t(W(Dm'+\xi') - W_k(Dm'))\right) \, d\xi' \right] \\
& = \sum_{m'\in\Z^{n-1}} e\left( \frac{p'\cdot m' + p_1W_k(m')}{q} \right) \,   \zeta(m'),
	\end{split} 
\end{equation}
where we define 
\begin{equation} \label{eq:def_zeta}
\zeta(m') = e^{2\pi i\,\epsilon \cdot Dm'}  \psi \left(\frac{Dm'}{R}\right) 
	\int \widehat{\phi}_2(\xi') \, e\left( x'\cdot\xi'+ t \, (W(Dm'+\xi') - W_k(Dm'))\right) \, d\xi'.
\end{equation}
Heuristically, if we assume that $R/D\gg Q$, the function $\zeta$
is roughly constant
at scale $q$, 
so we should be able to sum in blocks modulo $q$. 
In that case, since the support of $\zeta$ is contained in $|m'| \leq R/D$, 
the solution would be
\begin{equation} \label{eq:h_pre_Weil}
\left| \U{t} h(x') \right| \simeq 
	\left(\frac{R}{DQ}\right)^{n-1}\Bigg| \sum_{r' \in\F^{n-1}_q} e\left( \frac{p'\cdot r' + p_1W_k(r')}{q} \right) \Bigg|.
\end{equation}
At this point, we have two goals:
\begin{itemize}
	\item To make \eqref{eq:h_pre_Weil} rigorous, and
	\item To estimate the exponential sum in \eqref{eq:h_pre_Weil}.
\end{itemize}
As we shall see, estimating the exponential sum
is necessary to prove \eqref{eq:h_pre_Weil},
so let us begin by that. 

First, notice that the exponential sum is the discrete inverse Fourier transform
of the function $S(r_1, r') = \delta_0(W_k(r')-r_1)$
in the variables $(r_1, r')\in\F^n_q$, that is,
\begin{equation}\label{eq:Weil_Fourier}
S(r_1, r') = \delta_0(W_k(r')-r_1) \qquad \Longrightarrow \qquad \widecheck{S}(p) = \sum_{r'\in\F^{n-1}_q} e\left( \frac{p'\cdot r' + p_1W_k(r')}{q} \right).
\end{equation}
Notice also that $\widecheck{S}(p)$ is the extension operator 
attached to the surface $\{(W_k(r'), r')\in\F^n_q\}$.



Estimating this kind of sums is difficult in general. 
In  \cite{Pierce2021},
An, Chu and Pierce applied Deligne's Theorem~8.4 from \cite{MR340258},
a deep result in arithmetic geometry, which we write here.
The reader is also referred to Theorem~11.43 in \cite{MR2061214}.

\begin{thm}[Deligne's theorem adapted] \label{thm:Weil_Bound}
Let $f\in\Z[X_1,\ldots,X_d]$ be a nonzero polynomial of degree $k$ 
with homogeneous part $f_k$. Suppose that
\begin{enumerate}[(i)]
\item $q$ is a prime number and $q\nmid k$.
\item $\nabla f_k(x)\neq 0$ for every $x\in\overline{\F_q}^d\setminus\{0\}$.
\label{it:non-singular}
\end{enumerate}
Then,
\begin{equation} \label{eq:Weil_Bound}
\Big|\sum_{x\in\F^d_q} e(f(x)/q)\Big|\le (k-1)^d q^{d/2}.
\end{equation}
\end{thm}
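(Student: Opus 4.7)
The statement is essentially a specialization of Deligne's bound on exponential sums attached to smooth projective hypersurfaces, so my plan is to reduce to a standard reference form of Deligne's theorem and verify that hypothesis (ii), which is phrased on the affine cone, translates exactly to the projective smoothness condition under which Deligne's bound is usually stated.

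I would first recall the reference form (as in Theorem~11.43 of \cite{MR2061214}, which repackages Deligne's Theorem~8.4 in \cite{MR340258}): if $f = f_k + g$ with $\deg g < k$, $q$ prime with $q\nmid k$, and the projective hypersurface
\[
V(f_k) = \{[x]\in\mathbb{P}^{d-1}(\overline{\F_q}) : f_k(x)=0\}
\]
is smooth over $\overline{\F_q}$, then $\bigl|\sum_{x\in\F_q^d} e(f(x)/q)\bigr|\le (k-1)^d\, q^{d/2}$. The factor $(k-1)^d$ is the rank of the top $\ell$-adic cohomology governing the sum, and the $q^{d/2}$ is the Weil bound on the corresponding Frobenius eigenvalues; neither of these I would attempt to reprove.

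The actual work is to check that hypothesis (ii) is equivalent to the smoothness of $V(f_k)$ under (i). The key input is Euler's identity
\[
k\cdot f_k(x) = x\cdot\nabla f_k(x),
\]
valid for any homogeneous polynomial of degree $k$. Since $q\nmid k$, this identity shows that $\nabla f_k(x)=0$ forces $f_k(x)=0$; conversely, every singular point $[x]\in V(f_k)$ lifts to an $x\in \overline{\F_q}^d\setminus\{0\}$ simultaneously satisfying $f_k(x)=0$ and $\nabla f_k(x)=0$. Hence the nonvanishing of $\nabla f_k$ on $\overline{\F_q}^d\setminus\{0\}$ in (ii) is precisely the assertion that $V(f_k)$ has no singular points, i.e.\ is smooth.

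With that verified, the conclusion is immediate: apply the reference version of Deligne's theorem with homogeneous part $f_k$ and lower-order part $g = f-f_k$. The main obstacle is not in our step but hidden inside Deligne's bound itself, which relies on the full strength of the Weil conjectures and étale $\ell$-adic cohomology; our contribution is limited to the geometric packaging, namely noticing that the natural affine condition on $\nabla f_k$ is, thanks to Euler, precisely the projective smoothness required for the cohomological estimate.
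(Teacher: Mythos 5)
The paper does not actually prove this theorem; it is cited verbatim from Deligne (Theorem~8.4 of \cite{MR340258}), with a pointer to Theorem~11.43 of \cite{MR2061214} as an alternative reference, and the paper merely remarks in passing (citing \cite{sawinMO}) that hypothesis (ii) is equivalent to smoothness of the projective hypersurface $V(f_k)$. Your ``proof'' is therefore not competing with a proof in the paper; it is a correct reduction to the standard reference form, and it usefully supplies the argument that the paper only asserts. Your key step is right: Euler's identity $k\,f_k(x) = x\cdot\nabla f_k(x)$ together with $q\nmid k$ shows that any $x\neq 0$ with $\nabla f_k(x)=0$ automatically satisfies $f_k(x)=0$, so ``$\nabla f_k\neq 0$ on $\overline{\F_q}^d\setminus\{0\}$'' is equivalent to ``$V(f_k)\subset\mathbb P^{d-1}(\overline{\F_q})$ is nonsingular,'' which is precisely the hypothesis under which Deligne's bound is stated. (The converse direction is trivial since condition (ii) is formally stronger than nonsingularity of $V(f_k)$, and for smoothness of a projective hypersurface over $\F_q$ it is indeed enough and standard to check the Jacobian criterion over $\overline{\F_q}$.) You are also right that neither the constant $(k-1)^d$ nor the $q^{d/2}$ decay is something one would reprove; the geometric repackaging of the hypothesis is the only verifiable content, and you have handled it correctly.
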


We first remark that we need \eqref{eq:Weil_Bound}
to hold for all but finitely many primes $q$.
In this setting, if a polynomial $f\in\Z[X_1,\ldots,X_d]$ 
satisfies the hypotheses of Deligne's theorem
for all but finitely many primes, 
then we say that $f$ satisfies the Weil bound.

Our second remark is that 
if $f$ has degree $\geq 2$ and satisfies 
the Weil bound, 
then the polynomial  $af(x) + l(x)$ with 
any integer $a$ which is coprime with $q$
and any linear form $l$ 
also satisfies the Weil bound.
Thus, for our sum in \eqref{eq:Weil_Fourier} it is enough
to check that $W_k$ satisfies the Weil bound.

Let us comment on condition \ref{it:non-singular} now, 
about which the reader can find a discussion in \cite{sawinMO}.
It is equivalent to the statement that the variety (scheme) 
defined by $f_k$ in $\mathbb{P}^{d-1}(\F_q)$ is smooth.
However, we want to find conditions 
that make a polynomial satisfy the Weil bound
that are easier to verify than condition \ref{it:non-singular}.
We do that in the following corollary.
\begin{cor}\label{thm:Nullstellensatz_Cor}
Let $f\in\Z[X_1,\ldots,X_d]$ be a homogeneous polynomial. 
If $\nabla f(x)\neq 0$ for every $x\in\C^d\setminus\{0\}$, then
$f$ satisfies the Weil bound, 
that is, 
\eqref{eq:Weil_Bound} holds for all but finitely many primes.
\end{cor}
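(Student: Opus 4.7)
The plan is to verify hypothesis \ref{it:non-singular} of Theorem~\ref{thm:Weil_Bound} for all but finitely many primes $q$, and then invoke that theorem directly; since $f$ is homogeneous we have $f=f_k$, and the supplementary condition $q\nmid k$ only excludes a finite set of primes, so the whole task reduces to showing that $\nabla f(x)\neq 0$ for every $x\in\overline{\F_q}^d\setminus\{0\}$, outside a finite set of primes $q$.

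First I would apply Hilbert's Nullstellensatz over $\C$ to the ideal $I=(\partial_1 f,\ldots,\partial_d f)\subset\C[X_1,\ldots,X_d]$. By hypothesis the common vanishing locus of the $\partial_j f$ in $\C^d$ is $\{0\}$, so $\sqrt{I}=(X_1,\ldots,X_d)$. Hence there exists $N\ge 1$ and polynomials $g_{i,j}\in\C[X_1,\ldots,X_d]$ with
\begin{equation}
X_i^N \;=\; \sum_{j=1}^d g_{i,j}(X)\,\partial_j f(X), \qquad i=1,\ldots,d.
\end{equation}
Next I would descend from $\C$ to $\Q$: viewing the coefficients of the $g_{i,j}$ as unknowns, the displayed equation is a linear system with $\Z$-coefficients (because $\partial_j f\in\Z[X]$), so consistency over $\C$ forces consistency over $\Q$, and one may choose $g_{i,j}\in\Q[X]$. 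Clearing denominators produces a nonzero integer $a$ and polynomials $\tilde g_{i,j}\in\Z[X]$ with $a\,X_i^N=\sum_j \tilde g_{i,j}\,\partial_j f$ as an identity in $\Z[X]$.

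Finally, for any prime $q$ with $q\nmid ak$ I would reduce this identity modulo $q$: since $a$ is then a unit in $\overline{\F_q}$, any simultaneous zero $x\in\overline{\F_q}^d$ of $\partial_1 f,\ldots,\partial_d f$ satisfies $x_i^N=0$ for every $i$, whence $x=0$. This verifies condition \ref{it:non-singular} of Theorem~\ref{thm:Weil_Bound} for all but finitely many primes, and \eqref{eq:Weil_Bound} follows. The plan has no genuine obstacle; the only real content is the quantitative form of the Nullstellensatz (producing a single exponent $N$ valid for every $i$) combined with the denominator-clearing step, which together upgrade the complex-analytic non-singularity hypothesis to a mod-$q$ statement uniform outside a finite set of primes.
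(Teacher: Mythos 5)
Your proof is correct and follows essentially the same strategy as the paper: use the Nullstellensatz to write $X_i^N$ as a $\Q[X]$-combination of the $\partial_j f$, clear denominators, and reduce the resulting integer-polynomial identity modulo all sufficiently large primes to verify the non-singularity hypothesis of Deligne's theorem. The only cosmetic difference is that the paper invokes a form of the Nullstellensatz (Zariski--Samuel, Ch.~VII.3, Thm.~14) that already produces coefficients in the base field $\Q$, whereas you first work over $\C$ and then descend to $\Q$ by a linear-algebra consistency argument; both routes are valid and lead to the same identity.
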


To prove it, we need Hilbert's Nullstellensatz, 
which we recall here in the version written in 
\cite[Theorem 14, Ch. VII.3]{MR0120249}.
\begin{thm}[Hilbert's Nullstellensatz]
Let $k$ be a field. If $F, F_1,\ldots, F_m$ are polynomials in $k[X_1,\ldots, X_d]$ and
if $F$ vanishes at every common zero of $F_1,\ldots, F_m$ 
in an algebraically closed extension $K$ of $k$, then
there exists an exponent $\rho \in \mathbb N$ and polynomials $A_1,\ldots, A_m \in k[X_1,\ldots, X_d]$ such that
\begin{equation}
F^\rho = A_1F_1 + \cdots + A_mF_m.
\end{equation}
\end{thm}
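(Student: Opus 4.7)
The plan is to deduce the strong Nullstellensatz from the weak Nullstellensatz via the Rabinowitsch trick, and then descend from the algebraically closed extension $K$ to the ground field $k$. Thus the argument naturally splits into three stages: (1) prove the weak Nullstellensatz over $K$, namely that every proper ideal in $K[X_1,\ldots,X_d]$ has a common zero in $K^d$; (2) introduce an auxiliary variable to derive the stated radical identity, initially over $K$; and (3) show that the identity obtained over $K$ forces one with coefficients already in $k[X_1,\ldots,X_d]$.

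For stage (1), the plan is to classify the maximal ideals of $K[X_1,\ldots,X_d]$. Given a maximal ideal $\mathfrak m$, the quotient $L = K[X_1,\ldots,X_d]/\mathfrak m$ is a field that is finitely generated as a $K$-algebra. Zariski's lemma asserts that any such $L$ is in fact a finite algebraic extension of $K$, and since $K$ is algebraically closed this forces $L = K$. Letting $a_i$ be the image of $X_i$ in $L$, one gets $\mathfrak m \supset (X_1-a_1,\ldots,X_d-a_d)$, and the inclusion is an equality by maximality. Consequently, if $F_1,\ldots,F_m$ have no common zero in $K^d$, they lie in no maximal ideal of $K[X_1,\ldots,X_d]$, so the ideal they generate is the whole ring and there is an identity $1 = \sum A_i F_i$.

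For stage (2), introduce a new variable $Y$ and consider $F_1,\ldots,F_m,\;1-YF$ in $K[X_1,\ldots,X_d,Y]$. By hypothesis these have no common zero in $K^{d+1}$, since any zero of $F_1,\ldots,F_m$ is a zero of $F$, contradicting $1-YF=1\neq 0$. Stage (1) then yields
\begin{equation}
1 = \sum_{i=1}^m A_i(X,Y)\,F_i(X) + B(X,Y)\,(1-YF(X))
\end{equation}
with $A_i,B\in K[X_1,\ldots,X_d,Y]$. Substituting $Y=1/F$ inside $K(X_1,\ldots,X_d)$ kills the last term, and clearing denominators by multiplying through by $F^\rho$ for large enough $\rho$ produces an identity $F^\rho = \sum_i \widetilde A_i F_i$ with $\widetilde A_i \in K[X_1,\ldots,X_d]$.

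For stage (3), choose a $k$-basis $\{e_\lambda\}_\lambda$ of $K$ with $e_0=1$ and expand each $\widetilde A_i = \sum_\lambda e_\lambda A_i^{(\lambda)}$ uniquely with $A_i^{(\lambda)}\in k[X_1,\ldots,X_d]$. Then the identity of stage (2) becomes
\begin{equation}
F^\rho = \sum_\lambda e_\lambda \Big(\sum_i A_i^{(\lambda)} F_i\Big),
\end{equation}
and since $F^\rho\in k[X_1,\ldots,X_d]$ is entirely in the $e_0$-component, comparison of components gives $F^\rho = \sum_i A_i^{(0)} F_i$ with $A_i^{(0)}\in k[X_1,\ldots,X_d]$, as required. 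The main obstacle is stage (1), whose actual content is Zariski's lemma; the cleanest route is through Noether normalization of finitely generated $K$-algebras, although a direct induction on the number of generators using transcendence-degree arguments also works. Stages (2) and (3) are formal once stage (1) is available.
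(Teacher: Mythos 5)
Your argument is correct, but note that the paper does not prove this statement at all: it is quoted verbatim as a classical result with a reference to Zariski--Samuel (Theorem~14, Ch.~VII.3), so there is no internal proof to compare against. Your route is the standard one: the weak Nullstellensatz over the algebraically closed field $K$ (maximal ideals of $K[X_1,\ldots,X_d]$ are of the form $(X_1-a_1,\ldots,X_d-a_d)$, via Zariski's lemma), then the Rabinowitsch trick with the auxiliary polynomial $1-YF$ to get $F^\rho=\sum_i \widetilde A_i F_i$ over $K$, and finally descent to $k$ by expanding the coefficients of the $\widetilde A_i$ in a $k$-basis of $K$ containing $1$ and comparing the $e_0$-components, which works because $F^\rho$ and the $F_i$ have coefficients in $k$ so multiplication by $F_i$ respects the component decomposition. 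Two small remarks: (i) the substitution $Y=1/F$ presupposes $F\not\equiv 0$; the case $F\equiv 0$ should be dispatched separately (trivially, with $\rho=1$ and $A_i=0$); (ii) the genuinely nontrivial input, Zariski's lemma, is only named and not proved in your sketch --- you correctly identify Noether normalization (or a transcendence-degree induction) as the standard way to supply it, and since the paper itself treats the whole theorem as a black box, deferring that lemma to a standard reference is a reasonable stopping point, but be aware that it is where all the real content lies.
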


\begin{proof}[Proof of Corollary~\ref{thm:Nullstellensatz_Cor}]
We must prove that $\nabla f(x)\neq 0$ for every $x\in\overline{\F_q}^d\setminus\{0\}$
and all but finitely many primes $q$.

By Hilbert's Nullstellensatz with $k = \Q$ and $K = \C$,
we can find a natural number $N$ and polynomials $A_{ij}\in \Q[X_1,\ldots, X_d]$, for $i,j=1,\ldots,d$,
such that
\begin{equation}
X_i^N = A_{i1} \, \partial_1 f + \cdots + A_{id} \, \partial_d f, 
	\qquad \quad  \textrm{for all } i = 1, \ldots, d.
\end{equation} 
We multiply the polynomials above by some $M\in\Z$ such that $MA_{ij}\in\Z[X_1,\ldots,X_d]$ for every $i,j$, and hence
\begin{equation}\label{eq:Polynomials}
M\, X_i^N = (MA_{i1}) \, \partial_1 f+\cdots+(MA_{id}) \, \partial_d f,
	\qquad  \quad  \textrm{for all } i = 1, \ldots, d.
\end{equation}
These are equalities among polynomials with integer coefficients. 
Thus, for every prime $q \nmid M$ (in particular, for $q > M$) 
we can reduce modulo $q$, 
so that \eqref{eq:Polynomials} holds in $\mathbb F_q$. 
This implies that if $\nabla f(x) = 0$ for some $x \in \overline{\mathbb F_q}^d$,
then necessarily $x = 0$.
The proof is thus complete.
\end{proof}

Thus, Corollary~\ref{thm:Nullstellensatz_Cor} implies that 
if the homogeneous part $W_k$ of $W \in \mathbb Z[X_2, \ldots, X_n]$ 
satisfies 
$\nabla W_k(x) \neq 0$ for every $x \in \mathbb C^{n-1}\setminus\{0\}$,
then both $W$ and $W_k$
satisfy the Weil bound. 
In particular, we bound \eqref{eq:Weil_Fourier} by  
\begin{equation}\label{eq:Bound_Exp_Sum}
\big| \widecheck{S}(p) \big|  = \Bigg| \sum_{r'\in\F^{n-1}_q} e\left( \frac{p'\cdot r' + p_1W_k(r')}{q} \right) \Bigg| \leq (k-1)^{n-1} \, q^{(n-1)/2}
\end{equation}
for every $p_1 \not\equiv 0 \pmod{q}$ and for all but finitely many primes $q$.

Once we have estimated the exponential sum, 
let us justify \eqref{eq:h_pre_Weil}.
We do that in an abstract setting, like we did with the exponential sum.
\begin{lem} \label{thm:perturbation}
Let $f\in\Z[X_1,\dots,X_d]$ be a polynomial of degree $\ge 2$ that
satisfies the Weil bound.
Let also $\zeta\in C_0^\infty$ and define the discrete Laplacian $\widetilde\Delta$ by
\begin{equation}
\widetilde{\Delta} \zeta(m) = \sum_{j=1}^d \big( \zeta(m + e_j) + \zeta(m - e_j) - 2\zeta(m) \big).
\end{equation}
Assume that, for some $L >0$, $\zeta$ is supported in $B(0,L)$ and 
$\norm{\widetilde\Delta^N\zeta}_\infty\lesssim_N L^{-2N}$ for every $N \in \mathbb N$.
Then,
\begin{equation} \label{eq:thm:perturbation}
\sum_{m\in \Z^d} \zeta(m)e(f(m)/q) = 
	\left(\frac{1}{q^d}\sum_{m\in\Z^d} \zeta(m)\right)\sum_{l\in\F_q^d} e(f(l)/q) + 
	\BigO_N\left(q^{d/2}\left(\frac{L}{q}\right)^{d-2N}\right)
\end{equation}
for any integer $N> d/2$.
\end{lem}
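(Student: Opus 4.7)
The strategy is to partition $\Z^d$ into residue classes modulo $q$, expand the periodic fluctuations in a discrete Fourier series on $(\Z/q\Z)^d$, and then apply the Weil bound to each resulting complete sum. The smoothness hypothesis on $\zeta$ is used only at the last step to control the size of these Fourier coefficients.

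Since $f\in\Z[X_1,\ldots,X_d]$, the phase $e(f(m)/q)$ depends only on $m\pmod q$, so splitting $m=l+qk$ with $l\in(\Z/q\Z)^d$ and $k\in\Z^d$ gives
\begin{equation}
\sum_{m\in\Z^d}\zeta(m)\,e(f(m)/q)=\sum_{l\in(\Z/q\Z)^d} e(f(l)/q)\,A(l),\qquad A(l):=\sum_{k\in\Z^d}\zeta(l+qk).
\end{equation}
The average of $A$ over $l$ equals $q^{-d}\sum_{m}\zeta(m)$, so I would write $A(l)=q^{-d}\sum_{m}\zeta(m)+B(l)$ with $B$ having mean zero on $(\Z/q\Z)^d$. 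The constant part reproduces the main term on the right of \eqref{eq:thm:perturbation}, so the whole task reduces to bounding the remainder
\begin{equation}
\mathcal{E}:=\sum_{l\in(\Z/q\Z)^d} e(f(l)/q)\,B(l).
\end{equation}

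Because $B$ has zero mean, discrete Fourier inversion on $(\Z/q\Z)^d$ yields $B(l)=\sum_{\nu\neq 0\,(q)}\widehat B(\nu)\,e(l\cdot\nu/q)$, and a direct computation gives $\widehat B(\nu)=q^{-d}\,\widecheck\zeta(\nu/q)$, where $\widecheck\zeta(\xi):=\sum_{m\in\Z^d}\zeta(m)\,e(-m\cdot\xi)$. Substituting into $\mathcal{E}$ and swapping the order of summation,
\begin{equation}
\mathcal{E}=\sum_{\nu\neq 0\,(q)}\widehat B(\nu)\sum_{l\in(\Z/q\Z)^d} e\bigl((f(l)+l\cdot\nu)/q\bigr).
\end{equation}
For each $\nu$, the inner sum is a complete exponential sum in the polynomial $f(l)+l\cdot\nu$, which has exactly the same homogeneous top-degree part $f_k$ as $f$; hence $f+l\cdot\nu$ still satisfies the Weil bound, and the inner sum is $\BigO(q^{d/2})$ uniformly in $\nu$, for all but finitely many primes $q$.

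It remains to sum $|\widehat B(\nu)|=q^{-d}|\widecheck\zeta(\nu/q)|$ over $\nu\not\equiv 0\pmod q$. The key identity, obtained by noting that unit shifts in $\Z^d$ act as multiplication by $e(\pm\xi_j)$ on the Fourier side, is
\begin{equation}
\widecheck{\widetilde\Delta^N\zeta}(\xi)=(-4)^N\Bigl(\sum_{j=1}^d\sin^2(\pi\xi_j)\Bigr)^N\widecheck\zeta(\xi).
\end{equation}
Using $\|\widetilde\Delta^N\zeta\|_\infty\lesssim_N L^{-2N}$ together with the support condition $\supp\zeta\subset B(0,L)$ (which yields $\sum_m|\widetilde\Delta^N\zeta(m)|\lesssim_N L^{d-2N}$), I get the pointwise bound
\begin{equation}
\bigl|\widecheck\zeta(\nu/q)\bigr|\lesssim_N L^{d-2N}\,\Bigl(\sum_{j=1}^d\sin^2(\pi\nu_j/q)\Bigr)^{-N}.
\end{equation}
Since $\sin(\pi x)$ is comparable to the distance from $x$ to $\Z$, the sum $\sum_{\nu\neq 0\,(q)}(\sum_j\sin^2(\pi\nu_j/q))^{-N}$ is comparable to $q^{2N}\sum_{\mu\in\Z^d\setminus\{0\},\,|\mu|\le q}|\mu|^{-2N}$, which converges uniformly in $q$ as soon as $2N>d$. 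Combining everything,
\begin{equation}
|\mathcal{E}|\lesssim q^{d/2}\cdot q^{-d}\cdot L^{d-2N}\cdot q^{2N}=q^{d/2}(L/q)^{d-2N},
\end{equation}
which is exactly the error term in \eqref{eq:thm:perturbation}.

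The only genuinely delicate points are the invocation of the Weil bound on the perturbed polynomial $f+l\cdot\nu$, which works because only the subleading part changes, and the translation of the hypothesis on the discrete Laplacian into pointwise decay of $\widecheck\zeta$ at the rationals $\nu/q$; once these are in place, the rest is bookkeeping with the convergence condition $2N>d$ being exactly what ensures the geometric-series estimate above.
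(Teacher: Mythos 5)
Your proof is correct and is essentially the same argument as the paper's. Both proceed by splitting the sum into residue classes modulo $q$, expanding the periodized data in a discrete Fourier series on $(\Z/q\Z)^d$, isolating the zero mode as the main term, controlling the nonzero modes with the Weil bound on the linearly perturbed polynomial $f(\cdot)+\omega\cdot(\cdot)$, and using the discrete Laplacian hypothesis to make the Fourier coefficients decay fast enough to sum. The only difference is presentational: you work directly with $\widecheck\zeta(\nu/q)$ via the Fourier-multiplier identity $\widecheck{\widetilde\Delta^N\zeta}(\xi)=(-4\sum_j\sin^2(\pi\xi_j))^N\widecheck\zeta(\xi)$, while the paper works with the periodization $Z(r)=\sum_{m\equiv r}\zeta(m)$ and its discrete Fourier coefficients $\widehat Z(\omega)$, using summation by parts with $\widetilde\Delta^N Z$; since $\widehat Z(\omega)=q^{-d}\widecheck\zeta(\omega/q)$, these are two phrasings of the identical computation.
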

\begin{proof}
We rearrange the sum into blocks modulo $q$ so that
\begin{equation} \label{eq:block_sum}
\sum_{m\in\Z^d} \zeta(m)e(f(m)/q) 
	= \sum_{r\in\F^d_q}\Bigg(\sum_{m \equiv r\Mod{q}}\zeta(m)\Bigg)e(f(r)/q) 
	= \sum_{r\in\F^d_q}  Z(r) \, e(f(r)/q),
\end{equation}
where we define $Z(r) = \sum_{m \equiv r\Mod{q}}\zeta(m)$ for every $r \in \mathbb F_q^d$. 
Decompose $Z$ into frequencies 
\begin{equation}
\widehat{Z}(\omega) = \frac{1}{q^d}\sum_{r \in \F^d_q}Z(r)e^{-2\pi i\omega\cdot r/q} \qquad \text{ such that } \qquad Z(r) = \sum_{\omega\in\F^d_q} \widehat{Z}(\omega)e^{2\pi i \omega\cdot r/q},
\end{equation}
and replace them into \eqref{eq:block_sum} so that
\begin{equation}
\sum_{m\in\Z^d} \zeta(m)e(f(m)/q) = 
	\widehat{Z}(0)\sum_{m\in\F^d_q}e(f(m)/q) \,+
	\sum_{ \omega \in \mathbb F_q^d \setminus \{0\} }\widehat{Z}(\omega)\Bigg( \sum_{r\in\F^d_q}e\left(\frac{f(r)+\omega\cdot r}{q}\right)\Bigg).
\end{equation}
Since $\widehat{Z}(0) = q^{-d}\sum_{m\in\Z^d}\zeta(m)$, to prove \eqref{eq:thm:perturbation} 
we need to control the error term
\begin{equation}
E = \sum_{ \omega \in \mathbb F_q^d \setminus \{0\} }\widehat{Z}(\omega)\Bigg( \sum_{r\in\F^d_q}e\left(\frac{f(r)+\omega\cdot r}{q}\right)\Bigg).
\end{equation}
Observe that
\begin{align}
\left| E \right| \leq  \norm{\widehat{Z}}_{\ell^1(\F^d_q\setminus\{0\})} \, \sup_{\omega \in \mathbb F^d_q} \Bigg|\sum_{r \in\F^d_q} e\left(\frac{f(r)+\omega\cdot r}{q}\right)\Bigg|
		\label{eq:error_term}
\end{align}
From the Weil Bound in Theorem~\ref{thm:Weil_Bound} we know that 
\begin{equation}
\Big|\sum_{r \in\F^d_q} e\left(\frac{f(r)+\omega\cdot r}{q}\right)\Big|\lesssim q^{d/2}.
\end{equation}
To bound $\widehat{Z}(\omega)$, we sum by parts using the discrete Laplacian $\widetilde{\Delta}$. 
First, observe that for a fixed $\omega$, 
\begin{align}
-\widetilde{\Delta} \left( e^{-2\pi i\omega\cdot r/q} \right)
	& = \sum_{j=1}^d  \left( 2e^{-2\pi i \, \omega\cdot r/q} - e^{-2\pi i\, \omega\cdot(r+e_j)/q} - e^{-2\pi i \, \omega\cdot (r-e_j)/q} \right) \\ 
	&=  e^{-2\pi i\omega\cdot r/q} \, \sum_{j=1}^d \sin^2 (\pi \omega_j/q) = e^{-2\pi i\omega\cdot r/q}A(\omega),
\end{align}
where $\{e_j\}_j$ is the canonical basis and $A(\omega) = 4 \, \sum_{j=1}^d \sin^2 (\pi \omega_j / q)$.
Thus, 
\begin{equation}
\widehat{Z}(\omega) = - \frac{1}{q^d \, A(\omega)} \,  \sum_{r \in\F^d_q}Z(r) \, \widetilde{\Delta} \left( e^{-2\pi i \omega\cdot r/q} \right)
	= - \frac{1}{q^d \, A(\omega)} \, \sum_{r \in\F^d_q} \,  \widetilde{\Delta} \left(Z(r) \right) e^{-2\pi i\omega\cdot r/q}.
\end{equation}
Iterating, we get
\begin{equation}
\widehat{Z}(\omega) =  \frac{(-1)^N}{q^d \, (A(\omega))^N} \, \sum_{r \in\F^d_q} \,  \widetilde{\Delta}^N \left(Z(r) \right) e^{-2\pi i\omega\cdot r/q}, \qquad \forall N \in \mathbb N.
\end{equation}
Since $\sin^2(\pi x)\ge c\norm{x}^2$, where
$\norm{x}$ is the distance of $x$ to the closest integer, then
$A(\omega) \gtrsim \sum_{j=1}^d \norm{\omega_j/q}^2$, so
\begin{equation}
\begin{split}
\big| \widehat{Z}(\omega) \big|  & \lesssim \frac{1}{q^d} \, \Big( \sum_{j=1}^d \norm{\omega_j/q}^2 \Big)^{-N} \, \Bigg|  \sum_{r \in\F^d_q} \,  \widetilde{\Delta}^N \left(Z(r) \right) e^{-2\pi i\omega\cdot r/q} \Bigg| \\
& \leq \Big( \sum_{j=1}^d \norm{\omega_j/q}^2 \Big)^{-N}\, \lVert \widetilde\Delta^N Z \rVert_\infty, \qquad \forall N \in \mathbb N.
\end{split}
\end{equation}
As long as $2N > d$, we can write
\begin{equation}
\sum_{\omega \in \mathbb F_q^d\setminus\{0\}} \Big( \sum_{j=1}^d \norm{\omega_j/q}^2 \Big)^{-N} \simeq 
\sum_{\omega \in \{1, \ldots, \frac{q-1}{2} \}^d} \frac{ q^{2N}}{ |\omega|^{2N} } \simeq_N q^{2N},
\end{equation}
so 
\begin{equation}
\lVert \widehat Z \rVert_{\ell^1(\mathbb F_q^d \setminus \{0\})} \lesssim q^{2N } \,  \lVert  \widetilde\Delta ^N Z \rVert_\infty, \qquad \forall N \in \mathbb N,
\end{equation}
and thus,
\begin{equation}
\left| E \right| \lesssim q^{2N + d/2} \lVert  \widetilde\Delta^N Z \rVert_\infty, \qquad \forall N \in \mathbb N.
\end{equation}
Now, from the definition of $Z$ and the hypotheses for $\zeta$ we see that
\begin{equation}
\abs{\widetilde\Delta^N Z(r)} = \Big|\sum_{l\in\Z^d} \widetilde\Delta^N\zeta(l q + r)\Big| \lesssim
	\left( \frac{L}{q} \right)^d \norm{\widetilde\Delta^N\zeta}_\infty \lesssim_N \frac{L^{d-2N}}{q^d}, \qquad \forall r \in \mathbb F_q^d,
\end{equation}
so
\begin{equation}
\abs{E} \lesssim_N q^{d/2}\, \left(\frac{L}{q}\right)^{d-2N}.
\end{equation}
\end{proof}

We can now rigorously estimate $\U{t}h$ 
as it appeared in \eqref{eq:h_pre_perturbation}.
For that, we use Lemma~\ref{thm:perturbation} with $d=n-1$
 and $L = R/D$.
We only need to bound $\widetilde \Delta^N \zeta$ 
as indicated in the hypotheses
for the function $\zeta$ defined in \eqref{eq:def_zeta}.

\begin{lem}
Let $\zeta$ be defined as in \eqref{eq:def_zeta}, and let $L = R/D$. Then
$\norm{\widetilde\Delta^N\zeta}_\infty\lesssim_N L^{-2N}$ for every $N \in \mathbb N$.
\end{lem}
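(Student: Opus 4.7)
The plan is to reduce the discrete-Laplacian bound to a pointwise bound on continuous derivatives of a smooth extension of $\zeta$. Let $\tilde\zeta: \mathbb R^{n-1} \to \mathbb C$ be defined by the same formula \eqref{eq:def_zeta} with the lattice variable $m'$ replaced by a real variable $y$. Iterated second-order Taylor expansion of each finite-difference factor $\tilde\zeta(m+e_j) + \tilde\zeta(m-e_j) - 2\tilde\zeta(m)$ gives
\begin{equation}
\big|\widetilde\Delta^N \zeta(m)\big| \lesssim_N \sum_{|\alpha|=2N}\, \sup_{|z-m|\leq N}\, \big|\partial^\alpha \tilde\zeta(z)\big|,
\end{equation}
so it suffices to prove $|\partial^\alpha \tilde\zeta(y)| \lesssim_\alpha L^{-|\alpha|}$ uniformly in $y$ for every multi-index $\alpha$.

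Next, I would decompose $\tilde\zeta = a\cdot b\cdot I$, where $a(y) = e^{2\pi i\epsilon\cdot Dy}$, $b(y) = \psi(Dy/R)$, and $I(y)$ is the oscillatory integral in $\xi'$. By the Leibniz rule it is enough to bound derivatives of each factor separately. The factor $a$ is harmless: each derivative contributes at most $|\epsilon|D \lesssim D/R = L^{-1}$ since $|\epsilon|\lesssim R^{-1}$. The factor $b$ is equally harmless, with each derivative yielding $D/R = L^{-1}$ by the chain rule and the Schwartz property of $\psi$.

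The hard part will be the oscillatory integral $I(y)$. Differentiating under the integral sign in $y$ brings down a factor $2\pi i\, tD\,[\partial_j W(Dy+\xi') - \partial_j W_k(Dy)]$ from the phase. This is precisely the place where the cancellation between the homogeneous part $W_k$ and $W$ itself becomes crucial: splitting $W = W_k + W_{\text{lower}}$ with $\deg W_{\text{lower}}\le k-1$, and Taylor expanding $\partial_j W_k(Dy+\xi')$ around $Dy$ using $|\xi'|\lesssim 1$ on the support of $\widehat\phi_2$, one gets
\begin{equation}
\big|\partial_j W(Dy+\xi') - \partial_j W_k(Dy)\big| \lesssim |Dy|^{k-2} \lesssim R^{k-2}
\end{equation}
on the support $|Dy| \lesssim R$ of $\psi(Dy/R)$. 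Combined with $|tD| \lesssim D^{1-k}$ (from $|t|\le D^{-k}$ and $|p_1|\le q$), each $y$-derivative of the phase contributes at most $R^{k-2}D^{2-k}$, and under the scaling relation between $R$ and $D$ used in the construction (namely $R^{k-1}\lesssim D^k$, so $L \lesssim D^{1/(k-1)}$), this quantity is bounded by $D/R = L^{-1}$.

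Finally, expanding the higher derivative via Leibniz/Faà di Bruno produces a finite sum of terms, each one a product of $|\alpha|$ factors bounded above by $L^{-1}$ and integrals of $\widehat\phi_2$ against bounded phases. Summing over the finitely many combinatorial arrangements yields $|\partial^\alpha \tilde\zeta(y)| \lesssim_\alpha L^{-|\alpha|}$, and in particular $\|\widetilde\Delta^N\zeta\|_\infty \lesssim_N L^{-2N}$.
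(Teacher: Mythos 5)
Your proposal takes essentially the same route as the paper: reduce $\widetilde\Delta^N\zeta$ to continuous derivatives $\partial^\alpha\zeta$ of the smooth extension and then bound derivatives of the oscillatory phase. The paper's implementation is cleaner: it changes variables $z'=m'/L$ and sets $\delta=R\epsilon$, $\tau=R^{k-1}t$, so the rescaled phase $F(z')$ (which also absorbs your factor $e^{2\pi i\epsilon\cdot Dm'}$) has all derivatives $O(1)$ uniformly, and the factor $L^{-2N}$ then drops out of the chain rule with no further bookkeeping. Working unscaled, as you do, is fine in principle, but two points need tightening. First, your description of the Fa\`a di Bruno terms as ``a product of $|\alpha|$ factors bounded by $L^{-1}$'' is not accurate: the terms contain $\partial^\beta_y\Phi$ with $|\beta|\ge 2$, and you must check $|\partial^\beta_y\Phi|\lesssim L^{-|\beta|}$ for all $1\le|\beta|\le 2N$, not just $|\beta|=1$. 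The bound does hold, since $\partial^\beta W_k(Dy+\xi')-\partial^\beta W_k(Dy)$ and $\partial^\beta(W-W_k)(Dy+\xi')$ both decay like $(1+|Dy|)^{k-1-|\beta|}\lesssim R^{k-1-|\beta|}$, giving $|tD^{|\beta|}|\,R^{k-1-|\beta|}\lesssim L^{-|\beta|}$, but this step is not automatic and needs to be carried out for each order. Second, $|t|\le D^{-k}$ does not follow from the construction: $p\in G(q)$ only means $p\bmod q\in G(q)$, and the true constraint on $p_1$ is $|x_1|\le 1$, i.e.\ $|p_1|\lesssim qD^k/R^{k-1}$, which can be much larger than $q$. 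What is actually used (and what the paper encodes by $|\tau|=R^{k-1}|t|\le 1$) is $|t|\lesssim R^{-(k-1)}$; this gives $|tD|\,R^{k-2}\lesssim D/R=L^{-1}$ directly, with no need to invoke the relation $R^{k-1}\lesssim D^k$.
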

\begin{proof}
Since $\norm{\widetilde\Delta^N\zeta}_\infty\lesssim \sup_{\abs{\alpha} = 2N}\norm{\partial^\alpha\zeta}_\infty$,
where $\alpha = (\alpha_1,\ldots,\alpha_{n-1})$ is a multi-index,
it suffices to bound
\begin{equation}
\partial^\alpha\zeta(m')	
 = \int \widehat{\phi}_2(\xi') \, 
	e^{2\pi ix'\cdot\xi'} \, \partial_{m'}^\alpha \left[\psi \left(\frac{m'}{L}\right) 
	e\left( \epsilon \cdot m' \frac{R}{L}  +  t \, \left(W\left(\frac{R}{L}\, m' + \xi'\right)-W_k\left(\frac{R}{L}m' \right)\right)\right) \right] \, d\xi'. 
	\end{equation}
Let us write $W(\xi) = \sum_{ i=0  }^k  W_{k-i} (\xi)$,
where $W_j$ is the $j$-homogeneous part of $W$. 
Then, 
\begin{equation}
\begin{split}
W\left(\frac{R}{L}\, m' + \xi'\right)-W_k\left(\frac{R}{L}m' \right) 
&  = W_k \left(\frac{R}{L}\, m' + \xi'\right)-W_k\left(\frac{R}{L}m' \right) 
+ \sum_{ i=1 }^k W_{ k-i }\left(\frac{R}{L}\, m' + \xi'\right)\\
& = R^k \left(  W_k \left(\frac{m'}{L} + \frac{\xi'}{R} \right)-W_k\left(\frac{m'}{L}\right)  \right) 
+ \sum_{ i=1 }^k R^{k-i} W_{ k-i }\left(\frac{m'}{L}+ \frac{\xi'}{R}\right).
\end{split}
\end{equation}
Call $\delta = R\epsilon$ and $\tau = R^{k-1}t$ 
so that $\abs{\delta}\le 1$ and $\abs{\tau}\le 1$.
Defining the phase function
\begin{equation}
F(z') = \delta \cdot z'  + R\tau\,\left(W_k(z' + \xi'/R) - W_k(z') + \sum_{ i=1 }^k R^{-i} W_{ k-i }(z'+ \xi'/R) \right),
\end{equation}
we may write
\begin{equation}
\partial^\alpha\zeta(m')	 
 = \int \widehat{\phi}_2(\xi') \, 	e^{2\pi ix'\cdot\xi'} 
 \, \partial_{m'}^\alpha \left[ \psi(m'/L) \, e(F(m'/L)) \right ] \, d\xi'.
\end{equation}
Since for a function $G$ we have $\partial_{m'}^\alpha \left( G(m'/L) \right) = L^{-2N} \, \partial^\alpha G (m'/L) $, it suffices to prove that
\begin{equation}
\left| \partial_{z'}^\alpha \Big[\psi(z') \, e(F(z'))\Big] \right| \lesssim 1 
	\qquad \textrm{uniformly for }\abs{z'}\le 1.
\end{equation}
Since $\abs{\partial^\alpha F(z')} \lesssim 1$ uniformly in $z'$, $\epsilon$, $\tau$, $R$ and $\xi'$, 
the result follows from the Leibniz rule.
\end{proof}

Thus, we apply Lemma~\ref{thm:perturbation}
with the first integer $N$ that satisfies $N > (n-1)/2$. 
Due to the smallness of all phases in the definition of $\zeta$, 
we get $\sum_{m' \in \mathbb Z^{n-1}} \zeta(m') \simeq (R/D)^{n-1}$,
so we obtain
\begin{equation}\label{eq:Ready_For_Weil}
\left| \U{t} h(x') \right| =
	\left(\frac{R}{DQ}\right)^{n-1} \abs{\widecheck{S}(p)} + 
	\BigO\left(Q^{(n-1)/2}\Big(\frac{R}{DQ}\Big)^{-1}\right).
\end{equation}
Now, in \eqref{eq:Bound_Exp_Sum}
we estimated $\widecheck{S}(p)$ from above, 
but we also need an estimate from below. 
With the aid of Deligne's theorem, An, Chu and Pierce showed in \cite[Proposition~2.2]{Pierce2021} that
$\abs{\widecheck{S}(p)}\gtrsim q^{(n-1)/2}$ for most $p \in \F^n_q$. 
For completeness, we repeat here their arguments.
\begin{lem}\label{thm:G_q}
If $W_k$ satisfies the Weil Bound, then, for $q\gg 1$,
\begin{equation}
\abs{\widecheck{S}(p)} = 
	\Bigg|\sum_{r'\in\F^{n-1}_q} e\left( \frac{p'\cdot r' + p_1W_k(r')}{q} \right)\Bigg| \ge \frac{1}{10} \, q^{(n-1)/2}
\end{equation}
for every $p \in G(q)\subset\mathbb{F}_q^n$, 
where $\abs{G(q)}\ge C_{k,n} q^n$.
\end{lem}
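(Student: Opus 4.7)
The plan is to combine Parseval's identity on $\mathbb F_q^n$ with the Weil upper bound \eqref{eq:Weil_Bound}: Parseval gives the correct second moment of $|\widecheck S(p)|^2$, and the Weil bound constrains how concentrated this mass can be, so most $p$'s must give values close to the average $q^{(n-1)/2}$.

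First I would compute $\sum_{p\in\mathbb F_q^n}\abs{\widecheck S(p)}^2$ via discrete Parseval. Since $S(r_1,r')=\delta_0(W_k(r')-r_1)$ is the indicator of the graph of $W_k$, one has $\sum_{r\in\mathbb F_q^n}\abs{S(r)}^2=q^{n-1}$, which after accounting for the normalization of the Fourier transform on $\mathbb F_q^n$ yields
\begin{equation}
\sum_{p\in\mathbb F_q^n}\abs{\widecheck S(p)}^2=q^n\cdot q^{n-1}=q^{2n-1}.
\end{equation}
Next I would isolate the exceptional slice $p_1\equiv 0\Mod{q}$, where the phase collapses so that $\widecheck S(0,p')=q^{n-1}\delta_{p'=0}$; its contribution is exactly $q^{2(n-1)}$. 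Subtracting,
\begin{equation}
\sum_{p\,:\,p_1\not\equiv 0}\abs{\widecheck S(p)}^2=q^{2n-1}-q^{2n-2}=q^{2n-2}(q-1).
\end{equation}

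Now I would combine this second-moment identity with the upper bound supplied by Deligne's theorem. Since $W_k$ satisfies the Weil bound, for $q$ large enough and $p_1\not\equiv 0\Mod{q}$ we have $\abs{\widecheck S(p)}^2\le (k-1)^{2(n-1)}q^{n-1}$. Define
\begin{equation}
G(q):=\{p\in\mathbb F_q^n\,:\,\abs{\widecheck S(p)}\ge \tfrac{1}{10}q^{(n-1)/2}\},
\end{equation}
and split the sum over $p_1\neq 0$ according to whether $p\in G(q)$ or not. The complement contributes at most $(q-1)q^{n-1}\cdot \tfrac{1}{100}q^{n-1}$, so
\begin{equation}
q^{2n-2}(q-1)\le \abs{G(q)}\cdot (k-1)^{2(n-1)}q^{n-1}+\tfrac{1}{100}(q-1)q^{2(n-1)}.
\end{equation}
Rearranging gives $\abs{G(q)}\ge C_{k,n}\,q^n$, as claimed.

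I do not anticipate a serious obstacle: the argument is a standard "second moment plus pointwise upper bound" pigeonhole. The only mildly delicate point is keeping track of the constants in the discrete Parseval identity and remembering to excise the $p_1\equiv 0$ slice, where $\widecheck S$ concentrates all of its mass at a single frequency and thus violates the equidistribution suggested by the Weil bound. Once that slice is removed, the Weil bound is essentially saturated on average, and the pigeonhole forces a positive-density set of $p$ with $\abs{\widecheck S(p)}\gtrsim q^{(n-1)/2}$.
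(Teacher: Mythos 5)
Your proposal is correct and takes essentially the same route as the paper: compute the second moment $\sum_p\lvert\widecheck S(p)\rvert^2=q^{2n-1}$ via Parseval, use Deligne's $L^\infty$ bound on the non‐degenerate frequencies, and pigeonhole. The one cosmetic difference is that you subtract the $p_1\equiv 0$ slice explicitly (noting $\widecheck S(0,p')=q^{n-1}\delta_{p'=0}$), whereas the paper simply absorbs the $p=0$ contribution as a single $q^{2n-2}$ term in the inequality; these are the same idea, and both yield $\lvert G(q)\rvert\gtrsim_{k,n}q^n$.
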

\begin{proof}
Following \eqref{eq:Weil_Fourier}, Plancherel's theorem gives
\begin{equation}
\sum_{p\in\F^n_q}\abs{\widecheck{S}(p)}^2 = 
	q^n\sum_{r \in\F^n_q} \delta(W_k(r')-r_1) = q^{2n-1}.
\end{equation}
For $c_1 >0$, define $G(q) = \{p\in\F^n_q \mid \abs{\widecheck{S}(p)}\ge c_1q^{(n-1)/2}\}$.
Since the Weil bound implies $\abs{\widecheck{S}(p)}\le (k-1)^{n-1}q^{(n-1)/2}$,  we have
\begin{equation}
q^{2n-1} = \sum_{p\in\F^n_q}\abs{\widecheck{S}(p)}^2 \le 
	q^{2n-2} + \abs{G(q)\setminus\{0\}}(k-1)^{2(n-1)}q^{n-1} + c_1^2 \, q^{2n-1}.
\end{equation}
By choosing $c_1$ small enough we obtain $\abs{G(q)}\gtrsim q^n$.
\end{proof}

Applying Lemma~\ref{thm:G_q} in \eqref{eq:Ready_For_Weil}, at every $p \in G(q)$ one gets
\begin{equation} \label{eq:sol_h_lowerBound}
\abs{\U{t} h(x')} \gtrsim
	\Big(\frac{R}{DQ}\Big)^{n-1}  \, Q^{(n-1)/2} + \BigO\left(Q^{(n-1)/2}\Big(\frac{R}{DQ}\Big)^{-1}\right) 
	\simeq \left(\frac{R}{DQ^{1/2}}\right)^{n-1},
\end{equation}
where we require $R/D\gg Q$ for the last operation.
We remark that the case $q = Q = 1$, 
which is relevant in some parts of the following sections, 
is not covered by the arguments above.
However, \eqref{eq:sol_h_lowerBound} still holds by direct computation.

We join estimates \eqref{eq:Heuristic_First_Component} and
\eqref{eq:sol_h_lowerBound} to obtain
\begin{equation}
\abs{\U{t}f_R(x)} \gtrsim  \left| \phi_1( R^{1/2} (x_1 + k R^{k-1} t) )  \right| \left(\frac{R}{DQ^{1/2}}\right)^{n-1},
\end{equation}
for every $x = (x_1,x')\in [-1,1]^n$ and $t< 1/R^{k-1}$ like in \eqref{eq:Points_For_Solution}. 
That means that if we also ask for  $| x_1 + k R^{k-1} t| < R^{-1/2}$, 
we get 
\begin{equation}\label{eq:Heursitic_Evolution}
\abs{\U{t}f_R(x)} \gtrsim \Big(\frac{R}{DQ^{1/2}}\Big)^{n-1}.
\end{equation}
We gather all such requirements for $x$ in the following definition.
\begin{defin}[Admissible Slabs]
Let $k \in \mathbb N$, $R>1$, $D>1$ and $Q >1$, and
let $G(q) \subset \F^n_q$ be the set given by Lemma~\ref{thm:G_q}.
The slab
\begin{equation}\label{eq:Slabs}
E_{p,q,R} = 
	B_1 \left(  k\, \frac{R^{k-1}}{D^k} \frac{p_1}{q} , \frac{1}{R^{1/2}}  \right)  \, \times \, B_{n-1} \left( \frac{1}{D}\, \frac{p'}{q} , \frac{1}{R}  \right)
\end{equation}
is admissible whenever $Q/2 \leq q \leq Q$ and
$p = (p_1,p') = (p_1, \ldots, p_n)\in G(q)$.
Here, when we write $p\in G(q)$ we mean $p \pmod{q} \in G(q)$.
\end{defin}

The definition of these slabs allows us
to synthesize the discussion so far in the next proposition,
which combines \eqref{eq:Norms_Counterexample} 
and \eqref{eq:Heursitic_Evolution}.
The conclusion is similar to that in \cite[Proposition 5.1]{Pierce2021}.
\begin{prop}\label{thm:Proposition_Pierce}
Let $k \in \mathbb N$, $R>1$, $D>1$ and $Q >1$, and
let $W\in\Q[X_2,\cdots,X_n]$ of degree $k$ have
homogeneous part $W_k$ that
satisfies the Weil bound. 
Let also $f_R$ be the initial datum \eqref{eq:Counterexample}. 
If $t = p_1/(D^k q)$, $q\simeq Q$
and $E_{p,q,R}$ is an admissible slab, then  
\begin{equation}\label{eq:Solution_Is_Large}
\frac{\left| T_tf_R(x) \right|}{\norm{f_R}_2}  
	\gtrsim R^{1/4} \, \left(  \frac{R}{DQ} \right)^{\frac{n-1}{2}}
	\qquad \textrm{for all } x \in E_{p,q,R} \cap ([-1,0]\times [-1,1]^{n-1}).
\end{equation}
\end{prop}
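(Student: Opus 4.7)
The plan is to combine the ingredients already derived in Section~\ref{sec:Preliminary_Datum} into a clean statement. Because $f_R(x) = g(x_1)\,h(x')$ factors across disjoint variables, the solution factors analogously as $\U{t}f_R(x) = \U{t}g(x_1)\cdot\U{t}h(x')$, so I would lower bound each factor separately on the corresponding projection of the admissible slab $E_{p,q,R}$ and then multiply.

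For the $x_1$ factor, I would start from \eqref{eq:Evolution_First_Component} and Taylor-expand the phase in $\xi_1$. In the regime under consideration, $|t| = |p_1|/(D^kq) \lesssim 1/R^{k-1}$, so every quadratic-or-higher term $t\binom{k}{j}\xi_1^j R^{k-j/2}$ is bounded by a constant on $\supp\widehat\phi_1$ and contributes only a bounded oscillating factor. After pulling out the pure phase $e^{2\pi i(x_1R + tR^k)}$, the linear part of the phase dominates and yields $|\U{t}g(x_1)|\gtrsim |\phi_1(R^{1/2}(x_1 + kR^{k-1}t))|$, which is $\gtrsim 1$ exactly when $x_1$ lies in the first ball appearing in \eqref{eq:Slabs} (with the sign of $p_1$ chosen compatibly with the restriction $x_1\in[-1,0]$).

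For the $x'$ factor, at $t = p_1/(D^kq)$ and $x' = p'/(Dq)+\epsilon$ with $|\epsilon|\lesssim R^{-1}$, I would combine \eqref{eq:h_pre_perturbation} with Lemma~\ref{thm:perturbation} applied with $d = n-1$ and $L = R/D$, whose hypothesis on $\norm{\widetilde\Delta^N\zeta}_\infty$ has just been verified. This converts the discrete sum over $m'\in\Z^{n-1}$ into a constant multiple of the complete exponential sum $\widecheck{S}(p)$ modulo $q$, plus an error of order $Q^{(n-1)/2}(R/(DQ))^{-1}$. Lemma~\ref{thm:G_q} then supplies the matching lower bound $|\widecheck{S}(p)|\gtrsim q^{(n-1)/2}$ for $p \in G(q)$, and the error is absorbed into the main term $(R/(DQ^{1/2}))^{n-1}$ as soon as $R/D\gg Q$, a condition encoded into the notion of admissible slab.

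Multiplying the two lower bounds gives $|\U{t}f_R(x)|\gtrsim (R/(DQ^{1/2}))^{n-1}$ on $E_{p,q,R}\cap([-1,0]\times[-1,1]^{n-1})$, and dividing by $\norm{f_R}_2\simeq R^{-1/4}(R/D)^{(n-1)/2}$ from \eqref{eq:Norms_Counterexample} delivers exactly \eqref{eq:Solution_Is_Large}. The delicate step, already handled by the preceding lemmas, is the simultaneous upper and lower control of $\widecheck{S}(p)$: square-root cancellation from the Deligne--Weil bound is what allows the perturbation error in Lemma~\ref{thm:perturbation} to be absorbed, while Plancherel combined with the same upper bound forces the existence of a large set $G(q)$ on which matching lower bounds hold. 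Once both are available, the proposition reduces to a bookkeeping assembly.
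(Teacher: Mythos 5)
Your reconstruction matches the paper's own derivation. The paper states Proposition~\ref{thm:Proposition_Pierce} as a synthesis of \eqref{eq:Norms_Counterexample} and \eqref{eq:Heursitic_Evolution}, and the latter is built exactly as you describe: the factorization $\U{t}f_R = \U{t}g\cdot\U{t}h$ coming from the additive structure of $P(\xi)=\xi_1^k+W(\xi')$; the $x_1$ bound via \eqref{eq:Evolution_First_Component} and \eqref{eq:Heuristic_First_Component} on the relevant time scale $|t|\lesssim R^{-(k-1)}$; the $x'$ bound via Lemma~\ref{thm:perturbation} (with $d=n-1$, $L=R/D$) combined with Lemma~\ref{thm:G_q} to both control and lower-bound $\widecheck{S}(p)$ on $G(q)$; and the error-versus-main-term comparison using $R/D\gg Q$ to land on \eqref{eq:sol_h_lowerBound}. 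Two small caveats, neither a gap: in the $x_1$ step, ``bounded oscillating factor'' understates what is needed — one needs the higher-order phase $t\sum_{j\ge 2}\binom{k}{j}\xi_1^jR^{k-j/2}$ to be \emph{small} on $\supp\widehat\phi_1$, which holds precisely because $\supp\widehat\phi_1\subset B(0,c)$ with $c\ll1$; and the condition $R/D\gg Q$ is not literally part of the admissible-slab definition but is enforced later by the choice of $D,Q$ as powers of $R$ in Section~\ref{sec:FractalSet}. With these readings, the bookkeeping you perform gives exactly $R^{1/4}(R/(DQ))^{(n-1)/2}$, as claimed.
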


The rough idea of why this indicates 
divergence of $T_tf_R$ is the following.
If we denote by $F$ the union of all admissible slabs and 
set the measure $\mu(A) = \abs{A\cap F}/\abs{F}$ supported in $F$, then
we get
\begin{equation}
\big\lVert \sup_{0<t<1} \big|  T_t f_R \big| \big\rVert_{L^1(\mu)} 
	\gtrsim R^{1/4} \, \left(  \frac{R}{DQ} \right)^{\frac{n-1}{2}}\lVert f_R  \rVert_2, \qquad \forall R \gg 1.
\end{equation}
Eventually, we will choose $D$ and $Q$ to be powers of $R$, 
so we may define the exponent $s(k, \alpha)$ by 
\begin{equation}\label{eq:Sobolev_Exponent_Origin}
R^{1/4} \, \left(  \frac{R}{DQ} \right)^{\frac{n-1}{2}} = R^{s(k, \alpha)}.
\end{equation}
This way,
the inequality above morally contradicts 
the maximal estimate for data with regularity $s < s(\alpha)$, 
which is the standard way to prove convergence. 

However, we will actually prove the divergence property directly.
We will sum the $f_R$ above to build a new datum, 
and we will also build a set $F$ by taking a limsup of the 
slabs $E_{p,q,R}$ of different $q$ and $R$.
This will be a fractal set 
whose Hausdorff dimension $\alpha$ will depend 
on the aforementioned powers $D$ and $Q$, 
and where the evolution of the datum,
of Sobolev regularity $s < s(k, \alpha)$,
will diverge. 

As marked, $s(k, \alpha)$ will depend
both on the homogeneity degree $k$ and 
on the Hausdorff dimension chosen $\alpha$.  
However, we drop the dependence on $k$ 
from the notation for simplicity, so we will simply write $s(\alpha)$. 
We refer to this regularity as the Sobolev exponent.

\subsection{The counterexample}\label{sec:Counterexample}

We just saw how the datum $f_R$ heuristically contradicts the usual maximal estimate. 
However, it does not give a counterexample for the convergence property.
To do that, we combine several $f_R$ at different scales $R$.  
We propose
\begin{equation}\label{eq:Divergent_Counterexample}
f(x) = \sum_{m \geq m_0} \frac{m}{R_m^{s(\alpha)}} \, \frac{f_{R_m}}{\lVert f_{R_m} \rVert_2}
\end{equation}
for a large enough $m_0 \in \mathbb N$, where $R_m = 2^m$ for every $m \geq m_0$. 
The parameters $D=D(R_m)$ and $Q=Q(R_m)$ corresponding to each component 
$f_{R_m}$ will be powers of $R_m$, and we will denote them simply by $D_m$ and $Q_m$ respectively.

We first remark that $f \in H^s(\R^n)$ for every $s < s(\alpha)$, as 
the triangle inequality shows that
\begin{equation}
\lVert f \rVert_{s} \leq \sum_{m \geq m_0} \frac{m}{  R_m^{s(\alpha) - s} } < \infty.
\end{equation}

In the setting of Proposition~\ref{thm:Proposition_Pierce}, given that the estimate \eqref{eq:Solution_Is_Large}
depends on $Q$ rather than on the particular choice of $q \simeq Q$, we define  the sets
\begin{equation}\label{eq:Divergence_Set_Levels}
F_m = \bigcup_{Q_m/2 \leq q \leq Q_m} \,   \bigcup_{p \in G(q)} E_{p,q,R_m}, \qquad \qquad m \geq m_0.
\end{equation}
This means that for every $m \geq m_0$, the component $f_{R_m}$ satisfies \eqref{eq:Solution_Is_Large} in the set $F_m$.

We now prove the main result of this section, 
which measures the size of the solution $T_tf$ 
in the sets $F_m$.
\begin{prop}\label{thm:Proposition_Divergent_Counterexample}
Let $f$ be defined in \eqref{eq:Divergent_Counterexample} and $M \geq m_0$.
If $x \in F_M \cap \left( [-1,-1/10] \times [-1,1]^{n-1} \right)$, then
there exists a time $t(x) \simeq 1/R_M^{k-1}$ such that 
$\big| T_{t(x)} f(x) \big| \gtrsim M$.
\end{prop}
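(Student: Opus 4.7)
The plan is to isolate the scale $m = M$ in the series defining $f$ and show that its contribution to $T_{t(x)} f(x)$ is of order $M$, while all other scales together contribute only $O(1)$. First, given $x \in F_M \cap ([-1,-1/10] \times [-1,1]^{n-1})$, by definition of $F_M$ there exist $q \simeq Q_M$ and $p \in G(q)$ with $x \in E_{p,q,R_M}$; I would set $t(x) = p_1/(D_M^k q)$. The requirement that the slab's $x_1$-center lies in $[-1,-1/10]$ forces $\abs{t(x)} \simeq 1/R_M^{k-1}$. Proposition~\ref{thm:Proposition_Pierce} applied to $f_{R_M}$ at $(x, t(x))$ then yields $\abs{T_{t(x)} f_{R_M}(x)}/\norm{f_{R_M}}_2 \gtrsim R_M^{s(\alpha)}$, so the $M$-th term of
\begin{equation}
T_{t(x)} f(x) = \sum_{m \geq m_0} \frac{m}{R_m^{s(\alpha)}} \, \frac{T_{t(x)} f_{R_m}(x)}{\norm{f_{R_m}}_2}
\end{equation}
has magnitude $\gtrsim M$.

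For the remaining terms I would exploit the splitting $P(\xi) = \xi_1^k + W(\xi')$, which factors the evolution as $T_t f_{R_m}(x) = T_t g_m(x_1) \cdot T_t h_m(x')$. The factor $\abs{T_{t(x)} h_m(x')}$ can be bounded trivially by $\norm{\widehat{h_m}}_1 \simeq (R_m/D_m)^{n-1}$. For $\abs{T_{t(x)} g_m(x_1)}$, after the rescaling $\xi_1 = R_m + R_m^{1/2}\eta$, the phase of the resulting oscillatory integral has linear coefficient $R_m^{1/2}(x_1 + t(x) k R_m^{k-1})$ in $\eta$. The slab condition for scale $M$ forces $x_1 = -t(x) k R_M^{k-1} + O(R_M^{-1/2})$, so $x_1 + t(x) k R_m^{k-1} \simeq t(x) k(R_m^{k-1} - R_M^{k-1})$; combined with $\abs{t(x)} \simeq 1/R_M^{k-1}$ and $R_m = 2^m$, this gives a uniform lower bound $R_m^{1/2}$ for the linear coefficient whenever $m \neq M$. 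The higher-order corrections $t(x)\binom{k}{j} R_m^{k-j/2} \eta^j$ for $j \geq 2$ are dominated by the linear term on $\operatorname{supp}\widehat{\phi}_1$, so integration by parts using the Schwartz decay of $\widehat{\phi}_1$ yields $\abs{T_{t(x)} g_m(x_1)} \lesssim_N R_m^{-N/2}$ for every $N \in \mathbb N$.

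Combining these bounds with $\norm{f_{R_m}}_2 \simeq R_m^{-1/4}(R_m/D_m)^{(n-1)/2}$ and $R_m^{s(\alpha)} = R_m^{1/4}(R_m/(D_m Q_m))^{(n-1)/2}$, each remaining term is bounded by $m\, Q_m^{(n-1)/2}\, R_m^{-N/2}$; taking $N$ large enough (possible provided $Q_m$ is a polynomial power of $R_m$) makes $\sum_{m \neq M}$ convergent and of size $O(1)$. The triangle inequality then yields $\abs{T_{t(x)} f(x)} \gtrsim M - O(1) \gtrsim M$. The key technical step, and the main obstacle, is establishing the uniform lower bound $\gtrsim R_m^{1/2}$ on the linear phase coefficient across all scales $m \neq M$: this relies crucially on the admissible slabs of $F_M$ being tuned to scale $M$, so that the $(x_1, t(x))$ relation they encode is incompatible with the stationary-phase condition for any other scale---a subtle point that needs the exponential growth $R_m = 2^m$ to guarantee $\abs{R_m^{k-1} - R_M^{k-1}} \gtrsim \max(R_m, R_M)^{k-1}$ for all $m \neq M$.
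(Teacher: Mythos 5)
Your proof is correct and follows essentially the same route as the paper: the same choice $t(x) = p_1/(D_M^k q)$, the same appeal to Proposition~\ref{thm:Proposition_Pierce} for the $m = M$ term, the same trivial bound $\lVert \widehat{h_m} \rVert_1$ for the $x'$-factor, and the same non-stationary phase argument for $T_t g_m$ driven by the fact that the linear coefficient $R_m^{1/2}\abs{x_1 + t(x) k R_m^{k-1}}$ is bounded below by $R_m^{1/2}$ whenever $m \neq M$ (the paper just spells out the two cases $m<M$ and $m>M$ separately). The one point worth sharpening in your write-up is the phrase ``dominated by the linear term'': what you actually need, and what the paper verifies explicitly, is that the \emph{derivative} of the normalized phase stays bounded away from zero, i.e. $t R_m^{k-1}/\lambda < 1/2$, which licenses the iterated integration by parts.
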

From this 
we immediately get the divergence property we are looking for.
\begin{cor}\label{thm:Corollary_Divergent_Counterexample}
Let $F = \limsup_{m \to \infty} F_m$. 
Then, for $f$ defined in \eqref{eq:Divergent_Counterexample},
\begin{equation}
 \limsup_{t\to 0} \big| T_t f(x) \big| = \infty, 
 	\qquad \qquad \forall x \in F \cap \left( [-1,-1/10] \times [-1,1]^{n-1} \right).
\end{equation}
\end{cor}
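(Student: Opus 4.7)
The plan is to deduce the corollary directly from Proposition~\ref{thm:Proposition_Divergent_Counterexample} using only the definition of $\limsup$ of sets. Unpacking $F = \limsup_m F_m = \bigcap_{N \ge m_0} \bigcup_{m \ge N} F_m$, any point $x \in F$ belongs to $F_m$ for infinitely many $m$. Hence, if additionally $x \in [-1,-1/10] \times [-1,1]^{n-1}$, one can extract a strictly increasing sequence $m_j \to \infty$ with $x \in F_{m_j} \cap ([-1,-1/10] \times [-1,1]^{n-1})$ for every $j$.

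For each such $m_j$, Proposition~\ref{thm:Proposition_Divergent_Counterexample} supplies a time $t_j := t(x) \simeq 1/R_{m_j}^{k-1}$ such that
\begin{equation}
\big| T_{t_j} f(x) \big| \gtrsim m_j.
\end{equation}
Since $R_{m_j} = 2^{m_j} \to \infty$ as $j \to \infty$, the times satisfy $t_j \to 0$, while the values $|T_{t_j} f(x)|$ are unbounded above. Taking $\limsup_{t \to 0}$ along this subsequence yields $\limsup_{t \to 0} |T_t f(x)| = \infty$, which is the claim.

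There is essentially no obstacle here once Proposition~\ref{thm:Proposition_Divergent_Counterexample} is granted: all the work has already gone into constructing $f$ and showing that the admissible slabs $E_{p,q,R_m}$ pick up large values of $T_tf$ at scales $t \simeq R_m^{-(k-1)}$. The only small verification is to ensure that the times $t_j$ provided by the proposition indeed tend to $0$ (guaranteed by $k \ge 2$ and $R_{m_j} \to \infty$), so that the divergence occurs in the required limiting sense rather than merely at a single time.
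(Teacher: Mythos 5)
Your proof is correct and follows the same argument the paper gives: unwind the $\limsup$ to get infinitely many $m$ with $x \in F_m$, invoke Proposition~\ref{thm:Proposition_Divergent_Counterexample} to produce times $t_j \simeq R_{m_j}^{-(k-1)} \to 0$ along which $|T_{t_j}f(x)| \gtrsim m_j \to \infty$, and conclude.
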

\begin{proof}[Proof of Corollary~\ref{thm:Corollary_Divergent_Counterexample}]
If $x \in F$, then there are infinitely many $m \in \mathbb N$ such that $x \in F_m$. 
That is, there exists an increasing sequence of natural numbers $(a_n)_{n \in \mathbb N}$ such that 
$x \in F_{a_n}$ for all $n \in \mathbb N$. 
According to Proposition~\ref{thm:Proposition_Divergent_Counterexample}, 
for every $n \in \mathbb N$ there exist a time $t_n(x)$ such that 
$t_n(x) \simeq R_{a_n}^{-(k-1)}$ and $| T_{t_n(x)}f(x)| \gtrsim a_n$. 
Thus, $\lim_{n\to\infty} | T_{t_n(x)}f(x) | = \infty$, which 
implies the result because $\lim_{n\to\infty} t_n(x) = 0$. 
\end{proof}

Let us prove Proposition~\ref{thm:Proposition_Divergent_Counterexample}. 
In short, it holds because the main contribution 
to $T_tf$ in the set $F_M$ comes from the component $f_{R_M}$, while 
the effect of the rest of components is negligible.
\begin{proof}[Proof of Proposition~\ref{thm:Proposition_Divergent_Counterexample}]
Fix $M \in \mathbb N$ such that $M \geq m_0$ and let $x \in F_M$. 
For every $m \geq m_0$, we want to compute the contribution of each of the components $f_{R_m}(x)$. 
Recall that $x \in F_M$ implies that there are $q \simeq Q_M$ and $p \in G(q)$ such that $x \in E_{p,q,R_M}$, so 
Proposition~\ref{thm:Proposition_Pierce} suggest the choice
\begin{equation}\label{eq:Time_For_x}
t = t(x) = p_1/(D_M^kq).
\end{equation}
We separate cases:

$\bullet$ 
Case $m = M$. 
By the choice of $t$ in \eqref{eq:Time_For_x}, 
and recalling the definition of $s(\alpha)$ in \eqref{eq:Sobolev_Exponent_Origin},
Proposition~\ref{thm:Proposition_Pierce} implies
\begin{equation}\label{eq:Main_Contribution}
M \,  \frac{ \left| T_t f_{R_M}(x) \right|}{ R_M^{s(\alpha)}\,  \lVert f_{R_M} \rVert_2  } \gtrsim  M.
\end{equation}

$\bullet$
Case $m \neq M$.
The objective is to see that the contribution of $T_tf_{R_m}(x)$ for $m \neq M$ is much smaller, so that
\eqref{eq:Main_Contribution} dominates.  
Thus, we want to bound $\left| T_t f_{R_m}(x) \right|$ from above. 
From \eqref{eq:Evolution_Last_Components} we can directly bound
\begin{equation}
\left| T_t h_{R_m} (x') \right| 
	\leq \sum_{l\in\Z^{n-1}}\psi(D_ml/R_m) \int \big| \widehat{\phi}_2(\xi') \big| \, d\xi' 
	\leq C\left(\frac{R_m}{D_m}\right)^{n-1} \, \lVert \widehat{\phi}_2 \rVert_1, 
\end{equation}
so we write
\begin{equation}\label{eq:Components_Decomposed}
\big| T_t f_{R_m}(x) \big| \lesssim \big| T_t g_{R_m}(x_1) \big| \, \left( \frac{R_m}{D_m} \right)^{n-1}.
\end{equation}
Let us analyze the first component. According to \eqref{eq:Evolution_First_Component}, we can write
\begin{equation}\label{eq:Oscillatory_Integral}
\big| T_t g_{R_m}(x_1) \big| 
	= \Big| \int \widehat{\phi}_1(\eta)\, e^{i\, \lambda \, \theta(\eta)}\, d\eta \Big|, \qquad \text{ where } \qquad \left\{ \begin{array}{l}
\displaystyle \lambda = R_m^{1/2}\, \big|  x_1 + kR_m^{k-1} t \big|, \\
 \\
 \displaystyle \theta(\eta) = \pm \eta + \frac{t}{\lambda}\, \sum_{\ell=2}^k \binom{k}{\ell}\, \eta^\ell\,  R_m^{ k - \ell/2},
\end{array} \right.
\end{equation}
and the sign on $\pm \eta$ depends on the sign of $x_1 + tR_m^{k-1} t$. 
We want to see that
\begin{equation}\label{eq:Derivative_Of_Phase}
\theta'(\eta) = \pm 1 + \frac{t}{\lambda} \, \sum_{\ell = 2}^k \binom{k}{\ell} \, \ell \, \eta^{\ell - 1} \, R_m^{k - \ell/2} \neq 0
\end{equation}
to use the principle of non-stationary phase.
Since $ x \in E_{p,q,R_M}$ and $x_1\in [-1,-1/10]$, then
the condition $|x_1 + kR_M^{k-1} t| < R_M^{-1/2}$ implies, for $M\ge m_0\gg 1$, that
\begin{equation}
\frac{1}{R_M^{k-1}}   
	\lesssim \frac{1}{kR_M^{k-1}}\bigg(-x_1- \frac{1}{R_M^{1/2}}\bigg)
	\leq t \leq \frac{1}{kR_M^{k-1}}\bigg(-x_1+ \frac{1}{R_M^{1/2}}\bigg)
	\lesssim \frac{1}{R_M^{k-1}}.
\end{equation}
To see that the second term of $\theta'(\eta)$ in \eqref{eq:Derivative_Of_Phase} is small, we first bound it by
\begin{equation}\label{eq:Error_In_Phase}
 \left|  \frac{t}{\lambda} \, \sum_{\ell = 2}^k \binom{k}{\ell} \, \ell \, \eta^{\ell - 1} \, R_m^{k - \ell/2} \right| \leq \frac{tR_m^{k-1}}{\lambda}\, \sum_{\ell=2}^k \ell \, \binom{k}{\ell} \simeq_k \frac{tR_m^{k-1}}{\lambda}
\end{equation}
We need to consider two more cases:

$\bullet$ Case $m < M$. In this case $R_m/R_M < 1$, so since $t \simeq 1/R_M^{k-1} $, we can write 
\begin{equation}\label{eq:Small_m}
\left| x_1 + kR_m^{k-1} t\right| 
	= \BigO(R^{-1/2}_M) + k(R_M^{k-1}-R_m^{k-1})t
	\simeq 1,
\end{equation}
so 
\begin{equation}\label{eq:Small_m_Lambda}
\lambda = R_m^{1/2}\, \big|  x_1 + kR_m^{k-1}t \big| \simeq R_m^{1/2}, \qquad \qquad \text{ when } m < M,
\end{equation}
and
\begin{equation}
\frac{tR_m^{k-1}}{\lambda} 
	\simeq \frac{(R_m/R_M)^{k-1}}{R_m^{1/2}\, \left|  x_1 + kR_m^{k-1} t \right|} \lesssim \frac{1}{R_m^{1/2}} < 1/2.
\end{equation}

$\bullet$ Case $m > M$. Now $R_m/R_M > 1$, so since $t \simeq 1/R_M^{k-1} $, we can write
\begin{equation}\label{eq:Large_m}
\left| x_1 + kR_m^{k-1} t\right| 
	= \BigO(R^{-1/2}_M) + k(R_m^{k-1}-R_M^{k-1})t
	\simeq (R_m/R_M)^{k-1},
\end{equation}
so 
\begin{equation}\label{eq:Large_m_Lambda}
\lambda = R_m^{1/2}\, \big|  x_1 + kR_m^{k-1}t \big| 
	\simeq R_m^{1/2}\, \left( \frac{R_m}{R_M} \right)^{k-1} > R_m^{1/2}, \qquad \text{ when } m > M,
\end{equation}
and
\begin{equation}
\frac{t R_m^{k-1}}{\lambda} \simeq \frac{1}{R_m^{1/2}} < 1/2.
\end{equation}

In both cases we get $ tR_m^{k-1}/\lambda < 1/2$, so  from \eqref{eq:Derivative_Of_Phase} and \eqref{eq:Error_In_Phase} we deduce that $\left| \theta'(\eta)\right| > 1/2$.
This allows us to integrate \eqref{eq:Oscillatory_Integral} by parts
as many times as we need to obtain the bound
\begin{equation}
\left| T_t g_{R_m}(x_1) \right| \lesssim_N \frac{1}{\lambda^N}, 
 \qquad \textrm{for all } N \in \mathbb N.
\end{equation}
In \eqref{eq:Small_m_Lambda} and \eqref{eq:Large_m_Lambda} we got $\lambda \gtrsim R_m^{1/2}$ for every $m \neq M$, so $ \left| T_t g_{R_m}(x_1) \right| \lesssim_N  R_m^{-N/2} $ for every $m \neq M$. Together with \eqref{eq:Sobolev_Exponent_Origin} and \eqref{eq:Components_Decomposed}, this implies 
\begin{equation}\label{eq:Remainder_Contribution}
  m \,  \frac{ \left| T_t f_{R_m}(x) \right|}{ R_m^{s(\alpha)}\,  \lVert f_{R_m} \rVert_2  } \lesssim m \,  \frac{ R_m^{1/4}}{ R_m^{s(\alpha)}  } \left( \frac{R_m}{D_m} \right)^{\frac{n-1}{2}}  \, \frac{1}{R_m^{N/2}} = \frac{m \,  Q_m^{ \frac{n-1}{2}} }{R_m^{N/2}} < \frac{1}{R_m}, \qquad \qquad \text{ for all } m \neq M,
\end{equation}
where the last inequality is true if we choose $N$ as large as we need (recall that $Q_m$ is a some power of $R_m$).
Finally, joining \eqref{eq:Main_Contribution} and \eqref{eq:Remainder_Contribution}, 
for $x \in F_M$ we have found a time $t = t(x)$ in \eqref{eq:Time_For_x} such that
\begin{equation}
\begin{split}
\left|  T_{t(x)} f(x) \right| & = \left| \sum_{m \geq m_0} m \, \frac{T_t f_{R_m}(x)}{R_m^{s(\alpha)}\, \lVert f_{R_m} \rVert_2}  \right| \geq M \, \frac{\left| T_t f_{R_M}(x)\right|}{R_M^{s(\alpha)}\, \lVert f_{R_M} \rVert_2}  - \sum_{m \neq M} m \, \frac{\left|  T_t f_{R_m}(x) \right| }{R_m^{s(\alpha)}\, \lVert f_{R_m} \rVert_2} \\
& \geq M - \sum_{m \neq M} \frac{1}{R_m} \geq  M - \sum_{m =1}^\infty \frac{1}{2^m} \\
& \gtrsim M,
\end{split}
\end{equation}
the last inequality being true if $M \geq m_0$ and $m_0$ is large enough.

Incidentally, our computations show that $f$ is essentially equal to $f_{R_M}$
around the plane $\{(x,t)\mid x_1 + kR_M^{k-1} t = 0\}$ when $x\in [-1,-1/10]\times [-1,1]^{n-1}$,
so $f$ is continuous and well defined everywhere around that plane ---
recall our convention \eqref{eq:lim_def_f}.
\end{proof}

To conclude the proof of Theorem~\ref{thm:divergence_k}, 
we are missing to compute the Hausdorff dimension of the set 
of divergence,
$F = \limsup_{m \to \infty} F_m$. 
We do that using the Mass Transference Principle.


\section{The Mass Transference Principle} \label{sec:MTP}

The Mass Transference Principle was introduced in \cite{BeresnevichVelani2006} in the setting of the Duffin-Schaeffer conjecture, 
which has recently been solved,
as a method to compute the $\alpha$-Hausdorff measure of limsup sets. 
Let $x \in \mathbb R^n$ and $r >0$, and 
denote by $B = B(x,r)$ the ball with center at $x$ and radius $r$.
Let $0\le a < 1$ and define the dilated ball $B^a = B(x,r^a)$.
\begin{thm}[Mass Transference Principle, Theorem~2 in \cite{BeresnevichVelani2006} ]\label{thm:MTP}
Let $\{B_i\}_{i\in\mathbb N}$ be a sequence of balls in $\mathbb R^n$. 
Assume that the radii $r_i>0$ of $B_i$ satisfy $\lim_{i\to\infty}r_i=0$, and suppose that 
\begin{equation}\label{eq:MTP_Hypothesis}
\mathcal H^n\left( B\cap \limsup_{i\to\infty} B_i^{s/n}  \right) = \mathcal H^n(B), \qquad \text{ for every ball } B \subset \mathbb R^n.
\end{equation}
Then, 
  \begin{equation}\label{eq:MTP_Conclusion}
\mathcal H^s\left( B\cap \limsup_{i\to\infty} B_i  \right) = \mathcal H^s(B), \qquad \text{ for every ball } B \subset \mathbb R^n.
\end{equation}
\end{thm}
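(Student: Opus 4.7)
The inequality $\mathcal{H}^s(B \cap \limsup_i B_i) \leq \mathcal{H}^s(B)$ is immediate by monotonicity. For $s = n$ the conclusion coincides with the hypothesis, so one may assume $0 < s < n$; in this range $\mathcal{H}^s(B) = \infty$ for every ball of positive radius, and therefore \eqref{eq:MTP_Conclusion} reduces to showing that $B \cap \limsup_i B_i$ has infinite $s$-Hausdorff measure. Equivalently, for every fixed ball $B$ one wants to exhibit a compact subset of $B \cap \limsup_i B_i$ with positive (and, by rescaling, arbitrarily large) $s$-Hausdorff measure.

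The natural route is a Cantor construction inside $B$ together with a Frostman-type probability measure, then an appeal to the mass distribution principle. I would build a nested sequence of finite families of closed balls $\mathcal{K}_0 \supset \mathcal{K}_1 \supset \cdots$, whose members belong to $\{B_i\}_{i\in\mathbb{N}}$ with the indices pushed to infinity along the construction, so that the resulting Cantor set $K = \bigcap_k \bigcup \mathcal{K}_k$ automatically lies in $\limsup_i B_i$. At step $k$, for each parent ball $J \in \mathcal{K}_k$, the hypothesis \eqref{eq:MTP_Hypothesis} applied inside $J$ guarantees that $\limsup_i B_i^{s/n}$ has full $n$-measure in $J$; by a Vitali--Besicovitch covering argument I extract a large but finite pairwise-disjoint subcollection $\{B_{i_j}^{s/n}\}$ with $B_{i_j}^{s/n} \subset \tfrac{1}{2}J$ and total Lebesgue measure bounded below by a fixed fraction of $|J|$. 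The children of $J$ are declared to be the \emph{undilated} balls $B_{i_j}$, which are strictly smaller since $s/n < 1$.

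On the resulting Cantor set I would place the mass-distribution measure in which each parent $J$ splits its mass among its children in proportion to the $s$-powers $r_{i_j}^s$ of the child radii. The heart of the argument is the Frostman estimate $\mu(B(x,r)) \lesssim r^s$, which I would verify by locating the generation $k$ at which $r$ is comparable to a typical child radius and splitting the local mass count into (i) children fully contained in $B(x,r)$, bounded via the $s$-power proportionality, and (ii) the total number of siblings meeting $B(x,r)$, bounded via a packing count on the disjoint dilated shadows $B_{i_j}^{s/n}$, which occupy a Lebesgue volume at most $r^n$. Unwinding these two bounds and using that dilating by the exponent $s/n$ turns the radius $r_{i_j}^s$ (the mass) into $r_{i_j}^n$ (the shadow volume), the two contributions match, and the $s/n$ exponent is precisely what makes the estimates compatible.

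The main obstacle is this Frostman verification, since the balls $B_i$ have no a priori arithmetic structure and the radii $r_i$ can be wildly varying. The standard remedies are to sort the candidates at each stage into dyadic radius classes before invoking disjointness, to shrink each selected $B_{i_j}$ and $B_{i_j}^{s/n}$ by a fixed geometric factor so that children of distinct parents remain well separated, and to pass from the full-measure hypothesis to a quantitative density statement via the Lebesgue density theorem. Carrying these separations uniformly across all stages is the bookkeeping core of the Beresnevich--Velani argument; once it is in place, the mass distribution principle yields $\mathcal{H}^s(K) > 0$, and the freedom to restart the construction inside any subball of $B$ upgrades this to $\mathcal{H}^s(B \cap \limsup_i B_i) = \mathcal{H}^s(B)$.
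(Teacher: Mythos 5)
The paper does not prove this theorem; it is quoted verbatim from Beresnevich and Velani (Theorem~2 of \cite{BeresnevichVelani2006}) and used as a black box, so there is no in-paper argument to compare yours against. Your sketch is a faithful high-level outline of the original Beresnevich--Velani proof: the Cantor scheme nested inside $B$, the $5r$-covering extraction of a disjoint subfamily of dilated balls carrying a fixed fraction of the parent's Lebesgue measure, the mass distribution principle, and the crucial observation that the $s/n$-dilation converts the mass weight $r_{i_j}^{s}$ into the shadow volume $\bigl(r_{i_j}^{s/n}\bigr)^{n}=r_{i_j}^{s}$, so the two bookkeeping quantities agree.

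The point you flag as ``the bookkeeping core'' is, however, a genuine gap as stated rather than a routine verification. If the parent ball $J$ simply hands mass to its children in proportion to $r_{i_j}^{s}$, the inductive constant in the Frostman bound picks up a factor $r(J)^{s-n}$ at each generation (since $\sum_j r_{i_j}^{s}\simeq|J|=c_n\,r(J)^{n}$), and this factor grows without bound as $r(J)\to 0$ when $s<n$. Beresnevich--Velani circumvent this by structuring each generation so that the selected balls have comparable radii (the dyadic sorting you mention is part of this), by inserting intermediate ``levels'' that control the ratio of parent to child radii, and by carrying a more delicate recursive weight in their Lemmas~4--5 that keeps the Frostman constant uniform. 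Since you acknowledge this obstacle and correctly identify the remedies at the level of strategy, your proposal is best read as a credible roadmap to the BV argument rather than a self-contained proof; a complete write-up would need to carry out that iteration explicitly.
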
 
Observe that the hypothesis \eqref{eq:MTP_Hypothesis} means that $\mathcal H^n(\mathbb R^n \setminus \limsup_{i\to\infty}B_i^{s/n}) = 0$, that is, 
that the set $\limsup_{i\to\infty} B_i^{s/n}$ has full measure in $\mathbb R^n$.
On the other hand,  if $s < n$, every ball $B \subset \mathbb R^n$ satisfies $\mathcal H^s(B) = \infty$. 
Thus, the conclusion of Theorem~\ref{thm:MTP} is that $\mathcal H^s(\limsup_{i\to\infty} B_i) = \infty$,  which 
in turn implies that $\dim \left( \limsup_{i\to\infty} B_i \right) \geq s$. 
This means that if we can rescale the balls so that their limsup has full Lebesgue measure,
the Mass Transference Principle gives a lower bound for the Hausdorff dimension of the limsup of the original balls. 

Theorem~\ref{thm:MTP} is a powerful tool to obtain deep results by very simple computations. 
As an example, let us explain how the celebrated Jarn\'ik-Besicovitch theorem \cite{Jarnik1929,Jarnik1931} can be easily obtained using the Mass Transference Principle and  the Dirichlet approximation theorem. 

In Diophantine approximation, a very well-known result 
(consequence either of the Dirichlet approximation theorem or of the theory of continued fractions) 
is that every real number can be approximated by infinitely many rational numbers $p/q$ with an error smaller than $q^{-2}$. 
In other words, 
\begin{equation}\label{eq:Dirichlet_Set}
\left\{ x \in \mathbb R \, : \, \left| x - \frac{p}{q} \right| < \frac{1}{q^2} \, \,\text{ for infinitely many rationals } \frac{p}{q}  \right\} = \mathbb R.
\end{equation}
One may wonder how much better than $q^{-2}$ the error can be made, so 
it is natural to study how large the sets
\begin{equation}\label{eq:JB_Set}
S_\tau = \left\{ x \in [0,1] \, : \, \left| x - \frac{p}{q} \right| < \frac{1}{q^\tau} \, \,\text{ for infinitely many rationals } \frac{p}{q}  \right\}, \qquad \tau \geq 2,
\end{equation}
are. The Jarn\'ik-Besicovitch theorem answers this question.
\begin{thm}[Jarn\' ik-Besicovitch theorem]\label{thm:Jarnik_Besicovitch}
Let $\tau \geq 2$ and $S_\tau$ be defined in \eqref{eq:JB_Set}. Then, 
\begin{equation}
\dim S_\tau = 2/\tau.
\end{equation} 
\end{thm}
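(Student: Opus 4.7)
The plan is to prove the equality by establishing two matching bounds, $\dim S_\tau \le 2/\tau$ and $\dim S_\tau \ge 2/\tau$. The upper bound is a soft covering computation; the lower bound is exactly the situation the Mass Transference Principle was designed for, and is the whole point of showcasing this example.

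For the upper bound, I would use the natural cover of $S_\tau$ by balls $B(p/q, q^{-\tau})$. Since $S_\tau \subset [0,1]$, for any cutoff $N$ we have
\begin{equation}
S_\tau \subset \bigcup_{q \ge N} \bigcup_{p=0}^{q} B(p/q, q^{-\tau}),
\end{equation}
because every $x\in S_\tau$ has approximations $p/q$ with $q$ arbitrarily large. Then for $s>2/\tau$, the Hausdorff pre-measure bound
\begin{equation}
\sum_{q\ge N}\sum_{p=0}^{q} (2 q^{-\tau})^{s} \;\lesssim\; \sum_{q\ge N} q^{1-\tau s}
\end{equation}
converges (since $\tau s>2$) and tends to $0$ as $N\to\infty$. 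Hence $\mathcal{H}^{s}(S_\tau)=0$ for every $s>2/\tau$, giving $\dim S_\tau \le 2/\tau$.

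For the lower bound, I would apply Theorem~\ref{thm:MTP} to the family of balls $B_{p,q} = B(p/q, q^{-\tau})$ (enumerated as a sequence so that the radii tend to $0$), targeting the exponent $s=2/\tau$ in dimension $n=1$. The dilated balls are exactly
\begin{equation}
B_{p,q}^{s/n} = B(p/q, \, (q^{-\tau})^{2/\tau}) = B(p/q, \, q^{-2}),
\end{equation}
so the MTP hypothesis becomes $\mathcal{H}^{1}\!\left(B \cap \limsup_{q\to\infty} B(p/q, q^{-2})\right) = \mathcal{H}^{1}(B)$ for every ball $B\subset\mathbb{R}$. But this is precisely the content of \eqref{eq:Dirichlet_Set}, the Dirichlet approximation theorem (localized to $B$ by rational translation invariance), which says that $\limsup B(p/q,q^{-2})$ is all of $\mathbb{R}$. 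The MTP then yields $\mathcal{H}^{2/\tau}(S_\tau \cap B) = \mathcal{H}^{2/\tau}(B) = \infty$ for any ball $B\subset[0,1]$, so $\dim S_\tau \ge 2/\tau$.

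The only potentially subtle step is bookkeeping for the MTP application: one must enumerate the balls $\{B_{p,q}\}$ as a single sequence with radii tending to zero, and justify that the Dirichlet statement restricted to an arbitrary ball $B$ still gives full $\mathcal{H}^1$-measure (which follows by the density of rationals and the fact that the Dirichlet set is invariant under integer translations, so every subinterval inherits full measure). There is no genuine obstacle here — the entire strength of the theorem is loaded into the MTP, which trades the delicate Jarník-Besicovitch dimension computation for the elementary Dirichlet fact plus a scaling exponent. This is precisely the efficiency that the paper wishes to highlight before deploying the rectangle-to-rectangle version of the MTP for the more intricate divergence sets in Section~\ref{sec:FractalSet}.
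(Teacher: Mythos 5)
Your proof matches the paper's almost exactly: the same covering argument for the upper bound (with convergence of $\sum q^{1-\tau s}$ for $s>2/\tau$), and the same application of Theorem~\ref{thm:MTP} with dilation exponent $2/\tau$ so that the dilated balls are the Dirichlet balls $B(p/q,q^{-2})$ whose limsup has full measure. The only cosmetic difference is that you localize explicitly to an arbitrary ball $B$, while the paper invokes the localized MTP hypothesis directly from \eqref{eq:Dirichlet_Set}; the substance is identical.
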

As usual, the upper bound is a direct consequence of the covers
$ S_\tau \subset \bigcup_{q \geq Q} B \left( p/q, 1/q^\tau \right) $ 
for all $Q \in \mathbb N$. 
If $s > 2/\tau$, they yield 
\begin{equation}
\mathcal H^s_{1/Q^\tau} (S^\tau) \leq \sum_{ q \geq Q} \frac{q}{q^{\tau s}}
	=  \sum_{ q \geq Q} \frac{1}{q^{\tau s - 1}} \to 0 \qquad \text{ when } Q \to \infty,
\end{equation}
so $\dim S_\tau \leq 2/\tau$. 
The lower bound, independently obtained by Jarn\'ik \cite{Jarnik1929,Jarnik1931} and Besicovitch \cite{Besicovitch1934}, 
is not that easy to prove by hand.
Let us use Theorem~\ref{thm:MTP} instead. 
Recall first that the limsup of a sequence of sets $(F_j)_{j \in \mathbb N}$ 
is defined as 
$\limsup_{j \to \infty} F_j = \bigcap_{J \in \mathbb N} \bigcup_{j \geq J} F_j$, 
so it can be characterized as
\begin{equation}\label{eq:Limsup_Characterization}
x \in \limsup_{j \to \infty} F_j \qquad \Longleftrightarrow \qquad \exists \text{ infinitely many } j \in \mathbb N \text{ such that } x \in F_j.
\end{equation}
For the sets $S_\tau$, it suffices to work with rationals $p/q \in [0,1]$, which can be ordered increasingly in $q$ given that $0 \leq p \leq q$.
Thus, $S_\tau$ is the limsup of the balls $B(p/q,1/q^\tau)$.
Now, by \eqref{eq:Dirichlet_Set} we know that $S_2$ has full Lebesgue measure, and 
\begin{equation}
\mathcal H^n \left( \limsup_{\substack{q \to \infty \\ 0 \leq p \leq q}} B\left(  \frac{p}{q}, \frac{1}{q^\tau} \right)^{2/\tau}  \right) = \mathcal H^n \left( \limsup_{\substack{q \to \infty \\ 0 \leq p \leq q}} B\left(  \frac{p}{q}, \frac{1}{q^2} \right)  \right) = \mathcal H^n \left( S_2 \right) = 1.
\end{equation}
Theorem~\ref{thm:MTP} implies that 
\begin{equation}
\mathcal H^{2/\tau} \left( S_\tau \right) =  \mathcal H^{2/\tau} \left( \limsup_{\substack{q \to \infty \\ 0 \leq p \leq q}} B\left(  \frac{p}{q}, \frac{1}{q^\tau} \right)  \right) = \infty
\end{equation}
and thus $\dim S_\tau \geq 2/\tau$.

In a similar way, we will use the Mass Transference Principle 
to compute the Hausdorff dimension of 
the divergence set of $T_tf$. 
However, Theorem~\ref{thm:MTP} employs a transition from balls to balls, 
while our sets are limsups of rectangles, not balls.
But the Mass Transference Principle has been adapted to tackle 
transitions from balls to rectangles in \cite{WangWuXu2015} and 
from rectangles to rectangles recently in \cite{WangWu2021}. 
We make use of the latter.
For that, we need to adapt our setting to the framework 
in \cite[Section 3.1]{WangWu2021}.
For the sake of comparison, we use the same notation as there.

Recall that our divergence set $F$ is the limsup of the sets $F_m$ in \eqref{eq:Divergence_Set_Levels}.
To define a numbering $J$ of the slabs in $F$,
we number the slabs within each $F_m$,
so we index each slab in $F$ as $\alpha = (m,\nu)$,
where $\nu$ refers to the numbering within $F_m$.
The resonant sets $\{\mathcal{R}_\alpha\mid \alpha\in J\}$ are thus the centers of the admissible slabs \eqref{eq:Slabs},
so the scaling property is $\kappa = 0$ (see \cite[Definition~3.1]{WangWu2021}).

Also following \cite{WangWu2021}, we define the function $\beta:J\to\R_+$ as $\beta(m,i) = 2^m = R_m$
and the functions $u_m = l_m = \beta(m,i)$ so that 
$J_m = \{\alpha\in J\mid l_m\le\beta(\alpha)\le u_m\}$ index the slabs in $F_m$.
If we define $\rho(u) = u^{-1}$, then our slabs can be written as
\begin{equation}
E_{p,q,R_m} = B(\mathcal{R}_\alpha, \rho(R_m))^{\vc{b}}
	= \prod_{i = 1}^n B \left( (\mathcal{R}_{\alpha})_i , \rho(R_m)^{b_i} \right),
\end{equation}
where $\vc{b} = (1/2,1,\ldots, 1)$.

The following is \cite[Definition~3.3]{WangWu2021}, which
means that the dilated rectangles $ B(\mathcal{R}_\alpha, \rho(R_m))^{\vc{a}}$, 
with $\boldsymbol{a} = (a_1, \ldots, a_n)$ and $a_i\le b_i$, fill the space.

\begin{defin}[Uniform local ubiquity for rectangles] \label{def:UniformLocalUbiquity}
A system $\{\mathcal{R}_\alpha\mid \alpha\in J\}$ is uniformly locally ubiquitous
with respect to $(\rho, \vc{a})$ if there exists a constant $c>0$
such that for any ball $B$
\begin{equation} \label{eq:def:UniformLocalUbiquity}
\mathcal{H}^n\Big(B\cap\bigcup_{\alpha\in J_m} B(\mathcal{R}_\alpha, \rho(R_m))^{\vc{a}}\Big)\
	\ge c\,\mathcal{H}^n(B) \qquad \textrm{for all } m\ge m_0(B).
\end{equation}
\end{defin}

Uniform local ubiquity is actually stronger than full measure, and 
it can be used to get some refinements of the Mass Transference Principle,
but we do not exploit them to avoid unnecessary technical details. 

We state now an adapted version of Theorem~3.1 of \cite{WangWu2021}
considering the simplification in Proposition~3.1 there.

\begin{thm}[Mass Transference Principle from rectangles to rectangles]\label{thm:MTP_Rectangles}
In $\R^n$, let $\{\mathcal{R}_\alpha\mid \alpha\in J\}$ be a uniformly locally ubiquitous 
system with respect to $(\rho,\vc{a})$.
Let $\vc{b} = (b_1, \ldots, b_n)$ be an exponent with $b_i\ge a_i$ for $i=1,\ldots,n$, and
define the set 
\begin{equation}
W(\vc{b}) 
	= \left\{ x \in \mathbb R^n  \, : 
		\, x \in B(\mathcal{R}_\alpha, \rho(R_m))^{\vc{b}} \textrm{ for infinitely many } \alpha \in J \,  \right\}.
\end{equation}
Then, 
\begin{equation}\label{eq:MTP_Dimension}
\dim W(\vc{b}) \geq \min_{A \in \mathcal A} \left\{  \sum_{j \in K_1(A)} 1 +  \sum_{j \in K_2(A)} \left( 1 - \frac{b_j - a_j}{A} \right) + \sum_{j \in K_3(A)}  \frac{a_j}{A}   \right\},
\end{equation}
where $\mathcal A = \{ b_1, \ldots, b_n\}$ and for every $A \in \mathcal A$, 
the sets $K_1(A), K_2(A)$ and $K_3(A)$ form a partition of $\{1, \ldots, n\}$ in the following way:
\begin{equation}
K_1(A) = \{  j \, : \, a_j \geq A \}, \quad K_2(A) = \{ j \, : \, b_j \le A  \}\setminus K_1(A), 
	\quad K_3(A) = \{ 1, \ldots, n \} \setminus (K_1(A) \cup K_2(A)). 
\end{equation}
\end{thm}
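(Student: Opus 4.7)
The approach I would take is the classical Frostman-measure construction: build a Cantor-like subset of $W(\vc{b})$, equip it with a well-chosen probability measure, and apply the Mass Distribution Principle to extract the Hausdorff dimension. The minimum over $A \in \mathcal{A}$ should emerge naturally as a supremum over scales at which one estimates the $\mu$-mass of a test ball.

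First I would fix a rapidly increasing sequence $m_1 < m_2 < \cdots$ and inductively construct nested families $\mathcal{F}_1, \mathcal{F}_2, \ldots$ of rectangles of shape $\vc{b}$ at scales $\rho(R_{m_s})$. At step $s+1$, inside each rectangle $R \in \mathcal{F}_s$ I would invoke the uniform local ubiquity hypothesis \eqref{eq:def:UniformLocalUbiquity} — applied to $R$ in place of the ball $B$ — to produce a positive-$\mathcal{H}^n$-fraction cover of $R$ by rectangles $B(\mathcal{R}_\alpha, \rho(R_{m_{s+1}}))^{\vc{a}}$. Since $b_j \geq a_j$, the rectangle $B(\mathcal{R}_\alpha, \rho(R_{m_{s+1}}))^{\vc{b}}$ sits inside the $\vc{a}$-rectangle, and I would declare these to be the members of $\mathcal{F}_{s+1}$ lying in $R$. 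A standard thinning (to a maximal disjoint subfamily, with the $\vc{a}$-rectangles used as buffers) ensures quantitative control of the number $N_s(R)$ of children of $R$; taking $m_{s+1}$ large enough on each step makes $N_s(R)$ as large as needed. The Cantor-like intersection $K_\infty = \bigcap_s \bigcup \mathcal{F}_s$ is a subset of $W(\vc{b})$, and I distribute a probability measure $\mu$ by splitting mass uniformly among the $N_s(R)$ children of each rectangle.

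The heart of the argument is the mass estimate. Given a ball $B(x,r)$, I would choose $s$ so that $\rho(R_{m_{s+1}}) \leq r < \rho(R_{m_s})$ and then pick $A \in \mathcal{A}$ so that $r \approx \rho(R_{m_{s+1}})^A$. The ball intersects a child rectangle $B(\mathcal{R}_\alpha,\rho(R_{m_{s+1}}))^{\vc{b}}$ only if it lies in a tube, and in each coordinate $j$ the tube has size determined by comparing $r = \rho(R_{m_{s+1}})^A$ with $\rho(R_{m_{s+1}})^{a_j}$ and $\rho(R_{m_{s+1}})^{b_j}$:
\begin{itemize}
\item For $j \in K_1(A)$ (i.e.\ $a_j \geq A$) the $\vc{a}$-rectangle is shorter than $r$ in direction $j$, so the ball contains the whole extent and contributes a factor $r$ — dimensional weight $1$;
\item For $j \in K_2(A)$ (i.e.\ $b_j \leq A$) the $\vc{b}$-rectangle is longer than $r$, so the ball restricts inside each selected rectangle with a factor of $r/\rho(R_{m_{s+1}})^{b_j}$ — weight $1 - (b_j - a_j)/A$ after combining with the count;
\item For $j \in K_3(A)$ the ball is longer than the $\vc{b}$-rectangle but shorter than the $\vc{a}$-rectangle, counting $r/\rho(R_{m_{s+1}})^{a_j}$ parent rectangles — weight $a_j/A$.
\end{itemize}
Multiplying these contributions and inserting them into $\mu(B(x,r)) \lesssim r^{\tau}$, the exponent $\tau$ is precisely the expression inside the minimum of \eqref{eq:MTP_Dimension}. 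Taking the infimum over admissible $A$ and applying the Mass Distribution Principle yields $\dim W(\vc{b}) \geq \tau$.

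The main obstacle I expect is the bookkeeping required to make the counting argument match the three-regime partition cleanly. One has to choose $m_{s+1}/m_s$ large enough that errors between consecutive scales (and in particular, the passage from $\vc{a}$-rectangles to $\vc{b}$-rectangles contained in them) do not spoil the bound on $\mu(B(x,r))$; and the finiteness of $\mathcal{A} = \{b_1,\ldots,b_n\}$ as the set of candidate scales for $A$ must be justified, since a priori one might need a continuum of scales. The latter reduces to checking that the function $A \mapsto \tau(A)$ is piecewise linear and attains its minimum at one of the breakpoints — which is exactly the set $\mathcal{A}$ — so the discrete optimization in \eqref{eq:MTP_Dimension} suffices.
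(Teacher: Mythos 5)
The paper does not prove Theorem~\ref{thm:MTP_Rectangles}: it is quoted, in simplified form via Proposition~3.1 there, directly from Wang and Wu \cite{WangWu2021}, so there is no in-paper argument to compare against. Your sketch is a faithful outline of the strategy of that cited source: iterate the ubiquity hypothesis at a rapidly growing subsequence of scales to build a nested Cantor-type subset of $W(\vc{b})$, spread a probability measure uniformly over the children at each stage, estimate $\mu(B(x,r))$ by splitting the coordinates according to how $r$ compares with $\rho^{a_j}$ and $\rho^{b_j}$, and close with the Mass Distribution Principle. Your three regimes correspond exactly to $K_1(A)$, $K_2(A)$, $K_3(A)$, and your remark that the continuous optimization over $A>0$ reduces to the finite set $\mathcal{A}=\{b_1,\ldots,b_n\}$ is correct and worth spelling out: writing $\tau(A)$ for the bracketed expression, the $j$-th summand equals $1$ for $A\le a_j$, equals $a_j/A$ (decreasing) for $a_j\le A\le b_j$, and equals $1-(b_j-a_j)/A$ (increasing) for $A\ge b_j$, so every downward-to-upward kink of $\tau$ occurs at some $b_j$ and the infimum is attained on $\mathcal{A}$. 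What remains, as you acknowledge, is the genuinely delicate bookkeeping: passing from the overlapping $\vc{a}$-rectangles supplied by ubiquity to a well-separated subfamily of $\vc{b}$-rectangles with a quantitative lower bound on the count, and carrying out the mass estimate for an arbitrary radius $r$ lying strictly between the chosen scales $\rho(R_{m_s})$ and $\rho(R_{m_{s+1}})$ so that $A$ is determined implicitly by $r$ rather than chosen from $\mathcal{A}$ in advance. None of this points to a gap in the approach; it is exactly the work Wang and Wu do in their Section~3, and your outline reproduces it correctly at the level of structure.
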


\begin{rmk}
Like the original Mass Transference Principle in Theorem~\ref{thm:MTP}, one can interpret this result in terms of dilations.
Denote $ \vc{a} / \vc{b} = (a_1/ b_1, \ldots a_n/b_n) $ so that 
$B(\mathcal{R}_\alpha, \rho)^{\vc{a}}= \left( B(\mathcal{R}_\alpha, \rho)^{\vc{b}} \right)^{\vc{a}/\vc{b}}$. 
Thus, if we dilate the original balls $B(\mathcal{R}_\alpha, \rho)^{\boldsymbol{b}}$ with a dilation exponent $\vc{s} = \boldsymbol{a} / \vc{b}$ to
obtain balls $B(\mathcal{R}_\alpha, \rho)^{\vc{a}}$ whose limsup has full Lebesgue measure, then 
the dimension of $W(\vc{b})$ is given by \eqref{eq:MTP_Dimension}, 
which can be written in terms of $\vc{b}$ and $\vc{s}$ only.
\end{rmk}

\begin{rmk}
Theorem~\ref{thm:MTP} is essentially recovered by setting $\vc{b} = (1, \ldots, 1)$ and  $\vc{a} = (a, \ldots, a)$, with $a<1$, so that 
$K_2(1) = \{1, \ldots, n\}$ and $\dim W(\vc{b}) \geq na = s$.
Indeed, $B^{s/n} = B^a$.
\end{rmk}


\section{The set of divergence and its Hausdorff dimension} \label{sec:FractalSet}

We now apply the Mass Transference Principle in Theorem~\ref{thm:MTP_Rectangles}
 to compute the Hausdorff dimension of the divergence set $F = \limsup_{m \to \infty}F_m$.

\subsection{The set and the geometric parameters}
Let $R > 1$ and $D,Q \geq 1$ be our usual two parameters, 
which are powers of $R$. Based on the slabs \eqref{eq:Slabs}, we define 
\begin{equation}\label{eq:SetsOfDivergence}
\begin{split}
F_{R} & = \bigcup_{Q/2 \leq q \leq Q}  \, \bigcup_{p \in G(q)}  E_{p,q,R} \\
& = \bigcup_{Q/2 \leq q \leq Q}  \, \bigcup_{p \in G(q)}  B_1 \left(  k\, \frac{R^{k-1}}{D^k} \frac{p_1}{q} , \frac{1}{R^{1/2}}  \right)  \, \times \, B_{n-1} \left( \frac{1}{D}\, \frac{p'}{q} , \frac{1}{R}  \right).
\end{split}
\end{equation}
The sets $F_m$ in \eqref{eq:Divergence_Set_Levels} that 
give rise to the divergence set $F$ correspond to $F_m = F_{R_m}$ for the particular choice of $R = R_m = 2^m$.
As we will see, the Hausdorff dimension of $F$ depends on the choice of $D$ and $Q$.
In short, to use the Mass Transference Principle in Theorem~\ref{thm:MTP_Rectangles}, 
we need to find a dilation of the slabs in \eqref{eq:SetsOfDivergence} such that 
the limsup of the dilated slabs cover the whole $[-1,1]^n$ 
(or, more precisely, such that they satisfy \eqref{eq:def:UniformLocalUbiquity}). 

For that purpose, it is convenient to replace $D$ and $Q$ with two new parameters that 
control the separation between slabs. 
For each $q \simeq Q$, the separation in the coordinate $x_1$ of 
two consecutive slabs in \eqref{eq:SetsOfDivergence} is $kR^{k-1}/(D^kQ)$, so 
let us define the parameter $u_1$ by
\begin{equation}\label{eq:Geometric_Parameter_1}
R^{u_1} = \frac{D^k Q}{R^{k-1}}.
\end{equation}
In the same way, the separation in each of the remaining coordinates $x_j$ is $1/(DQ)$, so we define
\begin{equation}\label{eq:Geometric_Parameter_2}
R^{u_2} = DQ.
\end{equation}
Direct computation shows that \eqref{eq:Geometric_Parameter_1} and \eqref{eq:Geometric_Parameter_2} can be reversed as
\begin{equation}\label{eq:D_and_Q_in_terms_of_R}
D = R^{1 - \frac{u_2 - u_1}{k - 1}}, \qquad \qquad Q = R^{\frac{ku_2 - u_1}{k-1} - 1}.
\end{equation} 
We have thus a one to one correspondence 
between the pairs $(D,Q)$ and $(u_1,u_2)$. 
Even if $(D,Q)$ are natural to build 
the counterexample \eqref{eq:Counterexample}, 
$(u_1,u_2)$ are more convenient to work geometrically.

\subsection{Basic restrictions for the parameters}
The new parameters $(u_1,u_2)$ cannot be arbitrary.
In this subsection we determine the basic restrictions 
they have to satisfy and delimit their domain in the plane. 

First, for each $q$, the separation in $x_1$ should be 
small enough so that there is more than a single slab in $[0,1]$. 
For that, we ask  
\begin{equation}\label{eq:Restriction_1}
\frac{1}{R^{u_1}} \le 1 \qquad \Longrightarrow \qquad u_1 \geq 0.
\end{equation}
Also, the slabs will not intersect in direction $x_1$ if we ask 
\begin{equation}\label{eq:Restriction_2}
\frac{1}{R^{1/2}} \leq \frac{1}{R^{u_1}} \quad \Longrightarrow \quad u_1 \leq 1/2.
\end{equation}
Analogously, in the coordinates $x_2, \ldots, x_n$ we require 
\begin{equation}\label{eq:Restriction_3}
\frac{1}{R^{u_2}} \leq 1 \quad \Longrightarrow \quad u_2 \geq 0 \qquad \text{ and } \qquad \frac{1}{R} \leq \frac{1}{R^{u_2}} \quad \Longrightarrow \quad u_2 \leq 1.
\end{equation}
One more restriction comes from the condition $Q \geq 1$, which 
by \eqref{eq:D_and_Q_in_terms_of_R} implies 
\begin{equation}\label{eq:Restriction_4}
\boxed{ ku_2 - u_1 \geq k-1. }
\end{equation}
Inequalities \eqref{eq:Restriction_1}, \eqref{eq:Restriction_2}, \eqref{eq:Restriction_3}
and \eqref{eq:Restriction_4} determine the basic region of validity for $(u_1,u_2)$, 
which we show in Figure~\ref{fig:Trapezoid_Basic}.
We remark that, in particular, we have 
\begin{equation}\label{eq:u2_Greater_Than_1_2}
u_2 \geq \frac{u_1}{k} + \frac{k-1}{k} \geq \frac{k-1}{k} \geq \frac12
\end{equation}
as a consequence of \eqref{eq:Restriction_1} and $k \geq 2$.

\begin{figure}[t]
\centering
\includegraphics[width=0.7\linewidth]{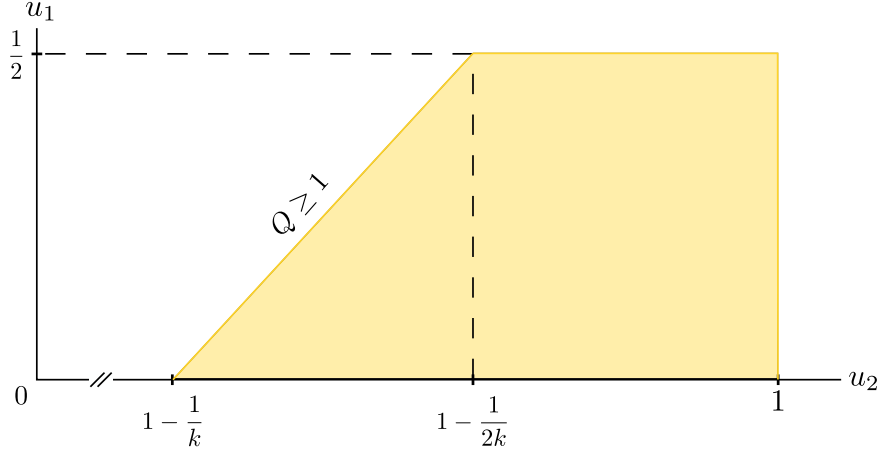}
\caption{In yellow, the basic trapezoidal region for the parameters $(u_1,u_2)$.}
\label{fig:Trapezoid_Basic}
\end{figure}

To apply the Mass Transference Principle, we need to impose 
further restrictions on $(u_1,u_2)$.

\subsection{Preparing the setting for the Mass Transference Principle}\label{sec:Preparing_For_MTP}
We now turn to the setting of 
the Mass Transference Principle in Theorem~\ref{thm:MTP_Rectangles}.
Recall from \eqref{eq:SetsOfDivergence} that the slabs are 
\begin{equation}\label{eq:Slabs_2}
E_{p,q,R} = B_1 \left(  k\, \frac{R^{k-1}}{D^k} \frac{p_1}{q} , \frac{1}{R^{1/2}}  \right)  \, \times \, B_{n-1} \left( \frac{1}{D}\, \frac{p'}{q} , \frac{1}{R}  \right).
\end{equation}
As we mentioned in Section~\ref{sec:MTP},
the radius parameter $\rho(R) = 1/R \to 0$, so we need:
\begin{itemize}
	\item  the exponent 
	\begin{equation}
	\boldsymbol{b} = (b_1, b_2,\ldots, b_2) = (1/2, 1,\ldots, 1),
	\end{equation}
	corresponding to the radii of the original slabs \eqref{eq:Slabs_2}.
	
	\item the exponent 
	\begin{equation}
	\boldsymbol{a} = (a_1, a_2,\ldots, a_2)
	\end{equation}	
	 so that the limsup of the dilated slabs
	 \begin{equation}\label{eq:Dilated_Slabs}
E_{p,q,R, \boldsymbol{a}} 
	= B_1 \left(  k\, \frac{R^{k-1}}{D^k} \frac{p_1}{q} , \frac{1}{R^{a_1}}  \right)  \, \times \, B_{n-1} \left( \frac{1}{D}\, \frac{p'}{q} , \frac{1}{R^{a_2}}  \right)
\end{equation} 
satisfies the uniform local ubiquity condition in Definition~\ref{def:UniformLocalUbiquity}.
In other words, for any ball $B$, 
we have to show that the Lebesgue measure of the set
\begin{equation}\label{eq:Dilated_Sets_F}
 F_{R,\vc{a}}\cap B := \bigcup_{Q/2 \leq q \leq Q} \bigcup_{p \in G(q)} E_{p,q,R,\vc{a}}\cap B
\end{equation}
is large.
	\end{itemize}

The set $F_{R, \vc{a}}$ has a periodic structure. 
It is made up of translations of the unit cell
\begin{equation}
\widetilde{F}_{R,\boldsymbol{a}}  
	:= \bigcup_{Q/2 \leq q \leq Q}  \, \bigcup_{p \in G(q) \cap [0,q)^n}  E_{p,q,R,\boldsymbol{a}},
\end{equation}
as shown on the left hand side of Figure~\ref{fig:Transformed_F}.
A ball $B$ contains approximately $\mathcal H^n(B)\, D^{n-1} \, D^k/(kR^{k-1})$ translated and 
disjoint copies of $\widetilde{F}_{R,\boldsymbol{a}}$
as long as $R\gg 1$ and $D$ satisfies 
\begin{equation}\label{eq:Restriction_5}
\frac{D^k}{R^{k-1}} > 1, \qquad \text{ which implies} 
	\qquad \boxed{ u_2 - u_1 < 1 - \frac{1}{k}. }
\end{equation}
Let us define the transformation $T(x) = (\frac{D^k}{k\, R^{k-1}}\, x_1,Dx')$.
Then,  $T( \widetilde{F}_{R, \boldsymbol{a}}) = \Omega_{R,\boldsymbol{a}}$,
where 
\begin{equation}\label{eq:Dilated_Sets_Omega}
\Omega_{R,\boldsymbol{a}} 
	= \bigcup_{Q/2 \leq q \leq Q} \bigcup_{p \in G(q) \cap [0,q)^n} \, B_1 \left(  \frac{p_1}{q} , \frac{D^k}{k\, R^{k-1}\, R^{a_1}}  \right)  \, \times \, B_{n-1} \left( \frac{p'}{q} , \frac{D}{R^{a_2}}  \right)
\end{equation}
is shown on the right hand side of Figure~\ref{fig:Transformed_F}.
\begin{figure}[t]
\centering
\includegraphics[width=0.9\linewidth]{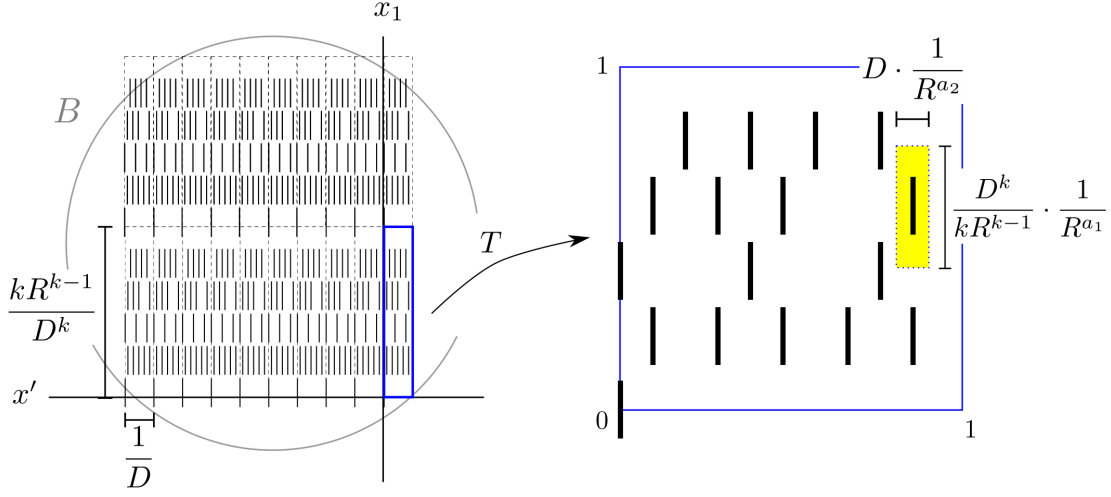}
\caption{At the left, the union $F_R$ of admissible slabs, which has
a periodic structure. 
The unit cell $\widetilde{F}_{R,\boldsymbol{a}}  $ is shown in blue.
At the right, the image 
$T( \widetilde{F}^k_{R, \boldsymbol{a}}) = \Omega_{R,\boldsymbol{a}}$
of the unit cell by the transformation $T$.
In yellow, the image $T(E_{p,q,R,\boldsymbol{a}})$ of 
a dilated admissible slab in \eqref{eq:Dilated_Slabs}.
To use the Mass Transference Principle, 
the set $ \Omega_{R,\boldsymbol{a}}$, which is the union of
the dilated admissible slabs, should cover 
a positive portion of the unit cell.}\label{fig:Transformed_F}
\end{figure}
By the scaling properties of the Lebesgue measure,
\begin{equation}
\mathcal H^n \big(  \widetilde{F}_{R, \boldsymbol{a}}  \big) =  \frac{kR^{k-1}}{D^k}\, \frac{1}{D^{n-1}}\,  \mathcal H^n (\Omega_{R,\boldsymbol{a}}),
\end{equation}
so $\mathcal H^n( F_{R, \vc{a}}\cap B)/\mathcal H^n(B) \simeq \mathcal H^n(\Omega_{R,\vc{a}})$.
Thus, to check whether the dilated slabs $E_{p,q,R,\boldsymbol{a}}$ 
form a uniformly locally ubiquitous system, 
it is enough to prove that $\mathcal H^n (\Omega_{R,\boldsymbol{a}}) \ge c > 0$.  

We now give some basic restrictions for $\boldsymbol{a}$. 
First, we need that the dilated slabs \eqref{eq:Dilated_Slabs} are contained in $[-1,1]^n$, so 
\begin{equation}\label{eq:Restriction_MTP_1}
\frac{1}{R^{a_1}}, \frac{1}{R^{a_2}} \leq 1 \qquad \Longrightarrow \qquad a_1, a_2 \geq 0.
\end{equation}
Also, we want the dilation \eqref{eq:Dilated_Slabs} to be larger 
than the original slab \eqref{eq:Slabs_2}, so
\begin{equation}\label{eq:Restriction_MTP_2}
\frac{1}{R^{a_1}} \geq \frac{1}{R^{1/2}} \quad \Longrightarrow \quad a_1 \leq 1/2, \qquad \text{ and } \qquad \frac{1}{R^{a_2}} \geq \frac{1}{R} \quad \Longrightarrow \quad a_2 \le 1.
\end{equation}

\subsection{Conditions for uniform local ubiquity}

We now look for the conditions on $\boldsymbol{a}$ so that $\mathcal H^n(\Omega_{R,\boldsymbol{a}}) \geq c>0$.
In \cite[Section 4]{Pierce2021}, they found conditions that yield the slightly weaker result $\mathcal H^n(\Omega_{R,\boldsymbol{a}}) \geq 1/\log Q$. 
In the following lines, we avoid the logarithmic loss
by making the dilated slabs $E_{p,q,R,\boldsymbol{a}}$ an $\epsilon$ larger,
and hence the dilation exponent $\boldsymbol{a}$ 
an $\epsilon$ smaller. 
This eventually results in a loss of an $\epsilon$ in the Hausdorff dimension of $F$, 
which will not affect the final result.
We need the following auxiliary lemma, which is \cite[Lemma 4.1]{Pierce2021}.

\begin{lem}[ Lemma 4.1 of \cite{Pierce2021} ]\label{thm:Lemma_Pierce_Separation}
Let $J$ be a finite set of indices and $\{I_j\}_{j \in J}$ be a collection of measurable sets in $\mathbb R^n$.
Suppose that these sets have comparable sizes, that is, 
$B_0 \leq |I_j| \leq B_1$ for every $j \in J$, and that 
they are regularly distributed in the sense that 
\begin{equation}\label{eq:Lemma_Separation_Condition}
\abs{\{ (j,j') \in J \quad : \quad I_j \cap I_{j} \neq \emptyset \}} \leq C_1 \, |J|. 
\end{equation}
Then, 
\begin{equation}
\big| \bigcup_{j \in J} I_j \big| \geq \frac{B_0}{B_1 \, C_1} \, \sum_{j \in J} |I_j|.
\end{equation}
\end{lem}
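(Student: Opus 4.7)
The plan is to carry out a standard second-moment (Cauchy--Schwarz / overlap) argument. Introduce the counting function
\begin{equation}
f(x) = \sum_{j\in J} \ind_{I_j}(x),
\end{equation}
which is nonnegative and supported in the union $U = \bigcup_{j\in J} I_j$. By the Cauchy--Schwarz inequality applied to $f = f \cdot \ind_U$,
\begin{equation}
\left(\int f\right)^{2} \;\le\; \abs{U}\cdot \int f^{2}.
\end{equation}

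Next I would rewrite both integrals combinatorially. On the one hand $\int f = \sum_{j\in J} \abs{I_j}$, and the lower bound $\abs{I_j}\ge B_0$ yields $\int f \ge B_0\abs{J}$, so
\begin{equation}
\Big(\sum_{j\in J}\abs{I_j}\Big)^{2} \;=\; \Big(\int f\Big)^{2} \;\ge\; B_0\abs{J}\sum_{j\in J}\abs{I_j}.
\end{equation}
On the other hand $\int f^{2} = \sum_{j,j'\in J}\abs{I_j\cap I_{j'}}$, and only pairs with $I_j\cap I_{j'}\neq\emptyset$ contribute; each such intersection has measure at most $B_1$, and by hypothesis \eqref{eq:Lemma_Separation_Condition} there are at most $C_1\abs{J}$ such pairs. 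Hence
\begin{equation}
\int f^{2} \;\le\; C_1\abs{J}\, B_1.
\end{equation}

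Plugging both estimates into Cauchy--Schwarz gives
\begin{equation}
B_0\abs{J}\sum_{j\in J}\abs{I_j} \;\le\; \abs{U}\cdot C_1\abs{J} B_1,
\end{equation}
and dividing by $C_1\abs{J}B_1$ yields exactly the claimed bound $\abs{U}\ge (B_0/(B_1 C_1))\sum_{j}\abs{I_j}$. There is really no hard step here; the only thing to watch is that the overlap hypothesis is formulated as a bound on the number of pairs with nonempty intersection (rather than, say, an $L^\infty$ bound on the multiplicity function), which is precisely what is needed to control $\int f^2$ after using $\abs{I_j\cap I_{j'}}\le B_1$. The argument is essentially identical to the classical proof that a system of sets of comparable size with controlled pairwise overlap behaves, in terms of total measure, like a disjoint system up to the constant $B_0/(B_1 C_1)$.
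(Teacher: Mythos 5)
Your proof is correct. The paper does not actually supply a proof of this lemma — it is imported verbatim as Lemma 4.1 of \cite{Pierce2021} — but the second-moment argument you give is the standard one for statements of this type, and it is almost certainly the one in the cited source. Each step checks out: the Cauchy--Schwarz estimate $(\int f)^2\le|U|\int f^2$ for $f=\sum_j \ind_{I_j}$ supported on $U$, the identity $\int f^2=\sum_{j,j'}|I_j\cap I_{j'}|$, the bound $|I_j\cap I_{j'}|\le B_1$ for intersecting pairs combined with the cardinality hypothesis to get $\int f^2\le B_1 C_1|J|$, and the slightly clever lower bound $(\sum_j|I_j|)^2\ge B_0|J|\sum_j|I_j|$ which lets the factor $|J|$ cancel cleanly. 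Nothing is missing.
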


With this lemma, we can prove the following proposition, 
which is an adapted version of \cite[Proposition 4.2]{Pierce2021}
and should be compared with Minkowski's theorem on 
intersections of lattice points with a convex, symmetric body.
\begin{prop}\label{thm:Proposition_Pierce_Adapted}
Let  $R,D,Q$ be the parameters of our problem. 
Define $\mathcal P_Q = \{ q \in [Q/2,Q] \, : \, q \text{ prime }  \}$, and
let $G(q) \subset [0,q)^n$ satisfy $\abs{G(q)} \simeq q^n$ for every $q \in \mathcal P_Q$.
Suppose there exist $0 < E_1,E_2,E_3,E_4 < 1$ such that 
\begin{equation}\label{eq:Conditions_For_h}
\frac{E_1}{x^\alpha}  \leq h_1(x) \leq \frac{E_2}{x^\alpha} \qquad \text{ and } \qquad \frac{E_3}{x^\beta}  \leq h_2(x) \leq \frac{E_4}{x^\beta}
\end{equation} 
with $ \alpha, \beta \geq 1 $.
If
\begin{equation}
\Omega = \bigcup_{q \in P_Q} \bigcup_{p \in G(q)} \, B_1 \left(  \frac{p_1}{q} , h_1(Q)  \right)  \, \times \, B_{n-1} \left( \frac{p'}{q} , h_2(Q)  \right),
\end{equation} 
then
\begin{equation}
\mathcal H^n(\Omega) \gtrsim  \frac{  |\mathcal P_Q|  \, Q^n \, h_1(Q) h_2(Q)^{n-1}  }{1 +   |\mathcal P_Q| \,  Q^n\,  h_1(Q) h_2(Q)^{n-1} }.
\end{equation}
In particular, if $h_1(Q)\, h_2(Q)^{n-1} \simeq 1/Q^{n+1-\epsilon}$ for $\epsilon >0$,
then
\begin{equation}\label{eq:Omega_Positive_Measure}
\mathcal H^n (\Omega) \geq c>0.
\end{equation}
\end{prop}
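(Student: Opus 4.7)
The plan is to apply Lemma~\ref{thm:Lemma_Pierce_Separation} directly to the collection $\{I_{p,q}\}$ indexed by $q\in\mathcal{P}_Q$ and $p\in G(q)$. Each rectangle has the common volume $V:=2^n\,h_1(Q)\,h_2(Q)^{n-1}$, so we take $B_0=B_1=V$, while the total mass is $\sum_j|I_j|\simeq M:=|\mathcal{P}_Q|\,Q^n\,h_1(Q)\,h_2(Q)^{n-1}$. Everything therefore reduces to establishing a bound $C_1\lesssim 1+M$ for the overlap constant in \eqref{eq:Lemma_Separation_Condition}; Lemma~\ref{thm:Lemma_Pierce_Separation} will then yield the main inequality $\mathcal H^n(\Omega)\gtrsim M/(1+M)$.

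Overlaps with a common denominator $q=q'$ are cheap: the centers $p/q$ are at distance at least $1/q\geq 1/Q$ in any coordinate, while the hypotheses $\alpha,\beta\geq 1$ together with $E_2,E_4<1$ force $Q h_i(Q)\leq E_i<1$. Hence each rectangle meets only $O_n(1)$ others sharing the same $q$, contributing $O_n(|J|)$ overlaps in total.

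For $q\neq q'$ the key is that $q,q'\in\mathcal{P}_Q$ are \emph{distinct primes}, so $\gcd(q,q')=1$. Coordinatewise, the map $(p_i,p''_i)\mapsto m_i:=p_i q'-p''_i q$ from $[0,q)\times[0,q')\cap\mathbb{Z}^2$ to $\mathbb Z$ is injective (two preimages would differ by a nonzero element of $q\mathbb Z\times q'\mathbb Z$, which cannot fit in the prescribed ranges). Consequently the joint map $(p,p'')\mapsto m$ is injective. Because $I_{p,q}\cap I_{p'',q'}\neq\emptyset$ is equivalent to $|m_i|\leq 2qq'\,h_i(Q)$ for every $i$, the number of overlapping pairs at fixed $q\neq q'$ is bounded by
\begin{equation*}
\prod_{i=1}^n (1+4qq'\,h_i(Q)) \;\leq\; (1+4Q^2 h_1(Q))\,(1+4Q^2 h_2(Q))^{n-1}.
\end{equation*}
Summing over pairs in $\mathcal{P}_Q\times\mathcal{P}_Q$ and dividing by $|J|\simeq|\mathcal{P}_Q|Q^n$ gives
\begin{equation*}
C_1\;\lesssim\; 1+\frac{|\mathcal{P}_Q|}{Q^n}\,(1+4Q^2 h_1(Q))(1+4Q^2 h_2(Q))^{n-1}.
\end{equation*}

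To finish I would expand the product as a sum over subsets $S\subseteq\{1,\ldots,n\}$ of monomials $|\mathcal{P}_Q|\,Q^{2|S|-n}\prod_{i\in S} h_i(Q)$. The full-set monomial is exactly $M$, whereas for any proper subset $S\subsetneq\{1,\ldots,n\}$ the resulting $Q$-exponent simplifies, via $|\mathcal{P}_Q|\leq Q$ and the decay $h_1\leq E_2 Q^{-\alpha}$, $h_2\leq E_4 Q^{-\beta}$ with $\alpha,\beta\geq 1$, to be non-positive, making that monomial $O_n(1)$. Summing the $2^n$ monomials yields $C_1\lesssim 1+M$, and Lemma~\ref{thm:Lemma_Pierce_Separation} concludes the main estimate. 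For the last claim, $h_1 h_2^{n-1}\simeq Q^{-(n+1-\epsilon)}$ together with the prime number theorem gives $M\simeq Q^\epsilon/\log Q\to\infty$, so $M/(1+M)\to 1$ and \eqref{eq:Omega_Positive_Measure} follows. The main technical obstacle is the verification that every non-full monomial is $O_n(1)$; this is where the hypothesis $\alpha,\beta\geq 1$ is used in an essential way.
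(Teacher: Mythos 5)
Your argument follows the paper's proof step for step: apply Lemma~\ref{thm:Lemma_Pierce_Separation} to the rectangles $I_{p,q}$, treat the diagonal, same-$q$, and different-$q$ overlaps separately, and use coprimality of distinct primes $q\neq q'$ to obtain injectivity of $(p_i,p_i'')\mapsto p_iq'-p_i''q$, yielding $C_1\lesssim 1+|\mathcal P_Q|Q^nh_1(Q)h_2(Q)^{n-1}$. Your expansion of $(1+4Q^2h_1)(1+4Q^2h_2)^{n-1}$ into monomials indexed by $S\subseteq\{1,\ldots,n\}$ and the verification that each proper-subset monomial is $O_n(1)$ via $|\mathcal P_Q|\leq Q$ and $\alpha,\beta\geq 1$ is in fact slightly more careful than the paper's direct bound $(2Q^2h_1+1)(2Q^2h_2+1)^{n-1}\leq 4^nQ^{2n}h_1h_2^{n-1}$, which tacitly assumes $Q^2h_i(Q)\gtrsim 1$; this is a minor refinement that does not change the approach, and the rest, including the final application of the prime number theorem, is identical.
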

\begin{rmk}
Proposition 4.2 of \cite{Pierce2021} corresponds 
to asking $ |\mathcal P_Q|  \, Q^n \, h_1(Q) h_2(Q)^{n-1} \ll 1$.
Indeed, taking into account that $|\mathcal P_Q| \simeq Q / \log Q$, 
it amounts to asking $h_1(Q) h_2(Q)^{n-1} \ll \log Q / Q^{n+1}$. 
Given that $h_1(Q) \leq 1/Q$, then it would be enough to have $h_2(Q)^{n-1} \ll \log Q / Q^n$, and in particular, 
$h_2(Q) \leq 1/Q^{n/(n-1)}$. 
Under those conditions, $\mathcal H^n(\Omega) \gtrsim  |\mathcal P_Q|  \, Q^n \, h_1(Q) h_2(Q)^{n-1} $. 
\end{rmk}
\begin{proof}
We use Lemma~\ref{thm:Lemma_Pierce_Separation}. 
For simplicity, let us call
\begin{equation}\label{eq:Omega_Short}
I_{p,q} = B_1 \left(  \frac{p_1}{q} , h_1(Q)  \right)  \, \times \, B_{n-1} \left( \frac{p'}{q} , h_2(Q)  \right) \quad \text{ so that } \quad \Omega  = \bigcup_{q \in \mathcal P_Q} \bigcup_{p \in G(q)} \, I_{p,q}.
\end{equation}
All the slabs have the same measure $\abs{I_{p,q}} = h_1(Q) \, h_2(Q)^{n-1} $, so $B_0=B_1$. 
Since the set of indices in \eqref{eq:Omega_Short} has size 
$\sum_{q \in \mathcal P_Q} \abs{G(q)} \simeq |\mathcal P_Q | \, Q^{n}$, 
to verify \eqref{eq:Lemma_Separation_Condition} we need to prove
\begin{equation}\label{eq:Verification_In_Proof}
\left| \left\{  (p,q), (\tilde{p},\tilde{q}) \in G(q) \times \mathcal P_Q \, : \, I_{p,q} \cap I_{\tilde{p},\tilde{q}} \neq \emptyset \,  \right\} \right| \leq C_1 \, |\mathcal P_Q | \, Q^n.
\end{equation}
We separate into three cases:

$\bullet$
\textbf{Case 1.} 
When the tuples are identical $(p,q) = (\tilde{p},\tilde{q})$, the condition $I_{p,q} \cap I_{\tilde{p},\tilde{q}} \neq \emptyset $ is  obviously satisfied. 
There are $\sum_{q \in \mathcal P_Q} \abs{G(q)} \simeq |\mathcal P_Q | \, Q^n$ such tuples. 

The two other cases concern non identical tuples. In this case, $I_{p,q} \cap I_{\tilde{p},\tilde{q}} \neq \emptyset$ implies 
\begin{equation}\label{eq:Separation_Proof}
\left| \frac{p_1}{q} - \frac{\tilde{p}_1}{\tilde{q}} \right| \leq h_1(Q) \qquad \text{ and } 
\qquad \left| \frac{p}{q} - \frac{\tilde{p}_j}{\tilde{q}} \right| \leq h_2(Q) , \quad j = 2, \ldots, n.
\end{equation}

$\bullet$ 
\textbf{Case 2.}
When tuples are non-identical with $q = \tilde{q}$, then from \eqref{eq:Separation_Proof}
\begin{equation}
|p_1 - \tilde{p}_1| \leq q h_1(Q) \leq E_2 q/Q^\alpha < 1
\end{equation}
because $\alpha \geq 1$.
That implies $p_1 = \tilde{p}_1$. 
In the same way,
\begin{equation}
|p_j - \tilde{p}_j| \leq q h_2(Q) \leq E_2 q/Q^{\beta} < 1, \qquad j = 2, \ldots, n,
\end{equation}
because $\beta \geq 1$. 
Thus, $p_\ell = \tilde{p}_\ell$ for $\ell = 2, \ldots, n$.
This means that $p = \tilde{p}$, so there are no tuples in this case. 
 
$\bullet$
\textbf{Case 3.}
When tuples are non-identical with $q \neq \tilde{q}$, then from \eqref{eq:Separation_Proof} we get 
\begin{equation}\label{eq:Separation_Proof_2}
\left| \tilde{q}p_1 - q \tilde{p}_1  \right| 
	\leq q \tilde{q} h_1(Q) \leq Q^2\, h_1(Q) \quad \text{ and } \quad \left| \tilde{q}p_\ell - q \tilde{p}_\ell  \right| 
	\leq q \tilde{q} h_2(Q) \leq Q^2\, h_2(Q) , \quad \ell = 2, \ldots, n.
\end{equation}
Fix $q \neq \tilde{q}$ and $\ell \in \{1,\ldots, n\}$. 

In how many ways can we represent a given integer $m$ as  
$\tilde{q}p_\ell - q \tilde{p}_\ell$ with $0 \leq p_\ell  < q$ and $0 \leq \tilde{p}_\ell  < \tilde{q}$?
For that, assume $\tilde{q}r_j  - q \tilde{r}_j$ is other representation of $m$ so that 
$\tilde{q}p_\ell  - q \tilde{p}_\ell  = \tilde{q}r_\ell  - q \tilde{r}_\ell $. 
Then, $\tilde{q} (p_\ell  - r_\ell ) = q (\tilde{p_\ell } - \tilde{r}_\ell )$. 
Since $q,\tilde{q} \in \mathcal P_Q$ are coprime, 
$q$ must divide $p_\ell  - r_\ell $ and necessarily $p_\ell  = r_\ell$.
Hence, the representation of $m$ is unique.

Since each integer $m$ with $|m| \leq Q^2 h_1(Q)$ has at most one representation $\tilde{q}p_j - q \tilde{p}_j$, 
it means that there are at most $2Q^2h_1(Q)+1$ pairs $(p_1,\tilde{p}_1)$ that 
can satisfy \eqref{eq:Separation_Proof_2}. 
Analogously, for $\ell = 2, \ldots, n$ and for an integer $m$ with $|m| \leq Q^2 h_2(Q)$ 
there is at most one pair $(p_\ell,\tilde{p}_\ell)$ that represent $m$. 
Thus, there are at most $2Q^2h_2(Q)+1$ pairs $(p_\ell,\tilde{p}_\ell)$ that can satisfy \eqref{eq:Separation_Proof_2}. 
In all, there are at most $(2Q^2h_1(Q)+1)(2Q^2h_2(Q)+1)^{n-1} \leq 4^n \,  Q^{2n} h_1(Q) h_2(Q)^{n-1}$ pairs $(p,\tilde{p})$ that can satisfy \eqref{eq:Separation_Proof_2}.

Finally, since $q, \tilde{q} \in \mathcal P_Q$, there are at most $4^n \, |\mathcal P_Q|^2  Q^{2n} h_1(Q) h_2(Q)^{n-1}$ pairs of tuples $(p,q),(\tilde{p},\tilde{q})$ in case 3.

We now join the three cases to see that 
\begin{equation}
\begin{split}
\left| \left\{  (p,q), (\tilde{p},\tilde{q}) \in G(q) \times \mathcal P_Q \, : \, I_{p,q} \cap I_{\tilde{p},\tilde{q}} \neq \emptyset \,  \right\} \right| & \lesssim  |\mathcal P_Q | \, Q^n +  |\mathcal P_Q|^2  Q^{2n} h_1(Q) h_2(Q)^{n-1} \\
& =  |\mathcal P_Q  | \, Q^n \left( 1 +  |\mathcal P_Q|  Q^n h_1(Q) h_2(Q)^{n-1}   \right). 
\end{split}
\end{equation}
Then, \eqref{eq:Verification_In_Proof} is satisfied with $C_1 = 1 +   |\mathcal P_Q|  Q^n h_1(Q) h_2(Q)^{n-1}  $ and thus Lemma~\ref{thm:Lemma_Pierce_Separation} implies 
\begin{equation}\label{eq:Lower_Bound_Omega}
\mathcal H^n(\Omega) \gtrsim \frac{\sum_{q \in \mathcal P_Q} \sum_{p \in G(q)} h_1(Q) h_2(Q)^{n-1} }{1 +   |\mathcal P_Q|  \, Q^n \, h_1(Q) h_2(Q)^{n-1} } \simeq  \frac{  |\mathcal P_Q|  \, Q^n \, h_1(Q) h_2(Q)^{n-1}  }{1 +   |\mathcal P_Q| \,  Q^n\,  h_1(Q) h_2(Q)^{n-1} }.
\end{equation}

Let now $\epsilon > 0$ and assume that $h_1(Q) h_2(Q)^{n-1} \simeq 1/Q^{n+1 - \epsilon}$. 
That means that 
\begin{equation}
 |\mathcal P_Q| \,  Q^n\,  h_1(Q) h_2(Q)^{n-1} 
 	\simeq \frac{Q^{n+1}}{\log Q} \, \frac{1}{Q^{n+1-\epsilon}} = \frac{Q^{\epsilon}}{\log Q} \gg 1,
\end{equation}
so \eqref{eq:Lower_Bound_Omega} implies $\mathcal H^n(\Omega) \ge c > 0$.
\end{proof}

Coming back to the set $\Omega_{R,\boldsymbol{a}}$, 
and having in mind that both $Q$ and $D$ are powers of $R$, 
we have 
\begin{equation}
h_1(Q) = \frac{D^k}{k R^{k-1} R^{a_1}} \qquad \text{ and } \qquad h_2(Q) = \frac{D}{R^{a_2}}.
\end{equation}
Proposition~\ref{thm:Proposition_Pierce_Adapted} shows 
that $\abs{\Omega_{R,\boldsymbol{a}}}\ge c > 0$ 
(and consequently that the dilated slabs form a uniform local ubiquity system)
whenever the following three conditions hold:
\begin{equation}
(i)\quad\frac{D^k}{k R^{k-1} R^{a_1}} \leq \frac{1}{Q}; \qquad \text{ and } \qquad (ii)\quad \frac{D}{R^{a_2}} \leq \frac{1}{Q};
\end{equation}
and, for $\epsilon >0$, 
\begin{equation}\label{eq:Choice_Of_h1_h2}
(iii)\quad \frac{D^k}{ R^{k-1} R^{a_1}}\, \left(  \frac{D}{R^{a_2}} \right)^{n-1} = \frac{1}{Q^{n+1-\epsilon}} 
\qquad \Longleftrightarrow \qquad Q^{1-\epsilon}\, R^{u_1}  \, R^{(n-1)u_2} \simeq R^{a_1 + (n-1)a_2 }.
\end{equation}
In view of the definition of $(u_1,u_2)$ in \eqref{eq:Geometric_Parameter_1}, \eqref{eq:Geometric_Parameter_2} and \eqref{eq:D_and_Q_in_terms_of_R}, comparing the exponents we get
\begin{equation}\label{eq:Restriction_MTP_3}
(i) \quad a_1 \geq u_1; \qquad \text{ and } \qquad (ii)\quad a_2 \geq u_2;
\end{equation}
and, for $\epsilon >0$,
\begin{equation}\label{eq:Restriction_MTP_4}
(iii)\quad a_1 + (n-1) a_2  = \frac{k-2+\epsilon}{k-1} \, u_1 + \frac{n(k-1) + 1 - k\epsilon}{k-1} \, u_2 - (1-\epsilon). 
\end{equation}
Together with \eqref{eq:Restriction_MTP_1} and \eqref{eq:Restriction_MTP_2}, these complete the restrictions for the exponent $\boldsymbol{a}$.
We gather all in the following lemma.
\begin{lem}\label{thm:Restrictions_For_a}
Fix the parameters $(u_1,u_2)$ and $\epsilon >0$. 
If $R \gg 1$ and 
if the dilation exponent $\boldsymbol{a} = (a_1,a_2)$ 
satisfies $u_1 \leq a_1 \leq 1/2$, $u_2 \leq a_2 \le 1$ and 
\begin{equation}\label{eq:Restriction_MTP_4_Lemma}
a_1 + (n-1) a_2  = \frac{k-2+\epsilon}{k-1} \, u_1 + \frac{n(k-1) + 1 - k\epsilon}{k-1} \, u_2 - (1-\epsilon),
\end{equation}
then the dilated admissible slabs \eqref{eq:Dilated_Slabs} form
a uniformly locally ubiquitous system as in Definition~\ref{def:UniformLocalUbiquity}.
\end{lem}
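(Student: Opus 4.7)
The plan is to collect the reductions already developed in Section~\ref{sec:Preparing_For_MTP} and apply Proposition~\ref{thm:Proposition_Pierce_Adapted}; the lemma amounts to a dictionary between the hypotheses of that proposition (phrased in $h_1,h_2,Q$) and the conditions on $\boldsymbol{a}$ (phrased in $u_1,u_2$).

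First I would recall from Section~\ref{sec:Preparing_For_MTP} that verifying Definition~\ref{def:UniformLocalUbiquity} for the dilated slabs \eqref{eq:Dilated_Slabs} reduces to the single quantitative bound $\mathcal{H}^n(\Omega_{R,\boldsymbol{a}}) \geq c > 0$. Indeed, $F_{R,\boldsymbol{a}}$ is periodic and its unit cell $\widetilde{F}_{R,\boldsymbol{a}}$ is, up to the explicit affine stretching $T$, exactly $\Omega_{R,\boldsymbol{a}}$; any fixed ball $B$ contains roughly $\mathcal{H}^n(B)\, D^{n-1} D^k/(kR^{k-1})$ disjoint translates of the cell provided $R \gg 1$ and $D^k > R^{k-1}$, yielding
\begin{equation}
\frac{\mathcal H^n(F_{R,\boldsymbol{a}}\cap B)}{\mathcal H^n(B)} \simeq \mathcal H^n(\Omega_{R,\boldsymbol{a}}),
\end{equation}
which matches \eqref{eq:def:UniformLocalUbiquity} as soon as the right-hand side is bounded below.

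Next I would invoke Proposition~\ref{thm:Proposition_Pierce_Adapted} with $h_1(Q) = D^k/(kR^{k-1}R^{a_1})$ and $h_2(Q) = D/R^{a_2}$. Its three hypotheses translate cleanly through the identities \eqref{eq:D_and_Q_in_terms_of_R}: the bound $h_1(Q) \leq 1/Q$ becomes $a_1 \geq u_1$; the analogous bound for $h_2$ becomes $a_2 \geq u_2$; and the product equality $h_1(Q) h_2(Q)^{n-1} \simeq Q^{-(n+1-\epsilon)}$ is exactly \eqref{eq:Restriction_MTP_4_Lemma}, via the exponent comparison already worked out in \eqref{eq:Choice_Of_h1_h2}. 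The cardinality assumption $|G(q)| \simeq q^n$ for primes $q \simeq Q$ is supplied by Lemma~\ref{thm:G_q} under the Weil bound on $W_k$, which is in force throughout this section. With all hypotheses verified, the proposition delivers $\mathcal{H}^n(\Omega_{R,\boldsymbol{a}}) \geq c > 0$, completing the proof.

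The upper bounds $a_1 \leq 1/2$ and $a_2 \leq 1$ do not enter the ubiquity argument itself; they are recorded to ensure that the dilated slabs are at least as large as the originals, a requirement for the subsequent application of Theorem~\ref{thm:MTP_Rectangles}. I do not foresee any real obstacle: every step is a direct bookkeeping translation between the $(D,Q)$ and $(u_1,u_2)$ parametrizations. The only mild point of care is that the disjoint-copies count used in the reduction requires the auxiliary condition \eqref{eq:Restriction_5}, so one should verify (or implicitly assume) that the domain of $(u_1,u_2)$ under consideration respects $u_2 - u_1 < 1 - 1/k$, which is guaranteed in the parameter regime where the lemma will subsequently be applied.
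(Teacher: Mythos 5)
Your proposal is correct and follows essentially the same route as the paper, which disposes of the lemma in a single line ("combine the restrictions \eqref{eq:Restriction_MTP_1}--\eqref{eq:Restriction_MTP_4}") because the lemma is designed as a bookkeeping summary of the reduction in Section~\ref{sec:Preparing_For_MTP}. You correctly identify the two-step structure (reduce ubiquity to $\mathcal H^n(\Omega_{R,\boldsymbol{a}})\geq c>0$ via the periodic unit-cell argument, then apply Proposition~\ref{thm:Proposition_Pierce_Adapted} with the appropriate $h_1,h_2$), correctly trace the three hypotheses of that proposition to the conditions $a_1\geq u_1$, $a_2\geq u_2$, and \eqref{eq:Restriction_MTP_4_Lemma}, and correctly note that the upper bounds $a_1\leq 1/2$, $a_2\leq 1$ serve the later application of Theorem~\ref{thm:MTP_Rectangles} rather than the ubiquity claim itself.
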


\begin{rmk}
This lemma means that, for fixed $(u_1,u_2)$, $(a_1,a_2)$ must be chosen from the intersection of the line \eqref{eq:Restriction_MTP_4_Lemma} with the rectangle $[u_1,1/2] \times [u_2,1]$.
Many times, it will be useful to rewrite \eqref{eq:Restriction_MTP_4_Lemma} as 
\begin{equation}\label{eq:Restriction_MTP_4_Lemma_Bis}
 \frac{k-2}{k-1} \, u_1 + \frac{n(k-1) + 1}{k-1} \, u_2  = a_1 + (n-1) a_2 + 1 + \epsilon \left( \frac{ku_2 - u_1}{k-1} - 1 \right).
\end{equation}
To simplify the reading in the forthcoming sections, we denote this domain by $\mathcal A$, that is, 
\begin{equation}\label{eq:Domain_a1a2}
\mathcal A = \mathcal A_{(u_1,u_2),\epsilon} 
	= \left\{ (a_1,a_2) \in [u_1,1/2] \times [u_2,1] \, : \, \eqref{eq:Restriction_MTP_4_Lemma_Bis} \text{ holds}  \right\},
\end{equation}
which we show in Figure~\ref{fig:Domain_For_Dilation}.
\end{rmk}

\begin{proof}[Proof of Lemma~\ref{thm:Restrictions_For_a}]
Combine the restrictions in \eqref{eq:Restriction_MTP_1}, \eqref{eq:Restriction_MTP_2}, \eqref{eq:Restriction_MTP_3} and \eqref{eq:Restriction_MTP_4}.
\end{proof}

From Lemma~\ref{thm:Restrictions_For_a} we get one more restriction for $(u_1,u_2)$. 
Indeed, $(u_1,u_2)$ must satisfy \eqref{eq:Restriction_MTP_4_Lemma_Bis} for every $\epsilon>0$, so 
from $a_1 \leq 1/2$, $a_2 \le 1$ and 
$ku_2 - u_1 \geq k-1$ in \eqref{eq:Restriction_4}, we find out that
\begin{equation}\label{eq:Restriction_6}
\boxed{
\frac{k-2}{k-1} \, u_1 + \frac{n(k-1) + 1}{k-1} \, u_2 \leq n + \frac12. 
}
\end{equation}
This last restriction, together with \eqref{eq:Restriction_4} and \eqref{eq:Restriction_5}, 
as well as the basic restrictions \eqref{eq:Restriction_1}, 
\eqref{eq:Restriction_2} and \eqref{eq:Restriction_3},
determines the region of validity for $(u_1,u_2)$, 
which we gather in the following lemma.
\begin{lem}\label{thm:Restrictions_For_u1u2}
The parameters $(u_1,u_2)$ must satisfy the conditions 
\begin{equation}
0 \leq u_1 \leq \frac12, \qquad \qquad \frac12 \leq u_2 \le 1,
\end{equation}
as well as 
\begin{align}
&ku_2 - u_1 \geq k-1,&
&\textrm{condition } Q \geq 1, \label{eq:R_Q1} \\
&u_2 - u_1 < 1 - \frac{1}{k},&
&\textrm{condition for shrinking unit cell,} \label{eq:R_Unit_Cell} \\
\shortintertext{and} 
&\frac{k-2}{k-1} \, u_1 + \frac{n(k-1) + 1}{k-1} \, u_2 \leq n + \frac12,&
&\textrm{condition for disjointness.} \label{eq:R_Disjointness} 
\end{align}
\end{lem}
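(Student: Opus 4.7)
The plan is straightforward bookkeeping: every inequality in the statement has already been derived earlier in Section~\ref{sec:FractalSet}, so the proof consists of invoking those derivations in sequence. First I would cite the basic trapezoidal bounds $0 \leq u_1 \leq 1/2$ and $0 \leq u_2 \leq 1$, which come from \eqref{eq:Restriction_1}--\eqref{eq:Restriction_3}: the nonnegativity requires that there be at least one admissible slab per coordinate direction inside $[-1,1]^n$, while the upper bounds ensure that the slab half-widths $R^{-1/2}$ and $R^{-1}$ do not exceed the center-to-center spacings $R^{-u_1}$ and $R^{-u_2}$ between consecutive slabs.

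Then the condition $Q \ge 1$, translated via the formula $Q = R^{(ku_2 - u_1)/(k-1) - 1}$ in \eqref{eq:D_and_Q_in_terms_of_R}, gives \eqref{eq:R_Q1} directly. The sharper lower bound $u_2 \geq 1/2$ is then automatic from \eqref{eq:R_Q1} combined with $u_1 \leq 1/2$ and $k \geq 2$, exactly as computed in \eqref{eq:u2_Greater_Than_1_2}. The shrinking unit cell condition $D^k/R^{k-1} > 1$, again through \eqref{eq:D_and_Q_in_terms_of_R}, rewrites as \eqref{eq:R_Unit_Cell}.

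The only mildly nontrivial step is the disjointness inequality \eqref{eq:R_Disjointness}: Lemma~\ref{thm:Restrictions_For_a} forces the dilation exponent $(a_1, a_2)$ to lie on the line \eqref{eq:Restriction_MTP_4_Lemma_Bis} while satisfying $a_1 \leq 1/2$ and $a_2 \leq 1$; substituting these upper bounds into \eqref{eq:Restriction_MTP_4_Lemma_Bis} yields
\[
\frac{k-2}{k-1}\, u_1 + \frac{n(k-1)+1}{k-1}\, u_2 \leq n + \tfrac{1}{2} + \epsilon \Big( \tfrac{ku_2 - u_1}{k-1} - 1\Big),
\]
and since this must hold for every $\epsilon > 0$, letting $\epsilon \to 0^+$ produces \eqref{eq:R_Disjointness}. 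I do not expect any real obstacle, as the lemma is a summary statement; the \emph{only} delicate point worth underlining is that the disjointness inequality is genuinely a \emph{consequence} of Lemma~\ref{thm:Restrictions_For_a} rather than an independent geometric constraint, so a careful write-up should make explicit the $\epsilon \to 0^+$ limit used to pass from the open condition in Lemma~\ref{thm:Restrictions_For_a} to the closed inequality here.
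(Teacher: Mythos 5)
Your proposal is correct and takes essentially the same route as the paper: Lemma~\ref{thm:Restrictions_For_u1u2} is stated without a displayed proof because it simply collects the restrictions derived in the preceding subsections, and your write-up recapitulates exactly that chain, including the key step of deducing the disjointness inequality from \eqref{eq:Restriction_MTP_4_Lemma_Bis} by bounding $a_1 \leq 1/2$, $a_2 \leq 1$ and letting $\epsilon \to 0^+$ (noting that the $\epsilon$-correction $\frac{ku_2-u_1}{k-1}-1$ is nonnegative by \eqref{eq:R_Q1}). One small slip to fix: the bound $u_2 \geq 1/2$ in \eqref{eq:u2_Greater_Than_1_2} follows from \eqref{eq:R_Q1} together with $u_1 \geq 0$ (i.e.\ \eqref{eq:Restriction_1}) and $k \geq 2$, not from $u_1 \leq 1/2$ as you wrote; the upper bound on $u_1$ is irrelevant here since one needs a lower bound on $u_1 + k - 1$.
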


The domain for $(u_1,u_2)$ delimited in Lemma~\ref{thm:Restrictions_For_u1u2}
plays a fundamental role.
In particular,
we want to emphasize the three conditions: $Q\ge 1$ \eqref{eq:R_Q1}, 
\textit{shrinking unit cell} \eqref{eq:R_Unit_Cell}, and 
\textit{disjointness} \eqref{eq:R_Disjointness}.
To simplify references, we denote this domain by
\begin{equation}\label{eq:Domain_u1u2}
\mathcal D = \{ (u_1,u_2) \in [0,1/2] \times [1/2,1] \text{ subject to }  \eqref{eq:R_Q1}, \eqref{eq:R_Unit_Cell} \text{ and } \eqref{eq:R_Disjointness}   \},
\end{equation}
which we show in Figure~\ref{fig:Domain_For_Geometric_Parameters}. 
Observe that the domain $\mathcal D = \mathcal D_{k,n}$ depends on both $k$ and $n$.

\begin{figure}[t]
\centering
\includegraphics[width=0.8\linewidth]{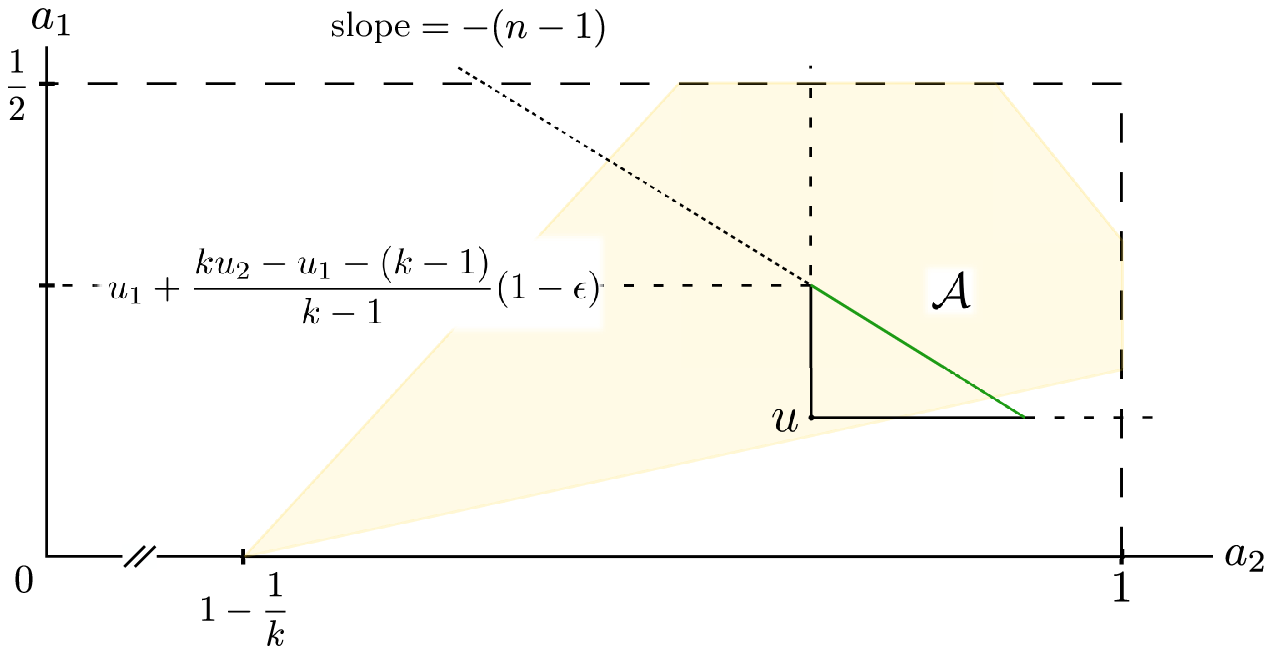}
\caption{For every fixed $(u_1,u_2) \in \mathcal D$ 
(see Figure~\ref{fig:Domain_For_Geometric_Parameters})
the domain $\mathcal A$ for the exponent $\boldsymbol{a} = (a_1,a_2)$ is the solid green line.}
\label{fig:Domain_For_Dilation}
\bigbreak
\includegraphics[width=0.8\linewidth]{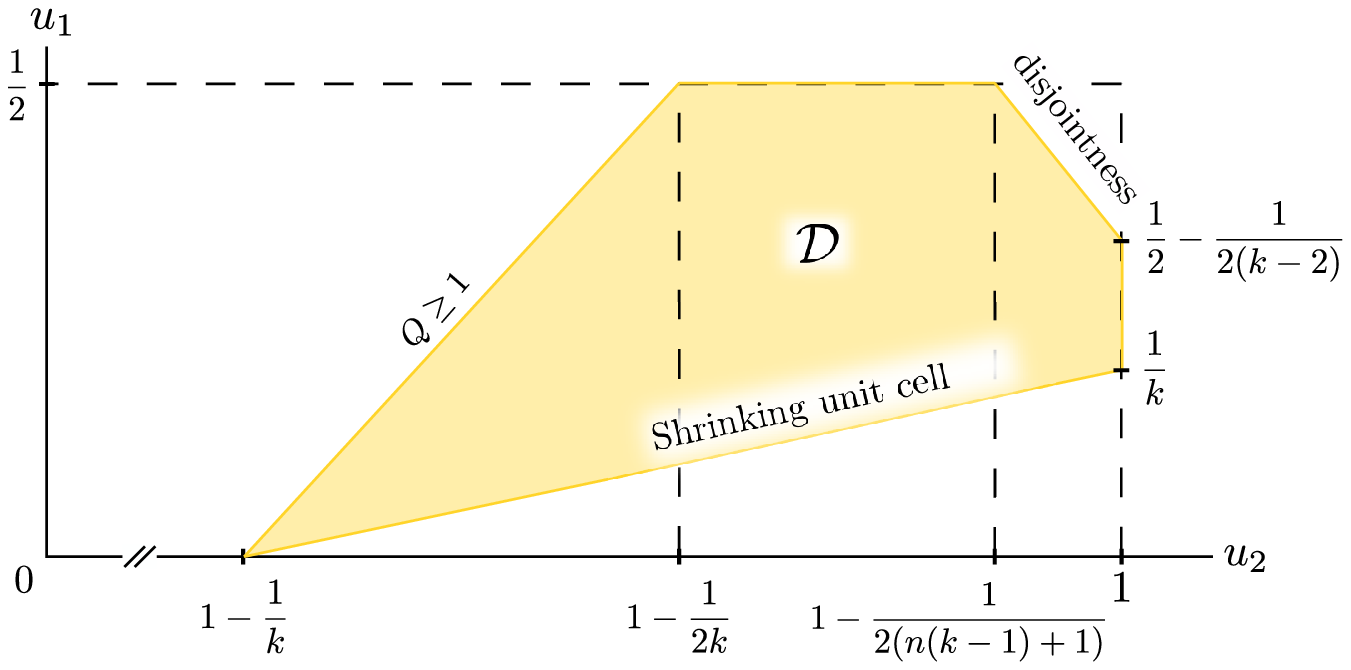}
\caption{The domain $\mathcal D$ for the parameters $(u_1,u_2)$. 
We represent the case $k \geq 5$. For $k=2,3$ and $4$ the boundaries on the right are slightly different.}\label{fig:Domain_For_Geometric_Parameters}
\end{figure}

\begin{rmk}\label{rmk:Additional_Restriction}
From Lemma~\ref{thm:Restrictions_For_u1u2} 
we can also deduce a lower bound for the condition for \textit{disjointness} \eqref{eq:R_Disjointness}.
For that, we find the line parallel to \eqref{eq:R_Disjointness} that crosses the extreme point $(u_1,u_2) = (0,1-1/k)$, which is the intersection of the conditions 
$Q\ge 1$ \eqref{eq:R_Q1} and $u_1 = 0$.
This line is 
\begin{equation}
 \frac{k-2}{k-1} \, u_1 + \frac{n(k-1) + 1}{k-1} \, u_2 = n - \frac{n-1}{k},
\end{equation}
which implies
\begin{equation}\label{eq:Restriction_6_Plus}
\boxed{
n - \frac{n-1}{k} \leq  \frac{k-2}{k-1} \, u_1 + \frac{n(k-1) + 1}{k-1} \, u_2  \leq n + \frac12. 
}
\end{equation}
\end{rmk}

For every $(u_1,u_2) \in \mathcal D$ 
we need to prove that $\mathcal A_{(u_1,u_2),\epsilon} \neq \emptyset$, 
that is, that we can always pick a dilation $(a_1,a_2) \in \mathcal A_{(u_1,u_2),\epsilon}$. 
\begin{lem}\label{thm:Existence_Of_a}
If $(u_1,u_2) \in \mathcal D$ and $\epsilon >0$, 
then $\mathcal A_{(u_1,u_2),\epsilon} \neq \emptyset$. 
\end{lem}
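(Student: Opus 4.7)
The plan is to reduce the non-emptiness of $\mathcal A_{(u_1,u_2),\epsilon}$ to two scalar inequalities. Writing \eqref{eq:Restriction_MTP_4_Lemma_Bis} in the form $a_1+(n-1)a_2 = C(u_1,u_2,\epsilon)$, where
\begin{equation}
C(u_1,u_2,\epsilon) := \frac{k-2}{k-1}u_1 + \frac{n(k-1)+1}{k-1}u_2 - 1 - \epsilon\Big(\frac{ku_2-u_1}{k-1}-1\Big),
\end{equation}
the task becomes showing that the affine line $\{a_1+(n-1)a_2 = C\}$ meets the rectangle $R := [u_1,1/2]\times[u_2,1]$. Since $(a_1,a_2)\mapsto a_1+(n-1)a_2$ is continuous on $R$ and takes the corner values $u_1+(n-1)u_2$ at $(u_1,u_2)$ and $\frac{1}{2}+(n-1)$ at $(1/2,1)$, by the intermediate value theorem it suffices to verify
\begin{equation}
u_1+(n-1)u_2 \;\le\; C(u_1,u_2,\epsilon) \;\le\; n-\tfrac{1}{2},
\end{equation}
assuming throughout that $\epsilon\le 1$, which is harmless because ultimately $\epsilon$ will be sent to $0$.

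For the upper bound I would use the \emph{disjointness} condition \eqref{eq:R_Disjointness}, which says $\frac{k-2}{k-1}u_1+\frac{n(k-1)+1}{k-1}u_2\le n+\frac{1}{2}$, together with the $Q\ge 1$ condition \eqref{eq:R_Q1}, which gives $\frac{ku_2-u_1}{k-1}-1\ge 0$. Together they immediately yield $C(u_1,u_2,\epsilon)\le n-\frac{1}{2}$.

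For the lower bound I would compute $C(u_1,u_2,\epsilon)-u_1-(n-1)u_2$ directly. Collecting the coefficients of $u_1$ and $u_2$,
\begin{equation}
C - u_1 - (n-1)u_2 = \frac{-u_1+ku_2}{k-1} - 1 - \epsilon\Big(\frac{ku_2-u_1}{k-1}-1\Big) = (1-\epsilon)\Big(\frac{ku_2-u_1}{k-1}-1\Big),
\end{equation}
and this quantity is nonnegative for $\epsilon\le 1$ thanks once more to \eqref{eq:R_Q1}. The two bounds together give a point $(a_1,a_2)\in R$ satisfying \eqref{eq:Restriction_MTP_4_Lemma_Bis}, which is exactly a member of $\mathcal A_{(u_1,u_2),\epsilon}$.

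The only mild subtlety I anticipate is the boundary case where $(u_1,u_2)$ sits on the relative boundary of $\mathcal D$, e.g.\ where $ku_2-u_1=k-1$ or where the disjointness inequality is saturated: the intersection then degenerates to a single corner of $R$, but the argument still produces a valid $(a_1,a_2)$. Everything else is a routine linear-algebra verification, so no genuine obstacle arises.
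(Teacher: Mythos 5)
Your proposal is correct and essentially reproduces the paper's argument: both approaches reduce to showing that the line $a_1+(n-1)a_2 = C(u_1,u_2,\epsilon)$ meets the rectangle $[u_1,1/2]\times[u_2,1]$, and both verify this via the same two scalar inequalities, $C \geq u_1+(n-1)u_2$ (which the paper phrases as ``the intersection with $a_2=u_2$ has $a_1\geq u_1$'') and $C\leq n-\tfrac12$ (the intersection with $a_2=1$ has $a_1\leq 1/2$), each following from $Q\geq 1$ and the disjointness condition respectively. Your IVT-on-a-diagonal phrasing is slightly cleaner than the paper's ``line crosses both quadrants'' step, and you correctly flag the harmless $\epsilon\leq 1$ caveat, which the paper also uses implicitly.
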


\begin{proof}
Fix $(u_1,u_2) \in \mathcal D$.  We interpret \eqref{eq:Restriction_MTP_4_Lemma_Bis} as a line in the variables $(a_1,a_2)$. 
First, this line crosses the point
\begin{equation}\label{eq:Critical_Point_For_a}
a_2 = u_2, \qquad \qquad a_1 = u_1 +  \frac{ku_2 - u_1 - (k-1)}{k-1}\, (1-\epsilon) \geq u_1,
\end{equation}
where the last inequality holds because $Q\ge 1$ \eqref{eq:R_Q1},
\textit{i.e.} $ku_2 - u_1 \geq k-1$.
Since the line \eqref{eq:Restriction_MTP_4_Lemma_Bis} has a negative slope equal to $-(n-1)$, 
the line crosses $[u_1,\infty] \times [u_2,\infty]$ (see Figure~\ref{fig:Domain_For_Dilation}).

On the other hand, the condition for \textit{disjointness} \eqref{eq:R_Disjointness} implies that 
the line \eqref{eq:Restriction_MTP_4_Lemma_Bis} satisfies
\begin{equation}
a_1 + (n-1)a_2 + \epsilon \Big(\frac{ku_2 - u_1}{k-1}-1\Big) \leq n - \frac12.
\end{equation}
That means that 
\begin{equation}
a_2 = 1 \qquad \Longrightarrow 
	\qquad a_1 \leq \frac12 -  \epsilon\Big(\frac{ku_2 - u_1}{k-1}-1\Big) \le \frac12, 
\end{equation}
so there is an exponent $(a_1,a_2)\in [-\infty,1/2] \times [-\infty, 1]$. 

Summing up, the line \eqref{eq:Restriction_MTP_4_Lemma_Bis} crosses both $[u_1,\infty] \times [u_2, \infty]$ and  $[-\infty,1/2] \times [-\infty, 1]$, so in particular it crosses $[u_1,1/2] \times [u_2,1]$.
Thus, $\mathcal A \neq \emptyset$.
\end{proof}
\begin{rmk}
Observe that the point in \eqref{eq:Critical_Point_For_a} depends exclusively 
on how far the point $(u_1,u_2)$ is from the line $Q\ge 1$ \eqref{eq:R_Q1}.
\end{rmk}

\subsection{A lower bound for the Hausdorff dimension}
We are ready to use the Mass Transference Principle 
to prove a lower bound for the dimension of our 
set of divergence 
\begin{equation}\label{eq:Limsup_Set_Of_Divergence}
F = \limsup_{m \to \infty} F_{m} = \bigcap_{J \in \mathbb N} \, \bigcup_{j > J} F_j,
\end{equation}
where $F_m = F_{R_m}$ is defined in \eqref{eq:SetsOfDivergence} with $R_m = 2^m$ for every $m \in \mathbb N$. 

\begin{prop}\label{thm:Prop_MTP}
Let $F$ be the set \eqref{eq:Limsup_Set_Of_Divergence} 
with parameters $(u_1,u_2) \in \mathcal D$ as in Lemma~\ref{thm:Restrictions_For_u1u2}
or Figure~\ref{fig:Domain_For_Geometric_Parameters}.  
If $\boldsymbol{a} = (a_1,a_2) \in \mathcal A$ as in Lemma~\ref{thm:Restrictions_For_a} 
or Figure~\ref{fig:Domain_For_Dilation},
then 
\begin{equation}
\dim_{\mathcal H} F \geq \min \left\{ \,  a_1 + (n-1) a_2 + 1/2 \, , \,\,\,  n - 1 + 2 a_1 \,  \right\}.
\end{equation}
Consequently, 
\begin{equation}\label{eq:Dimension_Condition_Good}
a_1 \geq (n-1) a_2 - (n- 3/2)  \qquad \Longrightarrow \qquad   \dim_{\mathcal H}  F \geq a_1 + (n-1) a_2 + 1/2,
\end{equation}
while
\begin{equation}\label{eq:Dimension_Condition_Bad}
a_1 \leq (n-1) a_2 - (n- 3/2) \qquad \Longrightarrow \qquad  \dim_{\mathcal H}  F \geq n- 1 + 2 a_1 .
\end{equation}
\end{prop}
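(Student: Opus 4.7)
The plan is to apply the Mass Transference Principle from rectangles to rectangles, Theorem~\ref{thm:MTP_Rectangles}, to the set $F = W(\boldsymbol{b})$, where $\boldsymbol{b} = (b_1,b_2,\ldots,b_2) = (1/2, 1, \ldots, 1)$ is the exponent of the original admissible slabs $E_{p,q,R_m}$. Lemma~\ref{thm:Restrictions_For_a} (for any $\epsilon > 0$) gives the uniform local ubiquity of the dilated system $\{E_{p,q,R_m,\boldsymbol{a}}\}$ with respect to $(\rho, \boldsymbol{a})$ whenever $\boldsymbol{a} = (a_1, a_2, \ldots, a_2) \in \mathcal{A}$, so the hypotheses of Theorem~\ref{thm:MTP_Rectangles} are satisfied and only the right-hand side of \eqref{eq:MTP_Dimension} needs to be evaluated.

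Because $\boldsymbol{b}$ has only two distinct entries, the minimum in \eqref{eq:MTP_Dimension} runs over $A \in \{b_1,\ldots,b_n\} = \{1/2, 1\}$. Using $a_1 \leq 1/2$, $a_2 \leq 1$ and $a_2 \geq u_2 \geq 1/2$ (see \eqref{eq:u2_Greater_Than_1_2}), I compute each case. For $A = 1$, since generically $a_1, a_2 < 1$, one has $K_1(1) = \emptyset$, $K_2(1) = \{1,\ldots,n\}$, $K_3(1) = \emptyset$, and the term becomes
\begin{equation*}
\sum_{j=1}^n (1 - b_j + a_j) = n - \bigl(\tfrac{1}{2} + (n-1)\bigr) + \bigl(a_1 + (n-1)a_2\bigr) = a_1 + (n-1)a_2 + \tfrac{1}{2}.
\end{equation*}
For $A = 1/2$, the inequality $a_2 \geq 1/2$ forces $\{2,\ldots,n\} \subseteq K_1(1/2)$, while $a_1 < 1/2$ gives $K_2(1/2) = \{1\}$ and $K_3(1/2) = \emptyset$, so the term is
\begin{equation*}
(n-1) + \left(1 - \frac{1/2 - a_1}{1/2}\right) = n - 1 + 2 a_1.
\end{equation*}
Taking the minimum of these two expressions yields the desired lower bound, and the dichotomy \eqref{eq:Dimension_Condition_Good}--\eqref{eq:Dimension_Condition_Bad} follows from the direct comparison $a_1 + (n-1)a_2 + 1/2 \leq n - 1 + 2a_1 \iff a_1 \geq (n-1)a_2 - (n - 3/2)$.

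The argument is essentially careful bookkeeping of the index sets $K_1, K_2, K_3$, so there is no real obstacle. The one delicate point is that Lemma~\ref{thm:Restrictions_For_a} supplies the ubiquity hypothesis only after a perturbation by some $\epsilon > 0$; this produces the bound with $\boldsymbol{a}$ replaced by a slightly shifted $\boldsymbol{a}_\epsilon$, but since both expressions above are continuous in $\boldsymbol{a}$, letting $\epsilon \to 0$ recovers the stated bound for every $\boldsymbol{a} \in \mathcal{A}$. The degenerate boundary cases $a_1 = 1/2$ or $a_2 = 1$ only enlarge $K_1$ and therefore produce dimension bounds at least as large as those above, so they do not affect the minimum.
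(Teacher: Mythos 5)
Your proof is correct and follows essentially the same route as the paper's: invoke Lemma~\ref{thm:Restrictions_For_a} for uniform local ubiquity, apply Theorem~\ref{thm:MTP_Rectangles} with $\boldsymbol{b}=(1/2,1,\ldots,1)$ and $\boldsymbol{a}=(a_1,a_2,\ldots,a_2)$, and evaluate the minimum over $A\in\{1/2,1\}$ by identifying $K_1,K_2,K_3$ exactly as the paper does, using $a_2\geq u_2\geq 1/2$ for the $A=1/2$ case. Your closing remark on the $\epsilon$-dependence and the boundary cases $a_1=1/2$, $a_2=1$ matches the paper's handling (the paper treats these by noting the result is unchanged, and defers the $\epsilon\to 0$ limit to the subsequent propositions), so there is no gap.
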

\begin{proof}
In Lemma~\ref{thm:Restrictions_For_a} we showed that
the dilated slabs $E_{p,q,R,\boldsymbol{a}}$ form a uniform local ubiquitous system, 
so we use Theorem~\ref{thm:MTP_Rectangles} with 
$\boldsymbol{b} = (1/2,1,\ldots, 1)$ and $\boldsymbol{a} = (a_1,a_2,\ldots, a_2)$.
Thus, 
\begin{align} 
\dim_{\mathcal H}  F 
	&= \dim_{\mathcal H} \limsup_{m \to \infty} F_m \\
	&\geq \min_{A \in \{1,1/2\}} \left\{  \sum_{\ell \in K_1(A)} 1 +  \sum_{\ell \in K_2(A)} \left( 1 - \frac{b_\ell - a_\ell}{A} \right) + \sum_{\ell \in K_3(A)}  \frac{a_\ell}{A}   \right\}.
	  \label{eq:MTP_Applied}
\end{align}
We compute that now:
\begin{itemize}
	\item For $A = 1$ and $a_2<1$, we have 
	\begin{equation}
	K_1 = \{ \ell \, : \, a_\ell \ge 1 \} = \emptyset, 
		\quad K_2 = \{ \ell \, : \, b_\ell \leq 1 \}\setminus K_1 = \{1,\ldots,n\}, 
		\quad K_3 = \{ 1,\ldots, n \} \setminus ( K_1 \cup K_2) = \emptyset,
	\end{equation}
	so the number inside braces in \eqref{eq:MTP_Applied} is 
	\begin{equation}
	\alpha_1 := 1 -  \frac{b_1 - a_1}{A} + (n-1) \left( 1 -  \frac{b_2 - a_2}{A} \right) = a_1 + (n-1) a_2 + 1/2.
	\end{equation}
	When $a_2 = 1$ the result is the same.
	
	\item For $A = 1/2$ and $a_1<1/2$, 
	first observe that Lemmas~\ref{thm:Restrictions_For_a} and \ref{thm:Restrictions_For_u1u2} imply $a_2 \geq u_2 \geq 1/2$,
	so
	\begin{equation}
	K_1 = \{ \ell \, : \, a_\ell \ge 1/2 \} = \{ 2,\ldots ,n \}, 
	\quad K_2 = \{ \ell \, : \, b_\ell  \leq 1/2 \}\setminus K_1 = \{ 1 \}, 
	\quad K_3 = \{ 1,\ldots, n \} \setminus ( K_1 \cup K_2) = \emptyset,
	\end{equation}
	so the number inside braces in \eqref{eq:MTP_Applied} is 
	\begin{equation}
	\alpha_2 := (n-1) + 1 - \frac{b_1 - a_1}{A} = n - \frac{1/2 - a_1}{1/2} = n - 1 + 2a_1.
	\end{equation}
	When $a_1 = 1/2$ the result is the same.
\end{itemize} 
Thus, $\dim_{\mathcal H}  F \geq \min \{ \alpha_1, \alpha_2 \}$.
The minimum switches its value from $\alpha_1$ to $\alpha_2$ on the line
\begin{equation}\label{eq:Boundary_MTP}
a_1 = (n-1) a_2 - (n- 3/2),
\end{equation}
from which we immediately get \eqref{eq:Dimension_Condition_Good} and \eqref{eq:Dimension_Condition_Bad} in the statement.
\end{proof}

Proposition~\ref{thm:Prop_MTP} shows that different choices of $(a_1,a_2) \in \mathcal A$ give us different lower bounds for the dimension. 
Moreover, $\mathcal A$ depends on $(u_1,u_2)$.
Figure~\ref{fig:Separation_For_a_MTP} shows that certain $(u_1,u_2)$ allow $\mathcal A$ 
to intersect both sides of the boundary \eqref{eq:Boundary_MTP}.
However, for some others, Figure~\ref{fig:Separation_For_a_MTP_Bad_Case} suggests
that $\mathcal A$ may lie completely below the boundary,
so only the case \eqref{eq:Dimension_Condition_Bad} is possible.
We now determine whether one case or the other happens.

\begin{figure}[t]
\centering
\includegraphics[width=0.8\linewidth]{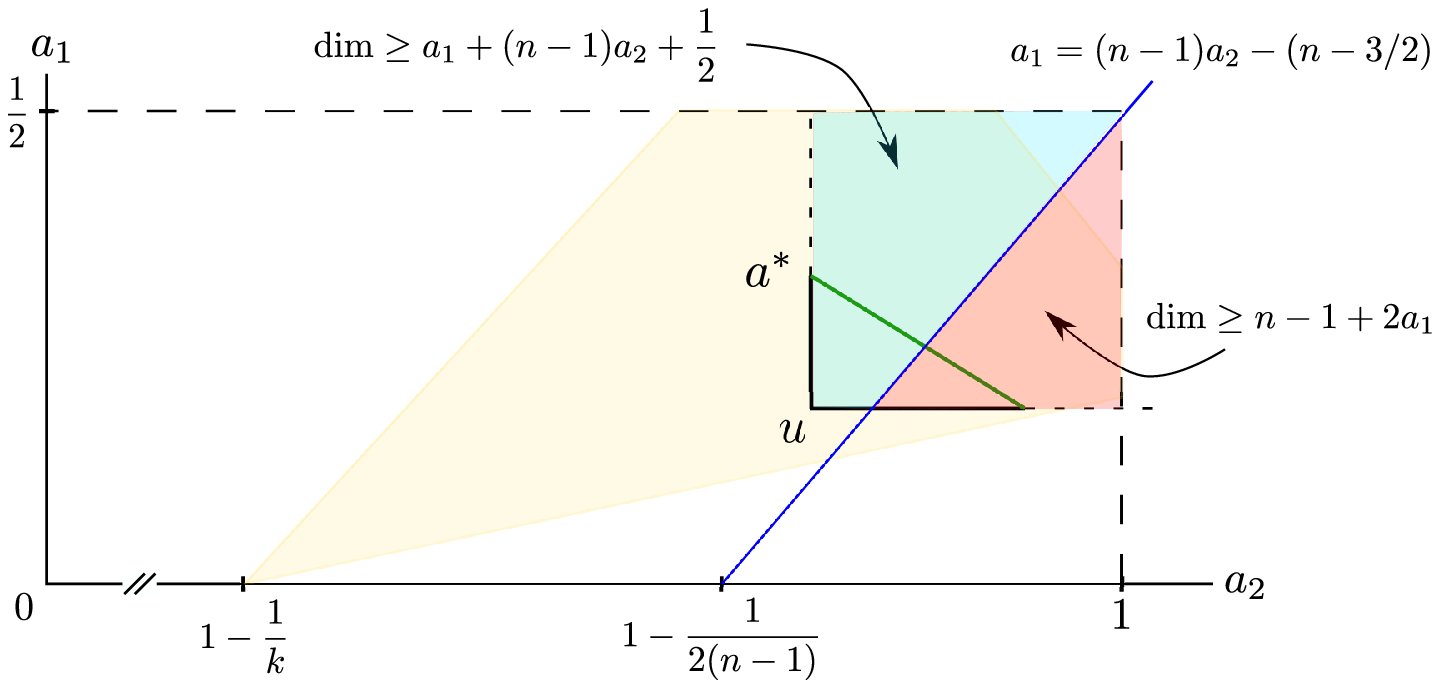}
\caption{In blue, the boundary \eqref{eq:Boundary_MTP} that delimits
 the two different regions for $\boldsymbol{a}$ according to Proposition~\ref{thm:Prop_MTP}. 
In this case, the line $\mathcal A$ has parts in both sides of it. }
\label{fig:Separation_For_a_MTP}
\bigbreak
\includegraphics[width=0.8\linewidth]{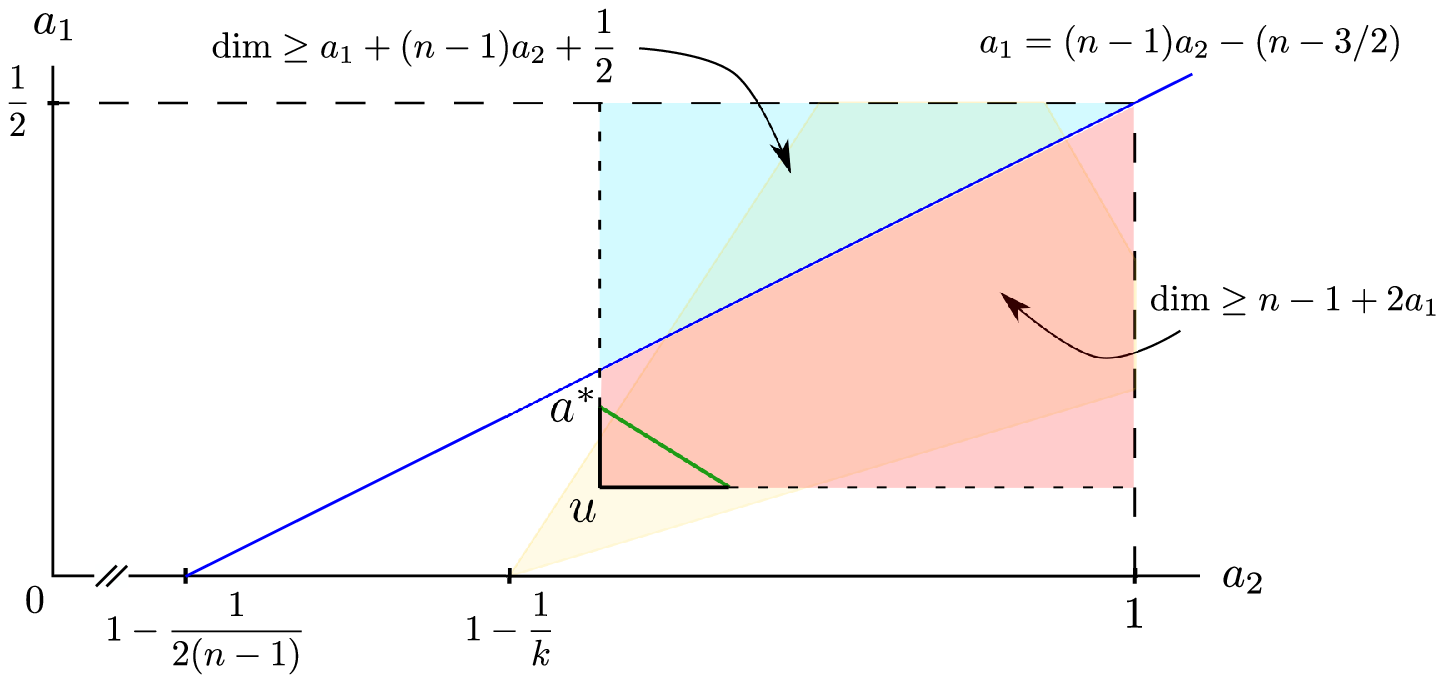}
\caption{In blue, the boundary \eqref{eq:Boundary_MTP} that delimits
 the two different regions for $\boldsymbol{a}$ according to Proposition~\ref{thm:Prop_MTP}. 
 In this case, the line $\mathcal A$ is completely below it. }
\label{fig:Separation_For_a_MTP_Bad_Case}
\end{figure}

\begin{prop}\label{thm:Good_Bad_a}
Let $(u_1,u_2) \in \mathcal D$.
Then, 
\begin{equation}\label{eq:Good_a}
\left( n - 1 - \frac{k}{k-1} \right)\,  u_2 - \frac{k-2}{k-1} \, u_1 < n - \frac52 \quad \Longrightarrow 
	\quad \exists \epsilon, (a_1,a_2) \in \mathcal A_\epsilon \, : \, a_1 \geq (n-1)a_2 - (n-3/2).
\end{equation}
On the contrary, 
\begin{equation}\label{eq:Bad_a}
\left( n - 1 - \frac{k}{k-1} \right)\,  u_2 - \frac{k-2}{k-1} \, u_1  \geq n - \frac52 \quad \Longrightarrow \quad a_1 < (n-1)a_2 - (n-3/2), \quad \forall \epsilon, (a_1,a_2) \in \mathcal A_\epsilon.
\end{equation}
\end{prop}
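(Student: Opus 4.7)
The plan is to exploit the fact that $\mathcal A_{(u_1,u_2),\epsilon}$ is an oblique line segment and reduce the existence question to a one-variable inequality at its extreme point. By \eqref{eq:Restriction_MTP_4_Lemma_Bis}, every $(a_1,a_2)\in\mathcal A$ lies on the line
\[ a_1 + (n-1) a_2 = M, \qquad M := \frac{k-2}{k-1} u_1 + \frac{n(k-1)+1}{k-1} u_2 - 1 - \epsilon\Big(\frac{k u_2 - u_1}{k-1} - 1\Big), \]
intersected with the rectangle $[u_1,1/2]\times[u_2,1]$. Parameterizing the segment by $a_1$, so that $a_2 = (M-a_1)/(n-1)$, the constraint $a_2\ge u_2$ caps $a_1$ from above, giving
\[ a_1^{\max} = \min\bigl\{1/2,\; M - (n-1)u_2\bigr\}. \]

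On this line I substitute $(n-1)a_2 = M-a_1$ into the favorable inequality $a_1 \ge (n-1)a_2 - (n-3/2)$; it simplifies to $a_1 \ge (M-n+3/2)/2$, whose right-hand side is independent of the running $a_1$. Consequently, some point of $\mathcal A$ is favorable if and only if $a_1^{\max} \ge (M-n+3/2)/2$. This splits into two clauses: first, $1/2 \ge (M-n+3/2)/2$, i.e.\ $M\le n-1/2$, which is automatic from the disjointness inequality \eqref{eq:R_Disjointness} together with $M\le M\vert_{\epsilon=0}$; and second,
\[ M - (n-1)u_2 \ge \tfrac{M-n+3/2}{2}\quad\Longleftrightarrow\quad M \ge 2(n-1)u_2 - n + 3/2. \]

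To finish, I verify the algebraic identity
\[ \frac{n(k-1)+1}{k-1} - 2(n-1) = -\Big(n - 1 - \frac{k}{k-1}\Big), \]
which converts the latter condition into
\[ \Big(n-1-\frac{k}{k-1}\Big)u_2 - \frac{k-2}{k-1}u_1 \;\le\; n-\tfrac{5}{2} + \epsilon\Big(\frac{ku_2-u_1}{k-1}-1\Big). \]
Under hypothesis \eqref{eq:Good_a} the strict inequality allows me to choose $\epsilon>0$ small enough (noting that the $\epsilon$-correction is non-negative by \eqref{eq:R_Q1}) so that the display holds, and then $(a_1^{\max},(M-a_1^{\max})/(n-1))\in\mathcal A_\epsilon$ is the desired favorable point. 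Under \eqref{eq:Bad_a} the display fails for every $\epsilon>0$ in the generic regime $ku_2-u_1>k-1$, giving $a_1^{\max}<(M-n+3/2)/2$ throughout and hence $a_1<(n-1)a_2-(n-3/2)$ on all of $\mathcal A_\epsilon$. The main obstacle is purely algebraic bookkeeping: checking the identity above for the coefficient of $u_2$, and tracking the sign of the $\epsilon$-perturbation through the $Q\ge 1$ constraint \eqref{eq:R_Q1}; the boundary case $ku_2-u_1=k-1$ together with equality in \eqref{eq:Bad_a} is degenerate and can be treated separately or absorbed into the $\epsilon$-loss that appears throughout the paper.
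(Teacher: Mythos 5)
Your proposal follows essentially the same route as the paper: both reduce the existence question to the corner point of $\mathcal A_\epsilon$ where $a_2 = u_2$ (your $a_1^{\max} = M - (n-1)u_2$ is precisely the paper's $a_1^*$ from \eqref{eq:Critical_Point_For_a_Bis}), substitute that point into the favorability threshold $a_1 \geq (M-n+3/2)/2$, and use the $Q\ge 1$ constraint \eqref{eq:R_Q1} to control the $\epsilon$-perturbation. One sign error slipped into your penultimate display: after multiplying by $-1$, the $\epsilon$-term should be $-\,\epsilon\bigl(\tfrac{ku_2-u_1}{k-1}-1\bigr)$, not $+\,\epsilon(\cdots)$. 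Your surrounding prose — ``choose $\epsilon$ small enough,'' ``the $\epsilon$-correction is non-negative by \eqref{eq:R_Q1},'' ``fails for every $\epsilon$ in the generic regime'' — only makes sense with the correct $-\epsilon$ sign, and the argument goes through once the sign is fixed, so this is a typo rather than a gap. A small point in your favor: by writing $a_1^{\max}=\min\{1/2, M-(n-1)u_2\}$ and checking the two clauses separately, you explicitly cover the case $a_1^*>1/2$ (showing $a_1=1/2$ is always favorable thanks to disjointness \eqref{eq:R_Disjointness}), whereas the paper's reduction to $(a_1^*,a_2^*)$ leaves this case implicit; it does not affect the conclusion since under \eqref{eq:Bad_a} one can check $a_1^*\le 1/2$ automatically, but your bookkeeping is cleaner.
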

\begin{rmk}\label{rmk:Above_Below}
This proposition determines two regions for $(u_1,u_2) \in \mathcal D$, 
\begin{equation}
\mathcal D_{\text{above}} = \left\{ (u_1,u_2) \in \mathcal D \, \mid \, \left( n - 1 - \frac{k}{k-1} \right)\,  u_2 - \frac{k-2}{k-1} \, u_1 < n - \frac52  \right\}
\end{equation}
and 
\begin{equation}
\mathcal D_\text{below} = \left\{ (u_1,u_2) \in \mathcal D \, \mid \, \left( n - 1 - \frac{k}{k-1} \right)\,  u_2 - \frac{k-2}{k-1} \, u_1 \geq  n - \frac52  \right\},
\end{equation}
which we show in Figure~\ref{fig:D_Above_Below}. 
They are important to calculate the Sobolev exponent 
in \eqref{eq:Sobolev_Exponent_Origin}. 
\end{rmk}

\begin{figure}
\begin{center}
\includegraphics[width=0.75\linewidth]{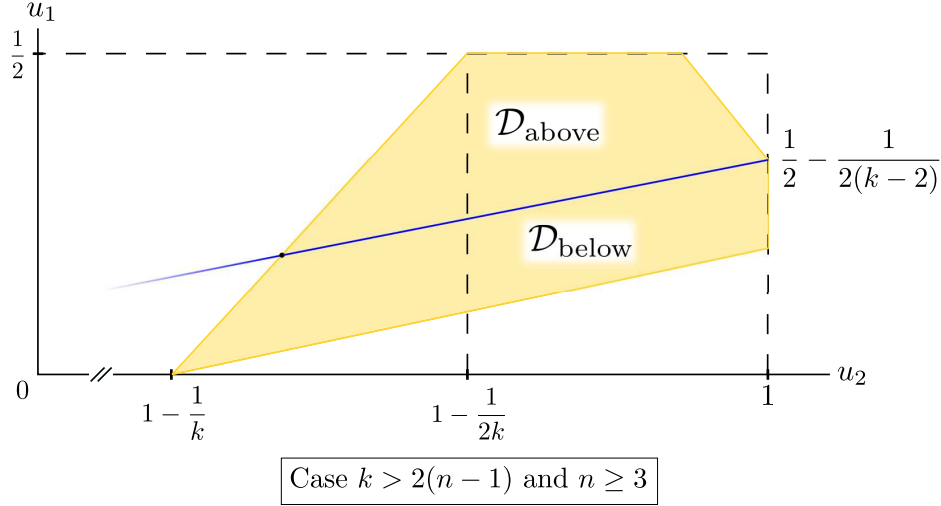}
\end{center}
\caption{The separation of the domain $\mathcal D$ according to Proposition~\ref{thm:Good_Bad_a}.}
\label{fig:D_Above_Below}
\end{figure}

\begin{proof}[Proof of Proposition~\ref{thm:Good_Bad_a}]
We have to prove that
\begin{equation} \label{eq:Border_AboveBelow}
\left( n - 1 - \frac{k}{k-1} \right)\,  u_2 - \frac{k-2}{k-1} \, u_1  \geq n - \frac52
\end{equation}
if and only if $a_1 < (n-1)a_2 - (n-3/2)$ for every $\epsilon > 0$ and $(a_1,a_2)\in\mathcal{A}_\epsilon$,
and by \eqref{eq:Restriction_MTP_4_Lemma_Bis} the last inequality is
the same as
\begin{equation} \label{eq:corner_below}
2a_1 < \frac{k-2}{k-1}u_1+\Big(n+\frac{1}{k-1}\Big)u_2-n + \frac{1}{2}-\epsilon\Big(\frac{ku_2-u_1}{k-1}-1\Big).
\end{equation}
In Figures~\ref{fig:Separation_For_a_MTP} and \ref{fig:Separation_For_a_MTP_Bad_Case}
we see that, for fixed $u\in\mathcal{D}$,
the existence of at least one dilation $\boldsymbol{a}$ in the region \eqref{eq:Dimension_Condition_Good}
depends solely on the location of the point $(a_1^*,a_2^*)$ where
the lines $\mathcal{A}_\epsilon$  \eqref{eq:Restriction_MTP_4_Lemma_Bis} and $a_2 = u_2$ intersect.
In other words, it suffices to prove that \eqref{eq:Border_AboveBelow} holds 
if and only if \eqref{eq:corner_below} holds for $(a_1^*,a_2^*)$ and for every 
$\epsilon > 0$ because $a_1\le a_1^*$ for every $(a_1,a_2) \in \mathcal A$.

The point $(a_1^*,a_2^*)$ is
\begin{equation}\label{eq:Critical_Point_For_a_Bis}
a_2^* = u_2, \qquad 
	a_1^* = \frac{k-2}{k-1}u_1+\frac{k}{k-1}u_2 -1-\epsilon\Big(\frac{ku_2-u_1}{k-1}-1\Big),
\end{equation}
so $(a_1^*,a_2^*)$ satisfies \eqref{eq:corner_below} for all $\epsilon >0$
if and only if
\begin{equation}\label{eq:Aux_Condition_1}
\left( n-1 -  \frac{k}{k-1} \right)\, u_2  -  \frac{k-2}{k-1}\, u_1  
	> n- 5/2 - \epsilon\Big(\frac{ku_2-u_1}{k-1}-1\Big), \qquad \forall \epsilon >0.
\end{equation}
By the condition  $Q\ge 1$ \eqref{eq:R_Q1}, 
this proves the proposition.
\end{proof}

\subsection{The Hausdorff dimension of the divergence set}

Combining Propositions~\ref{thm:Prop_MTP} and \ref{thm:Good_Bad_a}, we are now able to determine the Hausdorff dimension of the divergence set.
Let us begin with an upper bound that comes from the natural covering of $F$.

\begin{prop}\label{thm:Dimension_Upper_Bound}
Let $k \in \mathbb N$, $k \geq 2$. 
If $F$ is the set defined in \eqref{eq:Limsup_Set_Of_Divergence} with parameters $(u_1,u_2) \in \mathcal D$ as in Lemma~\ref{thm:Restrictions_For_u1u2},
then
\begin{equation}\label{eq:thm:Dimension_Upper_Bound}
\dim_{\mathcal H} F \leq \Big(n+\frac{1}{k-1}\Big)\,u_2+\frac{k-2}{k-1}\,u_1-\frac{1}{2}.
\end{equation}
\end{prop}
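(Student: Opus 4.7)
The plan is to cover $F$ efficiently using its natural description as a $\limsup$ of unions of admissible slabs, and then bound the Hausdorff $s$-content for $s$ arbitrarily close to the claimed upper bound $s^{\ast} = (n+\frac{1}{k-1})u_2 + \frac{k-2}{k-1}u_1 - \frac{1}{2}$. Since $F \subseteq \bigcup_{m>M} F_{R_m}$ for every $M \in \N$ and each slab has diameter $\lesssim R_m^{-1/2} \to 0$, it suffices to prove $\mathcal{H}^s(F \cap [-1,1]^n) = 0$ for all $s > s^{\ast}$; the full dimension bound then follows by covering $F$ with countably many bounded pieces.

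The main computation is counting admissible slabs in $F_R \cap [-1,1]^n$. For each integer $q$ with $Q/2 \le q \le Q$ the set $G(q)$ contains at most $q^n$ residue classes, and for any fixed class $r$ the integer tuples $p \equiv r \pmod q$ whose slab-center lies in $[-1,1]^n$ must satisfy $|p_1| \lesssim D^k q/R^{k-1}$ and $|p_j| \lesssim D q$ for $j \ge 2$. Since consecutive admissible integers for each coordinate are spaced $q$ apart, this yields $\simeq D^k/R^{k-1}$ values for $p_1$ (which exceeds $1$ thanks to the shrinking--unit--cell condition \eqref{eq:R_Unit_Cell}) and $\simeq D$ for each $p_j$ with $j \ge 2$, i.e. $\lesssim D^{n+k-1}/R^{k-1}$ slabs per residue class. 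Summing over $G(q)$ and then over $q \simeq Q$ gives at most $\simeq Q^{n+1} D^{n+k-1}/R^{k-1}$ admissible slabs, and substituting the identities \eqref{eq:D_and_Q_in_terms_of_R} to express $D$ and $Q$ as powers of $R$, the exponent collapses exactly to $s^{\ast} - 1/2$. Each slab has dimensions $R^{-1/2}\times R^{-1}\times\cdots\times R^{-1}$ and is therefore covered by $\simeq R^{1/2}$ Euclidean balls of radius $R^{-1}$, so $F_R \cap [-1,1]^n$ is covered by $\lesssim R^{s^{\ast}}$ such balls.

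To conclude, for $s > s^{\ast}$ and every $M \in \N$ the cover just constructed yields
\begin{equation}
\mathcal{H}^s_{R_{M+1}^{-1}}(F\cap[-1,1]^n) \;\le\; \sum_{m>M} N_m \cdot (R_m^{-1})^s \;\lesssim\; \sum_{m>M} R_m^{s^{\ast}-s},
\end{equation}
where $N_m \lesssim R_m^{s^{\ast}}$ is the number of balls covering $F_{R_m}\cap[-1,1]^n$. Since $R_m = 2^m$, this geometric series tends to $0$ as $M \to \infty$, so $\mathcal{H}^s(F\cap[-1,1]^n) = 0$ for every $s>s^{\ast}$, and hence $\dim_{\mathcal{H}} F \le s^{\ast}$. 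There is no deep obstacle here: the only delicate steps are the algebraic simplification of the slab-count exponent to $s^{\ast}-1/2$ via \eqref{eq:D_and_Q_in_terms_of_R}, and the justification that the per--residue--class count $\simeq D^k/R^{k-1}$ is really at least $1$, which is precisely the content of \eqref{eq:R_Unit_Cell}.
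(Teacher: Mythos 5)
Your proposal is correct and follows essentially the same approach as the paper: cover each $F_{R_m}\cap[-1,1]^n$ by $\simeq R_m^{s^*}$ balls of radius $R_m^{-1}$ (after cutting each slab into $\simeq R_m^{1/2}$ such balls), and run the standard Hausdorff-content argument on the tails $\bigcup_{m>M}F_{R_m}$. Your bookkeeping of the slab count is organized per residue class and multiplied by $|G(q)|\simeq q^n$, whereas the paper counts total $p_1$-choices, total $p'$-choices and total $q$-choices directly, but the two yield the same quantity $\simeq D^{n+k-1}Q^{n+1}/R^{k-1}=R^{s^*-1/2}$; the observation that the per-residue-class count $D^k/R^{k-1}\gtrsim 1$ is guaranteed by the shrinking-unit-cell condition is a correct (and welcome) sanity check that the paper leaves implicit.
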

\begin{proof}
From the definition of the limsup, we have $F \subset \cup_{m\ge M}F_m$ for every $M \in \mathbb N$. 
We will first cover each $F_m$ so that
the union of all those coverings with $m\ge M$ cover $F$.  

From its definition in \eqref{eq:SetsOfDivergence}, 
$F_R$ is a union of slabs of side lengths $R^{-1/2}$ in the first coordinate and 
$R^{-1}$ in the rest of coordinates. 
The smallest scale being $R^{-1}$, 
we cover each slab with approximately $R^{1/2}$ balls of radius $R^{-1}$.
Since the number of slabs in $F_R \cap [-1,1]^n$ is approximately 
\begin{equation}
\frac{D^k\, Q}{R^{k-1}} \, (D\, Q)^{n-1}\, Q = R^{u_1+(n-1)\,u_2+ \frac{ku_2-u_1}{k-1}-1} 
	= R^{(n+\frac{1}{k-1})u_2+\frac{k-2}{k-1}u_1-1},
\end{equation} 
we need around  $ R^{(n+\frac{1}{k-1})u_2+\frac{k-2}{k-1}u_1-\frac{1}{2}}$ balls of radius $R^{-1}$ to cover $F_R$. 
Now, the Hausdorff content of a set $A$ is defined by
\begin{equation} \label{eq:def:HausdorffContent}
\mathcal H^s_{\delta}(A) = 
	\inf \left\{ \, \,  \sum_{j=1}^\infty \left( \operatorname{diam} U_j \right)^s \quad \mid \quad A \subset \bigcup_{j=1}^\infty U_j \text{ such that } \operatorname{diam} U_j \leq \delta \, \,  \right\},  \qquad \delta >0. 
\end{equation}
Choosing $M \in \mathbb N$ such that $R_M^{-1} \leq \delta$, 
the union of the coverings above for $m \geq  M$ gives
\begin{equation}
\mathcal H^s_\delta (F) \leq \sum_{m\ge M}R_m^{(n+\frac{1}{k-1})u_2+\frac{k-2}{k-1}u_1-\frac{1}{2}}  \, R_m^{-s}.
\end{equation}
Taking $\delta \to 0$ implies $M \to \infty$, so if $s > (n+\frac{1}{k-1})u_2+\frac{k-2}{k-1}u_1-\frac{1}{2}$, we get
\begin{equation}
\mathcal H^s (F) = \lim_{\delta \to 0} \mathcal H^s_\delta (F) = 0.
\end{equation}
This implies that $\dim_{\mathcal H} F \leq (n+\frac{1}{k-1})u_2+\frac{k-2}{k-1}u_1-\frac{1}{2}$.
\end{proof}

This upper bound is valid for every $(u_1,u_2) \in \mathcal D$, 
but it is optimal only in one of the situations in Proposition~\ref{thm:Good_Bad_a}.  

\begin{prop}\label{thm:Full_Good}
Let $k \in \mathbb N$, $k \geq 2$. If $F$ is the set \eqref{eq:Limsup_Set_Of_Divergence} with parameters $(u_1,u_2) \in \mathcal D$ as in Lemma~\ref{thm:Restrictions_For_u1u2},
then 
\begin{equation}
\left( n - 1 - \frac{k}{k-1} \right)\,  u_2 - \frac{k-2}{k-1} \, u_1 < n - \frac52 \quad \Longrightarrow \quad  \dim_{\mathcal H} F = \frac{k-2}{k-1} \, u_1 + \frac{n(k-1) + 1 }{k-1} \, u_2 - \frac12.
\end{equation}
\end{prop}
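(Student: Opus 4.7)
The plan is to combine the upper bound from Proposition~\ref{thm:Dimension_Upper_Bound} with a matching lower bound obtained from the Mass Transference Principle. The upper bound
\[
\dim_{\mathcal H} F \leq \frac{k-2}{k-1}\,u_1 + \frac{n(k-1)+1}{k-1}\,u_2 - \frac{1}{2}
\]
is already proved in full generality for all $(u_1,u_2) \in \mathcal D$, so only the lower bound requires work under the extra hypothesis.

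For the lower bound, I would proceed as follows. The hypothesis $(n-1-\tfrac{k}{k-1})u_2 - \tfrac{k-2}{k-1}u_1 < n - 5/2$ is precisely the implication \eqref{eq:Good_a} in Proposition~\ref{thm:Good_Bad_a}, which guarantees the existence of some $\epsilon > 0$ and some admissible dilation $(a_1,a_2) \in \mathcal A_{(u_1,u_2),\epsilon}$ satisfying $a_1 \geq (n-1)a_2 - (n-3/2)$. For such a pair, the good case \eqref{eq:Dimension_Condition_Good} of Proposition~\ref{thm:Prop_MTP} applies, giving
\[
\dim_{\mathcal H} F \geq a_1 + (n-1)a_2 + \tfrac{1}{2}.
\]
Now the key observation is that along the entire line $\mathcal A_{(u_1,u_2),\epsilon}$, the quantity $a_1 + (n-1)a_2$ is \emph{constant} and equals, by \eqref{eq:Restriction_MTP_4_Lemma},
\[
a_1 + (n-1)a_2 = \frac{k-2+\epsilon}{k-1}\,u_1 + \frac{n(k-1)+1-k\epsilon}{k-1}\,u_2 - (1-\epsilon).
\]
Therefore
\[
\dim_{\mathcal H} F \geq \frac{k-2+\epsilon}{k-1}\,u_1 + \frac{n(k-1)+1-k\epsilon}{k-1}\,u_2 - \tfrac{1}{2} + \epsilon.
\]

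Finally, since by \eqref{eq:Good_a} we may choose $\epsilon > 0$ arbitrarily small (the strict inequality in the hypothesis leaves room to shrink $\epsilon$ while keeping $(a_1,a_2)$ in the admissible region, as inspection of the argument in Proposition~\ref{thm:Good_Bad_a} shows), passing to the limit $\epsilon \to 0^+$ yields
\[
\dim_{\mathcal H} F \geq \frac{k-2}{k-1}\,u_1 + \frac{n(k-1)+1}{k-1}\,u_2 - \tfrac{1}{2},
\]
matching the upper bound. I do not expect a serious obstacle here: the genuine work has already been done in setting up the uniform local ubiquity of the dilated slabs (Lemma~\ref{thm:Restrictions_For_a}) and in deploying the rectangle-to-rectangle Mass Transference Principle (Proposition~\ref{thm:Prop_MTP}); the only mild point to verify is that taking $\epsilon \to 0$ is legitimate, which follows from the strict inequality in the hypothesis and the continuity of the admissibility conditions for $(a_1,a_2)$ in $\epsilon$.
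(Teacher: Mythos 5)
Your proof is correct and follows essentially the same route as the paper's: combine the covering upper bound of Proposition~\ref{thm:Dimension_Upper_Bound} with the Mass Transference lower bound obtained from Propositions~\ref{thm:Good_Bad_a} and~\ref{thm:Prop_MTP}, use that $a_1 + (n-1)a_2$ is fixed along $\mathcal A_\epsilon$, and let $\epsilon \to 0^+$. The point you flag --- that the strict inequality in the hypothesis allows $\epsilon$ to be taken arbitrarily small while keeping a good dilation $(a_1,a_2)\in\mathcal A_\epsilon$ --- is exactly what the paper invokes when it writes ``Since this holds for any $\epsilon>0$,'' so your clarifying remark is compatible with (and slightly more explicit than) the paper's argument.
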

\begin{proof}
Proposition~\ref{thm:Good_Bad_a} ensures the existence of $(a_1,a_2) \in \mathcal A$ satisfying $a_1 \geq (n-1)a_2 - (n-3/2)$, so according to Proposition~\ref{thm:Prop_MTP}, 
\begin{equation}
\dim_{\mathcal H}  F \geq a_1 + (n-1) a_2 + 1/2. 
\end{equation}
Moreover, from \eqref{eq:Restriction_MTP_4_Lemma_Bis} in the definition of $\mathcal A$, we have 
\begin{equation}
a_1 + (n-1) a_2 + \frac12   = \frac{k-2}{k-1} \, u_1 + \frac{n(k-1) + 1 }{k-1} \, u_2 - \frac12 - \epsilon \, \frac{ku_2 - u_1 - (k-1)}{k-1}, 
\end{equation}
so we directly get
\begin{equation}
\dim_{\mathcal H} F  \geq  \frac{k-2}{k-1} \, u_1 + \frac{n(k-1) + 1 }{k-1} \, u_2 - \frac12  - \epsilon \, \frac{ku_2 - u_1 - (k-1)}{k-1}.
\end{equation}
Since this holds for any $\epsilon >0$ and 
by \eqref{eq:R_Q1} we know that $ku_2 - u_1 \geq k-1$, we get  
\begin{equation}
\dim_{\mathcal H} F  \geq  \frac{k-2}{k-1} \, u_1 + \frac{n(k-1) + 1 }{k-1} \, u_2 - \frac12 .
\end{equation}
This lower bound matches the upper bound in 
Proposition~\ref{thm:Dimension_Upper_Bound}, 
so the proof is complete.
\end{proof}

We now tackle the complementary case.

\begin{prop}\label{thm:Full_Bad}
Let $k \in \mathbb N$, $k \geq 2$. Let $F$ be the set \eqref{eq:Limsup_Set_Of_Divergence} with parameters $(u_1,u_2) \in \mathcal D$ as in Lemma~\ref{thm:Restrictions_For_u1u2}.
Then,
\begin{equation}
\left( n - 1 - \frac{k}{k-1} \right)\,  u_2 - \frac{k-2}{k-1} \, u_1 \geq n - \frac52 \quad \Longrightarrow \quad  \dim_{\mathcal H} F = n - 3 + 2 \, \frac{ku_2 + (k-2)u_1}{k-1}.
\end{equation}
\end{prop}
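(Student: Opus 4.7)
The plan is to prove matching lower and upper bounds. For the lower bound, the hypothesis places us in the regime of Proposition~\ref{thm:Good_Bad_a} where every admissible dilation $(a_1,a_2)\in\mathcal A_\epsilon$ satisfies $a_1<(n-1)a_2-(n-3/2)$, so Proposition~\ref{thm:Prop_MTP} yields $\dim_{\mathcal H}F\geq n-1+2a_1$ for every such pair. I optimize by maximizing $a_1$ over $\mathcal A_\epsilon$. Since $\mathcal A_\epsilon$ is a segment of slope $-(n-1)$ in $(a_2,a_1)$-coordinates, the maximum of $a_1$ within $[u_1,1/2]\times[u_2,1]$ is attained at the smallest available $a_2=u_2$, where \eqref{eq:Critical_Point_For_a_Bis} gives
\begin{equation*}
a_1^\ast=\frac{(k-2)u_1+ku_2}{k-1}-1-\epsilon\!\left(\frac{ku_2-u_1}{k-1}-1\right).
\end{equation*}
Admissibility $(a_1^\ast,u_2)\in\mathcal A_\epsilon$ is verified as follows: $a_1^\ast\geq u_1$ is the $Q\geq 1$ computation used in the proof of Lemma~\ref{thm:Existence_Of_a}, while $a_1^\ast<1/2$ follows by applying the bad-case inequality itself at $(a_1^\ast,u_2)$ together with $u_2\leq 1$, namely $a_1^\ast<(n-1)u_2-(n-3/2)\leq 1/2$. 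Substituting into $n-1+2a_1^\ast$ and letting $\epsilon\to 0$ yields the claimed lower bound.

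For the upper bound, the $R^{-1}$-scale cover underlying Proposition~\ref{thm:Dimension_Upper_Bound} is too fine here, so I cover $F_R$ instead at the coarser scale $R^{-1/2}$ matching the longest side of the slabs: each $E_{p,q,R}$ fits inside a single ball of radius $R^{-1/2}$. For fixed $q\simeq Q$ the slab centers are $\simeq R^{-u_1}$-separated in $x_1$, and since $u_1\leq 1/2$ at most one center lies in each $R^{-1/2}$-interval, contributing $\simeq R^{u_1}$ occupied $x_1$-cells. In each direction $x_j$ with $j\geq 2$ the centers are $\simeq R^{-u_2}$-separated, and since $u_2\geq 1/2$ by \eqref{eq:u2_Greater_Than_1_2} they cluster into $\simeq R^{1/2}$ occupied $R^{-1/2}$-cells per direction. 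Hence $\lesssim R^{u_1+(n-1)/2}$ balls suffice at fixed $q$, and summing over the $\simeq Q$ values of $q$, which at worst overcounts,
\begin{equation*}
N(R^{-1/2})\lesssim Q\cdot R^{u_1+(n-1)/2}=R^{\tfrac{(k-2)u_1+ku_2}{k-1}+(n-3)/2}.
\end{equation*}

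Given $s>n-3+2\tfrac{(k-2)u_1+ku_2}{k-1}$, the bound $\mathcal H^s_{R_m^{-1/2}}(F_{R_m})\lesssim N(R_m^{-1/2})\cdot R_m^{-s/2}\lesssim R_m^{-c}$ for some $c=c(s)>0$, combined with the cover $F\subset\bigcup_{m\geq M}F_{R_m}$ taken at a scale $R_M^{-1/2}\leq\delta$, produces a geometric series for $\mathcal H^s_\delta(F)$ that vanishes as $M\to\infty$. Hence $\mathcal H^s(F)=0$, matching the lower bound. The chief subtlety is the choice of covering scale: exploiting the anisotropy of the slabs via the coarser $R^{-1/2}$-cover is essential, as the finer $R^{-1}$-cover of Proposition~\ref{thm:Dimension_Upper_Bound} is strictly suboptimal throughout $\mathcal D_{\text{below}}$, and it is precisely this refined cover that matches the Mass Transference Principle lower bound.
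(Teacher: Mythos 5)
Your proposal is correct and follows essentially the same approach as the paper. For the lower bound you use Propositions~\ref{thm:Good_Bad_a} and \ref{thm:Prop_MTP}, pick the maximal $a_1$ at the endpoint \eqref{eq:Critical_Point_For_a_Bis}, and let $\epsilon\to 0$, exactly as the paper does; for the upper bound your count of occupied $R^{-1/2}$-cells (roughly $Q\cdot R^{u_1}$ in $x_1$ times $R^{(n-1)/2}$ in the remaining directions) is numerically identical to the paper's ``sheet'' cover $Q\cdot(QD^k/R^{k-1})$ sheets with $R^{(n-1)/2}$ balls of radius $R^{-1/2}$ per sheet, since $R^{u_1}=QD^k/R^{k-1}$, so the two organizations of the covering argument coincide.
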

\begin{proof}
We prove the lower bound first.
Proposition~\ref{thm:Good_Bad_a} implies that all exponents $(a_1,a_2) \in \mathcal A$ satisfy $a_1 < (n-1)a_2 - (n-3/2)$, so 
according to Proposition~\ref{thm:Prop_MTP} we have
\begin{equation}
\dim_{\mathcal H}  F \geq n - 1 + 2a_1, \qquad \forall (a_1,a_2) \in \mathcal A.
\end{equation}
Let us choose the one with the largest $a_1$,
which is always \eqref{eq:Critical_Point_For_a_Bis}.
Thus,
\begin{equation}
\dim_{\mathcal H}  F \geq n - 3 + 2\frac{(k-2)u_1 + ku_2}{k-1} - 2\epsilon \frac{ku_2 - u_1 - (k-1)}{k-1}. 
\end{equation}
Since this holds for every $\epsilon >0$ and 
by \eqref{eq:R_Q1} we have $ku_2 - u_1 \geq k-1$, we get
\begin{equation}
\dim_{\mathcal H}  F \geq n - 3 + 2\frac{(k-2)u_1 + ku_2}{k-1}. 
\end{equation}
Regarding the upper bound, the hypothesis implies that 
\begin{equation}
n - 3 + 2 \, \frac{ku_2 + (k-2)u_1}{k-1} \leq \frac{k-2}{k-1} \, u_1 + \frac{n(k-1) + 1 }{k-1} \, u_2 - \frac12,
\end{equation}
so there is a gap with the upper bound in Proposition~\ref{thm:Dimension_Upper_Bound}. 
Actually, in this case the covering used in Proposition~\ref{thm:Dimension_Upper_Bound} 
is not optimal and can be improved as follows: 

From its definition in \eqref{eq:SetsOfDivergence}, let us rewrite $F_R$ as
\begin{equation}\label{eq:Pile_Of_Sheets}
F_{R} = \bigcup_{Q/2 \leq q \leq Q}  \, \bigcup_{p_1} \left[ \, \bigcup_{p' : p \in G(q)} B_1 \left(  k\, \frac{R^{k-1}}{D^k} \frac{p_1}{q} , \frac{1}{R^{1/2}}  \right)  \, \times \, B_{n-1} \left( \frac{1}{D}\, \frac{p'}{q} , \frac{1}{R}  \right) \, \right]. 
\end{equation}
Recall that we restrict to $[-1,1]^n$. 
Then, for every fixed $q$ and $p_1$, we can cover the set inside the brackets using $R^{(n-1)/2}$ balls of radius $R^{-1/2}$. 
Since we have $\simeq Q$ choices for $q$ and $\simeq  QD^k/R^{k-1}$ choices for $p_1$, the number of balls of radius $R^{-1/2}$ that we need to cover $F_R$ is
\begin{equation}
\frac{Q^2 D^k}{R^{k-1}}\, R^{(n-1)/2} = R^{ u_1 + \frac{ku_2 - u_1}{k-1} - 1} \, R^{\frac{n-1}{2}} = R^{\frac{n-3}{2} + \frac{k-2}{k-1}u_1 + \frac{k}{k-1}u_2},
\end{equation} 
where we used  \eqref{eq:Geometric_Parameter_1} and \eqref{eq:D_and_Q_in_terms_of_R}.
Thus, as in the proof of Proposition~\ref{thm:Dimension_Upper_Bound}, taking $M\in \mathbb N$ such that $R_M^{-1} \leq \delta$, we get
\begin{equation}
\mathcal H_\delta^s(F) 
	\leq \sum_{m \geq M} R_m^{\frac{n-3}{2} + \frac{k-2}{k-1}u_1 + \frac{k}{k-1}u_2} \, R_m^{-s/2}.
\end{equation}
Thus, 
\begin{equation}
s > n-3 + 2 \left( \frac{k-2}{k-1}u_1 + \frac{k}{k-1}u_2 \right) \quad \Longrightarrow \quad \mathcal H^s(F) = \lim_{\delta \to 0} \mathcal H_\delta^s(F) = 0,
\end{equation}
and consequently, 
\begin{equation}
\dim_{\mathcal H}  F \leq n - 3 + 2\frac{(k-2)u_1 + ku_2}{k-1}. 
\end{equation}
The proof is complete.
\end{proof}

\begin{rmk}
In \eqref{eq:Pile_Of_Sheets}, we are piling the slabs of $F_R$ in sheets of
width $R^{-1/2}$ in the direction $x_1$:
\vspace*{4mm}
\begin{center}
\includegraphics[scale=0.55]{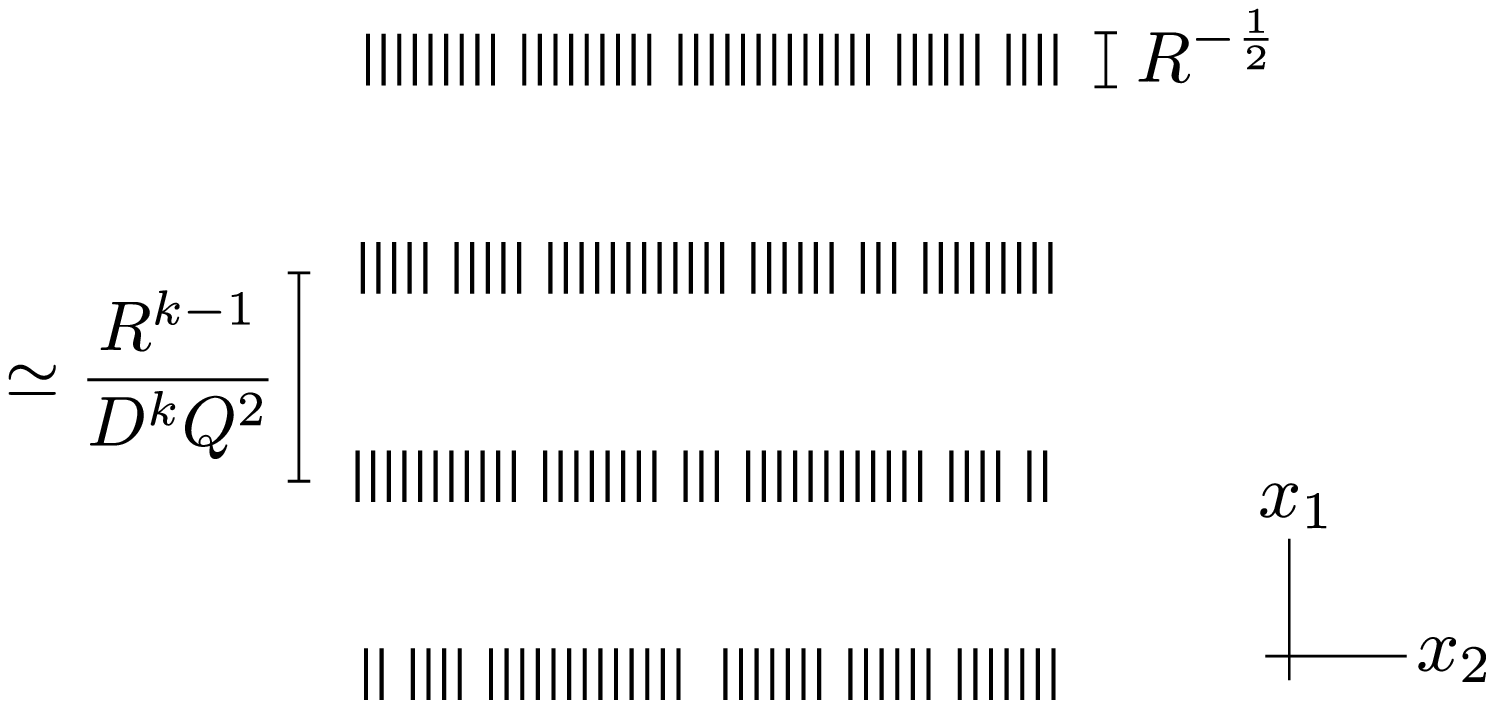}
\end{center}
\vspace*{4mm}
\noindent These sheets are centered at $x_1 = k(R^{k-1}/D^k) (p_1/q)$, so 
the distance between the centers of two sheets is
\begin{equation}
k \frac{R^{k-1}}{D^k} \left( \frac{p_1}{q} -  \frac{p'_1}{q'} \right) 
	\geq k \frac{R^{k-1}}{D^k} \frac{1}{q q'} \simeq  \frac{R^{k-1}}{D^k\, Q^2}.
\end{equation}
Then, the sheets will be pairwise disjoint if this separation is greater than their width $R^{-1/2}$. 
Using \eqref{eq:Geometric_Parameter_1} and \eqref{eq:D_and_Q_in_terms_of_R},
\begin{equation}
\frac{R^{k-1}}{D^k\, Q^2} > \frac{1}{R^{1/2}} \quad \Longleftrightarrow \quad R^{-\frac{k}{k-1}\,u_2-\frac{k-2}{k-1}\,u_1+1} > \frac{1}{R^{1/2}} \quad \Longleftrightarrow \quad \frac{k-2}{k-1}\,u_1 + \frac{k}{k-1}\,u_2 \leq \frac32.
\end{equation}
And indeed, the hypothesis of Proposition~\ref{thm:Full_Bad} together with $u_2 \leq 1$ implies precisely
\begin{equation}
\frac{k-2}{k-1}\,u_1 + \frac{k}{k-1}\,u_2 \leq (n-1)u_2 - (n-5/2) \leq \frac32.
\end{equation}
\end{rmk}

\subsection{Some simpler cases}

We have fully determined the Hausdorff dimension of $F$
in Propositions \ref{thm:Full_Good} and \ref{thm:Full_Bad}.
For some $k$ and $n$, those expressions can be simplified

\subsubsection{The dimension when $k=2$}

This corresponds to the Schr\"odinger equation, 
which was already studied in \cite{Bourgain2016,LucaPonceVanegas2021}. 
We include it here for the sake of completeness and of comparison with $k \geq 3$.

In this case, only Proposition~\ref{thm:Full_Good} applies. 
Indeed, the hypothesis there turns into
\begin{equation}
(n-3)\, u_2 < n - 5/2,
\end{equation}
which is always satisfied because:
\begin{itemize}
	\item when $n=2$, the condition is $u_2 \geq 1/2$, which is always satisfied as we saw in \eqref{eq:u2_Greater_Than_1_2}. 
	\item when $n=3$, the condition is $0 < 1/2$, which is trivially satisfied.
	\item when $n \geq 4$, we need 
	\begin{equation}
	u_2 < \frac{n-5/2}{n-3} = \frac{n- 3 + 1/2}{n-3} = 1 + \frac{1}{2(n-3)} ,
	\end{equation}
	which is also satisfied because we always have $u_2 \leq 1$. 
\end{itemize}
Thus, Proposition~\ref{thm:Full_Good} directly gives the following result.
\begin{prop}\label{thm:Full_Dimension_k2}
Let $k=2$ and $(u_1,u_2) \in \mathcal D$. 
Then, 
\begin{equation}
\dim_{\mathcal H} F = (n + 1) \, u_2 - 1/2.
\end{equation}
\end{prop}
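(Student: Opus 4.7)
The plan is to deduce this as a direct corollary of Proposition~\ref{thm:Full_Good} by checking that its hypothesis is automatically satisfied when $k=2$ and then simplifying the resulting formula. Since the heavy lifting (the Mass Transference Principle, the construction of the uniform local ubiquity system, and the matching upper bound) is already packaged in that proposition, essentially nothing new is required beyond an elementary verification in each dimension.

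First, I would substitute $k=2$ into the dichotomy condition of Proposition~\ref{thm:Full_Good}, namely
\begin{equation}
\Bigl( n - 1 - \tfrac{k}{k-1} \Bigr) u_2 - \tfrac{k-2}{k-1} u_1 \; < \; n - \tfrac{5}{2},
\end{equation}
which collapses to $(n-3) u_2 < n - 5/2$. I would then split into three cases exactly as in the discussion preceding the statement: for $n=2$ the inequality becomes $u_2 > 1/2$, which follows from \eqref{eq:u2_Greater_Than_1_2} (with the boundary case $u_2 = 1/2$ handled by an $\epsilon$-perturbation in the argument of Proposition~\ref{thm:Good_Bad_a}); for $n=3$ the inequality reads $0 < 1/2$ and is trivial; for $n\ge 4$ the inequality rearranges to $u_2 < 1 + 1/(2(n-3))$, which is automatic from $u_2 \le 1$ in the definition of $\mathcal{D}$ (Lemma~\ref{thm:Restrictions_For_u1u2}). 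Hence every $(u_1,u_2)\in\mathcal{D}$ falls in the regime $\mathcal D_\text{above}$ of Remark~\ref{rmk:Above_Below}.

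With the hypothesis verified, I would invoke Proposition~\ref{thm:Full_Good} to obtain
\begin{equation}
\dim_{\mathcal H} F = \tfrac{k-2}{k-1} u_1 + \tfrac{n(k-1)+1}{k-1} u_2 - \tfrac{1}{2}.
\end{equation}
Setting $k=2$ makes the coefficient of $u_1$ vanish and reduces the coefficient of $u_2$ to $n+1$, giving $\dim_{\mathcal H} F = (n+1) u_2 - 1/2$, as claimed. I do not expect any genuine obstacle: the only thing to be mildly careful about is the borderline $n=2$, $u_2 = 1/2$ case, where strict inequality fails. This should cause no trouble because the $\mathcal{A}_\epsilon$ argument underlying Proposition~\ref{thm:Good_Bad_a} is continuous in $(u_1,u_2)$, so the dimension formula extends to the closure by the upper bound of Proposition~\ref{thm:Dimension_Upper_Bound}; alternatively, one can simply perturb $u_2$ slightly and take a limit.
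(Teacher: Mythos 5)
Your proposal matches the paper's own argument essentially verbatim: both substitute $k=2$ into the hypothesis of Proposition~\ref{thm:Full_Good}, split into the cases $n=2$, $n=3$, $n\geq 4$, verify the inequality from the constraints defining $\mathcal D$, and then simplify the dimension formula. The extra caution you raise about the borderline $n=2$, $u_2=1/2$ is in fact vacuous here, since the conditions $2u_2 - u_1 \geq 1$ and $u_2 - u_1 < 1/2$ in $\mathcal D$ already force $u_2 > 1/2$ strictly when $k=2$, but this does not change the fact that your route is the same as the paper's.
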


\begin{rmk}
The reader might want to compare this result with Theorems~12 and 14 in \cite{LucaPonceVanegas2021}.
In the notation of that paper,
$u_1 = b$ and $u_2 = ((n-1)a + b + 1)/(n+1)$,
so we have not only computed the dimension over the ``extremal'' lines
$Q = 1$ and $u_1 = 1/2$ as in \cite{LucaPonceVanegas2021}, 
but on the whole region $\mathcal{D}$.
\end{rmk}

\subsubsection{The dimension when $k = 3,\,4$ and $n \geq 3$}

Like for $k=2$, only Proposition~\ref{thm:Full_Good} applies because its hypothesis always holds, that is, 
\begin{equation}\label{eq:Aux_k3}
\left( n - 1 - \frac{k}{k-1} \right)\,  u_2 - \frac{k-2}{k-1} \, u_1 < n - \frac52.
\end{equation}
Indeed, by  the \textit{shrinking unit cell} condition \eqref{eq:R_Unit_Cell} we have
$-u_1< -u_2+1-1/k$, so
\begin{equation}
\left( n - 1 - \frac{k}{k-1} \right)\,  u_2 - \frac{k-2}{k-1} \, u_1
	< (n-3)u_2 + \frac{k-2}{k} 
	\le n-\frac{5}{2} + \frac{k-4}{2k}, 
\end{equation}
where we used $u_2\le 1$ and $n \geq 3$ in the last inequality.
Hence, \eqref{eq:Aux_k3} is always satisfied
as long as $k \leq 4$,
and Proposition~\ref{thm:Full_Good} directly implies the following result.
\begin{prop}\label{thm:Full_Dimension_k3}
Let $k=3,\,4$ and $n \geq 3$. If $(u_1,u_2) \in \mathcal D$,
then 
\begin{equation}
\dim_{\mathcal H} F = \frac{k-2}{k-1}u_1+\frac{n(k-1)+1}{k-1}u_2-\frac{1}{2}.
\end{equation}
\end{prop}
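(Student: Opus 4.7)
The plan is to reduce this immediately to Proposition~\ref{thm:Full_Good}: one only needs to verify that its hypothesis
\begin{equation}
\left( n - 1 - \frac{k}{k-1} \right) u_2 - \frac{k-2}{k-1} u_1 < n - \frac{5}{2}
\end{equation}
is automatically satisfied for every $(u_1,u_2) \in \mathcal{D}$ when $k\in\{3,4\}$ and $n\ge 3$. Once this is done, the formula for $\dim_{\mathcal H}F$ follows verbatim from Proposition~\ref{thm:Full_Good}, exactly mimicking how Proposition~\ref{thm:Full_Dimension_k2} was obtained in the $k=2$ case.

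To verify the hypothesis, I would invoke the \emph{shrinking unit cell} condition \eqref{eq:R_Unit_Cell}, namely $u_2 - u_1 < 1 - 1/k$, which gives $-u_1 < -u_2 + (k-1)/k$. Multiplying by the positive factor $(k-2)/(k-1)$ (note this uses $k\ge 3$, so the sign is preserved) and adding $(n - 1 - k/(k-1))u_2$ on both sides, I obtain the strict inequality
\begin{equation}
\left( n - 1 - \frac{k}{k-1} \right) u_2 - \frac{k-2}{k-1} u_1 < \left( n - 1 - \frac{k}{k-1} - \frac{k-2}{k-1} \right) u_2 + \frac{k-2}{k} = (n-3)\,u_2 + \frac{k-2}{k},
\end{equation}
since $k/(k-1) + (k-2)/(k-1) = (2k-2)/(k-1) = 2$. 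Using $u_2 \le 1$ and $n \ge 3$, the right-hand side is bounded by $n - 3 + (k-2)/k = n - 2 - 2/k$.

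The delicate point is the final comparison: I need $n - 2 - 2/k \le n - 5/2$, i.e.\ $2/k \ge 1/2$, which is exactly $k \le 4$. For $k = 3$ this is strict, so combining with the earlier bound produces the required strict inequality. For $k = 4$ the comparison becomes an equality, so the overall strict inequality is inherited purely from the strict inequality supplied by the shrinking unit cell condition in the first step; this is the only subtle spot and is the reason the argument breaks down for $k \ge 5$. With the hypothesis of Proposition~\ref{thm:Full_Good} established for every $(u_1,u_2)\in\mathcal{D}$ when $k\in\{3,4\}$ and $n\ge 3$, that proposition yields the stated value of $\dim_{\mathcal H} F$ and the proof is complete.
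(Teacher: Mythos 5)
Your proposal is correct and reproduces the paper's own argument: reduce to Proposition~\ref{thm:Full_Good} by checking the hypothesis \eqref{eq:Aux_k3}, apply the shrinking unit cell condition \eqref{eq:R_Unit_Cell} to pass to $(n-3)u_2 + (k-2)/k$, and then bound by $n - 2 - 2/k \le n - 5/2$ using $u_2 \le 1$, $n \ge 3$, and $k \le 4$. Your explicit bookkeeping of where strictness comes from ($k=3$ strict in the second step, $k=4$ strict only from the first step) is a nice clarification of a point the paper leaves implicit.
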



\section{Sobolev exponents for divergence} \label{sec:SobolevExponents}

Once we know the dimension of $F$, 
let $0 < \alpha < n$ and fix $\dim_{\mathcal H} F = \alpha$. 
Recall that in \eqref{eq:Divergent_Counterexample} we built a counterexample that
\begin{itemize}
	\item diverges in $F$,
	\item is in $H^s(\mathbb R^n)$ for every $s < s(\alpha)$ defined in \eqref{eq:Sobolev_Exponent_Origin}, which by \eqref{eq:Geometric_Parameter_1} and \eqref{eq:Geometric_Parameter_2} is rewritten in terms of $(u_1,u_2)$ as
\begin{equation}\label{eq:Sobolev_Exponent_u2}
s(u_2) = \frac14 + \frac{n-1}{2} \, (1-u_2).
\end{equation} 
\end{itemize}
Our objective is to maximize this Sobolev exponent for every fixed $\alpha$.
For that, it suffices to minimize $u_2$ subject to the condition $\dim_{\mathcal H} F = \alpha$.
Propositions~\ref{thm:Full_Good} and \ref{thm:Full_Bad} give the relationship between 
$\alpha$ and $(u_1,u_2)$. 

We first briefly solve the case $k=2$ for the Schr\"odinger equation, and then 
we tackle the general case $k \geq 3$. 

\subsection{$\boldsymbol{k=2}$ (Schr\"odinger equation) }
In this case, by Proposition~\ref{thm:Full_Dimension_k2} we are fixing 
\begin{equation}
\alpha = (n+1) \, u_2 - 1/2, 
\end{equation}
so the Sobolev exponent $s(u_2)$ in \eqref{eq:Sobolev_Exponent_u2} is fully determined by
\begin{equation}
\begin{split}
s(\alpha) = \frac14 + \frac{n-1}{2} \left( 1 - \frac{\alpha + 1/2}{n+1} \right) & =  \frac14 + \frac{n-1}{4(n+1)} + \frac{n-1}{2(n+1)}\, (n-\alpha) \\
& = \frac{n}{2(n+1)} + \frac{n-1}{2(n+1)}\, (n-\alpha).
\end{split}
\end{equation} 
The conditions for $Q\ge 1$ \eqref{eq:R_Q1} and 
for \textit{disjointness} \eqref{eq:R_Disjointness} imply that 
\begin{equation}
1/2 \leq u_2 \leq \frac{n+1/2 }{n+1},
\end{equation}
which restricts the choice of $\alpha$ to 
\begin{equation}
n/2 \leq \alpha \leq n. 
\end{equation}
This result was obtained in \cite{LucaPonceVanegas2021}.

\subsection{$\boldsymbol{k \geq 3}$}

In general, the region $\mathcal D$ for $(u_1,u_2)$ is split into $\mathcal D_{\text{above}}$ and $\mathcal D_{\text{below}}$, defined in Remark~\ref{rmk:Above_Below} and with boundary in the line 
\begin{equation}\label{eq:Boundary_u1u2}
\left( n - 1 - \frac{k}{k-1} \right)\,  u_2 - \frac{k-2}{k-1} \, u_1 = n - \frac52. 
\end{equation}
The dimension of the divergence set is given by 
Proposition~\ref{thm:Full_Good} if $(u_1,u_2) \in \mathcal D_{\text{above}}$ and 
by Proposition~\ref{thm:Full_Bad} if $(u_1,u_2) \in \mathcal D_{\text{below}}$.
Thus, for fixed $\alpha$, we need $(u_1,u_2)$ in the polygonal line 
\begin{equation}\label{eq:Broken_Line_u1u2}
\alpha = \left\{   \begin{array}{ll}
\dfrac{k-2}{k-1} \, u_1 + \dfrac{n(k-1) + 1 }{k-1} \, u_2 - \dfrac12, 
	& \text{ for } (u_1, u_2) \in \mathcal D_{\text{above}}, \\[3mm]
n - 3 + 2 \, \dfrac{ku_2 + (k-2)u_1}{k-1}, & \text{ for } (u_1, u_2) \in \mathcal D_{\text{below}},
\end{array} \right.
\end{equation} 
with smallest $u_2$.
The broken line \eqref{eq:Broken_Line_u1u2} is shown in Figure~\ref{fig:Lines_For_Alpha}.
If we see it as $u_1$ in function of $u_2$, it has negative slope
and runs parallel to the line given by the condition for \textit{disjointness} \eqref{eq:R_Disjointness}
in  the region $\mathcal D_{ \text{above}}$.
That means that
\begin{equation}\label{eq:Smallest_u2}
\boxed{
\text{ smallest } u_2 \quad \equiv \quad \text{ intersection of \eqref{eq:Broken_Line_u1u2} with either } u_1 = 1/2 \text{ or } Q= 1 \text{ \eqref{eq:R_Q1} }.
}
\end{equation}

\begin{figure}[t]
\includegraphics[width=0.95\linewidth]{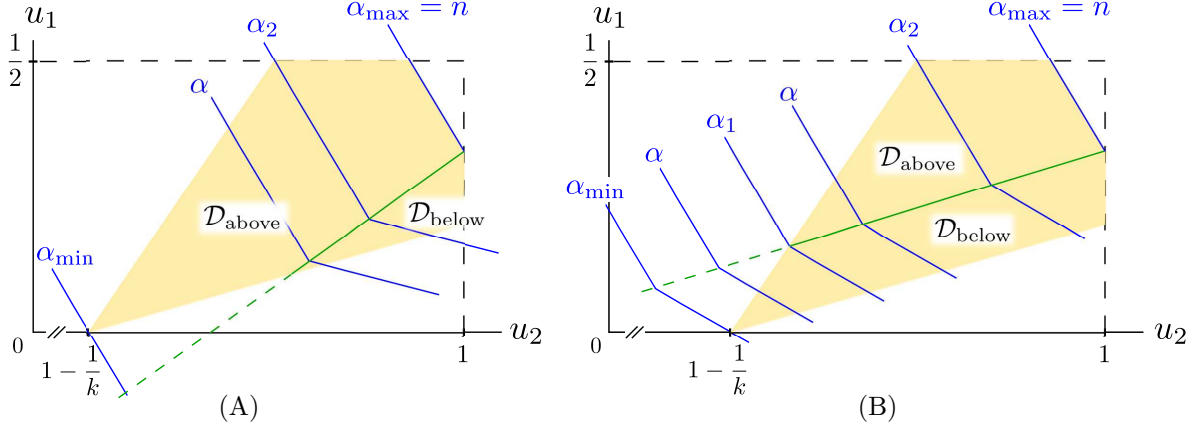}
\caption{Case $n\ge 3$. 
The green line is the boundary $\mathcal{D}_{\textrm{above}}/\mathcal{D}_{\textrm{below}}$ \eqref{eq:Boundary_u1u2}.}
\label{fig:Lines_For_Alpha}
\end{figure}

The situation much depends on whether 
the boundary line \eqref{eq:Boundary_u1u2} intersects $Q= 1$ \eqref{eq:R_Q1}
in the region $\mathcal D$ (like in Figure~\ref{fig:Lines_For_Alpha} B) 
or not (like in Figure~\ref{fig:Lines_For_Alpha} A). 
Observe that \eqref{eq:Boundary_u1u2} crosses the point
\begin{equation}\label{eq:Boundary_Point_1}
u_1 = \frac12 - \frac{1}{2(k-2)}, \qquad u_2 =  1
\end{equation}
and has slope
\begin{equation}\label{eq:Slope_Boundary}
\text{ slope of \eqref{eq:Boundary_u1u2}} = \frac{(n-1)(k-1) - k}{k-2}.
\end{equation}
Given that $k \geq 3$, this slope is positive if and only if $n \geq 3$. 
\begin{itemize}
	\item If $n=2$, the slope of \eqref{eq:Boundary_u1u2} is negative, and 
	it always intersects $Q = 1$ \eqref{eq:R_Q1} at the point
	\begin{equation}\label{eq:Intersection_n2}
		u_1 = \frac12 - \frac{1}{2(k-1)} , \qquad u_2 = \frac{k - 3/2}{k - 1} = 1 - \frac{1}{2(k-1)}.
		\end{equation}
		Since $0 \leq u_1 < 1/2$, we have $(u_1,u_2) \in \mathcal D$.

	\item If $n \geq 3$, the point in \eqref{eq:Boundary_u1u2} with $u_1 = 0$ is
		\begin{equation}
		u_1 = 0, \qquad u_2 = \frac{(n-5/2)(k-1)}{(n-1)(k-1)- k}.
        \end{equation}		 
        Given that $Q = 1$ \eqref{eq:R_Q1} crosses the point $(0,\frac{k-1}{k})$, 
        the intersection happens inside $\mathcal D$ 
        (like in Figure~\ref{fig:Lines_For_Alpha} B) if and only if 
        \begin{equation}\label{eq:The_Two_Cases}
        \frac{(n-5/2)(k-1)}{(n-1)(k-1)- k} < \frac{k-1}{k} \quad \Longleftrightarrow \quad n-1 < \frac{k}{2}.
        \end{equation}
        In this case,  the crossing point is 
        \begin{equation}\label{eq:Crossing_Point}
        u_1 = \frac12 - \frac{n-1}{2(k - (n - 1))} , \qquad u_2  = 1 - \frac{1}{2(k - (n - 1))}.
        \end{equation}
		Observe that both the condition \eqref{eq:The_Two_Cases} and 
		the crossing point \eqref{eq:Crossing_Point} also work for $n=2$.
\end{itemize}

Thus, according to \eqref{eq:The_Two_Cases}, we have two different situations to treat.

\subsubsection{When $n - 1 \geq k/2$}
In this case,  the boundary $\mathcal{D}_{\textrm{above}}/\mathcal{D}_{\textrm{below}}$ \eqref{eq:Boundary_u1u2} does not intersect the line $Q = 1$ \eqref{eq:R_Q1}, 
as shown in Figure~\ref{fig:Lines_For_Alpha}\, A.
In particular, both $Q = 1$ \eqref{eq:R_Q1} and $u_1=1/2$ are in $\mathcal{D}_{\textrm{above}}$.
By \eqref{eq:Smallest_u2}, it suffices to consider the section of the broken line \eqref{eq:Broken_Line_u1u2} in $\mathcal D_{\text{above}}$, 
\begin{equation}\label{eq:Alpha_1_Bis}
\alpha = \frac{k-2}{k-1} u_1 + \frac{n(k-1) + 1}{k-1} u_2 - \frac12.
\end{equation}
The additional restriction \eqref{eq:Restriction_6_Plus} in Remark~\ref{rmk:Additional_Restriction} delimits the range of $\alpha$ to
\begin{equation}
 n - \frac{n-1}{k} \leq \alpha + \frac12 \leq n + \frac12 \quad \Longrightarrow \quad  n - \frac{n-1}{k} - \frac12 \leq  \alpha \leq n.
\end{equation}
We have two cases:
\begin{itemize}
	\item If the line \eqref{eq:Alpha_1_Bis} intersects $u_1=1/2$, then that intersection point has 
	\begin{equation}
	u_2 = \left( \alpha + \frac{1}{2(k-1)} \right) \frac{k-1}{n(k-1)+1},
	\end{equation}
	and from \eqref{eq:Sobolev_Exponent_u2} we obtain the Sobolev exponent 
	\begin{equation}\label{eq:Sobolev_Largest_Alpha}
	s(\alpha) = \frac{nk}{4(n(k-1)+1)} + \frac{(n-1)(k-1)}{2(n(k-1)+1)} (n-\alpha).
	\end{equation}
	It is important to notice that, in this case, $\alpha$ is restricted by $u_2 \geq (k-1/2)/k$, or equivalently,
\begin{equation}
\alpha \geq  \frac{n(k-1) + 1}{k-1}\cdot\frac{k-1/2}{k} - \frac{1}{2(k-1)} = n - \frac{n-1}{2k} = \alpha_2.
\end{equation}

\item If the line \eqref{eq:Alpha_1_Bis} intersects $Q = 1$ \eqref{eq:R_Q1}, then 
the point of intersection is 
\begin{equation}
u_1 = \frac{n-1 - k(n - \alpha - 1/2)}{n+k-1}, \qquad u_2 = 1 - \frac{n - \alpha + 1/2}{n+k-1},
\end{equation}
so from \eqref{eq:Sobolev_Exponent_u2} we get the Sobolev exponent
\begin{equation}\label{eq:Sobolev_Middle_Alpha}
s(\alpha) = \frac{2(n-1)+k}{4(n+k-1)} + \frac{n-1}{2(n + k - 1)}(n-\alpha), \qquad \text{ for } n - \frac{n-1}{k} - \frac12  \leq \alpha  \leq n - \frac{n-1}{2k}.
\end{equation}

\end{itemize}

\subsubsection{When $n - 1 < k /2$}

In this case, the boundary $\mathcal{D}_{\textrm{above}}/\mathcal{D}_{\textrm{below}}$ 
\eqref{eq:Boundary_u1u2} intersects $Q = 1$ \eqref{eq:R_Q1} at the point \eqref{eq:Crossing_Point},
as shown in Figure~\ref{fig:Lines_For_Alpha}\, B. 
Thus, contrary to the previous case, there are values of $\alpha$ for which 
the broken line \eqref{eq:Broken_Line_u1u2} crosses $Q = 1$ \eqref{eq:R_Q1} in the region $\mathcal D_{\text{below}}$.
The largest of such $\alpha$, call it $\alpha_1$, corresponds to the broken line \eqref{eq:Broken_Line_u1u2} crossing the point \eqref{eq:Crossing_Point}. 
Since the broken line takes in $\mathcal D_{\text{below}}$ the form 
\begin{equation}\label{eq:Alpha_2_Bis}
\alpha = n - 3 + 2 \, \frac{ku_2 + (k-2)u_1}{k-1},
\end{equation}
we get
\begin{equation}
\alpha_1 = n - \frac{n-1}{k - (n-1)}.
\end{equation} 
On the other hand, the smallest $\alpha$, call it $\alpha_\text{min}$, corresponds to 
the broken line \eqref{eq:Broken_Line_u1u2} or \eqref{eq:Alpha_2_Bis} crossing the point
$(u_1,u_2) = (0,(k-1)/k)$, that is,
\begin{equation}
\alpha_\text{min} =  n - 1.
\end{equation}
Thus, for $\alpha_\text{min} \leq \alpha \leq \alpha_1$, 
the point of intersection between the broken line \eqref{eq:Alpha_2_Bis} and $Q = 1$ \eqref{eq:R_Q1} is 
\begin{equation}
u_1 = \frac12 - \frac{n-\alpha}{2}, \qquad u_2 = 1 - \frac{n-\alpha + 1}{2k},
\end{equation}
so from \eqref{eq:Sobolev_Exponent_u2} we obtain the Sobolev exponent
\begin{equation}
s(\alpha) = \frac14 + \frac{n-1}{4k} + \frac{n-1}{4k}(n-\alpha).
\end{equation}
Observe that this counterexample is only relevant if $s(\alpha)\ge (n-\alpha)/2$.
Thus, we need to shrink the range of $\alpha$ so that 
\begin{equation}\label{eq:Sobolev_Lowest_Alpha}
s(\alpha) = \frac14 + \frac{n-1}{4k} + \frac{n-1}{4k}(n-\alpha), \qquad \text{ for } 
	\quad n-\frac{1}{2}-\frac{3(n-1)}{2(2k-n+1)} \leq \alpha \leq n - \frac{n-1}{k - (n-1)}.
\end{equation}

For the remaining $\alpha > n - \frac{n-1}{k - (n-1)} = \alpha_1$, 
the broken line \eqref{eq:Broken_Line_u1u2} intersects either 
$Q = 1$ \eqref{eq:R_Q1} or $u_1=1/2$  in the region $\mathcal D_{\text{above}}$
in the same way as it did in the previous case $n-1 \geq k/2$. 
Therefore, we obtain the same Sobolev exponents \eqref{eq:Sobolev_Largest_Alpha} and \eqref{eq:Sobolev_Middle_Alpha}. 

This completes the proof of Theorem~\ref{thm:divergence_k}.


\section{Divergence for saddle-like symbols} \label{sec:saddle}

We now prove Theorem~\ref{thm:examples_saddle}.
For that, we need to build counterexamples for the symbols
\begin{equation}\label{eq:Saddle}
P(\xi) = \xi_1^2+\cdots + \xi_m^2 - \xi_{m+1}^2-\cdots -\xi_{n}^2
\end{equation}
with index $1\le m\le n/2$. 
Although Theorem~\ref{thm:divergence_k} can be used in this case as well,
it does not yield the largest possible sets of divergence.
Instead, we adjust the example of 
Rogers, Vargas and Vega \cite{MR2284549} to the fractal context.

\subsection{When $\boldsymbol{\alpha  \geq n - m + 1}$: Optimal result}\label{sec:Saddle_Optimal}

We first prove that if $\alpha$ is large enough, 
the non-dispersive threshold in Theorem~\ref{thm:non-dispersive_B} is optimal
for the symbol \eqref{eq:Saddle}. 
That means that the solution $\U{t}$ may behave as if there were no dispersion.

\begin{thm}\label{thm:saddle_dim_sharp}
Let $P$ be a quadratic form with index $1\le m\le n/2$.
Assume that $\alpha\ge n-m+1$ and that
\begin{equation}
s< \frac{n-\alpha+1}{2}.
\end{equation}
Then, there exists  $f \in H^s(\mathbb R^n)$ such that $T_tf$ diverges
in a set of Hausdorff dimension $\alpha$.
\end{thm}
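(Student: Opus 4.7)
The plan is to adapt the scheme of Sections~\ref{sec:Building_counterexample}--\ref{sec:SobolevExponents} to the saddle symbol, exploiting that it vanishes on an $m$-dimensional isotropic subspace. After a linear change of coordinates we may assume $\R^n = V \oplus V' \oplus W$ with $\dim V = \dim V' = m$ and $\dim W = n - 2m$, and
$P(\xi_V, \xi_{V'}, \xi_W) = \xi_V \cdot \xi_{V'} - |\xi_W|^2$.
The crucial feature is that $P$ vanishes identically on $V$ and is only linear in $\xi_V$, so the evolution behaves like a pure transport along $V$ and shows no dispersion in those directions.

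Construct a counterexample $f_R$ analogous to \eqref{eq:Counterexample} whose Fourier transform is concentrated in a narrow plate around $\xi^{(0)} = (0, \eta_0, 0)$ with $|\eta_0| \simeq 1$. In the $\xi_V$ direction we place a Bourgain-like periodic array of modulated wave packets at frequencies $Dm'$, $m' \in \Z^m$; in $\xi_{V'}$ a Schwartz bump of width $\simeq R^{-1/2}$ around $\eta_0$; in $\xi_W$ a bump of width $\simeq 1$. Evaluating at rational times $t = p_1/(Dq)$ with $q \simeq Q$, the Gauss-sum machinery of Section~\ref{sec:Preliminary_Datum} (simplified since $k=2$, so no Deligne theorem is needed) yields a coherent lower bound for $|T_t f_R|/\norm{f_R}_2$ on a family of slabs $E_{p,q,R}$ whose extent is $\simeq R^{-1/2}$ in one ``transport'' direction within $V'$, $\simeq R^{-1}$ in the $n-m$ genuinely dispersive directions (the remaining $V'$ directions plus $W$), and $\simeq 1$ (the full unit cube) in the $m-1$ remaining isotropic directions of $V$.

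Superposing scales $R_j = 2^j$ as in Section~\ref{sec:Counterexample} produces $f \in H^s(\R^n)$ for every $s < s(\alpha)$ whose evolution diverges on $F = \limsup_j F_{R_j}$. The Hausdorff dimension of $F$ is computed via the Mass Transference Principle for rectangles (Theorem~\ref{thm:MTP_Rectangles}) applied to this anisotropic geometry: the $m-1$ full-length isotropic directions contribute $m-1$ to the dimension automatically, while the remaining $n-m+1$ directions obey a dimension formula in the spirit of Section~\ref{sec:FractalSet} (but simpler, since the relevant geometric parameter is essentially one-dimensional and $k=2$). Choosing the scales $D$ and $Q$ as suitable powers of $R$ realises any target $\alpha \in [n-m+1, n]$, and balancing the divergent amplitude against $\norm{f_R}_2$ pins down the Sobolev threshold $s(\alpha) = (n-\alpha+1)/2$.

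The principal obstacle is the bilinear phase $\xi_V \cdot \xi_{V'}$: smooth perturbations in $\xi_{V'}$ couple directly with the discrete modulations in $\xi_V$, so the analogue of Lemma~\ref{thm:perturbation} has to track both variables at once, and the $R^{-1/2}$ localisation in $\xi_{V'}$ must be fine enough not to destroy the alignment of the Gauss sums in $\xi_V$ at scale $R^{-1}$. Once the lower bound on admissible slabs is established, summing over scales, computing the Sobolev index and applying the Mass Transference Principle all follow the pattern already developed in Sections~\ref{sec:Counterexample}--\ref{sec:SobolevExponents}.
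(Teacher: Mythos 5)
There is a genuine gap: your proposal calls on the Gauss-sum/Talbot machinery of Section~\ref{sec:Preliminary_Datum} at rational times $t=p_1/(Dq)$, but that mechanism is neither used nor usable for this theorem, and it would not reach the claimed exponent $(n-\alpha+1)/2$.

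The key structural feature the paper exploits is that, after passing to $P(\xi)=\xi_1\xi_{m+1}+\cdots+\xi_m\xi_{2m}-\lvert\xi'''\rvert^2$, the lattice of wave packets is placed in the variables $\xi''=(\xi_{m+2},\ldots,\xi_{2m})$, and $P$ contains \emph{no quadratic term in $\xi''$}: the only coupling is the bilinear $\xi'\cdot\xi''$, where $\lvert\xi'\rvert\lesssim 1$. For $t\lesssim 1/R$ this phase is $O(1)$, so the periodic sum collapses to a pure Dirichlet kernel $\sum_{\lvert l''\rvert\le R/D}e^{2\pi i x''\cdot Dl''}\simeq (R/D)^{m-1}$ on the lattice $x''\in D^{-1}\Z^{m-1}+O(R^{-1})$, with no restriction to rational $t$ and hence no loss of a factor $Q^{(m-1)/2}$. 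That full amplitude is exactly what yields $\abs{\U{t}f_R}/\norm{f_R}_2\simeq R^{1/2+(m-1)(1-a)/2}$ with $D=R^a$ and, after matching with $\dim_{\mathcal H}F=n-(1-a)(m-1)$, the sharp exponent $s(\alpha)=(n-\alpha+1)/2$. By contrast, evaluating at rational times and replacing the Dirichlet kernel by a Gauss sum over $\F_q^{m-1}$ only gives amplitude $\simeq (R/(DQ))^{m-1}Q^{(m-1)/2}$, a loss of $Q^{(m-1)/2}$ which strictly lowers the Sobolev regularity of the counterexample; that suboptimal route is what the paper uses for $\alpha<n-m+1$ (Theorem~\ref{thm:Saddle_Lower_Alpha}, where the lattice is placed in the elliptic block $\xi'''$), not for the optimal range $\alpha\ge n-m+1$.

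A secondary issue: your dimension bookkeeping is inverted. You claim the $m-1$ isotropic directions contribute $m-1$ ``automatically'' and the other $n-m+1$ directions carry the fractal parameter. In the paper it is the opposite: the $m-1$ lattice directions $x''$ carry the tunable fractal dimension $a(m-1)$ (computed via the ball-to-ball Mass Transference Principle, Theorem~\ref{thm:MTP}, not the rectangle version), while the remaining $n-m+1$ variables $(x_1,x',x_{m+1},x''')$ fill the full unit cube. That is also what restricts the range of achievable $\alpha$ to $[\,n-m+1,\,n\,]$, as required.
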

\begin{proof}
As usual, we build functions $f_R$ at fixed scales $R \gg 1$
and then sum all of them after verifying that the solutions
$\U{t}f_R$ do not interfere with each other.

First, the change of variables $\xi_i \to \frac{1}{2}(\xi_i + \xi_{m+i})$
and $\xi_{m+i} \to \frac{1}{2}(\xi_i - \xi_{m+i})$, for $i=1, \ldots, m$,
transforms the symbol to
\begin{equation}\label{eq:Transformed_Simbol_Saddle}
P(\xi) = \xi_1\xi_{m+1} + \cdots + \xi_m\xi_{2m} - \xi_{2m+1}^2-\cdots - \xi_n^2.
\end{equation}
Let $\varphi\in\Sz(\R^n)$ with $\widehat{\varphi}$ supported in $B(0,c)$
for some small $c>0$, and consider the preliminary initial datum
\begin{equation}
\widehat{f}_R(\xi) = \sum_{\abs{l''}\le R/D}\widehat{\varphi}(\xi_1-R,\xi',\xi_{m+1}/R, \xi''-Dl'', \xi'''), \qquad l'' = (l_{m+2}, \ldots, l_{2m}) \in \mathbb R^{m-1},
\end{equation}
\begin{center}
\includegraphics[scale=0.8]{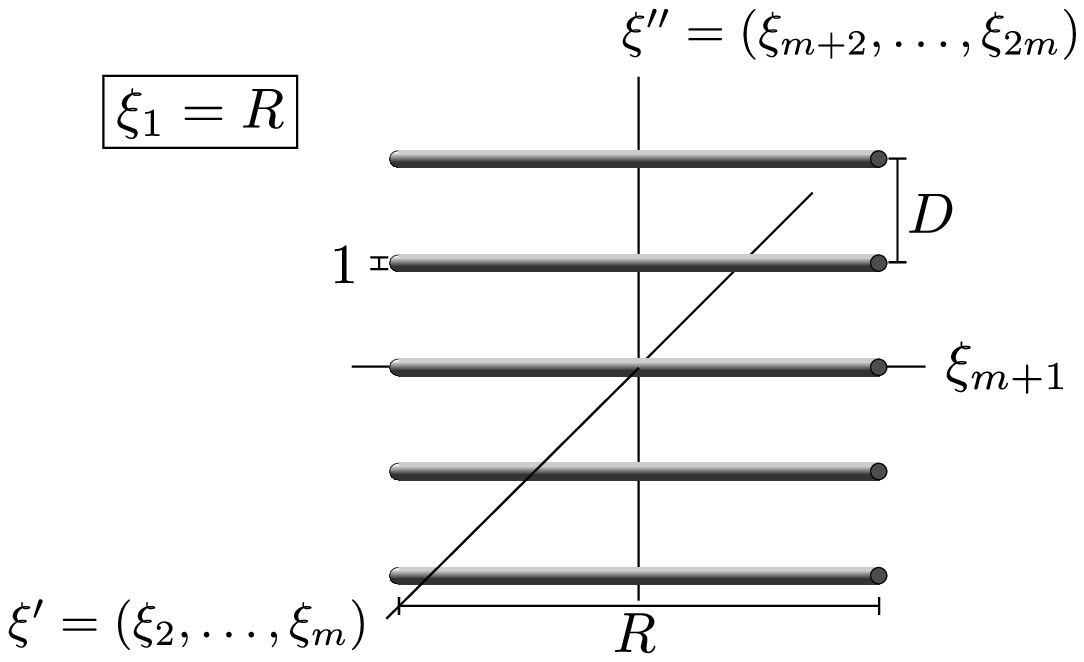}
\end{center}
\vspace*{3mm}
Here, we are denoting $\xi = (\xi_1,\xi',\xi_{m+1},\xi'',\xi''') \in \mathbb R^n$, where $\xi'=(\xi_2,\ldots, \xi_m)$, $\xi'' = (\xi_{m+2}, \ldots, \xi_{2m})$ and $\xi''' = (\xi_{2m+1},\ldots,\xi_{n})$. 
The solution can be written as 
\begin{align}
\U{t}f_R(x) &= \sum_{\abs{l''}\le R/D}\int \widehat{\varphi}(\xi_1-R, \xi', 
	\xi_{m+1}/R, \xi''-Dl'', \xi''') \, e(x\cdot\xi + tP(\xi))\,d\xi \\
	&= R \sum_{\abs{l''}\le R/D}e^{2\pi i(Rx_1 +x''\cdot Dl'')} \\
	&  \hspace*{2cm} \int \widehat{\varphi}(\xi)\,  e\Big(x_1\xi_1+(x'+tDl'')\cdot\xi'+ R(x_{m+1} + R\, t)\, \xi_{m+1} +  \\
 	&\hspace*{5cm} + x''\cdot\xi''+ x'''\cdot\xi''' + t(R\xi_1\xi_{m+1}+\xi'\cdot\xi''-\abs{\xi'''}^2)\Big)\,d\xi.
\end{align}
We evaluate it at times $0\le t\le 1/R$ and in the set $F$ given by the conditions
\begin{equation} \label{eq:saddle_trans_conditions}
\abs{(x_1,x')}\le 1;
\quad x_{m+1} + Rt = 0; \quad 
x'' = D^{-1}\Z^{m-1} + \BigO(R^{-1}); \quad
\textrm{and} \quad 
\abs{x'''}\le 1,
\end{equation}
see Figure~\ref{fig:saddle_ex_trans}. 
Take the absolute value so that
\begin{equation}
\abs{\U{t}f_R(x)} \simeq R \,  \Big| \sum_{\abs{l''}\le R/D}e^{2\pi ix''\cdot Dl''} \Big| \simeq R\, \left( \frac{R}{D} \right)^{m-1}.
\end{equation}
Since $\norm{f_R}_2\simeq R^{1/2}(R/D)^\frac{m-1}{2}$, we get
\begin{equation}\label{eq:Critical_Regularity_For_Saddle}
\frac{\abs{\U{t}f_R(x)}}{\norm{f_R}_2} \simeq R^{1/2}\, \left( \frac{R}{D} \right)^\frac{m-1}{2} = R^{\frac {1+(m-1)(1-a)}{2}},
\end{equation}
where we set $D = R^a$ with $0\le a\le 1$. 
Denote
\begin{equation}\label{eq:Critical_Regularity_For_Saddle_2}
s(a) = \frac{1 + (m-1)(1-a)}{2},
\end{equation}
which is the exponent that will determine the regularity of the datum.
 
Define the scales $R_k = 2^k$ for $k \in \mathbb N$, and 
for some large enough $k_0 \in \mathbb N$ we define the datum
\begin{equation}\label{eq:Datum_For_Saddle}
f(x) = \sum_{k \geq k_0} k \, \frac{f_{R_k}}{R_k^{s(a)}\, \lVert f_{R_k} \rVert_2 }.
\end{equation}
The triangle inequality gives $f \in H^s(\R^n)$ for all $s < s(a)$. 
Denoting by $F_k$ the corresponding set in \eqref{eq:saddle_trans_conditions}, 
the same method we used to prove Proposition~\ref{thm:Proposition_Divergent_Counterexample}
proves that for every $K \geq k_0$ and for every $x \in F_K$, 
there exists a time $t = t(x) \leq 1/R_K$ such that 
\begin{equation}
\left|  T_{t(x)} f(x) \right| 
	= \Bigg| \sum_{k \geq k_0} k \, \frac{T_{t(x)}f_{R_k}(x)}{R_k^{s(a)}\, \lVert f_{R_k} \rVert_2 } \Bigg| 
\gtrsim  K \, \frac{ |T_{t(x)}f_{R_K}(x) | }{R_K^{s(a)}\, \lVert f_{R_K} \rVert_2 } \simeq K.
\end{equation} 
Consequently, taking the limit $K \to \infty$, we get
\begin{equation}\label{eq:Divergent_Solution_Saddle}
\limsup_{t \to 0} \left|  T_{t} f(x) \right|  = \infty, \qquad \forall x \in F,
\end{equation}
where $F = \limsup_{k \to \infty} F_k$.

\begin{figure}[t]
\centering
\includegraphics[scale=0.8]{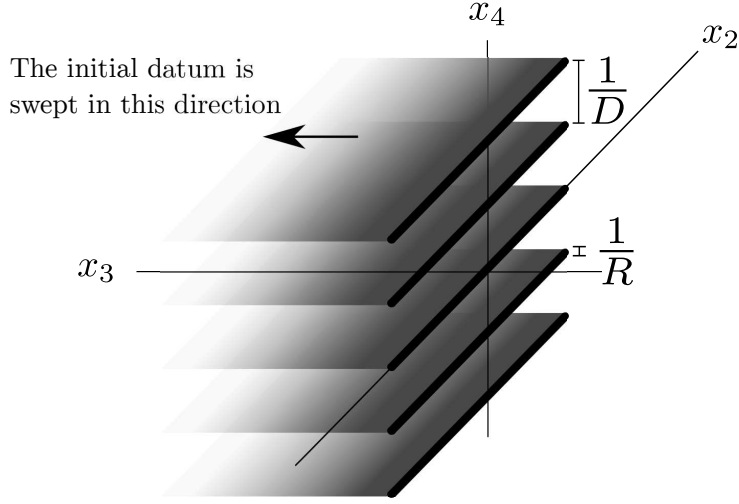}
\caption{A representation of the example in Theorem~\ref{thm:saddle_dim_sharp}
at a fixed scale $R$
when $n = 4$ and $P(\xi) = \xi_1\xi_3 + \xi_2\xi_4$, \textit{i.e.} $m = 2$.
We can only represent a general slice $x_1 =$ constant. 
The initial datum $f$ is concentrated around the dark rods in $x_3 = 0$ and,
as time progresses, the rods are translated to the left until $x_3 = -1$.
Thus,
$\sup_{0\le t\le 1/R}\abs{\U{t}f}$ is large along the sheets, which
cover a set of dimension $\ge$ 2 for each $\abs{x_1}\le 1$.} \label{fig:saddle_ex_trans}
\end{figure}

It only remains to compute the Hausdorff dimension of $F$. 
Let us define the set $ H = \limsup_{k\to\infty}H_k$, where
\begin{equation}
H_k = \left\{x''\in [-1,1]^{m-1} \, : \,  \abs{x''-D_k^{-1} \, p''}\le cR_k^{-1} \textrm{ for some } p'' \in\Z^{m-1}\right\}, \qquad k \in \mathbb N.
\end{equation}
For every $k \in \mathbb N$, the set $F_k$ is essentially equal
to $H_k\times [-1,1]^{n-m+1}$, where 
$[-1,1]^{n-m+1}$ is a cube in the variables $(x_1,x',x_{m+1}, x''')$. 
Then, $\dim_{\mathcal H} F = \dim_{\mathcal H} H + n - m + 1$;
see \cite[Corollary~7.4]{MR2118797}.

Since $H$ is covered by $\bigcup_{k \geq K }H_k$ for all $K \in \mathbb N$, 
and $H_k$ is a union of $D_k^{m-1}$ balls of radius $R_k^{-1}$, we get
\begin{equation}
\mathcal H^s_{R_K^{-1}}( H ) \leq \sum_{k \geq K} D_k^{m-1} R_k^{-s} 
= \sum_{k \geq K} R_k^{a(m-1)-s}, \qquad \forall K \in \mathbb N, 
\end{equation}
so letting $K \to \infty$ we get $\mathcal H^s\left(H \right) =0$ as long as $s > a(m-1)$,
which in turn implies that $\dim_{\mathcal H} H \leq a(m-1)$.

For the lower bound, we use the Mass Transference Principle
in its original form by Beresnevich and Velani in Theorem~\ref{thm:MTP}. 
For that, we need to find $s \geq 0$ such that $\limsup_{k \to \infty} H_k^s$ has full Lebesgue measure,
where 
\begin{equation}
H_k^s = \bigcup_{p'' \in \mathbb Z^{m-1}} B\left(\frac{p''}{D_k},\frac{1}{R_k^{s/(m-1)}} \right), \qquad k \in \mathbb N.
\end{equation}
In particular, it suffices that each $H_k^s$ fills the space, 
which happens when $R_k^{s/(m-1)} \simeq D_k$, that is, when $s = a(m-1)$.
Thus, the Mass Transference Principle implies $\dim_{\mathcal H} H \geq a(m-1)$. 
Consequently, $\dim_{\mathcal H} H = a(m-1) $ and 
\begin{equation}\label{eq:Saddle_Dimension}
\dim_{\mathcal H} F = a(m-1) + n - (m-1) = n - (1-a)(m-1), \qquad \forall a \in [0,1]. 
\end{equation}
In particular, the range for the dimension is $[n-m+1,n]$.

To conclude the proof, let $\alpha \in [n-m+1,n]$ and fix  $\dim_{\mathcal H} F = \alpha$. 
Then, according to \eqref{eq:Saddle_Dimension}, we need to choose 
$a$ such that $n-\alpha = (1-a)(m-1)$, 
so the critical regularity in \eqref{eq:Critical_Regularity_For_Saddle_2}
is $s(\alpha) = (n-\alpha + 1)/2$.
Thus, by \eqref{eq:Divergent_Solution_Saddle}, we found a datum
\eqref{eq:Datum_For_Saddle} that belongs to $H^s(\R^n)$ for every $s < (n-\alpha + 1)/2$
and that diverges in a set $F$ of Hausdorff dimension $\alpha$. 
\end{proof}

\subsection{When $\boldsymbol{\alpha < n - m + 1}$}

In this case, 
the smoothing effect of $\U{t}$ seems unavoidable.
Combining the counterexample of Rogers, Vargas and Vega \cite{MR2284549}
with the Talbot effect in the counterexample of the proof of Theorem~\ref{thm:divergence_k}, 
we give a lower bound for $s_c(\alpha)$ that is strictly smaller than the 
non-dispersive threshold. 
The proof is similar to that of Theorem~\ref{thm:divergence_k};
we sketch it here and skip many details 
that can be found in Sections~\ref{sec:Building_counterexample} and 
 \ref{sec:FractalSet}.

\begin{thm}\label{thm:Saddle_Lower_Alpha}
Let $P$ be a quadratic form with index $1\le m \leq n/2$. 
Assume that $n/2 \leq \alpha < n - m + 1$ and that
\begin{equation} \label{eq:thm:reg_Saddle_Talbot}
s < \frac{n + (n-2m)(n-\alpha)}{2 ( n - 2m + 2 )}.
\end{equation} 
Then, there exists  $f \in H^s(\mathbb R^n)$ such that $T_tf$ diverges
in a set of Hausdorff dimension $\alpha$.
\end{thm}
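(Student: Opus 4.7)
The plan is to combine the translation structure of Theorem~\ref{thm:saddle_dim_sharp} with a Talbot-like structure on the elliptic block $\xi''' = (\xi_{2m+1},\dots,\xi_n)$ of the reduced symbol $P(\xi) = \xi_1\xi_{m+1} + \cdots + \xi_m\xi_{2m} - |\xi'''|^2$ obtained in \eqref{eq:Transformed_Simbol_Saddle}. At each dyadic scale $R = R_k = 2^k$ I would propose the initial datum
\[
\widehat{f}_R(\xi) = \sum_{|l''| \le R/D}\sum_{|l'''| \le R/D'} \widehat{\varphi}\bigl(\xi_1-R,\xi',\tfrac{\xi_{m+1}}{R},\xi''-Dl'',\xi'''-D'l'''\bigr),
\]
where $D = R^a$ and $D' = R^b$ are parameters to be optimized, giving $\norm{f_R}_2 \simeq R^{1/2}(R/D)^{(m-1)/2}(R/D')^{(n-2m)/2}$.

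Next I would evaluate $T_tf_R$ at rational times $t = p_1/((D')^2 q)$ with $q \simeq Q = R^c$ prime, at points $x$ satisfying the Rogers-Vargas-Vega constraints $x_{m+1} + Rt = 0$ and $x'' \in D^{-1}\Z^{m-1} + \BigO(1/R)$, and the Talbot constraint $x''' = (D'q)^{-1}p''' + \BigO(1/R)$ for $p''' \in \Z^{n-2m}$. After translating the integration variables in $\xi$, the only substantial $(l'',l''')$-dependent phase is
\[
e\!\left(x'' \cdot Dl'' + \tfrac{p''' \cdot l''' - p_1 |l'''|^2}{q}\right),
\]
all remaining cross terms being $\BigO(c)$ because $\widehat{\varphi}$ has small support. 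The $l''$-sum contributes $(R/D)^{m-1}$ by direct coherent summation as in Theorem~\ref{thm:saddle_dim_sharp}; the $l'''$-sum, after Lemma~\ref{thm:perturbation} and the analogue of Lemma~\ref{thm:G_q} applied to the non-singular polynomial $-|\xi'''|^2$ (which satisfies the Weil bound by Corollary~\ref{thm:Nullstellensatz_Cor}), contributes $(R/D')^{n-2m}Q^{-(n-2m)/2}$ for $(p_1,p''')$ in a positive-density set $G(q) \subset \F_q^{n-2m+1}$. Altogether
\[
\frac{|T_tf_R(x)|}{\norm{f_R}_2} \gtrsim R^{1/2}\Big(\frac{R}{D}\Big)^{(m-1)/2}\Big(\frac{R}{D'Q^{1/2}}\Big)^{(n-2m)/2} =: R^{s(a,b,c)}.
\]
I would then sum the $f_R$'s over the scales $R_k$ as in \eqref{eq:Divergent_Counterexample} and repeat the non-stationary phase argument of Proposition~\ref{thm:Proposition_Divergent_Counterexample} to ensure non-interference across scales; the resulting $f$ lies in $H^s(\R^n)$ for every $s < s(a,b,c)$ and $T_tf$ diverges on $F = \limsup_k F_k$.

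Finally, to compute $\dim_\mathcal{H} F$ I would apply the Mass Transference Principle from rectangles to rectangles (Theorem~\ref{thm:MTP_Rectangles}). Each $F_k$ is a union of rectangles of thickness $\sim 1/R$ in the $x_{m+1}$, $x''$ and $x'''$ directions and width $\sim 1$ in $x_1$ and $x'$, with centers arranged periodically at spacings $R/((D')^2Q)$, $1/(DQ)$ and $1/(D'Q)$ respectively. I would identify the admissible dilation exponent $\boldsymbol{a}$ making the dilated rectangles fill a positive proportion of the unit cube---an analogue of Proposition~\ref{thm:Proposition_Pierce_Adapted} based on Lemma~\ref{thm:Lemma_Pierce_Separation}---and then optimize $s(a,b,c)$ subject to $\dim_\mathcal{H} F = \alpha$. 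The endpoints of the claimed range are consistent: at $\alpha = n-m+1$ the optimum should sit at $b = 0$ (no Talbot) and reproduce $s = m/2$ from Theorem~\ref{thm:saddle_dim_sharp}, while at $\alpha = n/2$ it should saturate the dispersive bound $s = n/4$ of Corollary~\ref{thm:dispersive_non-Singular}, with the linear interpolation $s(\alpha) = \frac{n + (n-2m)(n-\alpha)}{2(n-2m+2)}$ in between. The main obstacle will be precisely this multi-scale ubiquity computation: since the divergence set now exhibits periodicity at three different scales across distinct coordinate groups, verifying \eqref{eq:def:UniformLocalUbiquity} for the dilated rectangles and identifying the optimal vertex of the admissibility polytope requires a careful case analysis parallel to Sections~\ref{sec:FractalSet} and \ref{sec:SobolevExponents}.
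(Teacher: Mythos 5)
Your proposal follows essentially the same route as the paper: Talbot structure via a Gauss sum in the elliptic block $\xi'''$, a coherent Rogers--Vargas--Vega translation structure in $(\xi_1,\ldots,\xi_{2m})$, a lower bound on the exponential sum via Deligne/Lemma~\ref{thm:G_q}, and the Mass Transference Principle from rectangles to rectangles for the dimension. The one genuine difference is that you introduce an extra parameter $D = R^a$ and place a lattice of bumps in $\xi''$, whereas the paper simply takes an $R$-scale bump in $\xi''$ and fixes $x''=0$ in the divergence set. This generalization is harmless: writing $A = a(m-1)$ and $B = b+c$, the constraint $\dim F = \alpha$ reads $A + B(n-2m+2) = \alpha - m + 1$ while the Sobolev exponent reduces to $s = \frac{n-\alpha-1}{2} + B$, so $s$ is maximized by taking $B$ as large as possible, i.e.\ $A = a = 0$. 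Your construction then collapses back to the paper's (unit lattice in $\xi''$ $\approx$ fat bump, $x'' \approx 0$). Two small slips worth flagging: the spacing of the $x''$-lattice in your divergence set is $1/D$, not $1/(DQ)$, since the $l''$-phase $e(x''\cdot Dl'')$ does not involve $q$ and cannot support a Gauss-sum structure (adding rationals $p''/(Dq)$ with $p''\not\equiv 0 \pmod q$ would make that sum vanish); and at the endpoint $\alpha = n - m + 1$ the optimum corresponds to $D' = R$, i.e.\ $b = 1$ (a single $\xi'''$-bump, no Talbot), not $b=0$, which is consistent with the endpoint being excluded because the condition $R/D' \gg Q$, i.e.\ $b + c < 1$, becomes tight there. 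Neither slip affects the validity of the argument.
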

\begin{rmk}
Let $s(\alpha)$ be right hand side of \eqref{eq:thm:reg_Saddle_Talbot}.
Then $s(n-m+1) = m/2$ matches the exponent of Theorem~\ref{thm:saddle_dim_sharp}. 
On the other hand, $s(n/2) = n/4$ coincides with the exponent in Corollary~\ref{thm:dispersive_non-Singular}.
We show this graphically in Figure~\ref{fig:saddle}.
\end{rmk}

\begin{proof}
As in Theorem~\ref{thm:saddle_dim_sharp}, it suffices to work with the symbol \eqref{eq:Transformed_Simbol_Saddle}.
Given $\varphi \in \mathcal S (\mathbb R^n)$ with $\widehat{\varphi}$ supported in $B(0,c)$
for some small $c > 0$ 
and the parameter $R \gg 1$, 
we propose the datum
\begin{equation}
\widehat{f}_R(\xi) = \sum_{\substack{|l'''| \leq R/D \\ l''' \in \mathbb Z^{n-2m}  }} \widehat{\varphi} ( \xi_1 - R, \xi', \xi_{m+1}/R, \xi''/R, \xi''' - Dl'''  ). 
\end{equation}
Here, $D = R^\gamma$ for some $0 < \gamma < 1$, and
we split the variable $\xi = (\xi_1,\xi',\xi_{m+1}, \xi'',\xi''')$. 
Moreover, $\lVert f_R \rVert_2 \simeq R^{m/2} (R/D)^{(n-2m)/2}$.
The solution takes the form 
\begin{equation}
\begin{split}
T_t f_R(x) & = \sum_{|l'''| < R/D}  \int  \widehat \varphi  ( \xi_1 - R, \xi', \xi_{m+1}/R, \xi''/R, \xi''' - Dl'''  ) \, e( x \cdot \xi + t \, P(\xi) )\, d\xi  \\
& = R^m \, e(Rx_1) \sum_{|l'''| < R/D} \, e( x''' \cdot Dl''' - t D^2 |l'''|^2 ) \\
& \qquad \qquad \int  \widehat \varphi (\xi) \, e\Big(  x_1\xi_1 + x' \cdot \xi' + R(x_{m+1} + Rt)\xi_{m+1}  + Rx'' \cdot \xi'' + \xi''' \cdot (x''' - 2tDl''')  \\
& \qquad \qquad \qquad  \qquad \qquad + t \left( R\xi_1\xi_{m+1} + R\xi' \cdot \xi'' - |\xi'''|^2 \right)  \Big)\, d\xi.
\end{split}
\end{equation}
We impose $t < 1/R$, as well as 
\begin{equation}\label{eq:Saddle_Conditions}
|x_1|, |x'|, |x'''| < 1,\quad x'' = 0\quad \textrm{and} \qquad R\, |x_{m+1} + Rt| < 1,
\end{equation}
so that the phase of the integral is small, and thus
\begin{equation}
\left|  T_t f_R(x) \right| \simeq R^m \left|  \sum_{|l'''| < R/D} \, e \left( x''' \cdot Dl''' - t D^2 |l'''|^2 \right)  \right|.
\end{equation}
Let $p''' \in \mathbb Z^{n-2m}$, $p_{m+1} \in \mathbb Z$, $q$ a prime, and 
\begin{equation}\label{eq:Saddle_Rationals}
x''' = \frac{1}{D} \, \frac{p'''}{q} \qquad \textrm{and} \qquad t = \frac{1}{D^2}\, \frac{p_{m+1}}{q}.
\end{equation}
Assuming that $q \simeq Q$ and by periodicity,
we transform the expression above into a Gauss sum,
\begin{equation}\label{eq:Saddle_Gauss_Sum}
\left|  T_t f_R(x) \right| \simeq R^m \, \left( \frac{R}{DQ} \right)^{n-2m} \left|  \sum_{l''' \in \mathbb F^{n-2m}_q} \, e \left( \frac{p''' \cdot l''' - p_{m+1}|l'''|^2}{q} \right)  \right| 
\end{equation}
By Deligne's Theorem~\ref{thm:Weil_Bound} and Lemma~\ref{thm:G_q}, 
we know that there is a set $G(q) \subset \mathbb Z^{n-2m+1}$
such that $|G(q) \cap [0,q]^{n-2m+1}| \simeq q^{n-2m+1}$ and 
 for values $(p_{m+1},p''') \in G(q)$
the Gauss sum in \eqref{eq:Saddle_Gauss_Sum} is $\simeq q^{(n-2m)/2}$.
As a consequence, 
\begin{equation}\label{eq:Saddle_Exponent_Proto}
\left|  T_t f_R(x) \right| \simeq R^m \, \left( \frac{R}{DQ^{1/2}} \right)^{n-2m} \qquad \Longrightarrow \qquad \frac{\left|  T_t f_R(x) \right| }{\lVert f_R \rVert_2} \simeq R^{m/2} \left( \frac{R}{DQ} \right)^{(n-2m)/2}.
\end{equation}
This result is analogue to Proposition~\ref{thm:Proposition_Pierce}
and will eventually give the desired Sobolev exponent. 
As in \eqref{eq:Datum_For_Saddle}, the counterexample is built summing $f_{R_k}$
for different scales $R_k = 2^k$,
which diverges in the set $F = \limsup_{k \to \infty}F_k$, 
where $F_k = F_{R_k}$ are defined by the conditions
\eqref{eq:Saddle_Conditions} and \eqref{eq:Saddle_Rationals}.
The procedure is the same as in Subsection~\ref{sec:Counterexample},
so we do not repeat it here.

Instead, let us focus on the Hausdorff dimension of $F$. 
Conditions \eqref{eq:Saddle_Conditions} and \eqref{eq:Saddle_Rationals}
suggest defining
\begin{equation}
H_R = ([-1,0]\times[-1,1]^{n - 2m}) \cap  \bigcup_{q \simeq Q} \, \bigcup_{p \in G(q)} B \left( \frac{R}{D^2}\, \frac{p_{m+1}}{q} , \frac{1}{R} \right)   \times B\left( \frac{1}{D}\, \frac{p'''}{q}, \frac{1}{R} \right).
\end{equation}
Indeed, adding an error $|\epsilon| \leq 1/R$ to the choice of $x'''$ does not alter 
the result of the Gauss sum.
Calling $H_{R_k} = H_k$, let $H = \limsup_{k \to \infty} H_k$. 
Since the rest of the variables can be taken $(x_1,x') \in [-1,1]^m$ and $x''=0$,
 the final set $F \subset \mathbb R^n$ 
will have $\dim_{\mathcal H} F = \dim_{\mathcal H} H + m$. 

To compute $\dim_{\mathcal H} H$, let us define the geometric parameters $u_1,u_2$ by 
\begin{equation}
R^{u_1} = \frac{D^2 Q}{R}, \quad R^{u_2} = DQ \qquad \Longleftrightarrow \qquad Q = R^{2u_2 - u_1 - 1}, \quad D = R^{u_1 - u_2 + 1}. 
\end{equation}
With them, we easily get an upper bound for $\dim_{\mathcal H} H $.
Indeed, $H_R$ is covered by $ Q R^{u_1 + (n-2m)u_2} $ balls of radius $1/R$.
Since $H$ is covered by $\bigcup_{k \geq K}H_k$, we immediately get
\begin{equation}
\mathcal H^s_{1/R_K} (H) \leq \sum_{k \geq K} Q R^{u_1 + u_2(n-2m) - s} = \sum_{k \geq K} R^{u_2(n-2m+2) - 1 - s},
\end{equation} 
and letting $K \to \infty$ implies that 
\begin{equation}\label{eq:Saddle_Dim_Upper_Bound}
\dim_{\mathcal H} H \leq u_2(n-2m+2) - 1.
\end{equation} 
For the lower bound, we use the Mass Transference Principle 
in Theorem~\ref{thm:MTP_Rectangles}. 
The original exponents are now $\boldsymbol{b} = (1, \ldots, 1)$, 
and the restrictions we have for $u_1,u_2$ are 
\\[4mm]
\begin{minipage}{0.6\textwidth}
\begin{itemize}
	\item Basic separation restrictions: $0 \leq u_1,u_2 \leq 1$;
	\item $Q\geq 1$: $2u_2 - u_1 - 1 \geq 0 $;
	\item Shrinking unit cell: $u_2 - u_1 \leq 1/2$.
\end{itemize}
\end{minipage}
\begin{minipage}{0.4\textwidth}
\begin{center}
\includegraphics[scale=0.8]{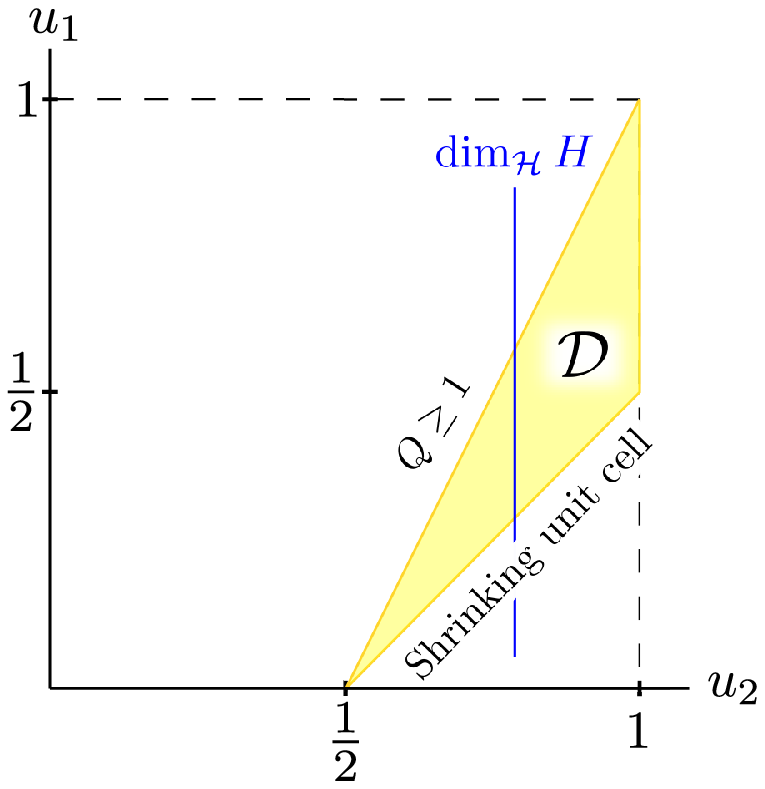}
\end{center}
\end{minipage}
\\[4mm]
\noindent Like we did in Subsection~\ref{sec:Preparing_For_MTP}, 
to check that the slabs form a uniform local ubiquity system 
it is enough to find $\boldsymbol{a} = (a_1,a_2,\ldots, a_2)$
such that $\mathcal H^n(\Omega_{R,\boldsymbol{a}}) \geq c > 0$, 
where
\begin{equation}
\Omega_{R,\boldsymbol{a}} =  \bigcup_{q \simeq Q} \, \bigcup_{p \in G(q) \cap [0,q]^{n-2m+1}} B \left( \frac{p_{m+1}}{q} , \frac{D^2}{R^{1+a_1}} \right)   \times B\left( \frac{p'''}{q}, \frac{D}{R^{a_2}} \right).
\end{equation}
According to Proposition~\ref{thm:Proposition_Pierce_Adapted}, given any $\epsilon>0$, it is enough to ask 
 \begin{equation}\label{eq:Saddle_auRelationship}
 \frac{D^2}{R^{1+a_1}} \, \left( \frac{D}{R^{a_2}}  \right)^{n-2m} \simeq \frac{1}{Q^{n-2m+2-\epsilon}} \quad \Longleftrightarrow \quad a_1 + (n-2m)a_2 = u_1 + (n-2m)u_2 + (1-\epsilon)(2u_2 - u_1 - 1),
 \end{equation}
where additionally $u_1 \leq a_1 < 1$ and $u_2 \leq a_2 < 1$. 
Given any valid $u_1,u_2$, there exist such $a_1,a_2$, 
so we apply the Mass Transference Principle in Theorem~\ref{thm:MTP_Rectangles} to get
\begin{equation}
\dim_{\mathcal H} H \geq \sum_{\ell \in K_1} 1 + \sum_{\ell \in K_2} (1 - (b_\ell - a_\ell)) + \sum_{\ell \in K_3} a_\ell,
\end{equation} 
where $K_2 = \{ \ell : b_\ell \leq 1 \}$ takes all the coordinates. 
In particular, $K_1 =  \emptyset = K_3$, so 
\begin{equation}
\dim_{\mathcal H} H \geq a_1 + a_2(n-2m).
\end{equation} 
Using \eqref{eq:Saddle_auRelationship}, we rewrite it as 
\begin{equation}
\dim_{\mathcal H} H \geq (n - 2m + 2)u_2 - 1 - \epsilon(2u_2-u_1 - 1).
\end{equation}
This is valid for every $\epsilon >0$, so given that $2u_2-u_1 - 1 \geq 0$ and together with \eqref{eq:Saddle_Dim_Upper_Bound}, we get
\begin{equation}
\dim_{\mathcal H} H = u_2(n-2m+2) - 1. 
\end{equation} 
Consequently, $\dim_{\mathcal H} F = u_2(n-2m+2) + m - 1$. 

To conclude, let us fix $\alpha = u_2(n-2m+2) + m - 1$. 
Since $1/2 \leq u_2 \leq 1$, the dimension is restricted to $n/2 \leq \alpha \leq n - m + 1$.
The Sobolev exponent, which is the exponent of $R$ in the last expression in \eqref{eq:Saddle_Exponent_Proto},
is $m/2 + (1-u_2)(n-2m)/2$, so replacing $u_2$ for $\alpha$ we get
\begin{equation}
s(\alpha) = \frac{n + (n - 2m)(n - \alpha)}{ 2(n - 2m + 2) }.
\end{equation}
\end{proof} 

We want to compare this theorem with the counterexamples constructed 
by Barron \textit{et al.} in Proposition~4.2 of \cite{MR4196386},
which morally imply that
\begin{equation}
s_c(\alpha) \ge \frac{n-m+(n-2m)(n-\alpha)}{2(n-2m+1)}, \qquad
\textrm{for } n-m\le \alpha\le n-m+1.
\end{equation}
This is better than the regularity in \eqref{eq:thm:reg_Saddle_Talbot}.
We say ``morally implies'' because
they were interested in \eqref{eq:thm:discrete},
not divergence, and
it is not always clear how to translate counterexamples 
for \eqref{eq:thm:discrete} to
counterexamples for divergence.

Their counterexamples are similar to those in Theorem~\ref{thm:Saddle_Lower_Alpha}.
In fact,
they are equal in the variables
$(\xi_1,\cdots,\xi_{2m})$, except for a translation.
In the remaining \textit{elliptic} variables $(\xi_{2m+1},\ldots,\xi_n)$
we use the Talbot effect, while
they used other counterexamples due to Du \cite[Theorem~1.2]{MR4186129}. 
They only constructed examples in the range $n - m \le \alpha \le n - m + 1$; however,
using modifications of Du's examples,
it might be possible to improve Theorem~\ref{thm:Saddle_Lower_Alpha} 
in the whole range $n/2<\alpha<n -m + 1$.

\subsection{When $\boldsymbol{n}$ is odd, $\boldsymbol{m = (n-1)/2}$ and $\boldsymbol{\alpha < (n+3)/2}$}
In this case, the exponent in the preceding subsection can be improved.
\begin{thm}
Let $n$ be odd and $P$ be a quadratic form with index $m = (n-1)/2$. 
Assume that $(n+1)/2 \le \alpha \leq (n+3)/2$ and that
\begin{equation} \label{eq:thm:saddle_n_odd}
s < \frac{n - \alpha + m + 1}{4}.
\end{equation}
Then, there exists  $f \in H^s(\mathbb R^n)$ such that $T_tf$ diverges
in a set of Hausdorff dimension $\alpha$.
\end{thm}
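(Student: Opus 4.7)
The proof follows the same skeleton as Theorems~\ref{thm:saddle_dim_sharp} and~\ref{thm:Saddle_Lower_Alpha}, but exploits the special structure $n-2m=1$ coming from $m=(n-1)/2$. After the same linear change of variables used in Subsection~\ref{sec:Saddle_Optimal}, the symbol takes the form $P(\xi)=\xi_1\xi_{m+1}+\cdots+\xi_m\xi_{2m}-\xi_n^2$, with only one elliptic variable $\xi_n$. The plan is to construct a scale-$R$ datum $f_R$ that combines a wave-packet concentration around a null direction of the hyperbolic block (i.e.\ $\xi_1\simeq R$, $\xi_{m+1}\simeq 0$ stabilized by the centering condition $x_{m+1}+Rt=0$) with a Talbot-type lattice sum in $\xi_n$ of spacing $D$, reinforced by an additional Talbot-lattice structure in some subset of the hyperbolic partner variables $\xi''$. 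The extra Talbot scale is what permits improvement beyond the bound $(2n-\alpha)/6$ of Theorem~\ref{thm:Saddle_Lower_Alpha}, lifting it to the target $(n-\alpha+m+1)/4$.

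The key steps proceed in order:
\textbf{Step 1.} Evaluate $T_tf_R$ at resonant times $t=p_{m+1}/(D^2q)$ with $q\simeq Q$ and resonant positions $x_n=p_n/(Dq)+\mathcal O(R^{-1})$, together with analogous rational conditions for the additional hyperbolic Talbot lattice. Applying Lemma~\ref{thm:perturbation} to dispense with smooth cutoffs and Lemma~\ref{thm:G_q} to produce a positive-density set $G(q)\subset\F_q^{\bullet}$ of numerators on which the quadratic Gauss sum in $l_n$ is $\gtrsim q^{1/2}$, while the linear Talbot sum in the hyperbolic $l''$-variables contributes $\simeq(R/D)^{m-1}$ when $x''$ lies on the dual lattice.
\textbf{Step 2.} With scales $D=R^\gamma$, $Q=R^\beta$, and a third exponent $a$ governing the hyperbolic Talbot spacing, compute $\norm{f_R}_2$ from the volume of the Fourier support and $|T_tf_R(x)|$ from the product of Gauss and Talbot sums, then calibrate $(\gamma,\beta,a)$ so that $|T_tf_R(x)|/\norm{f_R}_2\gtrsim R^{s(\alpha)}$ with $s(\alpha)=(n-\alpha+m+1)/4$.
\textbf{Step 3.} Form the counterexample $f=\sum_{k\ge k_0} k\,f_{R_k}/(R_k^{s(\alpha)}\norm{f_{R_k}}_2)$ with $R_k=2^k$, and repeat the non-stationary phase argument of Proposition~\ref{thm:Proposition_Divergent_Counterexample} to show that only the matching $k=K$ scale contributes at the resonant time $t(x)\simeq R_K^{-1}$, so that $\limsup_{t\to 0}|T_tf(x)|=\infty$ on the limsup set $F=\limsup_{k\to\infty}F_k$.
\textbf{Step 4.} Describe $F$ as a limsup of rectangles carved out by the resonance conditions in the $(x_{m+1},x'',x_n)$ coordinates, choose dilation exponents so that the dilated rectangles cover $[-1,1]^n$ (verifying uniform local ubiquity as in Definition~\ref{def:UniformLocalUbiquity} via Proposition~\ref{thm:Proposition_Pierce_Adapted}), and apply Theorem~\ref{thm:MTP_Rectangles} to deduce $\dim_{\mathcal H}F\ge\alpha$.

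The main obstacle will be the joint calibration of the three scale parameters so that both the Sobolev regularity and the Hausdorff dimension simultaneously reach their targets across the entire range $(n+1)/2\le\alpha\le(n+3)/2$. In Theorem~\ref{thm:Saddle_Lower_Alpha} only two parameters $(D,Q)$ were available, forcing the line of achievable $(s,\alpha)$ to be rigidly the one giving $(2n-\alpha)/6$; introducing the third scale $a$ adds exactly the degree of freedom needed to sweep out the improved line $(n-\alpha+m+1)/4$, with the two endpoints $\alpha=(n+1)/2$ and $\alpha=(n+3)/2$ corresponding to the extremal admissible triples $(\gamma,\beta,a)$. One must verify (a) that the Gauss sum lower bound of Step~1 survives, which requires $R/D\gg Q$ and compatibility of $a$ with the support of $\widehat\varphi$ in the hyperbolic directions; (b) that the quadratic cross terms in the $(\xi',\xi'')$ variables remain of size $\mathcal O(1)$ uniformly in $l''$, which is where the restriction $t\lesssim 1/R$ and the relation $n-2m=1$ enter; and (c) that Lemma~\ref{thm:perturbation} still applies in the extended parameter regime so that passing from the continuous Talbot sum to the Gauss sum modulo $q$ incurs only a negligible error. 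With these three verifications in place, the same Mass Transference computation carried out in Section~\ref{sec:FractalSet} yields the dimension lower bound, and the conclusion follows.
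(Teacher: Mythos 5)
Your proposal takes a fundamentally different, and more complicated, route than the paper's actual proof, and it is not carried far enough to verify that it reaches the target regularity.

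The paper's proof abandons the Talbot/Gauss-sum machinery entirely. When $m=(n-1)/2$ there is a \emph{single} elliptic variable $\xi_n$, and the datum is
\begin{equation}
\widehat{f_R}(\xi)=\sum_{|l|\le R^{1/2}/D}\widehat{\varphi}\bigl(\xi_1-R,\xi',\xi_{m+1}/R,\xi''/R,\xi_n-lD\bigr),
\end{equation}
with the crucial choice that the frequency range in $\xi_n$ is $R^{1/2}$ rather than $R$. Evaluating at $t\simeq 1/R$ on the set $x_{m+1}+Rt=0$, $x''=0$, $x_n\in D^{-1}\Z+\mathcal O(R^{-1/2})$, the quadratic phase $t|lD|^2\lesssim tR\le 1$ only sweeps through $\mathcal O(1)$ oscillations, so the lattice sum in $l$ remains \emph{fully coherent} of size $R^{1/2}/D$ — no rational times, no prime modulus $q$, no Gauss sum lower bound, and hence no $q^{1/2}$ loss. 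The divergence set is the product of a full cube $[-1,1]^{m+1}$ with the limsup of the periodic union of intervals $H_k=\{x_n:|x_n-D_k^{-1}p|\le cR_k^{-1/2}\}$, whose dimension $2a$ ($D=R^a$, $a\in[0,1/2]$) is computed with the original balls-to-balls Mass Transference Principle (Theorem~\ref{thm:MTP}), not the rectangle version or Proposition~\ref{thm:Proposition_Pierce_Adapted}. One then reads off $\alpha=m+1+2a$ and $s(\alpha)=(n-2a)/4=(n-\alpha+m+1)/4$.

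Your plan instead keeps the Gauss-sum/Diophantine structure of Theorem~\ref{thm:Saddle_Lower_Alpha} and tries to improve it by grafting on a hyperbolic Talbot lattice in $\xi''$ with a third scale parameter. There are two concrete problems with this. First, the hyperbolic Talbot lattice is exactly the mechanism that in Theorem~\ref{thm:saddle_dim_sharp} produces dimensions in $[n-m+1,n]$, \emph{above} the range $(n+1)/2\le\alpha\le(n+3)/2=n-m+1$ you need to reach; with a nontrivial hyperbolic spacing the dimension overshoots, and with trivial spacing the extra scale is vacuous and you fall back to the $(2n-\alpha)/6$ exponent. Second, the elliptic Gauss sum caps the coherent gain at $q^{1/2}$ per block, whereas the paper's restricted-range lattice achieves full coherence $R^{1/2}/D$; this $q^{1/2}$ deficit is precisely why the Talbot route gives a strictly smaller $s(\alpha)$ when $m=(n-1)/2$. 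Beyond these structural issues, Steps 2–4 of your plan are asserted rather than carried out: you do not define the datum, compute $\|f_R\|_2$ or $|T_tf_R|$, or exhibit the calibration of $(\gamma,\beta,a)$ that would produce $s(\alpha)=(n-\alpha+m+1)/4$ together with a limsup set of dimension exactly $\alpha$, so the claimed improvement is not established.
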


\begin{rmk}
Let $s(\alpha)$ be right hand side of \eqref{eq:thm:saddle_n_odd} and
$s_T(\alpha)$ the regularity in \eqref{eq:thm:reg_Saddle_Talbot}.
At one end point of $\alpha$ we have $s(n - m + 1) = s_T(n - m + 1)$,
while the slopes of $s(\alpha)$ and $s_T(\alpha)$ are 
$-1/4$ and $-(n-2m)/[2(n-2m + 2)]$, respectively.
Hence, $s(\alpha) > s_T(\alpha)$ for $\alpha\le n - m + 1$ if and only if $m > n/2 - 1$.
\end{rmk}

\begin{proof}
The example is similar to the one in the proof of Theorem~\ref{thm:saddle_dim_sharp}, 
and in particular,
we assume the same presentation of $P$ as
\begin{equation}
P(\xi) = \xi_1\xi_{m+1} + \cdots + \xi_m\xi_{2m} - \xi_n^2.
\end{equation}
We consider the initial datum
\begin{equation}
\widehat{f_R}(\xi) = \sum_{\abs{l}\le R^{1/2}/D}\widehat{\varphi}(\xi_1-R,\xi',\xi_{m+1}/R,\xi''/R,\xi_n-lD),
\end{equation}
where $(\xi_1,\ldots, \xi_m, \xi_{m+1},\ldots, \xi_{2m},\xi_n) = (\xi_1,\xi',\xi_{m+1},\xi'',\xi_n)$.
The solution can be written as
\begin{align}
\U{t}f_R(x) &= \sum_{\abs{l}\le R^{1/2}/D}\int\widehat{\varphi}(\xi_1-R,\xi',\xi_{m+1}/R,\xi''/R,\xi_n-lD)
			\,e(x\cdot\xi + tP(\xi))\,d\xi \\
	&= R^m \sum_{\abs{l}\le R^{1/2}/D}e^{2\pi i(Rx_1 +x_n\cdot lD)} \\
	&\hspace*{2cm} \int \widehat{\varphi}(\xi) \, 
		e\Big(x_1\xi_1+x'\cdot\xi'+ R(x_{m+1} + Rt)\xi_{m+1} + Rx''\cdot\xi''+ \\
	&\hspace*{4cm} (x_n-2tlD)\cdot\xi_n+tR(\xi_1,\xi')\cdot(\xi_{m+1},\xi'')-t(\xi_n^2+\abs{lD}^2)\Big)\,d\xi.
\end{align}
We evaluate it at times $0\le t\le 1/R$ and in the set $F$ given by the conditions
\begin{equation} \label{eq:saddle_Set_NTalbot}
\abs{(x_1,x')}\le 1;
\quad x_{m+1} + Rt = 0; \quad 
x'' = 0; \quad
\textrm{and} \quad 
x_n = D^{-1}\Z + \BigO(R^{-1/2}).
\end{equation}
Take the absolute value so that
\begin{equation}
\abs{\U{t}f_R(x)} \simeq  R^m \, \Big|\sum_{\abs{l}\le R^{1/2}/D}e^{2\pi ix_n\cdot lD}\Big| \simeq R^m \, \frac{R^{1/2}}{D}.
\end{equation}
Since $\norm{f_R}_2\simeq R^{m/2}(R^{1/2}/D)^{1/2}$, we get
\begin{equation}
\frac{\abs{\U{t}f_R(x)}}{\norm{f_R}_2} \simeq R^{m/2} \left( \frac{R^{1/2}}{D} \right)^{1/2} = R^{(n-2a)/4},
\end{equation}
where we set $D = R^a$ with $0\le a\le 1/2$. 
Denote
\begin{equation} \label{eq:regularity_NoTalbot}
s(a) = \frac{n-2a}{4},
\end{equation}
which is the exponent that will determine the regularity of the datum.

We define the scales $R_k = 2^k$ for $k\in\N$ and 
set the initial datum $f$ as in \eqref{eq:Datum_For_Saddle},
so $f\in H^s(\R^n)$, for $s<s(a)$, and 
$\U{t} f$ diverges in the set $F = \limsup_{k\to\infty} F_k$,
where $F_k$ are the sets given by \eqref{eq:saddle_Set_NTalbot}.
To compute the dimension of $F$, notice that it is a product
$F_k = [-1,1]^{m+1}\times H_k$, where
\begin{equation}
H_k = \left\{x_n\in [-1,1]\, : \,  \abs{x_n-D_k^{-1} \, p}\le cR_k^{-1/2} \textrm{ for some } p \in\Z\right\}, \qquad k \in \mathbb N.
\end{equation}
By the same methods used in the previous subsections, 
the dimension of this set is $2a$, so
$\alpha = \dim_{\mathcal H} F = m + 1 + 2a$ and 
the range of the dimension is $(n+1)/2\le \alpha \le (n+3)/2$.

To conclude the proof, replace $2a = \alpha - m - 1$ in \eqref{eq:regularity_NoTalbot}
to see that $s(\alpha) = (n - \alpha + m + 1)/4$.
\end{proof}

\bibliographystyle{acm}
\bibliography{FractalIDBib, fractal}

\end{document}